\documentclass[a4paper,11pt, reqno]{amsart}
\usepackage{epstopdf}
\usepackage{amssymb}
\usepackage{textcomp}
\usepackage{amsmath, amsthm, amscd, amsfonts}
\usepackage{graphicx, float}
\usepackage{mathrsfs}
\usepackage{bbm, bm, upgreek}

\usepackage{comment}
\usepackage{stmaryrd}
\usepackage[left= 2.8 cm, right= 2.8 cm, top= 3 cm, bottom=2.7 cm, foot= 1 cm]{geometry}



\usepackage{hyperref}
\hypersetup{colorlinks=true, citecolor=blue}
\usepackage{color}

\usepackage{tikz}

\linespread{1.2}
\newcommand{\mb}{\mathbb}
\newcommand{\mbm}{\mathbbm}
\newcommand{\mf}{\mathfrak}
\newcommand{\mc}{\mathcal}

\newcommand{\mbms}{\mathbbmss}
\newcommand{\ov}{\overline}

\newcommand{\wt}[1]{\widetilde{#1}}
\newcommand{\wh}[1]{\widehat{#1}}

\newcommand{\lb}{\llbracket}
\newcommand{\rb}{\rrbracket}

\newcommand{\bthm}{\begin{thm}}
\newcommand{\ethm}{\end{thm}}

\newcommand{\blem}{\begin{lemma}}
\newcommand{\elem}{\end{lemma}}

\newcommand{\bcor}{\begin{cor}}
\newcommand{\ecor}{\end{cor}}

\newcommand{\bprop}{\begin{prop}}
\newcommand{\eprop}{\end{prop}}

\newcommand{\brmk}{\begin{rem}}
\newcommand{\ermk}{\end{rem}}

\newcommand{\bpf}{\begin{proof}}
\newcommand{\epf}{\end{proof}}

\newcommand{\bdf}{\begin{defn}}
\newcommand{\edf}{\end{defn}}


\numberwithin{equation}{section}

\def\R{\mathbb{R}}

\def\cT{\mathcal{T}}

\def\mf{\mathfrak}

\def\mb{\mathbf}
\def\ol{\overline}

\def\vp{\varphi}

\def\w{\omega}

\def\vp{\varphi}

\def\xkm2{\overline{X}_{k-2}}

\makeatletter 
\@addtoreset{equation}{section}
\makeatother  

\numberwithin{equation}{section}
\newcommand{\beq}{\begin{equation}}
\newcommand{\eeq}{\end{equation}}

\title{Gauged Floer homology and spectral invariants}

\author[Wu]{Weiwei Wu}

\address{Centre de recherches math\'ematiques \\
      Universit\'e de Montr\'eal\\
      Pavillon Andr\'e-Aisenstadt\\
      2920 Chemin de la tour, Room 5357
      Montr\'eal (Qu\'ebec) H3T 1J4
			}
\email{wuweiwei@crm.umontreal.ca}

\author[Xu]{Guangbo Xu}
\address{
340 Rowland Hall\\
Department of Mathematics \\
University of California, Irvine\\
Irvine, CA 92697 USA
}
\email{guangbox@math.uci.edu}

\date{\today}

\begin{document}

\newtheorem{thm}{Theorem}[section]
\newtheorem{lemma}[thm]{Lemma}
\newtheorem{cor}[thm]{Corollary}
\newtheorem{prop}[thm]{Proposition}

\theoremstyle{definition}
\newtheorem{defn}[thm]{Definition}
\theoremstyle{remark}
\newtheorem{rem}[thm]{Remark}
\newtheorem{hyp}[thm]{Hypothesis}
\newtheorem{example}[thm]{Example}

\begin{abstract}
We define a version of spectral invariant in the vortex Floer theory for a $G$-Hamiltonian manifold $M$.  This defines potentially new (partial) symplectic quasi-morphism and quasi-states when $M//G$ is not semi-positive.  We also establish a relation between vortex Hamiltonian Floer homology \cite{Xu_VHF} and Woodward's quasimap Floer homology \cite{Woodward_toric} by constructing a closed-open string map between them.  This yields applications to study non-displaceability problems of subsets in $M//G$.

\end{abstract}

\maketitle

\setcounter{tocdepth}{1}
\tableofcontents

\section{Introduction}

The main theme of this paper lies in the intersection of several different directions of interests in symplectic geometry. Classically, one of the fundamental questions driving the development of symplectic geometry is to understand the fixed point set of a Hamiltonian diffeomorphism, that is, the Arnold's conjecture. A relative version of Arnold's conjecture considers the intersection of a Lagrangian submanifold with its image under a Hamiltonian diffeomorphism. It is by now well-known that this line of problems is intertwined with Gromov-Witten invariants, in a way that the Hamiltonian Floer cohomology ring, generated by fixed points of Hamiltonians, is isomorphic to the quantum cohomology ring defined by counting holomorphic spheres \cite{PSS}. Although it is more complicated in general, but there is a rough correspondence in the open string (Lagrangian boundary) cases, see \cite{FOOO_Book}\cite{Biran_Cornea} for example.

Another important mainstream in the research of symplectic geometry owes to the discovery of a bi-invariant metric, Hofer's metric on ${\rm Ham}(M)$ for arbitrary symplectic manifolds \cite{Hofer_metric}\cite{LM95}. This in turn is closely tied to an extra action filtration structure on the Floer complex, which is usually referred to as the \textit{spectral theory}. In particular, Viterbo \cite{Vi92} constructed a spectral invariant for $R^{2n}$, which was later generalized by Schwarz and Oh, see for example, \cite{Schwarz_spectral}\cite{Oh_2005}.

We recall briefly the construction of the chain level theory here. For a generic time dependent Hamiltonian $H_t: X \to {\mb R}\ (t\in S^1)$, we can construct the Floer homology group $HF(X, H)$ (with some appropriate coefficient ring $\Lambda$). $HF(X, H)$ is the homology of certain chain complex $CF(X, H)$ of $\Lambda$-modules with some distinguished generators; there is an action functional ${\mc A}_H$ (which is sensitive in $H$) defined on those generators. $HF(X, H)$ is canonically isomorphic to the singular homology of the underlying manifold $X$ in the same coefficient ring. Then, any homology class $a\in H(X; \Lambda)\simeq HF(X, H)$ is represented by a Floer chain
\begin{align*}
{\mf X}:= \sum_{{\mf x}} a_{{\mf x}} {\mf x} \in CF(X, H).
\end{align*}
Then we define the spectral number $c(a, H)$ formally as
\begin{align*}
c(a, H) = \inf \big\{ {\mf v}({\mf X})\ |\ [{\mf X}] = a \in HF(X, H)\big\},\ {\mf v}({\mf X}) = \max\big\{ {\mc A}_H({\mf x})\ |\ a_{{\mf x}} \neq 0 \big\}.
\end{align*}
One can show that $c(a, H)$ is independent of various choices made in order to define the Floer chain complex, and it only depends on the Hamiltonian path $\wt\psi_H \in \wt{\rm Ham}(M)$.  An important property is that the spectral invariant lower bounds the Hofer norm of a Hamiltonian diffeomorphism.

A remarkable connection was built between the filtered Floer theory and the displaceability problem of subsets of symplectic manifolds by Entov and Polterovich in their series of papers, see for example \cite{Entov_Polterovich_1, Entov_Polterovich_2, Entov_Polterovich_3}.  In a nutshell, they combine the ideas from dynamical systems, function theory as well as symplectic geometry to construct certain functionals on ${\rm Ham} (M)$, called the {\it (partial) symplectic quasi-states} and {\it Calabi quasimorphisms}. They used these objects to study symplectic intersections and introduced the notion of {\it heavy} and {\it superheavy} subsets of symplectic manifolds. Later such constructions are extended by \cite{FOOO_spectral}, \cite{Ush08} to include the contributions of the {\it big} quantum cohomology in general symplectic manifolds.

The primary purpose of this paper is to explore a similar picture in the vortex context.  The \textit{vortex Floer theory}, as the last strand of the braid, is defined when there is a compact Lie group $G$ acting on the symplectic manifold $M$.  The solutions of vortex equations exhibited similar structures to those of Floer equations, thus the relation between vortex theory and Hamiltonian/Lagrangian Floer theory has attracted much interests since.  In the Gromov-Witten context, a \textit{vortex equation} was studied by Mundet and Cieliebak-Gaio-Salamon \cite{Mundet_thesis, Cieliebak_Gaio_Salamon_2000, Mundet_2003, Cieliebak_Gaio_Mundet_Salamon_2002} to define a \textit{Hamiltonian Gromov-Witten invariant}. In \cite{Cieliebak_Gaio_Salamon_2000} a version of vortex Hamiltonian Floer theory (closed string) was proposed, which was only realized very recently by the second named author \cite{Xu_VHF, Xu_Floer_2, Xu_Floer_3}. The open string theory counterpart of symplectic vortex equations were considered first in \cite{Frauenfelder_thesis} to obtain new non-displaceability results.  Recently Chris Woodward defined a quasimap Floer cohomology using an adiabatic limit version of the equation in \cite{Woodward_toric}, and have achieved great advance in understanding the non-displaceability of the toric fibers.

In this paper, we try to understand two aspects of the vortex Floer homology theory constructed by the second named author \cite{Xu_VHF}: (1) its filtration structure, and (2) justify the fact that the vortex Hamiltonian Floer theory is an appropriate closed string theory versus Woodward's quasi-map Floer theory.  Combining these two aspects of considerations, we are able to study the non-displaceability problems and quasi-morphism(state) theory in the vortex context.

As the first step, we define the spectral invariants in vortex context.  The result can be vaguely stated as follows.
\begin{thm}\label{t:spectral}
Let $(M, \omega, \mu)$ be an aspherical, equivariantly convex, Hamiltonian $G$-manifold with compact smooth symplectic quotient (see conditions {\bf (H1)}--{\bf (H3)} at the beginning of Section \ref{section2}). Then there exists a function
\begin{align*}
c: VHF(M, \mu) \times \wt{\rm Ham} ( \ov{M}) \to {\mb R}
\end{align*}
satisfying usual axioms of the spectral invariants except for the normalization and symplectic invariance (replaced by Hamiltonian invariance).
\end{thm}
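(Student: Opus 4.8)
The plan is to construct the spectral invariants by mirroring the Oh--Schwarz construction, but with the vortex Floer complex $VCF(M,H)$ in place of the ordinary Floer complex $CF(X,H)$. First I would need to recall from \cite{Xu_VHF} that for a generic (admissible) $S^1$-dependent Hamiltonian $H$ on $M$ (descending to a Hamiltonian on $\ov M$), the vortex Floer chain complex is freely generated over the appropriate Novikov-type ring by the equivalence classes of critical points of the equivariant action functional $\cA_H$, and that the boundary operator counts rigid solutions of the vortex Floer equation, whose energy is exactly the drop in $\cA_H$ along the trajectory. This gives a well-defined action filtration on $VCF(M,H)$: set $\cF^\lambda VCF(M,H)$ to be the subspace spanned by generators of action $\le \lambda$, noting that one must check the Novikov coefficients interact with the filtration correctly (i.e.\ the filtration is exhausting and the sublevel sets are finite-dimensional over the base field in each energy window). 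The boundary operator decreases action, so $\cF^\lambda$ is a subcomplex, and one defines $c(a,H)=\inf\{\lambda : a\in \mathrm{image}(H_*(\cF^\lambda VCF(M,H))\to VHF(M,H))\}$ for $a\in VHF(M,\mu)$.

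Next I would establish the standard list of axioms. \emph{Finiteness} ($c(a,H)>-\infty$ for $a\ne 0$) and \emph{the spectrality} ($c(a,H)$ lies in the action spectrum) follow from the discreteness/properness of the action spectrum, which in turn uses the asphericity hypothesis \textbf{(H1)} to kill the ambiguity coming from sphere bubbling and make $\cA_H$ single-valued up to the relevant period group. \emph{Continuity} in $H$ (Lipschitz in the Hofer-type norm of $H$) comes from the standard continuation-map argument: a homotopy from $H^0$ to $H^1$ induces a chain map on vortex Floer complexes whose action-shift is controlled by $\int_0^1 \max(H^1_t-H^0_t)\,dt$ and $\int_0^1 \min(\cdot)\,dt$, exactly as in Oh--Schwarz; composing with the reverse homotopy and the homotopy-of-homotopies argument shows the resulting map on homology is the canonical one, hence independent of the homotopy. \emph{Triangle inequality} $c(a*b, H\#K)\le c(a,H)+c(b,K)$ uses the pair-of-pants product on vortex Floer homology and its compatibility with the action filtration; this requires knowing the pair-of-pants product is defined in \cite{Xu_VHF} (or constructing it), and that the energy of a pair-of-pants vortex solution bounds the action defect in the usual way. \emph{Hamiltonian/homotopy invariance} — that $c(a,H)$ depends only on the class $\wt\psi_H\in\wt{\rm Ham}(\ov M)$ — follows by combining continuity with the invariance of the action spectrum under homotopies of Hamiltonian paths with fixed endpoints, using again asphericity to control the action of $\pi_1$.

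The main obstacle I anticipate is \emph{not} the formal bookkeeping but making precise that the filtration is well-behaved despite the Novikov coefficients and the non-compactness of $M$ (the equivariant convexity hypothesis \textbf{(H2)} and the compact-quotient hypothesis \textbf{(H3)} are there precisely to guarantee $C^0$-estimates and compactness of moduli spaces of vortices, so one must cite the relevant a priori estimates from \cite{Xu_VHF,Xu_Floer_2,Xu_Floer_3}). Concretely, the subtle point is that a vortex Floer generator carries a ``cap'' datum (a bounding equivariant disk, or an element of a covering of the loop space in $\ov M$), and the action functional is only well-defined relative to these caps; one must verify that the Novikov ring is constructed so that $\inf$ in the definition of $c$ is attained on a finitely generated piece and is finite. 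I expect this to reduce, as in \cite{Oh_2005,Ush08}, to the statement that $VHF(M,\mu)$ is finite-dimensional over the Novikov field together with the non-Archimedean (valuation) structure on that field, so that the ``inf over representing cycles'' is in fact a minimum. The remaining axioms then go through by transcribing the classical arguments; the reason the \emph{normalization} axiom must be dropped is that there is no canonical ``$c(1,0)=0$'' statement available — the vortex vacuum and the unit in $VHF$ need not have zero action in the presence of the moment-map shift — and \emph{symplectic invariance} is replaced by Hamiltonian invariance because the ambient object is $\wt{\rm Ham}(\ov M)$ rather than $\mathrm{Symp}(\ov M)$.
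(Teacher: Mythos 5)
Your outline reproduces the formal Oh--Schwarz/Usher skeleton, and that part is consistent with the paper: the paper indeed filters $VCF(M;H)$ by the action plus Novikov valuation, obtains finiteness and spectrality from Usher's abstract filtered Floer--Novikov results, and proves the triangle inequality with the pair-of-pants product (using Entov's small-curvature Hamiltonian connections, averaged over $G$). The genuine gap is at the one point where the vortex setting is \emph{not} a transcription of the classical one: you never show that the invariant descends to a function on $VHF(M,\mu)\times\wt{\rm Ham}(\ov{M})$, as the theorem asserts. The Floer data is a $G$-invariant lift $H\in{\mc H}_G(M)$ of $\ov{H}$, and there are many lifts of the same $\ov{H}$; your continuation estimate only controls the action shift by $\int_{S^1}\max_M(H^1_t-H^0_t)\,dt$ and $\int_{S^1}\min_M(\cdot)\,dt$ over the (noncompact) total space $M$. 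This neither vanishes for two different lifts of the same $\ov{H}$ nor can it be rewritten in terms of $\ov{H}^1-\ov{H}^0$ on $\ov{M}$, because vortex Floer trajectories and continuation solutions do \emph{not} lie in $\mu^{-1}(0)$ and hence genuinely feel the lift away from the zero level. Your ``homotopy invariance'' step only treats different normalized Hamiltonians on $\ov{M}$ generating the same path; it does not address lift-independence, so what you construct is a priori an invariant of $G$-invariant Hamiltonians on $M$, not the function on $\wt{\rm Ham}(\ov{M})$ claimed.

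The paper closes exactly this gap with an adiabatic limit argument, which it flags as the non-formal part of the proof. If two lifts agree on $\mu^{-1}(0)$, then $|\partial_s H_{s,t}(x)|\le C|\mu(x)|$, and one runs the continuation map with the scaling parameter $\lambda$ (volume form $\wt\lambda(s)^2\,ds\,dt$) taken very large: the energy identity of Proposition \ref{prop27} bounds $\|\mu(u)\|_{L^2}$ by a $\lambda$-independent constant, so the correction term $\int_\Theta \partial_s H_{s,t}(u)$ is $O(1/\lambda_0)$, and a supposed strict jump of the spectral number yields a contradiction. A variant of the same trick, using ``admissible'' lifts whose $\max_M/\min_M$ approximate $\max_{\ov{M}}/\min_{\ov{M}}$ of $\ov{H}^\beta-\ov{H}^\alpha$ to within $\epsilon$, gives the Lipschitz estimate in terms of the \emph{quotient} Hamiltonians, which is what makes $c$ well defined and continuous on ${\mc H}(\ov{M})$ and ultimately on $\wt{\rm Ham}(\ov{M})$ (via spectrality, measure-zero spectrum, and equivariant lifting of Hamiltonian diffeomorphisms of $\ov{M}$). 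Without this mechanism, or some a priori localization of trajectories near $\mu^{-1}(0)$ which is not available, your argument does not prove the theorem as stated.
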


The proof of the above theorem is not a straightforward adaption from the Hamiltonian Floer case: among others, one of the common difficulty in recovering usual properties of spectral invariants in ordinary Floer theory lies in that, \textit{a priori}, we have vortex Floer trajectories which do not descend to the symplectic quotient.  An adiabatic limit technique is hence developed to handle the problem.  Details of the proof was provided in Section \ref{s:Proof}.

The spectral invariant we define is reduced to the usual spectral invariant when $G=\{id\}$ and $M$ is compact aspherical.  It is not clear to the authors at the present stage whether the absence of normalization and symplectic invariance properties are only technical in general.  In particular, the normalization will follow from a vortex version of PSS isomorphism (in principle the PSS isomorphism can always be constructed using the virtual technique but it is not yet known if a construction without virtual technique is possible at the time of writing).  However, a key observation is that this weaker version of vortex spectral invariant defines a genuine partial symplectic quasi-morphism (quasi-state) with all formal properties, hence making sense of the notion of heaviness. This will be explained in Section \ref{s:QMQS}.

There are two remarks we should make at this point.  It is notable that we have constrained us to the weaker notion of partial quasi-morphisms (quasi-states) instead of the genuine quasi-morphisms (quasi-states), and only heaviness instead of superheaviness.  The reason is that, while the proof of properties of superheavy sets mostly follows formally from Entov and Polterovich's original approach, we do not have a good example where vortex Floer cohomology can be computed and confirmed to have a field idempotent element other than the Fano cases, in which the vortex Floer theory is identified with the usual Hamiltonian Floer theory.

However, one should note that our partial quasi-morphisms/states alludes a relation to Borman's reduction on quasi-morphism/states on $M$ to $\ov{M}$, see \cite{Borman}.  There is no known relation between $HF(M, H)$ and $VHF(M, H)$, so it would be interesting if one could still establish instead a reduction relation on the quasi-morphism/state level, which might also shed lights on the relation between idempotents of the two rings.

The second part of the paper investigates the relation between $VFH(M)$ and Woodward's quasi-map Floer theory with focus on examples of toric manifolds.  Our main result is:

\bthm\label{t:clopen}  For a $G$-manifold $M$ as in Theorem \ref{t:spectral}, $L\subset M$ a compact $G$-invariant Lagrangian submanifold contained in $\mu^{-1}(0)$ and a brane structure $b\in H^1(L,\Lambda_0)$, there is a closed-open map
\begin{align*}
\mbms{co}: VHF(M) \to QHF(L^b)
\end{align*}
which is a $\Lambda$-linear map sending the identity $\mbms{1}_{\mbms H}$ to the identity $\mbms{1}_{L^b}$.
\ethm

This seems an appropriate correspondence of the closed-open map in the usual Floer theory (see \cite{FOOO_Book}). The reason that vortex Hamiltonian Floer theory is connected with Woodward's quasimap Lagrangian Floer theory is, when consider the symplectic vortex equation on the half cylinder $[0, +\infty) \times S^1$ with boundary lying in a $G$-invariant Lagrangian submanifold, the boundary bubbles are exactly ``quasidisks'' (disk vortices with zero area form).

A concrete consequence of the existence of the closed-open map is

\bcor\label{c:weakArnold} Any generic Hamiltonian diffeomorphism on a toric manifold has at least one fixed point.
\ecor


\begin{rem}
We would like to emphasize that the proof of the above corollary does not involve virtual techniques, and the toric manifold in the corollary can have arbitrary negative curves.
This result was firstly proved as \cite[Theorem 8.3]{Cieliebak_Gaio_Mundet_Salamon_2002} under the assumption of {\bf (H1)}, {\bf (H3)}, while allowing the symplectic quotient to be an orbifold. Their argument is similar to that of Gromov \cite{Gromov_1985}, by degenerating the symplectic vortex equation on a sphere with Hamiltonian perturbations. In the orbifold setting, it is more convenient to use the notion of ``$G$-relative 1-periodic orbits'' in $\mu^{-1}(0)$ instead of using 1-periodic orbits in the symplectic quotient. One should note also that the expected (but unproved) PSS isomorphism for vortex Floer theory implies the above result.
\end{rem}

As another application of the closed-open map, following \cite{Biran_Cornea_09}, \cite{Entov_Polterovich_3} and \cite{FOOO_spectral}, we conclude that all critical fibers of Hori-Vafa superpotential on a toric manifold are heavy with respect to the partial quasi-morphism (quasi-state) defined by the identity in $VFH(\ov M)$ in Section \ref{s:QMQS}.

\bthm\label{t:heavy} All Hori-Vafa critical fibers in a toric manifolds are heavy.
\ethm

We also considered a vortex version of Entov-Polterovich's theorem, which says the product of two heavy sets is again heavy, see Theorem \ref{thm69}.  Note that this includes the case when one of the product factor is heavy in Entov-Polterovich's original sense, by setting $G_1=\{id\}$, which is a ``hybrid case".  In particular this implies heavy sets are stably non-displaceable.


\subsubsection*{Acknowledgements}

We would like to thank Kenji Fukaya, Yong-Geun Oh for stimulating discussions.  The second author would also like to thank Chris Woodward for many detailed explanations on quasi-map Floer homology.

\section{Vortex Floer Homology}\label{section2}

In this section we review the definition of vortex Floer homology from \cite{Xu_VHF} for the convenience of the reader. Besides fixing notations, we unify the definition of the continuation map and the pair-of-pants product under the framework of symplectic vortex equations over surfaces with cylindrical ends. The reader may refer to \cite{Xu_VHF} for more details of the definition of the vortex Floer chain complex and to \cite{Xu_Floer_3} for details of the ring structure.

We start by giving an incomplete list of our notations and conventions in this paper.

\begin{itemize}
\item $G$ is a compact connected Lie group.  $(M, \omega, \mu)$ is a Hamiltonian $G$-manifold of dimension $m$. For any $\xi \in {\mf g}$, the vector field ${\mc X}_\xi$ defined by
\begin{align*}
{\mc X}_\xi(x) = \left. \frac{d}{dt} \right|_{t= 0} e^{t \xi} x.
\end{align*}
The moment map $\mu: M \to {\mf g}^*$ is defined as:
\begin{align*}
d ( \mu(\xi) ) = \omega({\mc X}_\xi, \cdot) \in \Omega^1(M).
\end{align*}

\item The following will be our standing assumptions for $(M, \omega, \mu)$: (cf. the main assumptions of \cite{Cieliebak_Gaio_Mundet_Salamon_2002}).

{\bf (H1)} $(M, \omega)$ is aspherical. Namely, for any smooth map $f: S^2 \to M$, $\displaystyle \int_{S^2} f^*\omega = 0$.

{\bf (H2)} $\mu$ is a proper map, $0\in {\mf g}^*$ is a regular value of $\mu$ and the restriction of the $G$-action to $\mu^{-1}(0)$ is free.

{\bf (H3)} (cf. \cite[Definition 2.6]{Cieliebak_Gaio_Mundet_Salamon_2002}) If $M$ is noncompact, then there exists a pair $({\mf f}, {\mf J})$, where ${\mf f}: M \to [0, +\infty)$ is a $G$-invariant proper function, ${\mf J}$ is an almost complex structure on $M$, such that there exists a constant ${\mf c}_0 >0$ and
\begin{align*}
{\mf f}(x) \geq {\mf c}_0 \Longrightarrow \langle \nabla_\xi \nabla {\mf f}(x), \xi \rangle + \langle \nabla_{{\mf J}\xi} \nabla {\mf f}(x), {\mf J} \xi \rangle \geq 0,\ d{\mf f}(x) \cdot {\mf J} {\mc X}_{\mu(x)} \geq 0,\ \forall \xi \in T_x M.	
\end{align*}
Here $\nabla$ is the Levi-Civita connection of the metric defined by $\omega$ and ${\mf J}$.

\item $\ov{M}:= \mu^{-1}(0) / G$ is the symplectic quotient of dimension $\ov m$, which is always assumed to be smooth (see {\bf (H2)} below). $\ov{M}$ admits a canonically induced symplectic form $\ov{\omega} \in \Omega^2(\ov{M})$.

\item We consider $G$-invariant Hamiltonian functions $H = (H_t)_{t\in S^1}$ where for each $t\in S^1$, $H_t: M \to {\mb R}$. We also consider their descendants $\ov{H} = (\ov{H}_t)_{t\in S^1}$ by $G$-quotients, where for each $t\in S^1$, $\ov{H}_t: \ov{M} \to {\mb R}$. The Hamiltonian vector field of $H_t$ is defined by $\omega(Y_{H_t}, \cdot ) = d H_t$.

\item The set of $G$-invariant Hamiltonian functions and diffeomorphisms are denoted respectively as
\begin{align*}
{\mc H}_G(M)\text{ and }{\rm Ham}_G(M),
\end{align*}
and similarly for the set of non-equivariant Hamiltonian functions and diffeomorphism:
\begin{align*}
{\mc H}(\ov{M})\text{ and } {\rm Ham}(\ov M),
\end{align*}
while the universal cover of the Hamiltonian diffeomorphism groups are denoted as
\begin{align*}
\wt{\rm Ham}(\ov M)\text{ and }\wt{\rm Ham}_G(M).
\end{align*}

\item From time to time we impose the following constraints on $H = (H_t)_{t\in S^1}\in {\mc H}_G(M)$:

\vspace{0.3cm}

{\bf (H4)} The Hamiltonian diffeomorphism on $\ov{M}$ induced from $\ov{H}$ is nondegenerate.

\vspace{0.3cm}

\noindent We then denote different classes of Hamiltonian functions as:
\begin{align*}
{\mc H}^*(\ov{M}) := \Big\{ (\ov{H}_t) \in {\mc H}(M)\ |(\ov{H}_t)\text{ satisfies {\bf(H4)}} \Big\},
\end{align*}
\begin{align*}
{\mc H}(\ov{M})_0 := \Big\{ (\ov{H}_t) \in {\mc H}(M)\ |\  \forall t\in S^1,\ \int_{\ov{M}} \ov{H}_t (\ov\omega)^m = 0. \Big\},
\end{align*}
\begin{align*}
 {\mc H}^*(\ov{M})_0 & := {\mc H}^*(\ov{M}) \cap {\mc H}(\ov{M})_0.
\end{align*}
\begin{align*}
{\mc H}_G^*(M):= \Big\{ H \in {\mc H}_G(M)\ |\ \ov{H} \in {\mc H}^*(\ov{M}) \Big\}.
\end{align*}
\begin{align*}
{\mc H}_G^*(M)_0:= \Big\{ H \in {\mc H}_G(M)\ |\ \ov{H} \in {\mc H}^*(\ov{M})_0 \Big\}.
\end{align*}
\end{itemize}

\subsection{Equivariant topology}


We briefly recall notions which will appear in the paper from basic equivariant theory.

\subsubsection*{Equivariant spherical classes}

For any manifold $N$, we denote by $S_2(N)$ to be the image of the Hurwitz map $\pi_2(N) \to H_2(N; {\mb Z})$.  Let $M_G=EG\times_G M$, we define $S_2^G(M):= S_2(M_G)$. We can view $S_2(M)$ as a subset of $S_2^G(M)$. Geometrically, it is convenient to represent a generator of $S_2^G(M)$ by the following object: a smooth principal $G$-bundle $P \to S^2$ and a smooth section $\phi: S^2\to P\times_G M$. We denote the class of the pair $(P, \phi)$ to be $[P, \phi]\in S_2^G(M)$.

\subsubsection*{Equivariant symplectic form and equivariant Chern numbers}

The equivariant cohomology of $M$ can be computed using the equivariant de Rham complex $( \Omega^*(M)^G, d^G )$. In $\Omega^2(M)^G$, there is a distinguished closed form $\omega^G = \omega - \mu$, called the equivariant symplectic form, which represents an equivariant cohomology class.

If $[P, u] \in S_2^G(M)$, then the pairing $\langle [\omega^G], [P, u] \rangle \in {\mb R}$ can be computed in the following way. Choose any smooth connection $A$ on $P$. Then there exists an associated closed 2-form $\omega_A $ on $P\times_G M$, called the {\bf minimal coupling form}. If we trivialize $P$ locally over a subset $U\subset S^2$, such that $A= d + \alpha$, $\alpha \in \Omega^1(U, {\mf g})$ with respect to this trivialization, then $\omega_A$ can be written as $\omega_A= \pi^* \omega - d( \mu\cdot \alpha)\in \Omega^2( U \times M)$. We have
\begin{align*}
\langle [\omega^G], [P, u] \rangle = \int_{S^2} u^* \omega_A.
\end{align*}
On the other hand, any $G$-invariant almost complex structure $J$ on $M$ makes $TM$ an equivariant complex vector bundle. So we have the equivariant first Chern class $c_1^G:= c_1^G(TM) \in H^2_G(M; {\mb Z})$. We define
\begin{align*}
N_2^G(M) = {\rm Ker} [\omega^G] \cap {\rm Ker}  c_1^G \subset S_2^G(M),\ \Gamma:= S_2^G(M)/ N_2^G(M).
\end{align*}



\subsection{The space of loops and the action functional}\label{s:actionFunctional}

Let $\wt{\mc L}$ be the space of smooth contractible parametrized loops in $M \times {\mf g}$ and a general element of $\wt{\mc L}$ is denoted by
\begin{align*}
\wt{x}:= (x, f): S^1 \to M \times {\mf g}.
\end{align*}

Let $\wt{\mf L}$ be a covering space of $\wt{\mc L}$, consisting of triples ${\mf x}:= ( x, f, [w] )$ where $\wt{x}= (x, f) \in \wt{\mc L}$ and $[w]$ is an equivalence class of smooth extensions of $x$ to the disk ${\mb D}$, where two extensions $w$ and $w'$ are equivalent if the class of $w_1 \# (-w_2)$ is zero in $S_2(M)$.

Denote by $LG:= C^\infty( S^1, G)$ the smooth free loop group of $G$. Take any point $x_0 \in M$, we have the homomorphism $l(x_0): \pi_1(G) \to \pi_1(M, x_0)$ induced from the map sending a loop $t\mapsto \gamma(t) \in G$ to a loop $t \mapsto \gamma(t) x_0 \in M$. Its kernel ${\rm Ker} l(x_0)$ is independent of the choice of $x_0$. We define
\begin{align*}
L_0 G\subset L_M  G:= \big\{ \gamma: S^1 \to G\ |\ [\gamma] \in {\rm Ker} l(x_0) \subset \pi_1(G) \big\}\subset LG,
\end{align*}
where $L_0 G$ is the subgroup of contractible loops.

The right action of $L_M G$ on $\wt{\mc L}$ can be explicitly written as
\begin{align*}
h^*(x, f)(t) =  \big( h(t)^{-1} x(t), {\rm Ad}_{h(t)}^{-1}  (f(t))  + h(t)^{-1} \partial_t h(t) \big),\text{ for any }h(t)\in L_M G.
\end{align*}

This action doesn't lift to $\wt{\mf L}$ naturally; only its restriction to $L_0 G$ does. To define this lift, one assigns a canonical choice of lift of capping for each element in $\wt{\mf L}$ as follows.

For a contractible loop $h: S^1 \to G$, extend $h$ arbitrarily to $h: {\mb D} \to G$. The homotopy class of extensions is unique because $\pi_2(G)= 0$ for any connected compact Lie group (\cite{Cartan_1936}). Then the class of $( h^{-1} x, h^* f, [ h^{-1} w] )$ in $\wt{\mf L}$ is independent of the extension.

It is easy to see that the covering map $\wt{\mf L}\to \wt{\mc L}$ is equivariant with respect to the inclusion $L_0 G \to L_M G$. Hence it induces a covering
\begin{align*}
\wt{\mf L} / L_0 G \to \wt{\mc L} / L_M G.
\end{align*}
An element of $\wt{\mf L}/ L_0 G$ represented by ${\mf x} \in \wt{\mf L}$ is denoted by $[{\mf x}]$.


One may now define an action by ``connected sum" of $S_2^G(M)$ on $\wt{\mf L} / L_0 G$. For the precise definition, we refer the readers to \cite[Section 2.3]{Xu_VHF}.  One checks that

\begin{lemma}[\cite{Xu_VHF}, Lemma 2.4]
$S^G_2(M)$ is the group of deck transformations of the covering $\wt{\mf L}/ L_0 G  \to \wt{\mc L}/ L_M G$.
\end{lemma}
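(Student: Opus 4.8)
The plan is to verify the two standard facts that characterize a group of deck transformations: first, that $S_2^G(M)$ acts on the total space $\wt{\mf L}/L_0G$ by deck transformations over $\wt{\mc L}/L_MG$, i.e.\ that the $\#$-action commutes with the covering projection; and second, that this action is \emph{free} and \emph{transitive on the fibers}. The first point is essentially built into the definition of the $\#$-action: given $A=[P,u]\in S_2^G(M)$ and ${\mf x}=(x,f,[w])$, the underlying loop of $A\#[{\mf x}]$ is $(x_h,f_h)=h^*(x,f)$ for the loop $h\colon S^1\to G$ extracted from the asymptotics of $u$ near the south pole, and $[h]\in{\rm Ker}\,l(x_0)\subset\pi_1(G)$, so $h\in L_MG$. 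Hence $A\#[{\mf x}]$ and $[{\mf x}]$ have the same image in $\wt{\mc L}/L_MG$, which shows the $S_2^G(M)$-action covers the identity. That $A\mapsto(A\#-)$ is a genuine group action (additive in $A$) follows by concatenating bundles and trivializations; this is the content of \cite[Section 2.3]{Xu_VHF} and I would simply cite it.

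The substantive work is transitivity and freeness on fibers. For \textbf{transitivity}: suppose $[{\mf x}_0]=(x_0,f_0,[w_0])$ and $[{\mf x}_1]=(x_1,f_1,[w_1])$ lie over the same point of $\wt{\mc L}/L_MG$. Then there is $h\in L_MG$ with $h^*(x_0,f_0)=(x_1,f_1)$. Choose any extension $w_0'$ of $h^{-1}x_1$; the pair consisting of the trivialized bundle over the disk with transition loop $h$ glued to $S^2$, together with the capping data, defines a class $A\in S_2^G(M)$ whose $\#$-action sends $[{\mf x}_0]$ to an element with loop data $(x_1,f_1)$. The remaining capping discrepancy is an element of $S_2(M)\subset S_2^G(M)$, which is itself in the image of the $\#$-action (these are the ``spherical'' classes acting by ordinary connected sum on the capping). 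Composing, some $A'\in S_2^G(M)$ carries $[{\mf x}_0]$ to $[{\mf x}_1]$. For \textbf{freeness}: if $A\#[{\mf x}]=[{\mf x}]$ in $\wt{\mf L}/L_0G$, then comparing loop components forces the associated loop $h$ to lie in $L_0G$ (it must differ from the identity loop by an element of $L_0G$ acting trivially on the base point class), and comparing capping components forces the glued sphere to be trivial in $S_2^G(M)$; hence $A=0$. Here one uses that the equivalence defining $\wt{\mf L}$ (namely $w_1\#(-w_2)$ trivial in $S_2(M)$) is exactly the one that makes the residual sphere class vanish.

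The main obstacle I anticipate is the careful bookkeeping of base points and capping classes when combining the ``group-theoretic'' part (loops in $G$, elements of ${\rm Ker}\,l(x_0)\subset\pi_1(G)$) with the ``topological'' part (spheres in $M$, elements of $S_2(M)$): one must check that the short exact-sequence-type relation between $S_2^G(M)$, $S_2(M)$, and ${\rm Ker}\,l(x_0)$ — which is precisely what governs deck transformations of the tower $\wt{\mf L}/L_0G\to\wt{\mc L}/L_MG$ factoring through $\wt{\mf L}\to\wt{\mc L}$ — is respected by the $\#$-action, with no sign or orientation ambiguity in the connected sum $u\,\wt\#\,w$. Once the consistency of these identifications is pinned down (again, this is essentially in \cite{Xu_VHF}), transitivity and freeness are formal. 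I would therefore organize the proof as: (i) recall the morphism $S_2^G(M)\to{\rm Ker}\,l(x_0)$ and the inclusion $S_2(M)\hookrightarrow S_2^G(M)$, observing $S_2(M)$ is its kernel; (ii) deduce the action covers the identity; (iii) prove fiber-transitivity by the gluing construction above; (iv) prove freeness by comparing loop and capping components. The only genuinely delicate point is (i)–(iii), and there I lean on the explicit description of the $\#$-action recalled from \cite{Xu_VHF}.
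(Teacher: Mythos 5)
Your outline follows essentially the route the paper indicates (it defers the proof to \cite{Xu_VHF} and treats the lemma as a direct check from the explicit $\#$-action): record the morphism $S_2^G(M)\to {\rm Ker}\, l(x_0)$ and the spherical part $S_2(M)\subset S_2^G(M)$, observe that the action covers the identity of $\wt{\mc L}/L_M G$, and verify freeness and fiberwise transitivity. Steps (i)--(iii) of your plan, including the transitivity argument (realize a prescribed class of ${\rm Ker}\, l(x_0)$ by a clutching construction, then correct the capping by an element of $S_2(M)$), are sound and match the intended argument.

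The one step I would not accept as written is the freeness argument. You claim that ``comparing loop components forces the associated loop $h$ to lie in $L_0G$,'' but the equality $A\#[{\mf x}]=[{\mf x}]$ in $\wt{\mf L}/L_0G$ only produces some $g\in L_0G$ with $g^{*}(x_h,f_h,[u\,\wt\#\,w])=(x,f,[w])$, and the comparison of the $M$-components says only that $g^{-1}h$ (in the appropriate convention) stabilizes the loop $x$ pointwise. At loops passing through points with nontrivial isotropy --- e.g.\ a constant loop at a $G$-fixed point, which occurs in the main examples such as $M={\mb C}^m$ --- this gives no constraint on $h$ at all, so the step as stated fails there. Two standard repairs: either use the ${\mf g}$-component as well (the gauge stabilizer of $(x,f)$ in $L_MG$ consists of loops conjugate along a path to a constant, hence contractible, so $h\in L_0G$ still follows), or simply check freeness on a single fiber over a loop contained in the free locus (e.g.\ in $\mu^{-1}(0)$, nonempty by {\bf (H2)}). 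In the latter case --- and in any case, to upgrade ``free and transitive on fibers'' to ``is the full group of deck transformations'' --- you must invoke connectedness of $\wt{\mf L}/L_0G$ together with unique lifting, so that a deck transformation agreeing with some $A\#(\cdot)$ at one point agrees with it everywhere; this connectedness hypothesis is used implicitly but never stated in your write-up and should be made explicit.
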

Since $N_2^G(M) \subset S_2^G(M)$, by this lemma, we form ${\mf L}:= ( \wt{\mf L}/ L_0 G )/ N_2^G(M)$, which is again a covering of ${\mc L}:= \wt{\mc L}/ L_M G$ and $\Gamma$ is the group of deck transformations. We use $\lb {\mf x} \rb$ to denote an element in ${\mf L}$ represented by ${\mf x}\in \wt{\mf L}$.

\subsubsection*{The action functional}

Take $H = (H_t) \in {\mc H}_G(M)$. We define a 1-form $\wt{\mc B}_H$ on $\wt{\mc L}$ by
\begin{align*}
 T_{(x, f)} \wt{\mc L}\ni (\xi, h)\mapsto       \int_{S^1}\Big( \omega ( \dot{x}(t)+ {\mc X}_f-  Y_{H_t},\xi(t) ) + \langle \mu(x(t)), h(t)\rangle \Big) dt \in {\mb R}.
\end{align*}
Its pull-back to $\wt{\mf L}$ is the differential of the following action functional on $\wt{\mf L}$:
\begin{align*}
\wt{\mc A}_H(x, f, [w]):= -\int_B w^* \omega+ \int_{S^1}  \left( \mu(x(t))\cdot f(t) - H_t(x(t))\right) dt.
\end{align*}
The zero set of $\wt{\mc B}_H$ is
\begin{align*}
{\rm Zero} \wt{\mc B}_H:= \Big\{ (x, f) \in \wt{\mc L}\ |\ \mu(x(t))\equiv 0, \ \dot{x}(t) + {\mc X}_{f(t)} (x(t))- Y_{H_t}(x(t))=0\Big\}.
\end{align*}
The critical point set of $\wt{\mc A}_H$ its preimage under the covering $\wt{\mf L} \to \wt{\mc L}$.

By \cite[Lemma 2.5]{Xu_VHF}, $\wt{\mc A}_H$ is $L_0G$-invariant and $\wt{\mc B}_H$ is $L_M G$-invariant. So we have the induced action functional $\wt{\mc A}_H: \wt{\mf L}/ L_0 G \to {\mb R}$. Moreover, by \cite[Lemma 2.6]{Xu_VHF}, for any $[{\mf x}] \in \wt{\mf L} / L_0 G$ and $B \in S_2^G(M)$, we have
\begin{align*}
\wt{\mc A}_H ( B\# [\mf x] ) = \wt{\mc A}_H ( [\mf x] ) -  \langle [ \omega^G ], B \rangle.
\end{align*}
Therefore  $\wt{\mc A}_H$ descends to a well-defined function ${\mc A}_H: {\mf L} \to {\mb R}$. The vortex Floer theory is formally the Morse theory of the pair $\left( {\mf L}, {\mc A}_H \right)$.

\subsection{Gradient flow, symplectic vortex equation and the moduli spaces}

Let ${\mc P}$ be the set of pairs $(H, J)$ where $H \in {\mc H}_G^*(M)$ and $J = (J_t)$ is an $S^1$-family of almost complex structures $J$ on $M$, such that
\begin{align*}
{\mf f}(x) \geq {\mf c}_0 \Longrightarrow J_t(x) = {\mf J}(x)
\end{align*}
where $({\mf f}, {\mf J})$ is the convex structure in {\bf (H3)}. On the other hand, we fix a biinvariant metric on ${\mf g}$, which induces an identification ${\mf g}\simeq {\mf g}^*$. Let $\lambda>0$ be a constant. Formally, the equation for the negative gradient flow of ${\mc A}_H$ is the following equation for pairs $(u, \Psi): \Theta \to M \times {\mf g}$
\begin{align}\label{equation21}
\frac{\partial u }{\partial s}  + J_t \Big( \frac{\partial u}{\partial t} + {\mc X}_\Psi(u)- Y_{H_t}(u)  \Big) = 0,\ \frac{\partial \Psi}{\partial s} + \lambda^2 \mu(u) = 0.
\end{align}
This equation is invariant under the $LG$-action given on pairs $(u, \Psi)$ given by
\begin{align*}
g^* (u, \Psi)(s, t) = \left( g(t)^{-1} u(s, t), g(t)^{-1} \Psi(s, t) g(t)  + \partial_t \log g(t) \right).
\end{align*}
The energy of a solution ${\mf w}=(u, \Psi)$ is defined by
\begin{align}\label{equation22}
\mc{YMH}_\lambda ({\mf w}):= \Big\| \frac{\partial u}{\partial s} \Big\|_{L^2(\Theta)}^2 + \lambda^{-2} \Big\| \frac{\partial \Psi}{\partial s } \Big\|_{L^2(\Theta)}^2
\end{align}

\subsubsection*{Moduli spaces of flow lines}

It is more convenient to consider the symplectic vortex equation over $\Theta$ in general gauge, i.e., the following equation on triples $(u, \Phi, \Psi): \Theta \to M \times {\mf g} \times {\mf g}$
\begin{align}\label{equation23}
\displaystyle \frac{\partial u}{\partial s} + {\mc X}_{\Phi}(u) + J \Big( \frac{\partial u}{\partial t} + {\mc X}_{\Psi}(u)-Y_{H_t(u)} \Big) = 0,\ \frac{\partial \Psi}{\partial s} - \frac{\partial \Psi}{\partial t} + [\Phi, \Psi] + \lambda^2 \mu(u) = 0.
\end{align}
We showed in \cite[Section 3]{Xu_VHF} that, every finite energy solution \eqref{equation23} whose image in $M$ has compact closure is gauge equivalent to a solution ${\bm w} = (u, \Phi, \Psi)$ (which in fact can be taken to be in \textit{temporal gauge}, i.e., $\Phi \equiv 0$) and there exists a pair $\wt{x}_\pm = (x_\pm, \xi_\pm) \in {\rm Zero}\wt{\mc B}_H$ such that
\begin{align}\label{equation24}
\lim_{s \to \pm \infty} \Phi(s, \cdot) = 0,\ \lim_{s\to \pm\infty} (u(s, \cdot), \Psi(s, \cdot)) = \wt{x}_\pm.
\end{align}
We simply write $\displaystyle \lim_{z \to \pm\infty} {\bm w} = \wt{x}_\pm$. We also have the energy identity (cf. Theorem \ref{thm25})
\begin{align}\label{equation25}
\mc{YMH}_\lambda({\mf w}) = {\mc A}_H( {\mf x}_-) - {\mc A}_H({\mf x}_+).
\end{align}

For ${\mf x}_\pm \in {\rm Crit} \wt{\mc A}_H$ which project to $\wt{x}_\pm \in {\rm Zero} \wt{\mc B}_H$ (forgetting the cappings), we can consider solutions which ``connect'' them, which form a moduli space
\begin{align*}
{\mc M}_\lambda ( {\mf x}_-, {\mf x}_+; H, J) = \big\{ {\bm w} =(u, \Phi, \Psi)\ {\rm solves\ }\eqref{equation23}\ |\ \lim_{s \to \pm\infty} {\bm w} = \wt{x}_\pm,\ {\mf x}_- \# {\bm w} = {\mf x}_+\big\}/ {\mc G}_0.
\end{align*}
Here ${\mc G}_0$ is the space of smooth gauge transformations on $\Theta$ which are asymptotic to the identity at the circles at infinity. If ${\mf x}_\pm' = h_\pm {\mf x}_\pm$ for $h_\pm \in L_0 G$, then one find gauge transformation $h: \Theta \to G$ which is asymptotic to $h_\pm$ at the circles at infinity, and $h$ induces an identification between ${\mc M}_\lambda({\mf x}_-, {\mf x}_+; H, J)$ and ${\mc M}_\lambda({\mf x}_-', {\mf x}_+'; H, J)$. The moduli space in common with respect to this type of identifications is denoted by ${\mc M}_\lambda( [{\mf x}_-], [{\mf x}_+]; H, J)$, where $[{\mf x}_\pm] \in {\rm Crit} \wt{\mc A}_H/ L_0 G \subset \wt{\mf L}/L_0 G$ is the orbit of ${\mf x}_\pm$. Lastly, if $A \in \Gamma$, then there is an obvious identification between ${\mc M}_\lambda( [{\mf x}_-], [{\mf x}_+]; H, J)$ and ${\mc M}_\lambda( A \#[{\mf x}_-], A\# [{\mf x}_+]; H, J )$. Then the moduli space in common with respect to this ``capping shifting'' identification is denoted by
\begin{align*}
{\mc M}_\lambda ( \lb {\mf x}_-\rb, \lb {\mf x}_+ \rb; H, J ).
\end{align*}

\subsubsection*{Transversality}

In the case that $M$ is symplectic aspherical, we can perturb the $S^1$-family of almost complex structures $J$ to achieve transversality of the moduli space. We give a short account of the method to achieve transverality and refer the reader to \cite[Section 6]{Xu_VHF} for more details. For our convenience we introduce the following concept which was not present in \cite{Xu_VHF}.

\begin{defn}\label{defn22}
Fix $\ov{H} \in {\mc H}^*(\ov{M})$ and $\lambda>0$. A pair $(H, J)\in {\mc P}$ is called a regular pair (relative to $\ov{H}$ and $\lambda$), if $H$ is a $G$-invariant lift of $\ov{H}$ and the linearization of \eqref{equation23} along each bounded solution is surjective. The set of regular pairs (relative to $\ov{H}$ and $\lambda$) is denoted by ${\mc P}_{\ov{H}, \lambda}^{reg}$ and the union of ${\mc P}^{reg}_{\ov{H}, \lambda}$ for all $\ov{H}\in {\mc H}^*(\ov{M})$ is denoted by ${\mc P}^{reg}_\lambda$.
\end{defn}

In \cite[Section 6]{Xu_VHF}, the second named author used the following scheme to find $\lambda$-regular pairs. Among all smooth $S^1$-families of almost complex structures and all $G$-invariant lifts of $\ov{H}$, the second named author defined the notion of admissible families of almost complex structures (\cite[Definition 6.2]{Xu_VHF}) and admissible lifts (\cite[Definition 6.5]{Xu_VHF}) (both notions are defined relative to $\ov{H}$). Let $\wt{\mc J}_{\ov{H}}$ be the space of all smooth admissible families (relative to $\ov{H}$). Moreover, the following was proved.
\begin{thm}\label{thm23}\cite[Lemma 6.6, Theorem 6.8]{Xu_VHF} Let $\ov{H} \in {\mc H}^*(\ov{M})$ and $\lambda>0$.
\begin{enumerate}
\item There exists an admissible lift $H \in {\mc H}_G^*(M)$.

\item For each such lift $H$ and each $\lambda>0$, there exists a subset $\wt{\mc J}_{H, \lambda}^{reg} \subset \wt{\mc J}_{\ov{H}}$ of second category such that for any $J \in \wt{\mc J}_{H, \lambda}^{reg}$, $(H, J) \in {\mc P}_{\ov{H}, \lambda}^{reg}$, $(H, J) \in {\mc P}_{\ov{H}, \lambda}^{reg}$.

\item For any $(H, J) \in {\mc P}_{\ov{H}, \lambda}^{reg}$, any pair $\lb {\mf x}_\pm \rb \in {\rm Crit} {\mc A}_H$, ${\mc M}_\lambda ( \lb {\mf x}_- \rb, \lb {\mf x}_+ \rb; H, J)$ is a smooth manifold of dimension ${\sf cz} \lb {\mf x}_- \rb - {\sf cz} \lb {\mf x}_+ \rb$.
\end{enumerate}
\end{thm}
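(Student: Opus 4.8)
The plan is to prove Theorem~\ref{thm23} by the usual universal-moduli-space plus Sard--Smale scheme, carried out equivariantly in a gauge-fixed model, using that {\bf (H1)} rules out holomorphic sphere bubbling so that genericity of $J$ alone suffices. \emph{Part (1): the admissible lift.} First I would build $H$ by hand. Since $0$ is a regular value of $\mu$ and $G$ acts freely on $\mu^{-1}(0)$, the metric from {\bf (H3)} identifies a $G$-invariant neighborhood of $\mu^{-1}(0)$ with $\mu^{-1}(0)\times{\mf g}^*$; I take a $G$-invariant function supported there which descends to $\ov H$ on $\ov M=\mu^{-1}(0)/G$, and then extend it over the rest of $M$ so that it is compatible with $({\mf f},{\mf J})$ near infinity, which keeps every bounded $1$-periodic solution inside the free locus. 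Then ${\rm Zero}\,\wt{\mc B}_H$ is exactly the set of lifts to $\mu^{-1}(0)$ of the $1$-periodic orbits of $\ov H$, and {\bf (H4)} guarantees that the induced critical points of $\mc A_H$ on ${\mf L}$ are nondegenerate. Checking that such an $H$ meets the explicit definition of an admissible lift in \cite{Xu_VHF} is then bookkeeping, which gives part (1).

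\emph{Part (2): transversality.} Fix such an $H$, a constant $\lambda>0$, and a pair $\lb{\mf x}_\pm\rb$. I would form the universal moduli space of pairs $({\bm w},J)$ with $J$ ranging over $\wt{\mc J}_{\ov H}$, given a Banach manifold structure via Floer's $C^\eps$ norms, and ${\bm w}$ a finite-energy solution of \eqref{equation23} asymptotic to $\wt x_\pm$, modulo ${\mc G}_0$. After fixing a gauge (temporal gauge, as in \cite{Xu_VHF}), the linearization of \eqref{equation23} in the ${\bm w}$-direction is, schematically, a gauged Cauchy--Riemann operator coupled to the linearization of the $\mu$-equation; elliptic estimates together with exponential decay at the ends, which follows from nondegeneracy of the asymptotic orbits, make it Fredholm. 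To see the universal space is a smooth Banach manifold I would prove the full linearization, including $\delta J$, is onto: an element $\eta$ of the cokernel is smooth by elliptic regularity and, by unique continuation, nonzero on an open set, so at a regular point $(s_0,t_0)$ --- one where $\partial_s u\neq 0$ and $u(s_0,t_0)$ lies in the region where $J$ may still be perturbed within $\wt{\mc J}_{\ov H}$ --- I can choose $\delta J$ supported near $u(s_0,t_0)$ that pairs nontrivially with $\eta$, a contradiction. Sard--Smale applied to the Fredholm projection to $\wt{\mc J}_{\ov H}$ then gives a residual set of regular values, and a Taubes-style nested-$C^\ell$ argument upgrades this to a second-category subset $\wt{\mc J}_{H,\lambda}^{reg}$ of smooth $J$; for any such $J$ every bounded solution is regular, i.e. $(H,J)\in{\mc P}_{\ov H,\lambda}^{reg}$, which is part (2).

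\emph{Part (3) and the main difficulty.} For $(H,J)\in{\mc P}_{\ov H,\lambda}^{reg}$, the implicit function theorem in Banach spaces makes ${\mc M}_\lambda(\lb{\mf x}_-\rb,\lb{\mf x}_+\rb;H,J)$ a smooth manifold whose dimension equals the Fredholm index of the linearization at any of its points, and that index I would evaluate by a spectral-flow computation: splitting $\Theta$ into its two cylindrical ends, the $u$-part of the operator contributes the difference of Conley--Zehnder indices of the linearized Hamiltonian flows, the $\Psi$-part contributes correction terms from the linearized moment-map equation, and after quotienting by ${\mc G}_0$ these assemble into ${\sf cz}\lb{\mf x}_-\rb-{\sf cz}\lb{\mf x}_+\rb$, giving part (3). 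I expect the surjectivity step of part (2) to be the main obstacle, precisely because $J$ cannot be varied everywhere --- it is pinned near infinity by {\bf (H3)} and constrained near $\mu^{-1}(0)$ --- so one needs a somewhere-injectivity-type statement for vortices ensuring that every nonconstant bounded solution meets the region of free perturbation; and because the extra ${\mf g}$-valued field $\Psi$, whose $s$-derivative is slaved to $\mu(u)$, contributes a ``vertical'' component of the cokernel that the admissible variations $\delta J$ must still be able to kill. The index computation is the second delicate point, since it requires understanding how the moment-map equation and the gauge fixing interact with the Conley--Zehnder formula.
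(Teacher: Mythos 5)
The paper does not prove this statement at all: Theorem \ref{thm23} is imported verbatim (up to minor rewording, as the remark following it explains) from \cite[Lemma 6.6, Theorem 6.8]{Xu_VHF}, where the actual work is done. Measured against that, your proposal is a correct description of the general scheme (universal moduli space, Sard--Smale, implicit function theorem, index count), but it has genuine gaps at exactly the two places you yourself flag as ``the main obstacle,'' and those are the places where the cited proof has real content. First, the surjectivity of the universal linearization is not established: you assert that at a regular point $(s_0,t_0)$ one can find $\delta J$ supported near $u(s_0,t_0)$ ``in the region where $J$ may still be perturbed within $\wt{\mc J}_{\ov H}$,'' but you never show such a point exists, nor do you say what the class $\wt{\mc J}_{\ov H}$ actually is. In \cite{Xu_VHF} this is precisely where the notions of \emph{admissible families} of almost complex structures (\cite[Definition 6.2]{Xu_VHF}) and \emph{admissible lifts} (\cite[Definition 6.5]{Xu_VHF}) enter: admissibility is a concrete condition on the infinitesimal behavior of $H$ and $J$ along $\mu^{-1}(0)$, designed so that the constrained perturbations (pinned to ${\mf J}$ near infinity by {\bf (H3)} and structured near $\mu^{-1}(0)$) still suffice to kill every cokernel element, including the ${\mf g}$-valued (``vertical'') part coming from the $\Psi$-equation. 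Your part (1) has the same problem in miniature: you construct some $G$-invariant lift and declare that checking admissibility ``is then bookkeeping,'' but admissibility is exactly the nontrivial property your part (2) needs, so this cannot be waved through without engaging the definition.

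Second, the index identity in part (3) is only gestured at. Saying that the $u$-part contributes a difference of Conley--Zehnder indices while the $\Psi$-part contributes ``correction terms'' that ``assemble into'' ${\sf cz}\lb{\mf x}_-\rb - {\sf cz}\lb{\mf x}_+\rb$ after quotienting by ${\mc G}_0$ is a statement of the desired conclusion, not an argument: one must work with the augmented (gauge-fixed) operator on the triple $(u,\Phi,\Psi)$, define the index ${\sf cz}$ of an element of ${\rm Crit}\,{\mc A}_H$ through the asymptotic operators of this augmented linearization, and compute the spectral flow, which is the content of the Fredholm theory in \cite{Xu_VHF}. So while your outline is aimed in the right direction and correctly identifies where the difficulties lie, as it stands it defers the two essential steps rather than supplying them; to turn it into a proof you would need to reproduce (or cite) the admissibility framework and the equivariant index computation of \cite{Xu_VHF}, which is what this paper itself does by quoting the result.
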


\begin{rem}
The statement of the above theorem differs slightly from the original ones in \cite{Xu_VHF}. This is because when we consider vortex Floer theory for product manifolds in Subsection \ref{subsection63}, we will take regular pairs $(H, J)$ of product type, which may not fall into the class of ``admissible'' ones in \cite{Xu_VHF}. Nevertheless, only the property of regular pairs are relevant to our construction.
\end{rem}

On the other hand, since the target $M$ is aspherical (Hypothesis {\bf (H1)}), one can compatify the moduli spaces only by adding broken flow lines. Meanwhile, one can orient the moduli spaces consistently (in the sense of \cite{Floer_Hofer_Orientation}) as the case of ordinary Morse or Hamiltonian Floer theory.

\subsection{The vortex Floer homology}

Let $R$ be either ${\mb Z}$ or ${\mb Q}$. The universal downward Novikov ring is
\begin{align}\label{equation26}
\Lambda:= \Lambda^\downarrow:= \left\{ \left. \sum_{i=1}^\infty a_i q^{w_i}\ \right| \ a_i \in R,\ w_i \in {\mb R}\ \lim_{i \to \infty} w_i = -\infty.\right\}.
\end{align}
The free $\Lambda$-module generated by ${\rm Crit}{\mc A}_H\subset {\mf L}$ is denoted by $\widehat{VCF}(M; H)$. We define an equivalence relation on $\widehat{VCF}(M; H)$ by
\begin{align*}
\lb {\mf x} \rb \sim q^c \lb {\mf x}' \rb \Longleftrightarrow \Gamma \lb {\mf x} \rb = \Gamma \lb {\mf x}' \rb,\ {\mc A}_H\lb {\mf x}' \rb + c = {\mc A}_H \lb  {\mf x} \rb.
\end{align*}
Denote by $VCF(M; H)$ the quotient $\Lambda$-module by the above equivalence relation.

In the notations for moduli spaces we temporarily omit the dependence on $J$, $H$ and $\lambda$. Let $\wh{\mc M}_\lambda ( \lb{\mf x}_- \rb, \lb {\mf x}_+ \rb)$ be the quotient of ${\mc M}_\lambda ( \lb{\mf x}_- \rb, \lb{\mf x}_+ \rb)$ by the ${\mb R}$-action of translation. Then for $(H, J) \in {\mc P}^{reg}_\lambda$, ${\rm dim} \wh{\mc M}_\lambda ( \lb{\mf x}_- \rb, \lb{\mf x}_+ \rb) = {\sf cz} \lb{\mf x}_-\rb - {\sf cz}\lb {\mf x}_+ \rb -1$ and one can define the function $m_{J, \lambda}^G:  {\rm Crit} {\mc A}_H \times {\rm Crit} {\mc A}_H  \to R$ by
\begin{align}\label{equation27}
m_{J,\lambda}^G ( \lb {\mf x}\rb, \lb{\mf y}\rb) = \left\{ \begin{array}{cc} 0,\ &\ {\sf cz}\lb{\mf x}\rb - {\sf cz} \lb{\mf y} \rb \neq 1,\\
                                        \# \widehat{\mc M}_\lambda ( \lb{\mf x}\rb, \lb {\mf y} \rb),\ &\ {\sf cz} \lb {\mf x}\rb - {\sf cz} \lb {\mf y} \rb = 1.
																				 \end{array} \right.
\end{align}
Here the number of $\wh{\mc M}_\lambda( \lb {\mf x} \rb, \lb{\mf y}\rb)$ is the algebraic count. The boundary operator $\delta_{J, \lambda}: VCF(M; H) \to VCF(M; H)$ is defined by the linear extension of
\begin{align*}
\delta_{J, \lambda} ( \lb{\mf x} \rb) = \sum_{ \lb{\mf y} \rb \in {\rm Crit} {\mc A}_H} m_{J, \lambda}^G ( \lb {\mf x} \rb, \lb {\mf y} \rb) \lb{\mf y}\rb.
\end{align*}
By a compactness argument the above sum is well-defined in $VCF(M; H)$ and by a gluing argument proved in \cite{Xu_VHF}, $(\delta_{J, \lambda})^2 = 0$. The {\bf vortex Floer homology} of the Hamiltonian $G$-manifold $(M, \mu)$ and the data $H, J, \lambda$ is defined as
\begin{align*}
VHF ( M; H, J, \lambda ) = H ( VCF(M; H), \delta_{J, \lambda} ).
\end{align*}

\subsection{Symplectic vortex equation on general Riemann surfaces}

To study further properties of vortex Floer cohomology, we must turn to the vortex equation on more general Riemann surfaces.  Indeed \eqref{equation23} fits into this general framework when specialized to $\Sigma=S^1 \times\R$.  In this section we briefly recall this general theory together with a calculation of the Yang-Mills-Higgs energy with cylindrical ends, which is a mild generalization of \cite[Section 3]{Xu_VHF} and \cite[Proposition 2.2]{Cieliebak_Gaio_Mundet_Salamon_2002} but critical to our applications.


\subsubsection*{Symplectic vortex equation}

Let $\Sigma$ be a Riemann surface and $P \to \Sigma$ be a smooth $G$-bundle. Let $\pi: Y:= P\times_G M\to \Sigma$ be the associated fibre bundle. Choose a family of $G$-invariant, $\omega$-compatible almost complex structures $J = (J_z)_{z\in \Sigma}$ on $M$. Then $J$ lifts to a complex structure on the vertical tangent bundle $T^\bot Y \to Y$, which is still denoted by $J$. On the other hand, the moment map $\mu$ lifts to a map $\mu: Y \to \big({\rm ad} P)^*:= P\times_{{\rm ad}^*} {\mf g}^*$, where ${\rm ad}^*$ is the co-adjoint action. We choose an ${\rm ad}$-invariant metric on ${\mf g}$ inducing an identification ${\mf g}^* \simeq {\mf g}$; hence $\mu$ is viewed as a map from $Y$ to ${\rm ad}P$.

Consider the space of smooth $G$-connections ${\mc A}(P)$ on $P$ and the space of smooth sections ${\mc S}(Y)$ of $Y\to \Sigma$. For each pair $(A, u) \in {\mc A}(P) \times {\mc S}(Y)$, we have the covariant derivative $d_A u \in \Gamma \big( \Sigma, {\rm Hom}(T\Sigma, u^* T^\bot Y) \big)$; $\ov\partial_A u$ is the $(0,1)$-part of $d_A u$ with respect to the complex structure $j_\Sigma$ on $\Sigma$ and $J$ on $T^\bot Y$. On the other hand, $F_A \in \Omega^2({\rm ad}P)$ is the curvature 2-form of $A$. Choose a volume form $\nu\in \Omega^2(\Sigma)$ and denote by $* F_A \in \Gamma({\rm ad}P)$ the contraction of $F_A$ against $\nu$. The symplectic vortex equation is the following system on pairs $(A, u)$
\begin{align}\label{equation28}
\ov\partial_A u = 0,\ * F_A + \mu\circ u = 0.
\end{align}

The group of gauge transformations ${\mc G}(P)$ consists of smooth maps $g: P \to G$ satisfying $g(ph) = h^{-1} g(p) h$ for $p\in P$ and $h \in G$. It induces a left-action on $P$ by $g\cdot p= p g(p)$ and a right action on pairs $(A, u)$ by reparametrization.

\subsubsection*{Hamiltonian perturbation}

In ordinary Floer theory, one uses Hamiltonian functions to perturb the $J$-holomorphic curve equation (see \cite[Section 8]{McDuff_Salamon_2004} as a standard reference). In our situation, we perturb similarly, by $G$-invariant Hamiltonians. A ${\rm Ham}^G(M)$-connection on $Y$ is a 1-form $\nabla \in \Omega^1(\Sigma, C^\infty_c(M)^G)$. If $z = s+ {\bm i} t$ is a local coordinate on $U\subset \Sigma$, then $\nabla$ can be written as
\begin{align*}
\nabla = K_\nabla^{(1)} (s, t) ds + K^{(2)}_\nabla (s, t) dt.
\end{align*}
Its curvature is
\begin{align*}
R_\nabla =\Big(  \frac{\partial K^{(2)}_\nabla}{\partial s} - \frac{\partial K^{(1)}_\nabla}{\partial t} + \{ K^{(1)}_\nabla, K^{(2)}_\nabla\}\Big) ds dt \in \Omega^2(U, C^\infty_c(M)^G).
\end{align*}
Let ${\mc Y}_{K_\nabla^{(1)}}$ and ${\mc Y}_{K_\nabla^{(2)}}$ be the Hamiltonian vector fields of $K_\nabla^{(1)}$ and $K_\nabla^{(2)}$, both depending on $(s, t)$. By $G$-invariance of $K^{(1)}_\nabla$ and $K^{(2)}_\nabla$, ${\mc Y}_{K_\nabla^{(1)}}$ and ${\mc Y}_{L_\nabla^{(2)}}$ lift to sections of $T^\bot Y|_U$. Then $\nabla$ deforms the operator $d_A$ to $d_{A, \nabla}$, which is
\begin{align*}
d_{A, \nabla} u  = d_A u + ds \otimes {\mc Y}_{K_\nabla}(u) + dt \otimes {\mc Y}_{L_\nabla}(u) \in \Gamma(\Sigma, {\rm Hom}(T\Sigma, u^* T^\bot Y) ).
\end{align*}
The $\nabla$-perturbed symplectic vortex equation reads
\begin{align}\label{equation29}
\ov\partial_{A, \nabla} u = 0,\  F_A + \mu(u)= 0,
\end{align}
where $\ov\partial_{A, \nabla}$ is the $(0, 1)$-part of $d_{A, \nabla}$. If we trivialize $P|_U$, then $A|_U = d+ \Phi ds + \Psi dt$ where $\Phi$ and $\Psi$ are ${\mf g}$-valued functions on $U$, and $\nu = \sigma ds dt$. Then \eqref{equation29} takes the form
\begin{align}\label{equation210}
\left\{ \begin{array}{ccc} \displaystyle  \frac{\partial u}{\partial s} + {\mc X}_\Phi + {\mc Y}_{K^{(1)}_\nabla} + J_{s, t} \Big( \frac{\partial u}{\partial t} + {\mc X}_\Psi(u) + {\mc Y}_{K^{(2)}_\nabla} \Big) & = & 0,\\[0.3cm]
\displaystyle \frac{\partial \Psi}{\partial s} - \frac{\partial \Psi}{\partial t} + [\Phi, \Psi] + \mu(u) \sigma(s, t) & = & 0.
\end{array}\right.
\end{align}

The energy of pairs $(A, u)$ with respect to the perturbed equation \eqref{equation22} can be generalized to the case of general perturbed symplectic vortex equation \eqref{equation28}, which is the {\bf Yang-Mills-Higgs functional}, given by
\begin{align*}
\mc{YMH}_\nabla (A, u) = \frac{1}{2} \Big( \big\| d_{A, \nabla} u \big\|_{L^2(\Sigma)}^2 + \big\| F_A \big\|_{L^2(\Sigma)}^2 + \big\| \mu(u) \big\|_{L^2(\Sigma)}^2 \Big).
\end{align*}
Here all the norms are taken with respect to the domain metric determined by $j_\Sigma$ and $\nu$, the target metric determined by $\omega$ and $J$, and the metric on the Lie algebra.

\subsubsection*{Punctured surface}

To get better control in actual applications, one needs to impose additional constraints in the case of non-compact Riemann surfaces.

Suppose $\Sigma$ is obtained by puncturing a compact Riemann surface $\ov\Sigma$ at finitely many points $z_1, \ldots, z_k$. Choose mutually disjoint cylindrical ends $U_j$ around $z_j$ for each puncture $z_j$ with identification $U_j \simeq \Theta_- = (-\infty, 0]\times S^1$. Denote $D_j = U_j \cup \{z_j\} \subset \ov\Sigma$. Specify a trivialization of $P|_{U_j}$, which induces an extension $\ov{P}\to \ov{\Sigma}$ of $P$. With respect to the specified trivialization, the restriction of any connection $A \in {\mc A}(P)$ to $U_j$ is written as $A|_{U_j} = d + \Phi_j ds + \Psi_j dt$ and a section $u\in \Gamma(Y)$ is identified with a map $u_j: U_j \to M$. If $u_j(s, t)$ converges to a smooth contractible loop $x_j(t)$ as $s \to +\infty$, and $w_j: {\mb D} \to M$ is a capping to $x_j$, then the ``connected sum''
\begin{align*}
(w_1, \ldots, w_k) \# u
\end{align*}
gives well-defined homotopy class  of continuous sections of $\ov{Y} = \ov{P}\times_G M$, and hence represents a homology class in $H_2^G(M)$.

We consider the case of \eqref{equation29} where all the auxiliary structures are of cylindrical type. Choose a volume form $\nu$ such that $\nu|_{U_j} = (\lambda^{(j)})^2 ds dt$. Choose regular pairs $(H^{(j)}, J^{(j)}) \in {\mc P}^{reg}_{\lambda^{(j)}}$ and consider a ${\rm Ham}^G(M)$-connection $\nabla$ over $\Sigma$, which is flat over $U_j$ and in a gauge such that
\begin{align*}
\nabla|_{U_j} = - H^{(j)}_t dt,\ H^{(j)} = (H^{(j)}_t) \in {\mc H}_G^*(M).
\end{align*}	
Choose a family of almost complex structures $J = (J_z)_{z\in \Sigma}$ which is {\bf exponentially close} to $J^{(j)}$ on $U_j$, i.e.,
\begin{align*}
\big| e^{|s|} ( J_{s, t} - J^{(j)}_t) \big|_{C^l(U_j)} < +\infty,\ j=0, 1, \ldots,\ j=1, \ldots, k.
\end{align*}
Then we have the following theorem on bounded solutions to \eqref{equation29}.
\begin{thm}\label{thm25}
Let ${\mf w} = (A, u)$ be a solution to \eqref{equation29} on a surface $\Sigma$ with cylindrical ends $U_j$ ($j=1, \ldots, k$), such that $\mc{YMH}_\nabla({\mf w})< +\infty$ and $\sup_{\Sigma} |\mu(u)| < +\infty$. Then the following holds.
\begin{enumerate}

\item There is a gauge transformation $g \in {\mc G}(P)$ and $\wt{x}_j = (x_j, f_j) \in {\rm Zero} \wt{\mc B}_{H^{(j)}}$, such that over $U_j$, with respect to the specified trivialization of $P|_{U_j}$, $g^* A = d + \Phi_j(s, t) + \Psi_j(s, t) dt$ and
\begin{align*}
\lim_{s \to -\infty} \Phi_j(s, t) = 0,\ \lim_{s \to -\infty} (g^{-1} u, \Psi_j) = (x_j, f_j)\in {\rm Zero}\wt{\mc B}_{H^{(j)}}.
\end{align*}

\item  We have a two form $\omega_{A, \nabla}\in \Omega^2(Y)$, called the {\bf minimal coupling form}, so that for any choice of capping ${\mf x}_j = (x_j, f_j, [w_j]) \in {\rm Crit} \wt{\mc A}_{H^{(j)}}$ to each $\wt{x}_j$,
\begin{align}\label{equation211}
\begin{split}
\mc{YMH}_\nabla ({\mf w}) &= \int_{\Sigma} u^* \omega_{A, \nabla} + R_\nabla \\
&=\langle [\omega^G], [(w_1, \cdots, w_k) \# u] \rangle +  \sum_{j=1}^k \wt{\mc A}_{H^{(j)}} ( {\mf x}_j ) +  \int_{\Sigma} R_{\nabla}.
\end{split}
\end{align}

\end{enumerate}
\end{thm}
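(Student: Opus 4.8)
The plan is to follow the strategy of \cite[Section 3]{Xu_VHF} and \cite[Proposition 2.2]{Cieliebak_Gaio_Mundet_Salamon_2002}, but to carry out the bookkeeping carefully on a surface with several cylindrical ends and in the presence of a nontrivial Hamiltonian perturbation $\nabla$. The first step establishes the asymptotic statement (1). Since $\mc{YMH}_\nabla({\mf w}) < +\infty$ and $\nabla$ is flat over each $U_j$ (indeed of the form $-H^{(j)}_t\, dt$), the restriction of the equation \eqref{equation29} to $U_j$ is, after trivializing $P|_{U_j}$, exactly the gradient-flow equation \eqref{equation23} on $(-\infty, 0]\times S^1$ with respect to $\lambda = \lambda^{(j)}$. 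The finiteness of the Yang--Mills--Higgs energy forces the energy on the ends to be finite, and combined with $\sup_\Sigma |\mu(u)| < +\infty$ one applies the removal-of-singularity / asymptotic-convergence analysis already carried out in \cite[Section 3]{Xu_VHF}: there is a gauge transformation $g \in {\mc G}(P)$ (adjusting it near each puncture to bring the connection into temporal gauge on $U_j$) such that $\Phi_j \to 0$ and $(g^{-1}u, \Psi_j)$ converges exponentially to some $\wt x_j = (x_j, f_j) \in {\rm Zero}\wt{\mc B}_{H^{(j)}}$. The only mild point to check is that the finitely many local gauge transformations near the punctures patch with a global one on the compact part, which is routine since ${\mc G}(P)$ acts transitively enough on connections over a compact surface with boundary.

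The second step is the energy identity \eqref{equation211}. Here I would first produce the minimal coupling form: choosing any connection $A$ on $P$ and using the local formula $\omega_A = \pi^*\omega - d(\mu\cdot\alpha)$ from the ``equivariant symplectic form'' subsection, then twisting by the Hamiltonian connection $\nabla$ to get $\omega_{A,\nabla}$, whose curvature term contributes $R_\nabla$. The first equality $\mc{YMH}_\nabla({\mf w}) = \int_\Sigma u^*\omega_{A,\nabla} + \int_\Sigma R_\nabla$ is a pointwise identity: one expands $\mc{YMH}_\nabla$ in a local trivialization using \eqref{equation210}, completes the square exactly as in the unperturbed case, and identifies the resulting integrand with $u^*\omega_{A,\nabla}$ plus the curvature correction. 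This is the standard ``energy = topological term'' computation for vortices and needs only the extra terms ${\mc Y}_{K^{(i)}_\nabla}$ to be tracked through; the perturbation enters the almost-complex part of $d_{A,\nabla}u$ and produces precisely the $R_\nabla$ contribution.

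The third step converts the geometric integral $\int_\Sigma u^*\omega_{A,\nabla}$ into the topological pairing plus the sum of action functionals. This is where the cappings enter. Form the closed-up section $(w_1,\dots,w_k)\#u$ over $\ov Y = \ov P\times_G M$; then $\langle[\omega^G], [(w_1,\dots,w_k)\#u]\rangle = \int_{\ov\Sigma}\big((w_1,\dots,w_k)\#u\big)^*\omega_{A,\nabla}$ by the computation in the ``equivariant Chern numbers'' subsection (the value of the pairing is computed by integrating the minimal coupling form). Subtracting, the difference $\int_\Sigma u^*\omega_{A,\nabla} - \langle[\omega^G], \cdot\rangle$ localizes to the disks $D_j$, i.e. it equals $-\sum_j \int_{{\mb D}} w_j^*\omega$ up to boundary terms along the loops $x_j(t)$; carrying out Stokes' theorem on each cylindrical end $U_j$ with $\nabla|_{U_j} = -H^{(j)}_t\,dt$ and using the definition of $\wt{\mc A}_{H^{(j)}}(x_j,f_j,[w_j]) = -\int_{\mb D} w_j^*\omega + \int_{S^1}(\mu(x_j)\cdot f_j - H^{(j)}_t(x_j))\,dt$, one recognizes the boundary contribution from each end as exactly $\wt{\mc A}_{H^{(j)}}({\mf x}_j)$. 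Summing over $j$ gives \eqref{equation211}.

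The main obstacle I expect is the sign and orientation bookkeeping in the third step: keeping track of how the cylindrical end $U_j\simeq(-\infty,0]\times S^1$ is glued (with which orientation of $S^1$) into $\ov\Sigma$, making sure the boundary integrals from Stokes combine with the correct signs into $\wt{\mc A}_{H^{(j)}}$ rather than $-\wt{\mc A}_{H^{(j)}}$, and confirming that the perturbation gauge $\nabla|_{U_j} = -H^{(j)}_t dt$ produces the $-H^{(j)}_t(x_j)$ term with the right sign. A secondary technical point is verifying that $\int_\Sigma R_\nabla$ is finite and that the minimal coupling form $\omega_{A,\nabla}$ is independent (up to exact forms vanishing on the relevant cycles) of the choice of connection $A$, so that the left-hand side $\mc{YMH}_\nabla({\mf w})$ — which manifestly does not depend on $A$ — matches a right-hand side written in terms of an arbitrary $A$. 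Both of these were handled in the unperturbed, single-end case in \cite{Xu_VHF}, so the work is to check nothing breaks when passing to multiple ends and a genuine perturbation.
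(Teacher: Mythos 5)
Your plan follows essentially the same route as the paper's proof: part (1) by invoking the asymptotic analysis of \cite[Section 3]{Xu_VHF} on each cylindrical end, part (2) by the pointwise completion-of-squares identity expressing the energy density as $u^*\omega_{A,\nabla}+R_\nabla$, and part (3) by capping off the ends and applying Stokes' theorem to identify the topological pairing with $[\omega^G]$ plus the action terms $\wt{\mc A}_{H^{(j)}}({\mf x}_j)$. The only detail to make explicit, which the paper handles with the cut-off data $A_s$, $\nabla_s$ on truncations $\Sigma_s$, is that the given connection and Hamiltonian perturbation must be modified near the punctures before they extend to $\ov\Sigma$ and the pairing formula applies, but your ``localize to the ends and integrate by parts'' step is exactly this computation, so the argument is sound.
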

\begin{proof}
The first part follows from the proof of the asymptotic behavior of bounded solutions to \eqref{equation23} in \cite[Section 3]{Xu_VHF}. The second part is essentially the same as \cite[Proposition 2.2]{Cieliebak_Gaio_Mundet_Salamon_2002}. Since it will be used in this paper, we include a detailed proof.

To prove \eqref{equation211}, we write down the energy density explicitly. Let $z = s+ {\bm i} t$ be a local coordinate on $U\subset \Sigma$ and $\nu|_U = \sigma ds dt$. Denote
\begin{align*}
v_s = \frac{\partial u}{\partial s} + {\mc X}_\Phi + {\mc Y}_{K_\nabla^{(1)}},\ v_t= \frac{\partial u}{\partial t} + {\mc X}_\Psi + {\mc Y}_{K_\nabla^{(2)}}.
\end{align*}
Then since $(A, u)$ solves \eqref{equation210}, we have
\begin{align*}
\begin{split}
&\ \frac{1}{2} \Big( \big| d_{A, \nabla} u\big|^2 + \big| F_A \big|^2 + \big| \mu(u) \big|^2 \Big)\sigma\\
= &\  \frac{1}{2} \big| v_s + J v_t \big|^2 -  \langle v_s, J v_t \rangle  + \frac{1}{2} \big| F_A + \mu(u) \nu \big|^2 \sigma - \mu(u) \cdot F_A\\
= &\ \omega\Big( \frac{\partial u}{\partial s} + {\mc X}_\Phi + {\mc Y}_{K_\nabla^{(1)}}, \frac{\partial u}{\partial t} + {\mc X}_\Psi + {\mc Y}_{K_\nabla^{(2)}} \Big) -\mu(u) \cdot \Big( \frac{\partial \Psi}{\partial s} - \frac{\partial \Phi}{\partial t} + [\Phi, \Psi] \Big)\\
= &\ \omega\Big( \frac{\partial u}{\partial s}, \frac{\partial u}{\partial t} \Big) + d \Big( \mu\cdot \Phi + K_\nabla^{(1)}\Big) \cdot \Big( \frac{\partial u}{\partial t} \Big) - d \Big( \mu \cdot \Psi + K_\nabla^{(2)} \Big) \cdot \Big( \frac{\partial u}{\partial s} \Big)\\
&\ + \omega\Big( {\mc X}_{\Phi} + {\mc Y}_{K_\nabla^{(1)}}, {\mc X}_{\Psi} + {\mc Y}_{K_\nabla^{(2)}} \Big) -\mu(u) \cdot \Big( \frac{\partial \Psi}{\partial s} - \frac{\partial \Phi}{\partial t} + [\Phi, \Psi] \Big)\\
= &\ \omega\Big( \frac{\partial u}{\partial s}, \frac{\partial u}{\partial t} \Big) + \frac{\partial}{\partial t} \Big( \mu(u) \cdot \Phi + K_\nabla^{(1)}(u) \Big) - \frac{\partial }{\partial s} \Big( \mu(u) \cdot \Psi + K_\nabla^{(2)}(u) \Big) + R_\nabla(\frac{\partial}{\partial s},\frac{\partial}{\partial t})\\
= &\ u^* \omega_{A, \nabla}\Big( \frac{\partial}{\partial s}, \frac{\partial}{\partial t} \Big) + R_\nabla \Big( \frac{\partial}{\partial s}, \frac{\partial}{\partial t} \Big) 	.
\end{split}
\end{align*}
Here $\omega_{A, \nabla}\in \Omega^2(Y)$ can be written in local coordinates as $\omega_{A, \nabla} = \omega - d\big( \mu \cdot (\Phi ds + \Psi dt) + K_\nabla^{(1)} ds + K_\nabla^{(2)} dt \big)$, which is independent of choice of local coordinates and trivializations thus well-defined. Therefore
\begin{align}\label{equation212}
\mc{YMH}_\nabla(A, u) = \int_{\Sigma} u^* \omega_{A, \nabla} + R_\nabla.
\end{align}
To evaluate the first term, for each $s>1$, choose cut-off functions $\rho_s: \Sigma \to [0, 1]$ which is equal to 1 away from the cylindrical end $U_j(s-1):= (-\infty, -s+1]\times S^1 \subset U_j$, and equal to 0 over each $U_j(s)$. Denote
\begin{align*}
\Sigma_s = \Sigma \setminus \cup_{j=1}^k U_j(s),\ u_s= u|_{\Sigma_s}.
\end{align*}
Then for $s$ large, we can connect $w_j$ with $u_s$ to obtain a family of smooth sections $\ov{u}_s \in \Gamma(\ov{Y})$, whose restriction to the disk $D_{j, s} = U_j(s) \cup \{z_j\}$ is denoted by $w_{j, s}: D_{j, s} \to M$. Moreover, $\ov{u}_s$ is represents the class of $(w_1, \ldots, w_k) \# u$ and
\begin{align*}
\lim_{s \to +\infty} \int_{D_{j, s}} (w_{j, s})^* \omega = \int_{\mb D} (w_j)^* \omega.
\end{align*}
On the other hand, $\nabla_s = \rho_s \nabla$ is another ${\rm Ham}^G(M)$-connection which extends to $\ov\Sigma$, and using the specified trivialization of $P|_{U_j}$,
\begin{align*}
A_s = A - (1-\rho_s) \sum_{j=1}^k (\Phi_j ds + \Psi_j dt)
\end{align*}
is a smooth connection on $P$ which extends to $\ov{P} \to \ov\Sigma$. Then
\begin{align}\label{equation213}
\begin{split}
&\ \int_{\Sigma_s} (u_s)^* \omega_{A_s, \nabla_s} \\
= &\ \int_{\Sigma_s} (u_s)^* \omega_{A_s, \nabla_s} + \sum_{j=1}^k \int_{U_j} u^* (\omega_{A, \nabla} - \omega_{A_s, \nabla_s})\\
                                     = &\ \int_{\ov\Sigma}(\ov{u}_s)^* \omega_{A_s, \nabla_s} - \sum_{j=1}^k \int_{D_{j, s}} (w_{j, s})^* \omega \\
																		 &\ + \sum_{j=1}^k \int_{[-s+1, -s]\times S^1} - u^* d \Big( (1-\rho_s) \mu\cdot ( \Phi_j ds + \Psi_j dt) - H_t^{(j)} dt \Big)\\
																		 = &\ \int_{\ov\Sigma} (\ov{u}_s)^* \omega_{A_s, \nabla_s} - \sum_{j=1}^k \int_{D_{j, s}} (w_{j, s})^* \omega + \sum_{j=1}^k \int_{S^1}\Big( \mu(u(s, t)) \Phi_j -  H_t^{(j)}(u(s, t)) \Big)dt																		 
\end{split}
\end{align}
The first term in the last line is equal to the pairing between $[\omega^G]$ and the homology class represented by $(w_1, \ldots, w_k) \# u$. Let $s \to +\infty$. The left-hand-side of \eqref{equation213} then converges to the integral of $u^* \omega_{A, \nabla}$ on $\Sigma$ and the right-hand-side converges to the sum of $\wt{\mc A}_{H^{(j)}}({\mf x}_j)$. Then by \eqref{equation212}, \eqref{equation211} is proved.\end{proof}

\begin{rem}

One should compare the general vortex equation in this section with those appeared in the definition of vortex Floer homology as follows.

If $\Sigma = \Theta$ and $(H_t)$ is an $S^1$-family of $G$-invariant Hamiltonians, then one has the flat connection $\nabla = - H_t dt$. So \eqref{equation21} and \eqref{equation23} are the special form of \eqref{equation210} for the trivial $G$-bundle over $\Theta$, with volume form $\lambda^2 ds dt$, and \eqref{equation21} is the form in
temporal gauge, and the parameter $\lambda$ can also be viewed as a rescaling factor of the metric on ${\mf g}$. \eqref{equation25} is hence a special case of \eqref{equation211}.

\end{rem}

\subsection{Continuation maps}

We now continue our discussion on vortex Floer homology using results from last section.  Let $\lambda^\pm > 0$ and $(H^\pm, J^\pm) \in {\mc P}^{reg}_{\lambda^\pm}$. Choose a smooth family $\wt{H} = ( H_{s, t} )_{(s, t) \in \Theta}$ of Hamiltonians with $H_{s, t} \in {\mc H}_G(M)$, such that
\begin{align*}
\pm s \gg 0 \Longrightarrow H_{s, t} = H^\pm_t.
\end{align*}
$\wt{H}$ defines a ${\rm Ham}^G(M)$-connection on $\Theta$, i.e., $\nabla = - H_{s, t}dt$. Choose a smooth function $\wt\lambda: {\mb R} \to (0, +\infty)$ such that
\begin{align*}
\pm s \gg 0 \Longrightarrow \wt\lambda(s) = \lambda^\pm.
\end{align*}
This gives a volume form $\wt\lambda (s)^2 ds dt$ on $\Theta$. On the other hand, consider the space $\wt{\mc J}(J^-, J^+)$ consisting of families of almost complex structures $\wt{J} = (J_{s, t})_{(s, t) \in \Theta}$ which are exponentially close to $J^\pm$ on $\Theta_\pm$. Then for the auxiliary data $\wt{H}$, $\wt{J}$, $\wt\lambda$ on the cylinder $\Theta$, consider the $\nabla$-perturbed equation \eqref{equation29}, which, in the cylindrical coordinates, reads
\begin{align}\label{equation214}
\left\{ \begin{array}{ccc}
\displaystyle \frac{\partial u }{ \partial s} + {\mc X}_{\Phi}(u) + J_{s, t} \Big(\frac{\partial u }{\partial t}  + {\mc X}_{\Psi}(u) - Y_{H_{s, t}}(u) \Big) & = & 0;\\[0.3 cm]
\displaystyle \frac{\partial \Psi}{\partial s} - \frac{\partial \Phi}{\partial t} + [\Phi, \Psi]  + \wt\lambda(s)^2 \mu(u) & = & 0.
\end{array}\right.
\end{align}
Then by Theorem \ref{thm25}, for any bounded solution ${\mf w}$ to \eqref{equation214}, there exists a pair $\wt{x}^\pm \in {\rm Zero} \wt{\mc B}_{H^\pm}$ such that ${\mf w}$ is gauge equivalent to some ${\mf w}'$ with
\begin{align*}
\lim_{z\to \pm \infty}{\mf w}' = \wt{x}^\pm.
\end{align*}
Then similarly to the case of moduli spaces of vortex Floer connecting orbits, for any pair $\lb {\mf x}^\pm \rb \in {\rm Crit} {\mc A}_{H^\pm}$, we consider the moduli space ${\mc N} ( \lb  {\mf x}^- \rb, \lb {\mf x}^+ \rb; \wt{H}, \wt{J}, \wt\lambda )$ of gauge equivalence classes of solutions to \eqref{equation214} which connect $\lb {\mf x}^- \rb$ and $\lb {\mf x}^+ \rb$. As a special case of the second part of Theorem \ref{thm25}, we have
\begin{prop}\cite[Proposition 7.5]{Xu_VHF}\label{prop27}
For any $[ {\mf w} ] \in {\mc N} ( \lb {\mf x}^- \rb, \lb {\mf x}^+ \rb; \wt{H}, \wt{J}, \wt\lambda )$, we have
\begin{align}\label{equation215}
\mc{YMH}_{\wt{H}} \big( [{\mf w}] \big) = {\mc A}_{H^-} \lb {\mf x}^- \rb  - {\mc A}_{H^+} \lb {\mf x}^+ \rb - \int_\Theta \frac{\partial H_{s, t}}{\partial s} (u) ds dt.
\end{align}
\end{prop}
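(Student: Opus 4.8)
The plan is to reduce Proposition \ref{prop27} to the energy identity \eqref{equation211} in Theorem \ref{thm25}, specialized to the cylinder $\Sigma = \Theta$ with the two cylindrical ends $U_\pm \cong \Theta_\pm$, the trivial $G$-bundle, volume form $\wt\lambda(s)^2\, ds\, dt$, and the ${\rm Ham}^G(M)$-connection $\nabla = -H_{s,t}\, dt$. First I would note that this $\nabla$ is flat over each end $U_\pm$ (since there $H_{s,t} = H^\pm_t$ is $s$-independent), with $\nabla|_{U_\pm} = -H^\pm_t\, dt$, so the auxiliary data genuinely fits the hypotheses of Theorem \ref{thm25}, with $(H^{(j)}, J^{(j)}) = (H^\pm, J^\pm) \in {\mc P}^{reg}_{\lambda^\pm}$ and $\wt{J} \in \wt{\mc J}(J^-, J^+)$ exponentially close to $J^\pm$ on the ends. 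Hence \eqref{equation211} applies verbatim and gives
\begin{align*}
\mc{YMH}_{\wt H}([{\mf w}]) = \langle [\omega^G], [(w_-, w_+)\#u]\rangle + \wt{\mc A}_{H^-}({\mf x}^-) + \wt{\mc A}_{H^+}({\mf x}^+) + \int_\Theta R_\nabla,
\end{align*}
where I must be careful about the orientation/role of the two ends: the minus end is an incoming end ($s\to -\infty$ in the coordinate $U_- \cong (-\infty,0]\times S^1$), while the plus end, after the identification $U_+ \cong (-\infty,0]\times S^1$, reverses the sign of the $s$-variable relative to the global coordinate on $\Theta$, which is exactly what converts $+\wt{\mc A}_{H^+}({\mf x}^+)$ into $-\wt{\mc A}_{H^+}({\mf x}^+)$ and accounts for the asymmetry in \eqref{equation215}.

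Next I would compute the curvature term. With $\nabla = -H_{s,t}\, dt$, i.e. $K^{(1)}_\nabla = 0$ and $K^{(2)}_\nabla = -H_{s,t}$, the curvature formula gives $R_\nabla = \bigl(\partial_s K^{(2)}_\nabla - \partial_t K^{(1)}_\nabla + \{K^{(1)}_\nabla, K^{(2)}_\nabla\}\bigr)\, ds\, dt = -\dfrac{\partial H_{s,t}}{\partial s}\, ds\, dt$, so $\displaystyle\int_\Theta R_\nabla = -\int_\Theta \frac{\partial H_{s,t}}{\partial s}(u)\, ds\, dt$, which is precisely the last term in \eqref{equation215}. (Here the $R_\nabla$ in Theorem \ref{thm25} is evaluated along $u$, as in the proof of that theorem where $R_\nabla(\partial_s,\partial_t)$ appears composed with $u$.) Finally, by hypothesis {\bf (H1)} the manifold $M$ is symplectically aspherical and the equivariant symplectic form $\omega^G = \omega - \mu$ pairs to zero on $S_2(M) \subset S_2^G(M)$; moreover, for the \emph{trivial} bundle over $\Theta \subset S^2$ the connected sum class $[(w_-,w_+)\#u]$ lies in the image of $S_2(M)$ — equivalently, the capping data is chosen so that this class is null in $\Gamma$ — hence $\langle [\omega^G], [(w_-,w_+)\#u]\rangle = 0$. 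Assembling the three pieces yields exactly \eqref{equation215}, and one then passes from $\wt{\mc A}_{H^\pm}$ on $\wt{\mf L}$ to ${\mc A}_{H^\pm}$ on ${\mf L}$ using the fact (Section \ref{s:actionFunctional}) that the functional descends.

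The main obstacle I anticipate is purely bookkeeping rather than conceptual: getting the signs and the two asymptotic conventions consistent. Specifically, one must match the single global coordinate $(s,t)$ on $\Theta$ with the two \emph{local} cylindrical-end coordinates used in Theorem \ref{thm25} (both of the form $(-\infty,0]\times S^1$), track how the reversal at the $+\infty$ end flips the sign of the corresponding $\wt{\mc A}$-term and of the $R_\nabla$ contribution near that end, and confirm that the vortex equation \eqref{equation214} is indeed the local form \eqref{equation210} of the perturbed equation \eqref{equation29} for this choice of $\nabla$ and $\nu$ — which it is, upon comparing $\Phi, \Psi$ and reading off ${\mc Y}_{K^{(2)}_\nabla} = -Y_{H_{s,t}}$. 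A secondary point requiring a line of justification is the vanishing of the $\omega^G$-pairing: one should remark that, as in the construction of the moduli spaces ${\mc N}(\lb{\mf x}^-\rb,\lb{\mf x}^+\rb;\cdots)$, the cappings ${\mf x}^\pm$ are precisely the data that pin down the relevant class in $\Gamma = S_2^G(M)/N_2^G(M)$, and the energy identity is stated for those compatible cappings, so no separate choice is being made. Once these conventions are fixed, the proposition follows immediately from Theorem \ref{thm25}; indeed, as the remark following Theorem \ref{thm25} already observes, \eqref{equation25} is the $s$-independent case of \eqref{equation211}, and Proposition \ref{prop27} is the mild generalization allowing $s$-dependence of $H$, whose only new ingredient is the nonzero curvature term computed above.
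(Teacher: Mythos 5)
Your proposal is correct and follows essentially the same route as the paper, which presents Proposition \ref{prop27} precisely as a special case of the second part of Theorem \ref{thm25} (citing \cite[Proposition 7.5]{Xu_VHF}): you specialize \eqref{equation211} to the cylinder with $\nabla = -H_{s,t}\,dt$, compute $R_\nabla = -\partial_s H_{s,t}\,ds\,dt$, handle the orientation reversal at the outgoing end, and observe that the compatibility of cappings built into ${\mc N}(\lb {\mf x}^-\rb,\lb {\mf x}^+\rb;\wt H,\wt J,\wt\lambda)$ kills the $\langle[\omega^G],\cdot\rangle$ term. The only slight imprecision is your first justification for that vanishing (the glued class need not lie in the image of $S_2(M)$ in general), but your second formulation --- that the compatible cappings force the glued class into $N_2^G(M)\subset\ker[\omega^G]$ --- is the correct one and suffices.
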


By \cite[Proposition 7.6]{Xu_VHF}, there is a subset $\wt{\mc J}^{reg}_{\wt{H}, \wt\lambda}(J^-, J^+)\subset \wt{\mc J}(J^-, J^+)$ of second category, such that for each $\wt{J} \in  \wt{\mc J}^{reg}_{\wt{H}, \wt\lambda}(J^-, J^+)$, the moduli space ${\mc N} (\lb {\mf x}^- \rb, \lb {\mf x}^+ \rb; \wt{H}, \wt{J}, \wt\lambda)$ is an oriented smooth manifold of dimension ${\sf cz} \lb {\mf x}^- \rb - {\sf cz} \lb {\mf x}^+ \rb$. So we can use the oriented counting to define the chain level continuation map
\begin{align*}
\mbms{cont}: VCF( M; H^- ) \to VCF ( M; H^+),
\end{align*}
which is a chain homotopy equivalence. For a different set of homotopy $(\wt{H}', \wt{J}', \wt\lambda')$ having the same asymptotic conditions as $(\wt{H}, \wt{J}, \wt\lambda)$, the induced continuation map $\mbms{cont}'$ is homotopic to $\mbms{cont}$. Therefore we have

\begin{thm}
The vortex Floer homology $VHF_\lambda (M; H, J)$ of $M$ is canonically independent of the choice of $\lambda>0$ and the regular pair $(H, J) \in {\mc P}^{reg}_\lambda$.
\end{thm}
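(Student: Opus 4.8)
The plan is to run the usual continuation argument, this time tracking the three parameters $H$, $J$, $\lambda$ at once. Fix two admissible data sets $(H^0, J^0, \lambda^0)$ and $(H^1, J^1, \lambda^1)$ with $(H^i, J^i) \in {\mc P}^{reg}_{\lambda^i}$. First I would choose interpolating auxiliary data on $\Theta$ exactly as in the discussion preceding the theorem: a smooth $\wt\lambda \colon {\mb R} \to (0, +\infty)$ with $\wt\lambda(s) = \lambda^{\pm}$ for $\pm s \gg 0$ (possible because any two points of $(0,+\infty)$ are joined by a path with image a compact subinterval), a homotopy $\wt H = (H_{s,t})$ of $G$-invariant Hamiltonians from $H^0$ to $H^1$, and a generic $\wt J \in \wt{\mc J}^{reg}_{\wt H, \wt\lambda}(J^0, J^1)$ from \cite[Proposition 7.6]{Xu_VHF}. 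Counting the zero-dimensional spaces ${\mc N}(\lb {\mf x}^- \rb, \lb {\mf x}^+ \rb; \wt H, \wt J, \wt\lambda)$ gives a chain map $\mbms{cont}^{01} \colon VCF(M; H^0) \to VCF(M; H^1)$, and exchanging $0$ and $1$ gives $\mbms{cont}^{10}$ in the opposite direction. Here Theorem \ref{thm25}, through the energy identity \eqref{equation215}, furnishes the a priori energy bound, while the a priori $C^0$ bound on $\mu(u)$ coming from {\bf (H3)} together with asphericity {\bf (H1)} rules out bubbling, so that the only source of non-compactness is broken trajectories; since $\wt\lambda$ ranges in a fixed compact subinterval of $(0,+\infty)$ all estimates are uniform, and the $\lambda$-direction introduces no analytic difficulty beyond the $H$-direction.

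Next I would establish that $\mbms{cont}^{10} \circ \mbms{cont}^{01}$ is chain homotopic to ${\rm id}_{VCF(M; H^0)}$, and symmetrically with $0$ and $1$ swapped. This is the standard ``homotopy of homotopies'': concatenate $(\wt H^{01}, \wt J^{01}, \wt\lambda^{01})$ with $(\wt H^{10}, \wt J^{10}, \wt\lambda^{10})$ with gluing length $R \in [0, +\infty]$, producing a one-parameter family of perturbed vortex equations \eqref{equation214} on $\Theta$ which at $R = +\infty$ degenerates into the broken configuration computing $\mbms{cont}^{10}\circ\mbms{cont}^{01}$ and at $R = 0$ is homotopic rel ends to the constant data $(H^0, J^0, \lambda^0)$. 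For constant data the continuation chain map is the identity, by the standard computation: when $\partial H_{s,t}/\partial s \equiv 0$ the equation \eqref{equation214} is $s$-independent, the energy identity forces $\mc{YMH} = {\mc A}_{H^0}\lb {\mf x}^- \rb - {\mc A}_{H^0}\lb {\mf x}^+ \rb \ge 0$, and a solution with ${\sf cz}\lb {\mf x}^- \rb = {\sf cz}\lb {\mf x}^+ \rb$ must be a stationary point, so $\lb {\mf x}^- \rb = \lb {\mf x}^+ \rb$ and the count is $1$. Cutting out the parametrized moduli spaces transversally for generic data (a parametrized Sard--Smale argument as in \cite[Section 7]{Xu_VHF}), the zero-dimensional ones define an operator $K$ raising ${\sf cz}$ by one, while the compactified one-dimensional ones are oriented $1$-manifolds whose ends, by the gluing theorem of \cite{Xu_VHF}, are the $R=0$ and $R=+\infty$ configurations together with once-broken trajectories; the standard count of these ends yields $\delta_{J^0, \lambda^0} K + K \delta_{J^0, \lambda^0} = \mbms{cont}^{10}\circ\mbms{cont}^{01} - {\rm id}$. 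Hence $\mbms{cont}^{01}$ and $\mbms{cont}^{10}$ are mutually inverse on homology.

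Finally, to upgrade this to \emph{canonical} independence I would verify two compatibilities, each again by a ``homotopy of homotopies''. First, the isomorphism $VHF(M; H^0, J^0, \lambda^0) \to VHF(M; H^1, J^1, \lambda^1)$ is independent of the interpolating data, since any two admissible choices of $(\wt H, \wt J, \wt\lambda)$ are homotopic rel endpoints and the parametrized-moduli-space argument shows the corresponding continuation maps are chain homotopic. Second, given a third data set, the concatenation of a $0 \to 1$ path with a $1 \to 2$ path is homotopic rel endpoints to a direct $0 \to 2$ path, so the composition of the two isomorphisms equals the $0 \to 2$ isomorphism; this cocycle property is exactly what ``canonically independent'' means, and it lets one write $VHF(M, \mu)$ unambiguously. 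The one point that genuinely needs care, as always, is the compactness and gluing of the parametrized moduli spaces entering the homotopy of homotopies; but given {\bf (H1)}--{\bf (H3)}, the uniform energy identity of Theorem \ref{thm25}, and the transversality and gluing package of \cite[Sections 6 and 7]{Xu_VHF}, this is a routine adaptation of the corresponding facts for ordinary Hamiltonian Floer homology.
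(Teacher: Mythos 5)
Your proposal is correct and follows essentially the same route as the paper: the paper defines the continuation maps by counting solutions of the $(\wt H,\wt J,\wt\lambda)$-interpolated equation \eqref{equation214}, invokes the energy identity of Theorem \ref{thm25}/Proposition \ref{prop27} and the transversality result \cite[Proposition 7.6]{Xu_VHF}, and then asserts chain homotopy equivalence and independence of the interpolating data by the standard homotopy-of-homotopies argument, which is exactly what you spell out (including the identity-for-constant-data computation and the cocycle property giving canonicity). No gaps; your write-up is simply a more detailed version of the argument the paper sketches.
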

The common $\Lambda$-module which is canonically isomorphic to each $VHF_\lambda (M; H, J)$ (whenever it is defined) is denoted by $VHF(M)$ and is called the {\bf vortex Floer homology} of $M$.

\subsection{Ring structure}\label{subsection27}

As in the ordinary Hamiltonian Floer theory, the quantum ring structure on $VHF(M)$ can be defined by the pair-of-pants product. Its construction is given by specializing the the analysis of \eqref{equation29} to the pair-of-pants. The details are given in \cite{Xu_Floer_3} but the basis analysis are well established: for the analysis of breaking and gluing the vortex Floer trajectories from cylindrical ends, see \cite{Xu_VHF} and \cite{Mundet_thesis}; for the independence of auxiliary data, see \cite{Mundet_thesis, Mundet_2003, Cieliebak_Gaio_Mundet_Salamon_2002}. In particular, the transversality argument is again similar to that in \cite{Xu_VHF} due to the absence of sphere bubbles.


\subsubsection*{Pair-of-pants product}

The pair-of-pants multiplications is a $\Lambda$-bilinear map
\begin{align}\label{equation216}
VHF (M) \otimes VHF (M) \to VHF (M).
\end{align}
Let $\Sigma$ be the pair-of-pants, which is biholomorphic to the three punctured sphere $S^2 \setminus \{z_0, z_1, z_\infty\}$. We take a cylindrical area form $\nu\in \Omega^2(\Sigma)$, so that there are three (disjoint) cylindrical ends $U_0, U_1, U_\infty \subset \Sigma$. We regard the first two as ``incoming'', which are isometric to $\Theta_-= (-\infty, 0]\times S^1$, and $U_\infty$ as ``outgoing'', which is isometric to $\Theta_+ = [0, +\infty) \times S^1$. Choose a volume form $\nu$ whose restriction to $U_\sigma$ is equal to $(\lambda^\sigma)^2 ds dt$.

For $\sigma = 0, 1, \infty$, choose regular pairs $(H^\sigma, J^\sigma) \in {\mc P}^{reg}_{\lambda^\sigma}$. Then we have the chain complexes
\begin{align*}\Big(VCF(M; H^\sigma), \delta_{J^\sigma, \lambda^\sigma} \Big)
\end{align*}
Choose a ${\rm Ham}^G(M)$-connection $\nabla\in \Omega^1(\Sigma, C^\infty(M)^G)$ such that
\begin{align}\label{equation217}
\nabla|_{U_\sigma} = - H^\sigma_t dt.
\end{align}
Choose a family of almost complex structures $J = (J_z)_{z\in \Sigma}$ which are exponentially close to $J^\sigma$ on $U_\sigma$ for $\sigma = 0, 1, \infty$.

Now consider the $\nabla$-perturbed symplectic vortex equation \eqref{equation29} on the trivial $G$-bundle over $\Sigma$, with the auxiliary data $\nu$, $\nabla$ and $J$. Each bounded solution $(A, u)$ has  good asymptotic behavior by Theorem \ref{thm25}. Then, for any triple ${\mf x}_\sigma\in {\rm Crit} \wt{\mc A}_{H^\sigma}$ which projects to $\wt{x}_\sigma \in {\rm Zero} \wt{\mc B}_{H^\sigma}$, we consider
\begin{align*}
\begin{split}
\wt{\mc M}\big( x_0, x_1; x_\infty \big) = &\ \Big\{ (A, u)\ {\rm solves\ } \eqref{equation29} \ |\ \lim_{z \to z_\sigma} u = \wt{x}_\sigma \Big\},\\
\wt{\mc M}\big( {\mf x}_0, {\mf x}_1; {\mf x}_\infty \big) = &\ \Big\{ (A, u) \in \wt{\mc M}\big( {\mf x}_0, {\mf x}_1; {\mf x}_\infty\big)\ |\ ({\mf x}_0, {\mf x}_1, \ov{\mf x}_\infty) \# [u] = 0 \Big\}.
\end{split}
\end{align*}
Here $\ov{\mf x}_\infty$ is the reverse of ${\mf x}_\infty$. Let ${\mc G}_\Sigma$ be the space of smooth gauge transformations $g: \Sigma \to G$ which are asymptotic to the identity at each $z_\sigma$. ${\mc G}_\Sigma$ acts on $\wt{\mc M}({\mf x}_0, {\mf x}_1; {\mf x}_\infty)$ and denote ${\mc M}({\mf x}_0, {\mf x}_1; {\mf x}_\infty) = \wt{\mc M}({\mf x}_0, {\mf x}_1; {\mf x}_\infty)/ G_\Sigma$.

If $[ {\mf x}_\sigma] = [{\mf x}_{\sigma}']\in \wt{\mf L}/ L_0 G$, then there is a canonical identification between ${\mc M}({\mf x}_0, {\mf x}_1; {\mf x}_\infty)$ and ${\mc M}({\mf x}'_0, {\mf x}'_1; {\mf x}'_\infty)$. We denote the common object by ${\mc M}([{\mf x}_0], [{\mf x}_1]; [{\mf x}_\infty])$. Moreover, if $B_0, B_1 \in S_2^G(M)$, then $B_0 \#[{\mf x}_0]$ and $B_1 \# [{\mf x}_1]$ are represented by $h_0(x_0)$ and $h_1(x_1)$ respectively, where $h_0, h_1\in L_M G$. Then we can extend $h_0, h_1$ to a global map $h: \Sigma \to G$ whose restrictions on $U_0$, $U_1$, $U_\infty$ are $h_0, h_1, h_0 h_1$ respectively. The gauge transformation by $h$ provides an identification
\begin{align*}
{\mc M}\big( [{\mf x}_0], [{\mf x}_1]; [{\mf x}_\infty] \big) \simeq {\mc M} \big( B_0 \# [{\mf x}_0], B_1 \# [{\mf x}_1]; (B_0 + B_1) \# [{\mf x}_\infty] \big).
\end{align*}
Then we define
\begin{align*}
{\mc M}\big( \lb {\mf x}_0 \rb, \lb{\mf x}_1\rb; \lb{\mf x}_\infty \rb \big) := \Big( \bigcup_{[\eta_\sigma] \in {\rm Crit} \wt{\mc A}_{H^\sigma}/ L_0 G \atop \lb \eta_\sigma \rb = \lb {\mf x}_\sigma \rb}   {\mc M}\big( [\eta_0], [\eta_1]; [\eta_\infty] \big)  \Big) / \left( N_2^G(M) \times N_2^G(M) \right).
\end{align*}

Under the assumptions (H1)-(H4), it is easy to achieve transversality by choosing a generic family of almost complex structures which are exponentially close to $J^\sigma$, similar to the case of defining the continuation map. Hence we may always assume that the above moduli space is a smooth manifold. It has a consistent orientation, for essentially the same reason as the case of the usual Hamiltonian Floer theory (cf. \cite{PSS}). For any choice of $\lb {\mf x}_\sigma\rb$, ($\sigma = 0, 1, \infty$), the moduli space is also compact up to the breaking of vortex Floer connecting orbits at cylindrical ends, which is a mild generalization from \cite{Xu_VHF}. Therefore, it is a finite set when ${\sf cz}\lb {\mf x}_0\rb + {\sf cz} \lb {\mf x}_1\rb = {\sf cz}\lb {\mf x}_\infty \rb + \ov{m}$. So its counting gives a chain map
\begin{align*}
VCF(H^0, J^0) \otimes VCF(H^1, J^1) \to VCF (H^\infty, J^\infty),
\end{align*}
which induces the pair-of-pants product \eqref{equation216}.

It is routine to check that the pair-of-pants product \eqref{equation216} is independent of various choices by a homotopy argument, due to the absence of bubbling.  This is essentially a special case of the analysis for general Riemann surfaces in \cite{Mundet_thesis}.  In particular, it doesn't depend on the choice of ${\rm Ham}^G(M)$-connections on $\Sigma$ that satisfy \eqref{equation217}. This fact will be crucial in proving the triangle inequality for the spectral numbers.

\subsubsection*{Identity element}

The multiplicative identity of $VHF(M)$ can be described as follows. Let the domain be ${\mb C}$ and let $U = \{ z\in {\mb C}\ |\ |z|\geq 1\}$ be the cylindrical end, identified with $[0, +\infty)\times S^1$ via the exponential map. Choose a cut-off function $\rho(z)$ supported in $U$ and $\rho(z) = 1$ for $|z|\geq 2$. Choose a volume form $\nu\in \Omega^2({\mb C})$ which is equal to $\lambda^2 ds dt$ on $U$ for some $\lambda>0$.

Now choose a regular pair $(H, J) \in {\mc P}^{reg}_{\lambda}$. Then $\nabla = -\rho(s, t) H_t dt$ is a ${\rm Ham}^G(M)$-connection on ${\mb C}$. Choose a family $\wt{J} = (J_z)_{z\in {\mb C}}$ which is exponentially closed to $J$ on $U$. Then these auxiliary data gives the following
special instance of \eqref{equation210} on the trivial $G$-bundle over ${\mb C}$ on pairs $(A, u) \in \Omega^1(\mb C, \mf g)\times C^\infty({\mb C}, M)$
\begin{align}\label{equation218}
\ov\partial_{A, \nabla} u = 0,\ F_A + \mu(u) \nu = 0.
\end{align}
The group of gauge transformations $g: {\mb C} \to G$ acts on pairs $(A, u)$ which makes the above equation invariant.

By Theorem \ref{thm25}, any bounded solution to \eqref{equation218} is gauge equivalent to a solution ${\mf w} = (A, u)$ which is asymptotic to a loop $\wt{x}\in {\rm Zero}\wt{\mc B}_H$. The homotopy class of $u$ automatically determines a capping ${\mf x}$ to $\wt{x}$, whose class $\lb {\mf x} \rb \in {\rm Crit} {\mc A}_H \subset {\mf L}$ only depends on the gauge equivalence class of ${\mf w}$. Therefore we can consider equivalence classes of solutions to \eqref{equation218} which represent $\lb {\mf x} \rb$, whose moduli space is denoted by
\begin{align*}
{\mc C}(\lb {\mf x} \rb):= {\mc C}(\lb {\mf x} \rb; \nabla, \wt{J}, \nu),
\end{align*}
where ${\mc C}$ stands for ``cap''. By an easy index calculation, we see that this moduli space has virtual dimension
\begin{align*}
{\rm dim}{\mc C}(\lb {\mf x} \rb) = \ov{m} - {\sf cz}\lb {\mf x}\rb.
\end{align*}
Moreover, by perturbing the family of the almost complex structure $J_z$, we can make every such moduli space transverse and has its dimension equal to the virtual dimension.

On the other hand, each ${\mc C}( \lb {\mf x} \rb)$ is compact up to the breakings of vortex Floer trajectories at the infinity side of ${\mb C}$ due to the absence of bubbling again. Such broken objects exist in strictly lower dimensions. Therefore, when $\ov{m} = {\sf cz} \lb {\mf x} \rb$, ${\mc C}( \lb {\mf x} \rb)$ is a finite set. On the other hand, we can assign an orientation on each ${\mc C}( \lb {\mf x} \rb)$ consistent with the orientation of moduli spaces of vortex Floer trajectories. Then we have the algebraic counting $\# {\mc C}( \lb {\mf x} \rb) \in {\mb Z}$. This counting is independent of various choices, such as the $S^1$-family of almost complex structures, the cut-off function $\rho$, etc..

Now we define a formal sum
\begin{align*}
{\bm 1}_H:= \sum_{\ov{m} = {\sf cz} \lb {\mf x} \rb} \# {\mc C}( \lb {\mf x} \rb) \lb {\mf x} \rb.
\end{align*}
By a compactness argument, ${\bm 1}_H\in VCF(M; H)$. Moreover, $\delta_{J, \lambda} {\bm 1}_H = 0$, because if
\begin{align*}
\delta_{J, \lambda}{\bm 1}_H= \sum_{{\sf cz} \lb {\mf y} \rb = \ov{m} -1} a_{\lb {\mf y} \rb} \lb {\mf y}\rb
\end{align*}
then each $a_{\lb {\mf y} \rb}$ counts the boundary of the 1-dimensional moduli space ${\mc C}( \lb {\mf y} \rb)$, which should be zero. Therefore ${\bm 1}_H$ is a vortex Floer cycle. A homotopy argument including certain gluing construction shows that the class of ${\bm 1}_H$ is independent of the choice of $\nu$, $\nabla$ and $J$which have the necessary asymptotic conditions. Therefore, ${\bm 1}_H$ represents a well-defined class
\begin{align}\label{equation219}
{\mbms 1}_{\mbms H} \in VHF(M).
\end{align}

Lastly, using the standard method, one may show that ${\mbms 1}_{\mbms H}$ is a multiplicative identify in $VHF(M)$.  The required gluing analysis was carried out in \cite{Xu_VHF} and \cite{Cieliebak_Gaio_Mundet_Salamon_2002}. The existence of multiplicative identity (or more generally idempotent elements) in the quantum ring is crucial in constructing pre-quasimorphisms and partial quasi-states.

\section{The spectral invariants}\label{section3}

\subsection{Usher's abstract formulation}\label{subsection31}

Let $R$ be a commutative ring. We recall (an ungraded version of) the definition of Usher \cite{Ush08, Usher_2010} of an abstract filtered Floer-Novikov complex.
\begin{defn}\label{defn31}
A {\bf graded filtered Floer-Novikov complex} ${\mf c}$ over $R$ is a tuple
\begin{align*}
{\mf c}:= \big( \underline{\Gamma}, \underline{\mc L}; {\mc A}, \omega; m \big)
\end{align*}
where
\begin{enumerate}
\item {\bf (Loops)} $\underline\Gamma$ is a finitely generated abelian group and $\underline{\mc L}$ is a $\underline\Gamma$-torsor such that $\underline{\mc L}/ \underline\Gamma$ is finite.

\item {\bf (Action functional)} The `action functional'' ${\mc A}: \underline{\mc L} \to {\mb R}$ and the ``period homomorphism'' $\omega: \underline\Gamma \to {\mb R}$ satisfy, for $B \in \underline\Gamma$ and $x \in \underline{\mc L}$
\begin{align*}
{\mc A}(B\# x) = {\mc A}(x)-\omega(B).
\end{align*}

\item {\bf (Flow line counting)} $m: \underline{\mc L} \times \underline{\mc L} \to R$ is a function satisfying, for $x, y \in \underline{\mc L}$ and $B\in \underline\Gamma$
\begin{align*}
{\mc A}(x) \leq {\mc A}(y) \Longrightarrow  m(x, y) = 0,\ m( B\# x, B\# y ) =  m( x, y).
\end{align*}

\item {\bf (Differential)} Let $\Lambda$ be the downward Novikov ring defined by \eqref{equation26} and let $\widehat{\mf c}$ be the free $\Lambda$-module generated by $\underline{\mc L}$. Let ${\mf c}$ be the quotient of $\widehat{\mf c}$ modulo the relation
\begin{align*}
x \simeq q^w x' \Longleftrightarrow \underline\Gamma x = \underline\Gamma x',\ {\mc A} (x') = {\mc A}(x) - w.
\end{align*}
We require that the formal series $\delta(x) = \sum_{y \in \underline{\mc L}} m(x, y) y$ lies in ${\mf c}$ and hence extends to a $\Lambda$-linear map. We require that $\delta^2 = 0$.
\end{enumerate}
\end{defn}




\subsection{The filtration on vortex Floer complex}

By Definition \ref{defn31}, the construction of the complex $(VCF(M; H), \delta_{J, \lambda})$ and the energy identity \eqref{equation25}, the following fact is obvious.

\blem
Using the notations of Section \ref{section2}, for $\lambda > 0$ and $(H, J) \in {\mc P}_\lambda^{reg}$, the collection $(\Gamma,{\mc L};{\mc A}_H,\w^G,m^G_{J, \lambda})$ (where $m^G_{J, \lambda}$ is defined by \eqref{equation27}) is an example of Usher's abstract filtered Floer-Novikov complex over ${\mb Z}$.
\elem

For any filtered Novikov-Floer complex, one can define a natural action filtration. Here we recall the construction for the concrete complex $VCF(M; H)$.

On the Novikov ring $\Lambda$ there is a valuation ${\mf v}_q: \Lambda \to {\mb R}$ given by
\begin{align}\label{equation30}
{\mf v}_q \Big( \sum_{i =1}^\infty a_i q^{w_i} \Big) = \sup\{ w_i\ |\  a_i \neq 0\}.
\end{align}
It induces a valuation on the $VCF(H)$, given by
\begin{align*}
{\mf v}_q \Big( \sum_{\lb {\mf x} \rb} a_{\lb {\mf x} \rb} \lb {\mf x} \rb \Big) = \max\Big\{  {\mf v}_q ( a_{\lb {\mf x} \rb}) + {\mc A}_H \lb {\mf x} \rb  \Big\}.
\end{align*}
We define a filtration on $VCF(H)$ by
\begin{align*}
VCF_\tau ( H ):= \big\{ {\mf X} \in VCF(H) \ | \  {\mf v}_q ({\mf X})  \leq \tau \big\}.
\end{align*}
Then (3) of Definition \ref{defn31} (or the energy identity \eqref{equation25}), we have
\begin{lemma}
If $(H, J)\in {\mc P}^{reg}_\lambda$, then $\delta_{J, \lambda} \left( VCF_\tau (H) \right) \subset VCF_\tau (H)$.
\end{lemma}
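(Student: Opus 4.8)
The plan is to show that the boundary operator $\delta_{J,\lambda}$ does not increase the $q$-valuation filtration level $\tau$, which amounts to a bookkeeping argument combining the definition of $\delta_{J,\lambda}$ with the energy identity \eqref{equation25}. First I would take a general element ${\mf X} = \sum_{\lb {\mf x}\rb} a_{\lb {\mf x}\rb} \lb {\mf x} \rb \in VCF_\tau(H)$, so that by definition ${\mf v}_q(a_{\lb {\mf x}\rb}) + {\mc A}_H\lb {\mf x}\rb \leq \tau$ for every $\lb {\mf x}\rb$ with $a_{\lb {\mf x}\rb}\neq 0$. Since $\delta_{J,\lambda}$ is $\Lambda$-linear, it suffices to bound ${\mf v}_q\big( a_{\lb {\mf x}\rb}\, \delta_{J,\lambda}\lb {\mf x}\rb\big)$, and since ${\mf v}_q$ is a valuation this reduces to bounding ${\mc A}_H\lb {\mf y}\rb$ for each $\lb {\mf y}\rb$ appearing in $\delta_{J,\lambda}\lb {\mf x}\rb$ by ${\mc A}_H \lb {\mf x}\rb$.

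The key point is then exactly item (3) of Definition \ref{defn31}: by construction of $m_{J,\lambda}^G$ via \eqref{equation27}, the coefficient $m_{J,\lambda}^G(\lb {\mf x}\rb, \lb {\mf y}\rb)$ is nonzero only when the moduli space $\wh{\mc M}_\lambda(\lb {\mf x}\rb, \lb {\mf y}\rb)$ is nonempty, and then the energy identity \eqref{equation25} gives $\mc{YMH}_\lambda({\mf w}) = {\mc A}_H\lb {\mf x}\rb - {\mc A}_H\lb {\mf y}\rb$ for any representative ${\mf w}$. Since $\mc{YMH}_\lambda \geq 0$ (it is a sum of $L^2$-norms by \eqref{equation22}), this forces ${\mc A}_H\lb {\mf y}\rb \leq {\mc A}_H\lb {\mf x}\rb$. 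Hence for every $\lb {\mf y}\rb$ in the support of $\delta_{J,\lambda}\lb {\mf x}\rb$ we get ${\mf v}_q(a_{\lb {\mf x}\rb}) + {\mc A}_H\lb {\mf y}\rb \leq {\mf v}_q(a_{\lb {\mf x}\rb}) + {\mc A}_H\lb {\mf x}\rb \leq \tau$.

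Finally I would assemble these estimates: $\delta_{J,\lambda}{\mf X} = \sum_{\lb {\mf x}\rb} a_{\lb {\mf x}\rb}\sum_{\lb {\mf y}\rb} m_{J,\lambda}^G(\lb {\mf x}\rb, \lb {\mf y}\rb)\lb {\mf y}\rb$, and for each resulting generator $\lb {\mf y}\rb$ its total coefficient has $q$-valuation at most $\max_{\lb {\mf x}\rb} {\mf v}_q(a_{\lb {\mf x}\rb})$ (valuations of $R$-coefficients $m_{J,\lambda}^G$ are $\leq 0$ since they are integers, i.e. elements of $R\subset\Lambda$ in degree $q^0$), so using the ultrametric inequality for ${\mf v}_q$ together with the action bound above yields ${\mf v}_q(\delta_{J,\lambda}{\mf X}) \leq \tau$, i.e. $\delta_{J,\lambda}{\mf X}\in VCF_\tau(H)$. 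There is no real obstacle here; the only mild subtlety is that a single $\lb {\mf y}\rb$ may receive contributions from several $\lb {\mf x}\rb$, which is handled by the non-archimedean triangle inequality ${\mf v}_q(\sum b_i) \leq \max_i {\mf v}_q(b_i)$, and that one must keep track of the $\Gamma$-equivalence relation defining $VCF(H)$, which is compatible with ${\mf v}_q$ by construction of the induced valuation. The statement is essentially immediate once Definition \ref{defn31}(3) and the energy identity are in place, so I would present it as a short direct computation.
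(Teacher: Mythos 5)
Your argument is correct and is exactly the route the paper takes: the paper simply cites Definition \ref{defn31}(3) (equivalently the energy identity \eqref{equation25}) for this lemma, and your write-up just spells out that positivity of $\mc{YMH}_\lambda$ forces ${\mc A}_H\lb {\mf y}\rb \leq {\mc A}_H\lb {\mf x}\rb$ whenever $m^G_{J,\lambda}(\lb {\mf x}\rb,\lb {\mf y}\rb)\neq 0$, so the non-archimedean valuation cannot increase under $\delta_{J,\lambda}$. No gaps; your handling of multiple contributions via the ultrametric inequality and of the $\Gamma$-equivalence is exactly the bookkeeping the paper leaves implicit.
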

Therefore we have the induced homology groups and induced maps
\begin{align*}
VHF_\tau (H, J, \lambda) = \frac{ {\rm Ker} \big( \delta_{J, \lambda}: VCF_\tau(H) \to VCF_\tau(H) \big) }{{\rm Im} \big( \delta_{J, \lambda}: VCF_\tau(H) \to VCF_\tau(H) \big)}.
\end{align*}
\begin{align*}
\begin{split}
\iota_\tau: VHF_\tau (H, J, \lambda)  \to VHF (H, J, \lambda) \simeq VHF(M);\\
\iota_{\tau_1, \tau_2}: VHF_{\tau_1} (H, J, \lambda) \to VHF_{\tau_2} (H, J, \lambda),\ {\rm if}\ \tau_1 \leq \tau_2.
\end{split}
\end{align*}
which are all consistent.

There are still a few facts we need to recall before we introduce the spectral invariants. Following \cite{FOOO_spectral}, we denote
\begin{align}\label{equation31}
G(M, \mu) := \left\{ \left\langle \left[ \omega^G  \right], A \right\rangle \ |\ A \in S_2^G(M) \right\} \subset {\mb R}.
\end{align}
The action spectrum of $H$ is
\begin{align}\label{equation31X}
{\rm Spec} H = \big\{ {\mc A}_H \lb {\mf x} \rb\ |\ \lb {\mf x} \rb \in {\rm Crit} {\mc A}_H  \big\}.
\end{align}
Indeed, ${\rm Spec} H$ is a $G(M, \mu)$-torsor. We can define the corresponding action spectrum of the reduction $\ov{H}$ on the symplectic quotient, ${\rm Spec} \ov{H}$; and $G(\ov{M}):= \left\{ \alpha \cap \ov{\omega} \ |\ \alpha \in \pi_2(\ov{M}) \right\}$. Then ${\rm Spec} \ov{H}$ is a $G(\ov{M})$-torsor and we have natural inclusions ${\rm Spec} \ov{H} \subset {\rm Spec} H$ and $G(\ov{M}) \subset G(M, \mu)$ such that the former is equivariant with respect to the latter.

\begin{lemma}\label{lemma34}{\rm (}cf. \cite[Lemma 2.2]{Oh_1}{\rm )}
For each $H \in {\mc H}_G(M)$, ${\rm Spec} H$ is a measure zero subset of ${\mb R}$.
\end{lemma}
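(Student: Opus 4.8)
The plan is to reduce the claim to a statement about a single smooth compact manifold — namely the symplectic quotient $\ov{M}$ — and then invoke the classical fact (Oh \cite{Oh_1}) that the action spectrum of a Hamiltonian on a closed symplectic manifold is nowhere dense, in particular of measure zero. First I would recall from Section \ref{s:actionFunctional} that ${\rm Spec}\, H$ is a $G(M,\mu)$-torsor, where $G(M,\mu)=\{\langle[\omega^G],A\rangle \mid A\in S_2^G(M)\}$ is a countable subgroup of ${\mb R}$ (it is the image of a finitely generated group under a homomorphism, so countable). Hence it suffices to show that the image of ${\rm Crit}\wt{\mc A}_H$ under $\wt{\mc A}_H$, taken modulo the action of $N_2^G(M)$, meets only countably many $G(M,\mu)$-cosets; equivalently, that for any fixed capping representative, the set of critical values is contained in a countable union of translates of a measure-zero set.

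The key reduction is that critical points of $\wt{\mc A}_H$ project to ${\rm Zero}\,\wt{\mc B}_H$, which consists of pairs $(x,f)$ with $\mu(x(t))\equiv 0$ and $\dot x + {\mc X}_{f(t)}(x) = Y_{H_t}(x)$; since the $G$-action on $\mu^{-1}(0)$ is free and $Y_{H_t}$ is $G$-horizontal, these descend bijectively (up to gauge) to $1$-periodic orbits of $\ov{H}$ on $\ov{M}$, and under this correspondence the value $\wt{\mc A}_H({\mf x})$ equals the classical action $\ov{\mc A}_{\ov H}(\ov{\mf x})$ of the corresponding capped orbit downstairs — this is implicit in the energy identity \eqref{equation25} and the identification $G(\ov M)\subset G(M,\mu)$, ${\rm Spec}\,\ov H\subset {\rm Spec}\,H$ noted just before the lemma. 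Concretely I would argue: the set of geometric $1$-periodic orbits of $\ov H$ on the closed manifold $\ov M$ is countable (it is the fixed-point set of the return map; even without nondegeneracy it is covered by countably many compact pieces, and for each such orbit the set of cappings differs by $\pi_2(\ov M)$, hence the action values along a fixed orbit lie in a single $G(\ov M)$-coset). Therefore ${\rm Spec}\,\ov H$ is a countable set, in particular measure zero. Pulling this back up, ${\rm Spec}\, H$ is contained in a countable union of $G(M,\mu)$-cosets of (the image of) this countable set, hence is itself countable, and a countable subset of ${\mb R}$ has measure zero.

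The main obstacle, and the one point that deserves care rather than a wave of the hand, is the passage from critical points of $\wt{\mc A}_H$ upstairs to $1$-periodic orbits of $\ov H$ downstairs: one must check that the $L_0G$-action (reparametrization by contractible loops in $G$) together with the $N_2^G(M)$-quotient really does collapse each $\ov H$-orbit-with-varying-capping to a single $\Gamma$-coset of action values, so that no spurious uncountable family appears. This is exactly the content of the identity $\wt{\mc A}_H(B\#[{\mf x}]) = \wt{\mc A}_H([{\mf x}]) - \langle[\omega^G],B\rangle$ from \cite[Lemma 2.6]{Xu_VHF} combined with $L_0G$-invariance of $\wt{\mc A}_H$ (\cite[Lemma 2.5]{Xu_VHF}), so in fact the excerpt gives me everything I need; the remaining work is purely bookkeeping. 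If one prefers to avoid even that bookkeeping, a cleaner route is to cite Oh's lemma verbatim for $(\ov M,\ov\omega,\ov H)$ to get that ${\rm Spec}\,\ov H$ is measure zero, observe ${\rm Spec}\,H$ is a $G(M,\mu)$-torsor with $G(M,\mu)$ countable, and note that any $G(M,\mu)$-torsor whose quotient by the (countable) subgroup $G(\ov M)$ lands in the measure-zero set ${\rm Spec}\,\ov H$ is automatically measure zero — which is immediate since countably many translates of a null set form a null set.
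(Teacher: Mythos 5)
Your argument stands or falls on the claim that the set of $1$-periodic orbits of $\ov{H}$ on $\ov{M}$, and hence ${\rm Spec}\,\ov{H}$, is countable, and this is where the genuine gap lies. The justification you give --- that the fixed point set of the return map is ``covered by countably many compact pieces'' --- does not imply countability (every closed subset of a manifold is so covered), and the claim itself is false once $\ov H$ is degenerate, which is allowed: the lemma is stated for all $H\in{\mc H}_G(M)$, nondegeneracy being the separate hypothesis {\bf (H4)} defining ${\mc H}_G^*(M)$. For instance, for an autonomous $\ov H$ every critical point $p$ of $\ov H$ is a constant $1$-periodic orbit with action $-\ov H(p)$ (modulo periods), and one can choose a smooth $\ov H$ on a closed manifold whose set of critical values is uncountable (it is of measure zero by Sard, but uncountable, e.g.\ built from a function that is injective on a Cantor set of critical points). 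Then ${\rm Spec}\,\ov H$ is uncountable and no counting argument can conclude. This is precisely why the classical proof of the cited result of Oh is a Sard-type argument (a finite-dimensional reduction of the action functional, e.g.\ via generating functions for the Hamiltonian isotopy), not a countability argument: measure zero survives degeneracy, countability does not. If you restricted to $\ov H$ satisfying {\bf (H4)} your reduction would be fine, since then the orbit set is finite and ${\rm Spec}\,H$ is a countable union of $G(M,\mu)$-cosets; but you explicitly claim the statement ``even without nondegeneracy,'' and the lemma as stated is for all $H$.

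Your fallback route has a second, independent gap: it needs the containment ${\rm Spec}\,H\subset {\rm Spec}\,\ov H + G(M,\mu)$, i.e.\ that every vortex critical value is a period-translate of a classical action value downstairs, whereas the paper only records the opposite inclusion ${\rm Spec}\,\ov H\subset{\rm Spec}\,H$. Critical points of the equivariant functional are loops $x\subset\mu^{-1}(0)$ that are contractible in $M$ (together with $f$ and an equivariant capping); their projections to $\ov M$ need not be contractible in $\ov M$, hence need not admit cappings in $\ov M$ and need not contribute to the classical spectrum of $\ov H$ at all. So anchoring every $G(M,\mu)$-orbit of ${\rm Spec}\,H$ to a value in ${\rm Spec}\,\ov H$ is not the ``immediate'' bookkeeping you describe; it requires either proving this containment or running an Oh-type measure-zero argument directly for the equivariant action functional (equivalently, for the orbits of $\ov H$ in each relevant homotopy class), after which the countable-translates step is indeed harmless.
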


\begin{lemma}\label{lemma35}
${\rm Spec}H$ only depends on the isotopy class of $\ov{H}$, i.e., the Hamiltonian path $\wt\psi_{\ov{H}} \in \wt{\rm Ham}(\ov{M})$.
\end{lemma}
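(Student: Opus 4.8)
The claim is that ${\rm Spec}\,H$ depends only on the path $\wt\psi_{\ov H}\in\wt{\rm Ham}(\ov M)$, i.e.\ on the homotopy-class-rel-endpoints of the loop-of-Hamiltonians $\ov H$ downstairs, not on $H$ itself nor on the particular reduction. The natural strategy is to recognize ${\rm Spec}\,H$ as the set of critical values of the action functional $\wt{\mc A}_H$ on ${\mf L}$, and to exhibit an explicit bijection between ${\rm Crit}\,{\mc A}_H$ (with its action values) and the corresponding object for any other Hamiltonian $H'$ whose reduction $\ov H'$ is path-homotopic to $\ov H$. Concretely, first I would reduce to the case where $\ov H$ and $\ov H'$ have the \emph{same} time-one map and differ by a contractible loop in ${\rm Ham}(\ov M)$, and even further, by the homotopy-lifting philosophy behind continuation maps, it suffices to treat the infinitesimal situation: a smooth family $\ov H^{(r)}$, $r\in[0,1]$, of reductions all generating paths homotopic rel endpoints, and show ${\rm Spec}\,H^{(r)}$ is locally constant in $r$.

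For the key computation, recall from Section~\ref{s:actionFunctional} that elements of ${\rm Zero}\,\wt{\mc B}_H$ are pairs $(x,f)$ with $\mu(x)\equiv 0$ and $\dot x+{\mc X}_f(x)-Y_{H_t}(x)=0$; since $x$ lies in $\mu^{-1}(0)$ and $Y_{H_t}$ descends to $Y_{\ov H_t}$, projecting to $\ov M$ gives exactly the $1$-periodic orbits of $\ov H$, and the gauge freedom in $f$ is killed when passing to ${\mf L}/L_0G$. Thus there is a canonical identification ${\rm Crit}\,{\mc A}_H/\Gamma \cong {\rm Crit}\,{\mc A}_{\ov H}/G(\ov M)$ (the ordinary capped $1$-periodic orbits downstairs), compatible with the action values up to the lattice $G(\ov M)\subset G(M,\mu)$. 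On the level of action values I would verify directly, using the formula $\wt{\mc A}_H(x,f,[w])=-\int_B w^*\omega+\int_{S^1}(\mu(x)\cdot f-H_t(x))\,dt$ and $\mu(x)\equiv 0$, that $\wt{\mc A}_H(x,f,[w])$ reduces to the standard Hamiltonian-Floer action of the projected capped orbit $\ov{\mf x}$ computed with $\ov H$ (here $\int_B w^*\omega$ matches $\int \ov w^*\ov\omega$ because $\omega|_{\mu^{-1}(0)}$ is the pullback of $\ov\omega$). Then the statement reduces to the classical fact (Oh, Schwarz) that the action spectrum of an ordinary Hamiltonian on $\ov M$ depends only on its element of $\wt{\rm Ham}(\ov M)$; one can either invoke that directly or reprove the one-line computation showing that under a contractible loop in ${\rm Ham}(\ov M)$ the action functional changes by a constant depending only on the homotopy class, using the Flux/action-of-a-loop formula.

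The main obstacle is bookkeeping the cappings and the two lattices: one must check that the inclusion ${\rm Spec}\,\ov H\subset{\rm Spec}\,H$ together with $G(\ov M)\subset G(M,\mu)$ is genuinely an equality of \emph{sets} ${\rm Spec}\,H={\rm Spec}\,\ov H$ — i.e.\ that every equivariant capping of a periodic orbit gives the same real number as some ordinary capping — which follows because the additional classes in $S_2^G(M)\setminus S_2(M)$ pair with $[\omega^G]$ via the minimal coupling form over $\mu^{-1}(0)$, where $\mu=0$, so they contribute exactly their $\ov\omega$-area after descent; hence no new action values appear. Once this identification is in place, invariance under the path-homotopy class of $\ov H$ is immediate from the classical statement, completing the proof.
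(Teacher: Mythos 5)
There is a genuine gap in the step you yourself flag as the main obstacle: the claimed equality ${\rm Spec}\,H={\rm Spec}\,\ov{H}$ is false in general, and so is its justification. The extra classes in $S_2^G(M)\setminus S_2(M)$ need \emph{not} be representable by sections landing in $\mu^{-1}(0)$, so you cannot evaluate their pairing with $[\omega^G]$ "over $\mu^{-1}(0)$ where $\mu=0$". A concrete example satisfying {\bf (H1)}--{\bf (H3)}: $M={\mb C}$, $G=U(1)$ acting by rotation, $\mu(z)=\tfrac{|z|^2}{2}-c$ with $c>0$, so $\ov{M}$ is a point and $G(\ov{M})=\{0\}$, while $S_2^G(M)\simeq \pi_2(BU(1))\simeq{\mb Z}$ and the pairing of $[\omega^G]$ with the class of a degree-$d$ bundle (say with the zero section) equals $2\pi c\,d\neq 0$; these classes cannot be represented inside $\mu^{-1}(0)$ (there any section descends to a sphere in a point, of zero area). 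Hence $G(M,\mu)\supsetneq G(\ov{M})$ and ${\rm Spec}\,H={\rm Spec}\,\ov{H}+G(M,\mu)\supsetneq{\rm Spec}\,\ov{H}$: new action values do appear, coming from recapping by equivariant classes, and your argument that "no new action values appear" breaks down exactly there.

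The good news is that the lemma does not need the equality, only the saturation statement ${\rm Spec}\,H={\rm Spec}\,\ov{H}+G(M,\mu)$, i.e.\ that ${\rm Spec}\,H$ is the union of $G(M,\mu)$-cosets through ${\rm Spec}\,\ov{H}$ (this is precisely the "torsor"/equivariant-inclusion remark the paper records just before the lemma). Since $G(M,\mu)$ depends only on $(M,\omega,\mu)$ and not on $H$, the lemma then follows from the classical fact that ${\rm Spec}\,\ov{H}$ depends only on $\wt\psi_{\ov{H}}$ (for normalized $\ov{H}$), which is exactly the paper's one-line proof. Your preceding steps --- identifying ${\rm Zero}\,\wt{\mc B}_H$ modulo gauge with $1$-periodic orbits of $\ov{H}$, and matching action values of critical points with classical actions of capped orbits downstairs (note that here you also implicitly use {\bf (H1)} to make the value independent of the capping in $M$) --- are fine and are in the same spirit as the paper's setup; only the deck-group bookkeeping needs to be corrected as above.
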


\bpf Since $Spec(\ov H)\subset Spec(H)$ is a non-empty measure zero subset depending only on the isotopy class of $\ov H$, the conclusion follows.
\epf

\subsection{The spectral invariants}

We now may define the \textit{spectral invariant} in the vortex context as the spectral number in Usher's abstract context. 
Explicitly, this means the functional
$c^v : VHF(M) \times {\mc H}_G^*(M) \to {\mb R}\cup \{-\infty\}$ given by
\begin{align*}
c^v(a, H) =  \inf \big\{ \tau \in {\mb R}\ | \ a \in \iota_\tau ( VHF_\tau (H, J, \lambda) ) \big\}.
\end{align*}

The following main proposition establishes the key properties for $c^v$, with which one should compare with the case of usual Hamiltonian Floer theory as in \cite{Oh_2005}\cite{Schwarz_spectral}. In particular, the spectral invariants are defined for all smooth Hamiltonian functions (see item (4) below).


\begin{prop}\label{prop36}
Let $H \in {\mc H}_G^*(M)$ and $a \in VHF(M) \setminus \{0\}$.
\begin{enumerate}
 \item {\bf (Finiteness and spectrality)} If $(H, J) \in {\mc P}^{reg}_\lambda$, then $c(a, H; J, \lambda)$ is finite and $c(a, H; J, \lambda) \in {\rm Spec}H$.  Moreover, $c(a,H+C)=c(a,H)+ C$ for $C\in\mb R$.

 \item {\bf (Independence of $J$ and $\lambda$)} $c(a, H; J, \lambda)$ doesn't depend on the choice of the constant $\lambda>0$ and $J$ for which $(H, J)\in {\mc P}^{reg}_\lambda$. We abbreviate $c(a, H)= c(a, H; J, \lambda)$.

 \item {\bf (Independence of the lifting)} If the restriction of $H^\alpha$ and $H^\beta$ on $\mu^{-1}(0)$ are equal, then $c(a, H^\alpha) = c(a, H^\beta)$. So $c(a, H)$ only depends on $\ov{H} \in {\mc H}^*(\ov{M})$ and we denote $c(a, \ov{H}) = c(a, H)$.

 \item {\bf (Lipschitz continuity)} For $\ov{H}^\alpha, \ov{H}^\beta \in {\mc H}^*(\ov{M})$, we have
\begin{align*}
\int_{S^1} \min_{\ov{M}} \big( \ov{H}^\beta - \ov{H}^\alpha \big) dt \leq c\big( a, \ov{H}^\alpha \big) - c \big( a, \ov{H}^\beta \big) \leq \int_{S^1} \max_{\ov{M}} \big( \ov{H}^\beta - \ov{H}^\alpha \big) dt.
\end{align*}
This allows us to define $c \big( a, \ov{H} \big)$ for all $\ov{H}\in {\mc H}(\ov{M})$.

\item {\bf (Monotonicity)} If $\ov{H}^\alpha \leq \ov{H}^\beta$ on $\ov{M}$, then $c \big( a,\ov{H}^\alpha \big) \geq c \big( a,\ov{H}^\beta \big)$.

\item {\bf (Isotopy invariance)} If $\ov{H}, \ov{K}\in {\mc H}(\ov{M})_0$ and $\wt\phi^1_{\ov{H}} = \wt\phi^1_{\ov{K}} \in \wt{\rm Ham}(\ov{M})$, then $c ( a,\ov{H} )=c ( a,\ov{K} )$. By isotopy invariance, we can define $c\big( a, \wt\phi \big)$ for $\wt\phi\in  \wt{\rm Ham}(\ov{M})$ as $c\big( a, \ov{H}\big)$ for any $\ov{H} \in {\mc H}(\ov{M})_0$ with $\wt\phi_{\ov{H}} = \wt\phi$.

\item {\bf (Hamiltonian invariance)} For $\psi \in {\rm Ham}(\ov{M})$ and $\wt\phi \in \wt{\rm Ham}(\ov{M})$,
\begin{align*}
c\big( a,\psi^{-1} \wt\phi \psi \big)=c\big( a, \wt\phi \big).
\end{align*}

\item {\bf (Shifting property)} For $\lambda \in \Lambda$ and $\wt\phi \in \wt{\rm Ham}(\ov{M})$,
\begin{align*}
c\big( \lambda a, \wt\phi \big) = c \big( a, \wt\phi\big) + {\mf v}_q (\lambda).
\end{align*}

\item {\bf (Triangle inequality)} For $a_0, a_1 \in VHF(M)$ and $\wt\phi_0, \wt\phi_1 \in \wt{\rm Ham}(\ov{M})$,
\begin{align}\label{equation32}
c \big( a_0 * a_1,\wt\phi_0 \wt\phi_1 \big)\leq c \big( a_0,\wt\phi_0 \big) + c \big( a_1, \wt\phi_1 \big).
\end{align}

\item {\bf (Weak normalization)} $c(a,0)$ is finite and depends only on $a\in VHF(M)$.

\end{enumerate}
\end{prop}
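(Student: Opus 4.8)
The plan is to establish the ten items essentially in the listed order, each leaning on its predecessors; beyond Usher's abstract formalism the only genuinely new analytic inputs are the energy identities of Proposition~\ref{prop27} and Theorem~\ref{thm25}, together with an adiabatic estimate as $\lambda\to+\infty$. Items (1) and (8) are formal. Finiteness and spectrality of $c(a,H;J,\lambda)$ for $a\neq 0$ follow from the fact, recorded above, that $(\Gamma,{\mc L};{\mc A}_H,\omega^G,m^G_{J,\lambda})$ is an abstract filtered Floer--Novikov complex, by Usher's general theory \cite{Ush08,Usher_2010}; the clause $c(a,H+C)=c(a,H)+C$ comes from inspecting $\wt{\mc A}_H$, which merely shifts by a constant when $H$ is shifted by a constant, the differential being unchanged; and item (8) records that multiplying by $\lambda\in\Lambda$ shifts the valuation ${\mf v}_q$ by ${\mf v}_q(\lambda)$. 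For item (2) I would take two regular triples $(H,J^-,\lambda^-)$ and $(H,J^+,\lambda^+)$ with the \emph{same} $H$ and run the continuation equation \eqref{equation214} along a homotopy constant in $H$; then $\partial_s H_{s,t}\equiv 0$, so \eqref{equation215} becomes $\mc{YMH}_{\wt H}({\mf w})={\mc A}_H\lb {\mf x}^-\rb-{\mc A}_H\lb {\mf x}^+\rb\geq 0$, the continuation map is action-non-increasing hence filtration-non-increasing, and interchanging the two triples (the composite being chain homotopic to the identity) forces equality of the spectral numbers.

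Items (3)--(5) are where the adiabatic limit enters. Fix lifts $H^\alpha,H^\beta$ of $\ov H^\alpha,\ov H^\beta\in{\mc H}^*(\ov M)$ and the monotone homotopy $H_{s,t}=(1-\beta(s))H^\alpha_t+\beta(s)H^\beta_t$, with $\beta$ nondecreasing and locally constant near $\pm\infty$, together with a volume form $\wt\lambda(s)^2\,ds\,dt$ on $\Theta$ equal to $\lambda^2\,ds\,dt$ on the compact support of $\beta'$. For a solution ${\mf w}$ of \eqref{equation214} from $\lb {\mf x}^-\rb$ to $\lb {\mf x}^+\rb$, \eqref{equation215} reads ${\mc A}_{H^\beta}\lb {\mf x}^+\rb={\mc A}_{H^\alpha}\lb {\mf x}^-\rb-\mc{YMH}_{\wt H}({\mf w})-\int_\Theta\beta'(s)(H^\beta_t-H^\alpha_t)(u)\,ds\,dt$. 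Since $|H^\beta-H^\alpha|\leq M_0$ and $\int\beta'=1$, the last term has absolute value $\leq M_0$, so $\mc{YMH}_{\wt H}({\mf w})\leq{\mc A}_{H^\alpha}\lb {\mf x}^-\rb-{\mc A}_{H^\beta}\lb {\mf x}^+\rb+M_0$; moreover $\int|\mu(u)|^2\,ds\,dt\leq C\lambda^{-2}\mc{YMH}_{\wt H}({\mf w})$ over the support of $\beta'$. Because $H^\beta-H^\alpha$ restricts to $\ov H^\beta-\ov H^\alpha$ on $\mu^{-1}(0)$ and $0$ is a regular value, there is a pointwise bound $(H^\beta_t-H^\alpha_t)(x)\geq\min_{\ov M}(\ov H^\beta_t-\ov H^\alpha_t)-C'|\mu(x)|$ on $M$; feeding this and Cauchy--Schwarz into the displayed identity, and using the self-consistent energy bound just obtained, gives ${\mc A}_{H^\beta}\lb {\mf x}^+\rb\leq{\mc A}_{H^\alpha}\lb {\mf x}^-\rb-\int_{S^1}\min_{\ov M}(\ov H^\beta_t-\ov H^\alpha_t)\,dt+\epsilon(\lambda)$, uniformly over all solutions, with $\epsilon(\lambda)\to 0$ as $\lambda\to+\infty$. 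Hence the continuation chain map raises the filtration by at most $-\int_{S^1}\min_{\ov M}(\ov H^\beta_t-\ov H^\alpha_t)\,dt+\epsilon(\lambda)$; letting $\lambda\to+\infty$ (legitimate by item (2)) and then swapping $\alpha\leftrightarrow\beta$ yields the two-sided estimate of item (4), and item (5) is its immediate consequence. Item (3) is the special case $\ov H^\alpha=\ov H^\beta$: then both sides of the estimate vanish, so $|c(a,H^\alpha)-c(a,H^\beta)|\leq\epsilon(\lambda)$ for all large $\lambda$, and since neither side depends on $\lambda$ they agree; this also licenses writing $c(a,\ov H)$, and item (4) extends $c(a,\cdot)$ by uniform continuity to all of ${\mc H}(\ov M)$.

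For the remaining items I would run the classical arguments. Item (6): connect normalized $\ov H,\ov K$ with $\wt\phi^1_{\ov H}=\wt\phi^1_{\ov K}$ by a path $\ov H_s$ of normalized Hamiltonians generating the same element of $\wt{\rm Ham}(\ov M)$; then $s\mapsto c(a,\ov H_s)$ is continuous by item (4) and takes values in ${\rm Spec}\,\ov H_s$ (item (1), with a limiting argument where $\ov H_s$ is degenerate), which by Lemmas~\ref{lemma34} and \ref{lemma35} is a measure-zero subset of ${\mb R}$ independent of $s$; a connected subset of such a set is a point, so the function is constant, as in \cite{Oh_2005,Schwarz_spectral}, and $c(a,\wt\phi)$ is well defined. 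Item (7): writing $\psi=\phi^1_{\ov K}$, choose a $G$-invariant lift $K$ of $\ov K$ and set $\Psi=\phi^1_K\in{\rm Ham}_G(M)$, which preserves $\mu^{-1}(0)$ and covers $\psi$; since $\psi^{-1}\wt\phi\psi$ is generated by $\ov H_t\circ\psi$, with $G$-invariant lift $H_t\circ\Psi$, the diffeomorphism $\Psi$ induces a filtration-preserving isomorphism $VCF(M;H\circ\Psi)\cong VCF(M;H)$ compatible with the canonical identifications with $VHF(M)$ (being Hamiltonian, $\Psi$ acts trivially there), so $c(a,\psi^{-1}\wt\phi\psi)=c(a,\wt\phi)$. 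Item (9): specialize Theorem~\ref{thm25} to the pair-of-pants, so that for a solution with incoming asymptotics $\lb {\mf x}_0\rb,\lb {\mf x}_1\rb$ and outgoing $\lb {\mf x}_\infty\rb$, non-negativity of $\mc{YMH}_\nabla$ and \eqref{equation211} give ${\mc A}_{H^\infty}\lb {\mf x}_\infty\rb\leq{\mc A}_{H^0}\lb {\mf x}_0\rb+{\mc A}_{H^1}\lb {\mf x}_1\rb+\int_\Sigma u^*R_\nabla$; choosing $H^0,H^1,H^\infty$ to generate $\wt\phi_0,\wt\phi_1,\wt\phi_0\wt\phi_1$ and the ${\rm Ham}^G(M)$-connection $\nabla$ so that $\int_\Sigma u^*R_\nabla\leq 0$ (possible as in the classical case, and legitimate since the product is independent of which such $\nabla$ is used), the product chain map sends chains of valuations $\tau_0,\tau_1$ to one of valuation $\leq\tau_0+\tau_1$, which with item (6) is \eqref{equation32}. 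Item (10): $0\notin{\mc H}^*_G(M)$, so $c(a,0)$ is defined via the Lipschitz extension of item (4); its finiteness follows by comparison with any nondegenerate $\ov H$ of small norm through items (1) and (4), and it depends only on $a$ because, by item (6), all normalized Hamiltonians with time-one flow equal to ${\rm id}$ have the same spectral number.

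The step I expect to be the main obstacle is the adiabatic estimate behind item (3) and the sharp $\min_{\ov M}/\max_{\ov M}$ form of item (4). Its core is the chain: (i) within each fixed continuation moduli space the Yang--Mills--Higgs energy is controlled by the asymptotic actions plus the uniform bound $M_0$ on the Hamiltonian perturbation; (ii) on the compact part of the cylinder where the perturbation varies the volume form is $\lambda^2\,ds\,dt$, so $\|\mu(u)\|_{L^2}$ there is $O(\lambda^{-1}\sqrt{\mc{YMH}})$; (iii) the self-consistency of (i) and (ii) then bounds the whole correction term by $O(\lambda^{-1})$, uniformly over the moduli space. Making this precise uses the a priori $C^0$-bound on $u$ and the compactness of $\ov M$, so that no energy escapes and the estimates are uniform; this is exactly where {\bf (H1)}--{\bf (H3)} enter --- asphericity to exclude sphere bubbling, {\bf (H2)}--{\bf (H3)} for properness and the equivariant convexity that confines solutions. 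A secondary, routine delicacy is the sign of the curvature term $\int_\Sigma u^*R_\nabla$ in item (9), handled exactly as in ordinary Hamiltonian Floer theory. The full details are deferred to Section~\ref{s:Proof}.
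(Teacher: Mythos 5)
Your proposal is correct in substance, and for most items ((1), (2), (5)--(8), (10)) it follows the same route as the paper: Usher's abstract theory for finiteness and spectrality, the continuation map with $\partial_s H \equiv 0$ together with the energy identity \eqref{equation215} for independence of $J$ and $\lambda$, Lipschitz continuity plus spectrality plus the fixed measure-zero spectrum (Lemmas \ref{lemma34}, \ref{lemma35}) for isotopy invariance, and $G$-invariant lifting of Hamiltonian diffeomorphisms for item (7). You deviate in two places. For (3)--(4), the paper proves independence of the lifting \emph{first}, by the adiabatic limit using the bound $|\partial_s H_{s,t}(x)| \le C|\mu(x)|$ (equation \eqref{equation33}, valid because the two lifts agree on $\mu^{-1}(0)$), and then proves the Lipschitz estimate with no adiabatic limit at all: item (3) licenses replacing the lifts by admissible ones satisfying \eqref{equation311}, so the $\min/\max$ of $H^\beta-H^\alpha$ over $M$ is $\epsilon$-close to the $\min/\max$ over $\ov M$ and the monotone-homotopy estimate closes immediately. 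You instead prove (4) directly for arbitrary lifts via the pointwise bound $(H^\beta_t-H^\alpha_t)(x) \ge \min_{\ov M}(\ov H^\beta_t-\ov H^\alpha_t) - C'|\mu(x)|$, the self-consistent energy estimate, and $\lambda\to\infty$, deducing (3) as the case $\ov H^\alpha=\ov H^\beta$. This works (your pointwise bound follows from properness of $\mu$ exactly as \eqref{equation33} does, and your remark about self-consistency is precisely the needed point), but you should still take the lifts admissible, since Theorem \ref{thm23}, which supplies regular pairs at arbitrarily large $\lambda$, is only proved for admissible lifts; what the paper's ordering buys is a proof of (4) requiring neither the limit $\lambda\to\infty$ nor the uniform estimate over the continuation moduli space. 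For (9), the paper deliberately follows Oh/Entov: it quotes Entov's theorem producing a connection with $\sup|R_{\nabla_\epsilon}|\le\epsilon$ and makes it a ${\rm Ham}^G(M)$-connection by a one-line averaging over $G$, absorbing the error $\epsilon\int_{\Sigma_0}\nu$ at the end; you propose instead the Schwarz/FOOO-style explicit connection with $\int_\Sigma u^*R_\nabla\le 0$. That route is viable in the equivariant setting---the classical construction uses only cutoffs and time-reparametrizations of the $G$-invariant inputs $H^0, H^1$, hence stays $G$-invariant, and the independence of the pair-of-pants product from the choice of connection satisfying \eqref{equation217} is available---but you assert it without detail, and it is exactly the step the paper avoids by settling for $|R|\le\epsilon$ rather than a sign condition.
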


\begin{rem}\hfill
\begin{itemize}
\item

By the time of writing, it is not clear to the authors whether $c(a,0)=0$, which is called the \textit{normalization property} of spectral theory in the Hamiltonian Floer context.  Nonetheless, item (10) will be enough for supplementing the normalization in defining partial quasi-states.

\item The ordinary spectral invariants of $\ov{M}$ satisfy the symplectic invariance property. That means, for any symplectomorphism $\psi: \ov{M} \to \ov{M}$, $c( \psi^* a, \psi^* H ) = c (a, H )$. Here $\psi^* H_t(x) = H_t (\psi(x))$ is the pull-back Hamiltonian. This is an immediate implication of the symplectic invariance of Floer's equation. In contrast, the vortex equation is only invariant under $G$-invariant symplectomorphisms of $M$ (which descends to symplectomorphisms of $\ov{M}$). It is not known if every symplectomorphism of $\ov{M}$ can be lifted to a $G$-invariant one on $M$.

While this unsatisfactory feature will not undermine our construction, it leads to the the conceptual consequence that, even if one could define superheaviness of a subset as in \cite{Entov_Polterovich_3}, it does not follow that they cannot be disjoined by arbitrary symplectomorphisms as in the Hamiltonian Floer context.
\end{itemize}
\end{rem}

\subsection{Proof of Proposition \ref{prop36}}\label{s:Proof}

\subsubsection{Finiteness and spectrality} This follows from \cite[Theorem 1.3, Theorem 1.4]{Ush08}, which applies for all filtered Floer-Novikov chain complex.

\subsubsection{Independence of $J$ and $\lambda$} This fact follows from the estimate on action functional in Proposition \ref{prop27}. Let $\lambda^\alpha, \lambda^\beta >0$ and $J^\alpha$, $J^\beta$ be $S^1$-families of almost complex structures such that $(H, J^\alpha) \in {\mc P}^{reg}_{\lambda^\alpha}$, $(H,J^\beta) \in {\mc P}^{reg}_{\lambda^\beta}$. Choose homotopies $\wt{J}$ and $\wt\lambda$ interpolating between $(J^\alpha, \lambda^\alpha)$ and $(J^\beta, \lambda^\beta)$ as when we define the continuation maps, while $H$ remains independent of $s$. If $a \in VHF(M)\setminus \{0\}$, then for any $\epsilon>0$, $a$ is represented by a nonzero cycle
\begin{align*}
{\mf X}^\alpha = \sum a_{ \lb {\mf x} \rb} \lb {\mf x} \rb \in VCF(M; H)
\end{align*} such that ${\mf v}_q ({\mf X}) \leq c(a, H, J^\alpha, \lambda^\alpha)  +  \epsilon$. Suppose $\Phi: VCF(M; H) \to VCF(M; H)$ is the chain level continuation map induced from $(\wt{J}, \wt\lambda)$. Then $\Phi({\mf X}) = \sum a_{ \lb {\mf x} \rb} \Phi( \lb{\mf x} \rb)\in VCF(M; H)$ is a Floer cycle representing $a \in VHF(M)$. Then by Proposition \ref{prop27},
\begin{align*}
{\mf v}_q(\Phi({\mf X})) \leq {\mf v}_q({\mf X}) \leq c ( a, H; J^\alpha, \lambda^\alpha ) + \epsilon.
\end{align*}
Hence $c ( a, H; J^\beta, \lambda^\beta) \leq c ( a, H; J^\alpha,  \lambda^\alpha )+ \epsilon$. Reversing the continuation map, we see that $c(a, H; J^\alpha, \lambda^\alpha) = c(a, H; J^\beta, \lambda^\beta)$.

\subsubsection{Independence of the lifting} Again this follows from the continuation principle, but we have to use the adiabatic limit argument to ``screen out'' the contribution from the difference between $H^\alpha$ and $H^\beta$ away from $\mu^{-1}(0)$.

Let $\lambda^\alpha, \lambda^\beta>0$ and $(H^\alpha, J^\alpha)\in {\mc P}^{reg}_{\lambda^\alpha}$, $(H^\beta, J^\beta) \in {\mc P}^{reg}_{\lambda^\beta}$. Suppose
\begin{align*}
H^\alpha|_{\mu^{-1}(0) \times S^1} = H^\beta|_{\mu^{-1}(0)\times S^1}.
\end{align*}
Choose a non-decreasing cut-off function $\rho: {\mb R} \to [0, 1]$ supported on $[0, +\infty)$ and equal to 1 on $[1, +\infty)$. Define the homotopy $\wt{H}$ between $H^\alpha$ and $H^\beta$ by $H_{s, t} = (1- \rho(s)) H^\alpha_t + \rho(s) H^\beta_t$. Then because $H^\alpha|_{\mu^{-1(0)}} = H^\beta|_{\mu^{-1}(0)}$, both of which are compactly supported, and because $\mu$ is proper, there exists a constant $C>0$ such that
\begin{align}\label{equation33}
\Big| \frac{\partial H_{s, t}}{\partial s} (x) \Big| \leq C |\mu(x)|.
\end{align}
For {\it any} $\lambda_0>0$, choose a homotopy $\wt\lambda(s)$ between $\lambda^\alpha$ and $\lambda^\beta$ such that
\begin{align*}
\inf_{s\in [0,1]} \wt\lambda(s) \geq \lambda_0.
\end{align*}
Choose a generic homotopy $\wt{J} = (J_{s, t})_{(s, t)\in \Theta}$ which is exponentially close to $J^\alpha$ (resp. $J^\beta$) on $\Theta_-$ (resp. $\Theta_+$). Then the chain level continuation map associated to the homotopy $(\wt{H}, \wt{J}, \wt\lambda)$
\begin{align*}
\Phi_{\wt\lambda} : VCF(M; H^\alpha) \to VCF(M; H^\beta)
\end{align*}
in which we would like to emphasize the role of $\wt\lambda$.

Suppose there is $\epsilon>0$ such that
\begin{align}\label{equation34}
c(a, H^\beta) - c(a, H^\alpha) \geq 2\epsilon.
\end{align}
There exists a Floer cycle ${\mf X}^\alpha = \sum a_i \lb {\mf x}_i \rb \in VCF(M; H^\alpha)$ representing $a$ such that
\begin{align}\label{equation35}
{\mf v}_q({\mf X}^\alpha) \leq c(a, H^\alpha) + \epsilon.
\end{align}
Since $\Phi_{\wt\lambda} ({\mf X}^\alpha)$ represents $a$, we have ${\mf v}_q(\Phi_{\wt\lambda} ({\mf X}^\alpha)) \geq c(a, H^\beta)$, which implies that there exists $ \lb {\mf y} \rb \in {\rm Crit} {\mc A}_{H^\beta}$ which {\it contributes} to $\Phi_{\wt\lambda} ( {\mf X}^\alpha )$ and
\begin{align}\label{equation36}
{\mc A}_{H^\beta}\lb {\mf y} \rb \geq c(a, H^\beta).
\end{align}
By the definition of continuation map, there exists $\lb {\mf x} \rb \in {\rm Crit} {\mc A}_{H^\alpha}$ which contributes to ${\mf X}^\alpha_\lambda $ and the moduli space ${\mc N} ( \lb{\mf x} \rb,  \lb {\mf y} \rb; \wt{H}, \wt{J}, \wt\lambda )$ is nonempty. Hence
\begin{align}\label{equation37}
{\mc A}_{H^\alpha} \lb {\mf x} \rb \leq {\mf v}_q({\mf X}^\alpha).
\end{align}
Take a solution ${\mf w} = (u, \Phi, \Psi)$ representing an element in the above moduli space. Then \eqref{equation215} and \eqref{equation34}--\eqref{equation37} imply
\begin{align}\label{equation38}
\int_{\Theta} \frac{\partial H_{s, t} }{\partial s}(u) ds dt \leq {\mc A}_{H^\alpha} \lb {\mf x} \rb - {\mc A}_{H^\beta} \lb  {\mf y} \rb \leq {\mf v}_q({\mf X}^\alpha) - c(a, H^\beta) \leq - \epsilon.
\end{align}
On the other hand, since $\partial_s H$ has compact support in $\Theta \times M$, there is a constant $C(\wt{H})$, independent of $\lambda$, such that
\begin{align}\label{equation39}
\mc{YMH}_{\wt{H}}({\mf w}) \leq -\epsilon - \int_\Theta \frac{ \partial H_{s, t}}{\partial s}(u) ds dt\leq \int_\Theta \Big| \frac{\partial H_{s, t} }{\partial s} (u) \Big| ds dt \leq C(\wt{H}).
\end{align}
Then by \eqref{equation33}, \eqref{equation38} and \eqref{equation39}, and the definition of energy,
\begin{align*}
\begin{split}
\epsilon \leq &\  - \int_\Theta  \frac{\partial H_{s, t} }{\partial s}(u) ds dt  \leq  \int_{\Theta} \Big| \frac{\partial H_{s, t}}{\partial s}(u) \Big| ds dt \\
\leq &\   \int_{[0,1]\times S^1} C \big|\mu(u) \big| ds dt  \leq C  \frac{ \sqrt{2\pi}}{\inf_{s\in [0,1]}\wt\lambda(s) } \left( \int_{\Theta} \wt\lambda(s)^2 \big| \mu(u) \big|^2 ds dt \right)^{1/2} \\
\leq &\ C \frac{\sqrt{ 2\pi}}{ \lambda_0} \sqrt{ \mc{YMH}_{\wt{H}} ({\mf w}) } \leq C \frac{\sqrt{ 2\pi}}{ \lambda_0} \sqrt{ C(\wt{H})}.
\end{split}
\end{align*}
This is not true if we take $\lambda_0$ sufficiently large. Hence $c(a, H^\alpha) = c(a, H^\beta)$.

\subsubsection{Lipschitz continuity} This also follows from a continuation and adiabatic limit argument. Suppose that there exists $\epsilon>0$ such that
\begin{align}\label{equation310}
c\big( a, \ov{H}^\beta \big) > c \big( a, \ov{H}^\alpha \big) - \int_{S^1} \min_{\ov{M}}  \big( \ov{H}^\beta - \ov{H}^\alpha \big) dt + 3\epsilon.
\end{align}

Now we make use of the more delicate notion of ``admissible'' lifts of Hamiltonians on $\ov{M}$, a notion which only depends on the infinitesimal behavior of the lifts along $\mu^{-1}(0)$ (for more details, see \cite[Definition 6.5]{Xu_VHF}). Then we can choose admissible lifts $H^\alpha, H^\beta\in {\mc H}_G(M)$ of $\ov{H}^\alpha$, $\ov{H}^\beta$ and assuming for all $t \in S^1$,
\begin{align}\label{equation311}
\begin{split}
\max_M \big( H^\beta_t - H^\alpha_t \big) \leq \max_{\ov{M}} \big( \ov{H}^\beta_t - \ov{H}^\alpha_t \big) + \epsilon,\\
\min_M \big( H^\beta_t-H^\alpha_t \big) \geq \min_{\ov{M}} \big( \ov{H}^\beta_t - \ov{H}^\alpha_t \big) - \epsilon.
\end{split}
\end{align}
Choose the homotopy $\wt{H} = H_{s, t}= (1-\rho(s)) H^\alpha + \rho(s) H^\beta$, where $\rho: {\mb R} \to [0, 1]$ is a smooth cut-off function which equals to $0$ for $s \leq -1$ and equals to $1$ for $s \geq 1$. For every $\lambda>0$, take $J^\alpha \in \wt{\mc J}^{reg}_{H^\alpha, \lambda}$, $J^\beta \in \wt{\mc J}^{reg}_{H^\beta, \lambda}$ (see Theorem \ref{thm23}) so that $(H^\alpha, J^\alpha), (H^\beta, J^\beta) \in {\mc P}^{reg}_\lambda$. Choose a generic homotopy $\wt{J} = (J_{s, t})_{(s, t) \in \Theta}$ which is exponentially close to $J^\alpha$ (resp. $J^\beta$) on $\Theta_-$ (resp. $\Theta_+$).

By the definition of the spectral numbers, there exists a Floer cycle ${\mf X}^\alpha \in VCF ( M; H^\alpha)$ representing $a\in VHF(M)$ such that ${\mf v}_q({\mf X}^\alpha) \leq c ( a, \ov{H}^\alpha ) + \epsilon$. Let $\Phi: VCF (M; H^\alpha) \to VCF (M; H^\beta)$ be the chain level continuation map induced from the homotopy $(\wt{H}, \wt{J}, \lambda)$. Hence there exist $\lb {\mf y} \rb \in {\rm Crit} {\mc A}_{H^\beta}$ contributing to $\Phi({\mf X}^\alpha)$, $\lb {\mf x} \rb \in {\rm Crit} {\mc A}_{H^\alpha}$ contributing to ${\mf X}^\alpha$ such that ${\mc N} ( \lb {\mf x} \rb, \lb{\mf y} \rb; \wt{H}, \wt{J}, \lambda \big)  \neq \emptyset$ and
\begin{align}\label{equation312}
{\mc A}_{H^\alpha} \lb{\mf x} \rb \leq {\mf v}_q({\mf X}^\alpha) \leq c(a, \ov{H}^\alpha) + \epsilon,\ {\mc A}_{H^\beta} \lb {\mf y} \rb = {\mf v}_q(\Phi({\mf X}^\alpha)) \geq c(a, \ov{H}^\beta).
\end{align}
For any solution ${\mf w}$ to \eqref{equation214} representing an element in this moduli space, by \eqref{equation215} and \eqref{equation310}--\eqref{equation312},
\begin{align*}
\begin{split}
\mc{YMH}_{\wt{H}}  ({\mf w}) = &\ {\mc A}_{H^\alpha}\lb{\mf x} \rb - {\mc A}_{H^\beta} \lb  {\mf y} \rb - \int_\Theta \frac{\partial H_{s, t}}{\partial s} (u) ds dt \\
\leq &\ c(a, \ov{H}^\alpha) + \epsilon - c(a, \ov{H}^\beta)  - \int_\Theta \frac{\partial H_{s, t}}{\partial s} (u) ds dt \\
	\leq &\ -2 \epsilon + \int_{S^1} \min_{\ov{M}} \big( \ov{H}^\alpha - \ov{H}^\beta \big) dt  + \int_\Theta \rho'(s) \big( H^\alpha(u) - H^\beta(u) \big) ds dt\\
	\leq &\ - 2\epsilon + \int_{S^1} \min_{\ov{M}} \big( \ov{H}^\alpha -\ov{H}^\beta \big) dt - \int_{S^1} \min_M  \big( H^\alpha - H^\beta \big) dt \leq -\epsilon
	\end{split}
	\end{align*}	
which is absurd. Therefore \eqref{equation310} is false and we must have
\begin{align*}
c (a, H^\beta ) \leq c ( a, H^\alpha ) - \int_{S^1} \min_{\ov{M}} \big( \ov{H}^\beta- \ov{H}^\alpha \big) dt.
\end{align*}
The other inequality follows by a similar argument, with the direction of the continuation map reversed.

\subsubsection{Monotonicity} It follows trivially from the Lipschitz continuity.

\subsubsection{Isotopy invariance} Suppose we have $\ov{H}^\alpha, \ov{H}^\beta \in {\mc H}^*(\ov{M})_0$ such that $\wt\psi:= \wt\psi^\alpha:= \wt\psi_{\ov{H}^\alpha} = \wt\psi_{\ov{H}^\beta}=: \wt\psi^\beta \in \wt{\rm Ham}(\ov{M})$. Then there exists a continuous path $\ov{H}^s \in {\mc H}^*(\ov{M})_0, s\in [0,1]$ such that $\ov{H}^0 = \ov{H}^\alpha$ and $\ov{H}^1= \ov{H}^\beta$ such that for each $s$, $\wt\psi_{\ov{H}^s} = \wt\psi$. Then for every $a \in VHF(M)$, the function
\begin{align*}
s\mapsto c\left( a, \ov{H}^s \right)
\end{align*}
is continuous and takes value in a fixed (by Lemma \ref{lemma35}) measure zero set (by Lemma \ref{lemma34}) of ${\mb R}$. Such a function can only be a constant.

\subsubsection{Hamiltonian invariance} Any Hamiltonian diffeomorphism $\psi \in {\rm Ham}(\ov{M})$ can be lifted to a Hamiltonian diffeomorphism of $M$ which commutes with the $G$-action. Then the Hamiltonian invariance follows from the invariance of the symplectic vortex equation under the action of symplectomorphisms that commute with the $G$-action.

\subsubsection{Shifting property} It is elementary to see that
\begin{align*}
c(\lambda a,H)\leq c(a,H)+ {\mf v}_q (\lambda)\leq c(\lambda a,H)+ {\mf v}_q (\lambda)+ {\mf v}_q (\lambda^{-1})=c(\lambda a,H).
\end{align*}
Here the last inequality follows from the fact that ${\mf v}_q (\lambda) + {\mf v}_q (\lambda^{-1})=0$.

\subsubsection{Triangle inequality}\label{triangle}

There are several ways to prove the triangle inequality for spectral invariants. The approaches of \cite{Schwarz_spectral} and \cite{FOOO_spectral} are based on explicit choice of the Hamiltonian connections on the pair of pants. Here we take the approach of \cite{Oh_2005}, which was based on Entov's work \cite{Entov_2001} on the relations between K-area and Hamiltonian diffeomorphism groups. Consider the infinite-dimensional Lie group ${\rm Ham}^G(M)$ consisting of compactly supported Hamiltonian diffeomorphisms of $(M, \omega)$ which commute with the $G$-action. This is a subgroup of ${\rm Ham}(M)$ of compactly-supported Hamiltonian diffeomorphisms. The usual Hofer metric on ${\rm Ham}(M)$ restricts to a Finsler bi-invariant pseudo-metric on ${\rm Ham}^G(M)$ (see \cite[page 95]{Entov_2001} for the definition).

Choose $\lambda>0$. Let $\wt\phi_0, \wt\phi_1 \in \wt{\rm Ham}^G(M)$ be generated by Hamiltonians $H^0, H^1 \in {\mc H}_G(M)$ which belong to regular pairs $(H^0, J^0)$ and $(H^1, J^1)$ relative to $(\ov{H}^0, \lambda)$ and $(\ov{H}^1, \lambda)$ respectively. Let $H^\infty \in {\mc H}_G^*(M)$ be given by $H^\infty_t(x) = H^0_t(x) + H^1_t\big(( \wt\phi_0^t)^{-1}x \big)$, with induced Hamiltonian isotopy $\wt\phi_\infty \in \wt{\rm Ham}^G(M)$. We choose $\wt\phi_0$, $\wt\phi_1$ generically so that the Hamiltonian isotopy on $\ov{M}$ induced from $\wt\phi_\infty$ is nondegenerate.

Let $\Sigma_0 = S^2 \setminus D_0 \cup D_1 \cup D_\infty$ be the pair-of-pants with three boundary components. Choose an area form $\Omega$ on $\Sigma_0$ with total area 1. We consider ${\rm Ham}^G(M)$-connections on $\Sigma_0$ which are flat near the boundary of $\Sigma_0$ and whose restrictions to the boundary circles $\partial D_0$, $\partial D_1$, $\partial D_\infty$ induce parallel transports which are in the same conjugacy classes of $\wt\phi_0$, $\wt\phi_1$ and $\wt\psi_\infty$, respectively. We denote the set of such ${\rm Ham}^G(M)$-connections on $\Sigma_0$ by ${\rm Conn}^G(\wt\phi_0, \wt\phi_1, \wt\phi_\infty)$. Any such connection $\nabla$ is in particular a ${\rm Ham}(M)$-connection, so we can apply Entov's result.

\begin{thm}\cite[Theorem 3.6.1]{Entov_2001}
For any $\epsilon>0$, there exists $\nabla_\epsilon\in {\rm Conn}^G(\wt\phi_0, \wt\phi_1, \wt\phi_\infty)$ on $\Sigma_0$ such that
\begin{align}\label{equation313}
\sup_{\wt{M}_0} \big| R_{\nabla_\epsilon} \big| \leq \epsilon.
\end{align}
\end{thm}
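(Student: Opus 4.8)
This statement is \cite[Theorem 3.6.1]{Entov_2001} applied to $\wt\phi_0,\wt\phi_1,\wt\phi_\infty$ regarded as elements of $\wt{\rm Ham}^G(M)\subset\wt{\rm Ham}(M)$, so the plan is to recall Entov's construction and to observe that, when it is run on the $G$-invariant data $H^0,H^1,H^\infty\in{\mc H}_G(M)$, it produces a connection valued in $C^\infty_c(M)^G$ — that is, an element of ${\rm Conn}^G(\wt\phi_0,\wt\phi_1,\wt\phi_\infty)$ — while the curvature bound \eqref{equation313} is unaffected.

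First I would present $\Sigma_0$ as a fixed compact core $\Sigma_0'$ with three long cylindrical collars $C_\sigma$ ($\sigma=0,1,\infty$) attached, the outer boundary of $C_\sigma$ being $\partial D_\sigma$. On $C_\sigma$, near $\partial D_\sigma$, I would put the flat connection $-H^\sigma_t\,dt$, whose parallel transport around $\partial D_\sigma$ is $\wt\phi_\sigma$, so the boundary conjugacy-class requirement defining ${\rm Conn}^G(\wt\phi_0,\wt\phi_1,\wt\phi_\infty)$ is met; moving inward along $C_\sigma$ I would reparametrize the generating path of $\wt\phi_\sigma$ over a long time interval, so that the connection handed to the core is $C^0$-small, paying only a curvature cost, supported in the transition region, which tends to $0$ as the collars are lengthened. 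On the core $\Sigma_0'$ the three small connections are patched together; this is possible because the chosen paths satisfy $\wt\phi_\infty=\wt\phi_0\wt\phi_1$ — indeed $H^\infty_t(x)=H^0_t(x)+H^1_t\big((\wt\phi_0^t)^{-1}x\big)$ was arranged for exactly this — so the monodromy around $\Sigma_0'$ is trivial and the interpolating connection has curvature controlled by the $C^0$-size of the small data together with their Poisson brackets. Lengthening the collars then forces $\sup_{\Sigma_0}|R_{\nabla_\epsilon}|$ below any prescribed $\epsilon$.

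The equivariant point is then immediate: each operation above — reparametrizing a Hamiltonian in the time variable, multiplying by an $s$- or $t$-dependent cutoff function, conjugating by $\wt\phi_0^t\in{\rm Ham}^G(M)$, and forming Poisson brackets — carries $C^\infty_c(M)^G$ into itself, since the $G$-action is symplectic and commutes with the equivariant isotopies $\wt\phi_\sigma^t$. Hence $\nabla_\epsilon$ is valued in $C^\infty_c(M)^G$, and the estimate \eqref{equation313}, being a bound on $\sup$-norms of functions, is insensitive to $G$-invariance.

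The step I expect to be the real work is the curvature estimate on the compact core $\Sigma_0'$: one must show that the exact identity $\wt\phi_\infty=\wt\phi_0\wt\phi_1$ allows the monodromy defect to be localized and then made arbitrarily small, which is where the basepoint bookkeeping and the fact that $\partial D_0\cdot\partial D_1\cdot\partial D_\infty$ is null-homotopic in $\Sigma_0$ enter. This is precisely Entov's K-area argument, so in practice I would simply cite \cite[Theorem 3.6.1]{Entov_2001} for the estimate and devote the proof only to the verification of $G$-invariance above.
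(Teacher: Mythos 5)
Your proposal is correct in substance but takes a genuinely different route from the paper. The paper's proof is a two-line reduction: it invokes Entov's Theorem 3.6.1 as a black box to obtain a ${\rm Ham}(M)$-connection with boundary holonomies in the conjugacy classes of $\wt\phi_0,\wt\phi_1,\wt\phi_\infty$ and $\sup_{\Sigma_0}|R|\leq\epsilon$, and then averages that connection over $G$ with respect to the Haar measure to land in ${\rm Conn}^G(\wt\phi_0,\wt\phi_1,\wt\phi_\infty)$, using that $G$ acts by symplectomorphisms commuting with all the data. You instead propose to rerun Entov's construction itself (collars carrying the slowed-down flat connections $-H^\sigma_t\,dt$, patching over the compact core via the relation $\wt\phi_\infty=\wt\phi_0\wt\phi_1$, lengthening the collars) and to observe that every operation involved --- time reparametrization, cutoff functions, conjugation by $\wt\phi_0^t\in{\rm Ham}^G(M)$, Poisson brackets --- carries $C^\infty_c(M)^G$ into itself, so the output is automatically a ${\rm Ham}^G(M)$-connection. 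Each route buys something: yours never has to ask how the curvature (which is quadratic in the connection through the bracket term) and the boundary conjugacy classes behave under averaging, while the paper's averaging avoids reopening Entov's proof altogether. The one point to repair in your write-up is the final sentence: you cannot simultaneously cite Theorem 3.6.1 as a black box for the estimate and claim $G$-invariance of ``the'' construction, since the statement of that theorem says nothing about how the connection is produced. To complete your route you must actually reproduce (or carefully inspect) Entov's construction and verify equivariance step by step, so the curvature estimate on the core becomes part of your proof rather than a citation; if you insist on using the theorem only as a statement, you need a mechanism converting an arbitrary small-curvature connection into a $G$-invariant one, which is precisely the Haar-averaging step the paper uses.
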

\begin{proof}
First, by Entov's theorem, there is a ${\rm Ham}(M)$-connection satisfying this condition. Averaging over $G$ with respect to the Haar measure, we obtain a ${\rm Ham}^G(M)$-connection satisfying the same estimate.
\end{proof}

Now choose a volume form $\nu$ on $\Sigma = \Sigma_0 \cup U_0\cup U_1\cup U_\infty \simeq S^2 \setminus \{0, 1, \infty\}$ where the notations are the same as in Subsection \ref{subsection27}, such that $\nu|_{U^\sigma} = \lambda^2 ds dt$. We can extend $\nabla$ to $\wt{M} = M \times \Sigma$ which is flat over each $U_\sigma$. Up to gauge transformation we may take $\nabla|_{U_\sigma} = H^\sigma dt$. Choose a family of almost complex structures $\wt{J} = (J_z)_{z\in \Sigma}$ which are exponentially close to $J^\sigma$ over $U_\sigma$ $(\sigma = 0, 1, \infty)$, where $(H^\sigma, J^\sigma)$ is a regular pair relative to $(\ov{H}^\sigma, \lambda)$. Then the data $(\nabla_\epsilon, \wt{J})$ and $\nu$ induces a chain level map
\begin{align*}
\Phi_{\nabla_\epsilon, \wt{J}}: VCF(H^0, J^0) \otimes VCF(H^1, J^1) \to VCF(	 H^\infty, J^\infty),
\end{align*}
which is defined by counting gauge equivalence classes of solutions to \eqref{equation313} (with $\nabla$ replaced by $\nabla_\epsilon$). On homology level it coincides with the pair-of-pants product.

Now we prove the triangle inequality. For any $a_0, a_1 \in VHF(M)$, if $a_0 * a_1 = 0$, then $c\big( a_0 * a_1, \wt\phi_0 \wt\phi_1 \big) = -\infty$ so \eqref{equation32} holds. If $a_0 * a_1 \neq 0$, then for any $\epsilon>0$, $a_0$, $a_1$ are represented by Floer cycles ${\mf X}_\sigma \in VCF(M; H^\sigma, J^\sigma, \lambda )$ ($\sigma = 0, 1$) with ${\mf v}_q({\mf X}_\sigma ) \leq c(a_\sigma, \wt\phi_\sigma) + \epsilon$. Any ${\mf x}_\sigma \in {\rm Crit} {\mc A}_{H^\sigma}$ which contributes to ${\mf X}_\sigma$ satisfies
\begin{align}\label{equation314}
{\mc A}_{H^\sigma}({\mf x}_\sigma) \leq c(a_\sigma, \wt\phi_\sigma) + \epsilon,\ \sigma = 0, 1.
\end{align}
Moreover, the cycle ${\mf X}_\infty:= \Phi_{\nabla_\epsilon, J}({\mf X}_0, {\mf X}_1)$ represents $a_0 * a_1$. Choose $\lb {\mf x}_\infty \rb\in {\rm Crit} {\mc A}_{H^\infty}$ which contributes to ${\mf X}_\infty$ with ${\mf v}_q({\mf X}_\infty) = {\mc A}_{H^\infty}({\mf x}_\infty)$. Then the corresponding moduli space contains at least one element, of which we choose a representative $(A, u)$. By Theorem \ref{thm25}, \eqref{equation313}, \eqref{equation314}, we have
\begin{align*}
\begin{split}
{\mf v}_q( {\mf X}_\infty) = {\mc A}_{H^\infty}( {\mf x}_\infty ) = &\ {\mc A}_{H^0}(  {\mf x}_0 ) + {\mc A}_{H^1} ( {\mf x}_1 ) - \mc{YMH}_{\nabla_\epsilon} (A, u) - \int_{\Sigma_0} R_{\nabla_\epsilon}(u)\\
																				 \leq &\ c(a_0, \wt\phi_0) + c(a_1, \wt\phi_1) + 2 \epsilon + \epsilon \int_{\Sigma_0} \nu.
\end{split}
\end{align*}
Since $\epsilon$ is arbitrarily small, \eqref{equation32} follows.

\subsubsection{Weak normalization property} This follows from item (1)--(3) of this proposition.

\section{Quasi-morphisms, quasi-states and heaviness}\label{s:QMQS}

We first recall the basic concepts of partial quasi-states. Note that the definition of partial quasi-states are not completely consistent in the literature. Here our axioms and signs follow the convention of \cite[Definition 13.3]{FOOO_spectral} except that we add {\it additivity with constants} and replace the {\it symplectic invariance} by {\it Hamiltonian invariance}. Compared to \cite{Entov_Polterovich_2} and \cite{Entov_Polterovich_3}: \cite{Entov_Polterovich_2} does not include \textit{additivity with constants} or \textit{triangle inequality}; \cite{Entov_Polterovich_3} has the additional {\it characteristic exponent property} which is irrelevant here.

Let $(M, \omega)$ be a compact symplectic manifold and $C(M)$ be the Banach algebra of real valued continuous functions.

\begin{defn}\label{defn41}
A functional $\upzeta: C(M) \to {\mb R}$ is called a {\bf partial quasi-state} on $M$ if it satisfies the following conditions.
\begin{enumerate}
\item {\bf (Lipschitz continuity)} $|\upzeta(F_1) - \upzeta(F_2)| \leq \| F_1 - F_2\|_{C^0(M)}$ for $F_1, F_2\in C(M)$.

\item {\bf (Semi-homogeneity)} $\upzeta(\lambda F) = \lambda\upzeta(F)$ when $\lambda\in\R_{\ge0}$.

\item {\bf (Monotonicity)} $\upzeta(F) \leq \upzeta(G)$ whenever $F\leq G$.

\item {\bf (Normalization)} $\upzeta(1) = 1$.

\item {\bf (Partial Additivity)} If $F_1,F_2\in C^\infty(M)$, $\{F_1,F_2\}=0$ and $supp(F_2)$ is displaceable, then $\upzeta(F_1+F_2)=\upzeta(F_1)$.

\item {\bf (Hamiltonian invariance)} $\upzeta(F) = \upzeta(F \circ \phi)$ for $\phi \in {\rm Ham}(M)$.

\item{\bf (Additivity with constants)} $\upzeta(F+\alpha) = \upzeta(F)+\alpha$ when $\alpha\in\R$.

\item {\bf (Vanishing)} $\upzeta(F) = 0$ when ${\rm supp}(F)$ is Hamiltonian displaceable.

\item {\bf (Triangle inequality)} $\upzeta(F + G) \geq \upzeta(F) + \upzeta(G)$ for $F, G\in C^\infty(M)$ and $\{ F, G \} = 0$.
\end{enumerate}
\end{defn}

We remark that the normalization property (4) can be deduced from (2) and (7).

Now we recall the concept of pre-quasimorphism, following \cite[Definition 13.6]{FOOO_spectral}, where we replace the {\it symplectic invariance} by {\it Hamiltonian invariance}.

\begin{defn}\label{defn42}
We call a map $\upmu: \wt{\rm Ham}(M) \to {\mb R}$ a {\bf pre-quasimorphism}, if the following conditions are satisfied.
\begin{enumerate}
\item {\bf (Lipschitz continuity)} There exists a constant $C>0$ such that  for $\wt\phi, \wt\psi \in \wt{\rm Ham}(M)$, $\big|\upmu(\wt\psi)- \upmu (\wt\psi) \big| \leq C \big\| \wt\psi \wt\phi^{-1} \big\|$, where $\big\| \cdot \big\|$ is the Hofer norm.

\item {\bf (Semi-homogeneity)} For $\wt\phi\in \wt{\rm Ham}(M)$ and $n \geq 1$, we have $\upmu(\wt\phi^n) = n \upmu(\wt\phi)$.

\item {\bf (Controlled quasi-additivity)} If $U \subset M$ is displaceable, then there exists a constant $K>0$ depending on $U$ such that for $\wt\phi, \wt\psi\in \wt{\rm Ham}(M)$,
\begin{align*}
\big| \upmu(\wt\psi\wt\phi) - \upmu(\wt\phi) - \upmu(\wt\psi) \big| \leq K \min \big( \big\| \wt\phi\big\|_U, \big\| \wt\psi \big\|_U \big).
\end{align*}
Here $\big\| \wt\phi \big\|_U$ is the Banyaga's fragmentation norm (see \cite{Banyaga_1978}).

\item {\bf (Hamiltonian invariance)} $\upmu(\wt\phi) = \upmu(\psi\wt\phi	 \psi^{-1})$ for $\wt\phi \in \wt{\rm Ham}(M)$, $\phi\in {\rm Ham}(M)$.

\item {\bf (Calabi property)} If $U \subset M$ is displaceable, then the restriction of $\upmu$ to $\wt{\rm Ham}(U)$ coincides with the Calabi homomorphism ${\rm Cal}_U$.
\end{enumerate}
\end{defn}
Here in (6), the {\bf Calabi functional} ${\rm Cal}_U: \wt{\rm Ham}(U) \to {\mb R}$ is defined for all Hamiltonian diffeomorphisms supported in $U$, given by
\begin{align*}
{\rm Cal}_U(\wt\phi_H) = \int_0^1 dt \int_M H_t \omega^n,
\end{align*}
where $\wt\phi_H$ is the Hamiltonian isotopy generated by $H_t$.

\subsection{Partial quasi-states and pre-quasimorphisms from vortex Floer theory}

Let $e \in VHF(M)$ be an idempotent element and assume that $e \neq 0$ in $VHF(M)$. For $\wt\phi\in \wt{\rm Ham}(\ov{M})$, define $c(e, \wt\phi) = c(e, \ov{H})$ for any $\ov{H}\in {\mc H}(\ov{M})$ such that $\wt\phi(1) = \phi_{\ov{H}}^1$. By the isotopy invariance of the spectral numbers, $c(e, \wt\phi)$ is well-defined. On the other hand, we can also evaluate $c(e, F)$ for $F \in C(\ov{M})$.

Define $\upmu_e: \wt{\rm Ham}(\ov{M}) \to {\mb R}\cup \{\pm \infty\}$ and $\upzeta_e: C(\ov{M}) \to {\mb R} \cup \{\pm \infty\}$ by
\begin{align}\label{equation41}
\upmu_e (\wt\phi) = {\rm vol}(\ov{M}) \lim_{n \to \infty} \frac{c (e, \wt\psi^n)}{n},\ \upzeta_e (F) = - \lim_{n \to \infty} \frac{ c(e, nF)}{n}.
\end{align}

\begin{thm}\label{thm43}
$\upmu_e$ and $\upzeta_e$ take finite values. Moreover, $\upmu_e$ is a pre-quasimorphism on $\wt{\rm Ham}(\ov{M})$ and $\upzeta_e$ is a partial quasi-state on $\ov{M}$.
\end{thm}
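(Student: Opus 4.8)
The plan is to derive Theorem~\ref{thm43} from the formal properties of $c=c^v$ assembled in Proposition~\ref{prop36}, following the Entov--Polterovich recipe (as in \cite{Entov_Polterovich_1, Entov_Polterovich_3} and \cite{FOOO_spectral}) for manufacturing a pre-quasimorphism and a partial quasi-state out of a spectral invariant. Write $c_e(\wt\phi):=c(e,\wt\phi)$ and $c_e(F):=c(e,F)$. The one algebraic input used repeatedly is the idempotency $e*e=e$, which combined with the triangle inequality (Proposition~\ref{prop36}(9)) yields the subadditivity
\[
c_e(\wt\phi\wt\psi)\le c_e(\wt\phi)+c_e(\wt\psi),\qquad c_e\bigl(n(F+G)\bigr)\le c_e(nF)+c_e(nG)\ \text{ when }\{F,G\}=0;
\]
in the second inequality one uses that $n(F+G)$ generates $\wt\phi_{nF}\wt\phi_{nG}$ in $\wt{\rm Ham}(\ov M)$ because $F,G$ are autonomous and Poisson-commute. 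Also $c_e$ is real-valued everywhere: finiteness for nondegenerate Hamiltonians is Proposition~\ref{prop36}(1) (using $e\ne0$), and Proposition~\ref{prop36}(4),(6) extend it to all of $\wt{\rm Ham}(\ov M)$ and $C(\ov M)$.

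First I would settle \emph{finiteness} of $\upmu_e$ and $\upzeta_e$. The sequences $n\mapsto c_e(\wt\phi^n)$ and $n\mapsto c_e(nF)$ are subadditive, so by Fekete's subadditivity lemma the limits $\lim_n c_e(\wt\phi^n)/n=\inf_n c_e(\wt\phi^n)/n$ and $\lim_n c_e(nF)/n$ exist in $[-\infty,+\infty)$. Applying the triangle inequality to $\wt\phi^n\wt\phi^{-n}=\mathrm{id}$ and to $nF+(-nF)=0$, together with the weak normalization that $c_e(0)$ is finite (Proposition~\ref{prop36}(10)) and the subadditivity bounds $c_e(\wt\phi^{-n})\le n\,c_e(\wt\phi^{-1})$, $c_e(-nF)\le n\,c_e(-F)$, gives matching lower bounds, so both limits are finite. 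Then come the ``soft'' axioms. For $\upmu_e$: semi-homogeneity $\upmu_e(\wt\phi^m)=m\,\upmu_e(\wt\phi)$ is immediate from \eqref{equation41}; Hamiltonian invariance is inherited from Proposition~\ref{prop36}(7); and Lipschitz continuity follows by substituting the telescoping identity $\wt\phi^n\wt\psi^{-n}=\prod_{k=0}^{n-1}\wt\psi^k(\wt\phi\wt\psi^{-1})\wt\psi^{-k}$ (whence $\|\wt\phi^n\wt\psi^{-n}\|\le n\|\wt\phi\wt\psi^{-1}\|$ by bi-invariance of the Hofer norm) into $c_e(\wt\phi^n)\le c_e(\wt\psi^n)+c_e(\wt\psi^{-n}\wt\phi^n)$ and the one-sided bound $c_e(\wt\theta)\le c_e(0)+\|\wt\theta\|$ supplied by Proposition~\ref{prop36}(4) (applied to a normalized generating Hamiltonian of $\wt\theta$), then dividing by $n$ and passing to the limit; this gives $|\upmu_e(\wt\phi)-\upmu_e(\wt\psi)|\le{\rm vol}(\ov M)\,\|\wt\phi\wt\psi^{-1}\|$. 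For $\upzeta_e$: Lipschitz continuity is Proposition~\ref{prop36}(4); semi-homogeneity under $\lambda\ge0$ is a reindexing of the limit; monotonicity is Proposition~\ref{prop36}(5) with the sign reversed in the limit; normalization $\upzeta_e(1)=1$ and additivity with constants come from the behavior of $c_e$ under adding a constant to the Hamiltonian (Proposition~\ref{prop36}(1)); Hamiltonian invariance is Proposition~\ref{prop36}(7); and the superadditivity $\upzeta_e(F+G)\ge\upzeta_e(F)+\upzeta_e(G)$ for $\{F,G\}=0$ is precisely the subadditivity $c_e(n(F+G))\le c_e(nF)+c_e(nG)$ divided by $-n$ in the limit.

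What remains is the pair of ``displaceability'' properties: controlled quasi-additivity and the Calabi property for $\upmu_e$, and partial additivity and vanishing (which is the special case $F_1=0$ of partial additivity) for $\upzeta_e$. Exactly as in the classical case, all of these reduce to a single \emph{displacement-energy estimate}: if $V\subset\ov M$ is Hamiltonianly displaceable with displacement energy $e(V)$ and $\ov H$ is supported in $V\times S^1$, then $|c_e(\wt\phi_{\ov H})-c_e(0)|\le e(V)$ (plus the iterated version bounding $c_e(\wt\phi\wt\psi)-c_e(\wt\phi)$ when $\wt\psi$ is supported in a translate of $V$, which after homogenization produces the Calabi value ${\rm Cal}_V$ and the fragmentation-norm bound of Definition~\ref{defn42}(3)). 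Granting this, the remaining deductions are purely formal and follow \cite{Entov_Polterovich_1, Entov_Polterovich_3} and \cite{FOOO_spectral} line by line, using for the Calabi computation the constant-shift property Proposition~\ref{prop36}(1) and the fact (Proposition~\ref{prop36}(3)) that $c_e$ depends only on the descended Hamiltonian on $\ov M$.

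The hard part will be establishing this displacement-energy estimate in the vortex setting. In ordinary Floer theory it is the energy--capacity inequality, and the standard proof routes through a Poincar\'e-type duality pairing on Floer homology, which has not been set up for $VHF(M)$ in this paper. I expect two options: either construct the pairing $VHF(M)\otimes VHF(M)\to\Lambda$ (routine, given the surface-vortex technology of Section~\ref{section2}, but not yet written down), or -- more in the spirit of the present paper -- prove the estimate directly by the adiabatic-limit method of Section~\ref{s:Proof}: run the vortex continuation equation between $H$ and a displaced copy, estimate its Yang--Mills--Higgs energy through Theorem~\ref{thm25} and Proposition~\ref{prop27}, and use the freedom in the rescaling parameter $\lambda$ to screen off the curvature contribution away from $\mu^{-1}(0)$, so that the barrier is governed by the displacement energy of $V$ inside $\ov M$ alone. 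Once this estimate is available, Theorem~\ref{thm43} follows by assembling the pieces above.
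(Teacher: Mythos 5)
Your formal skeleton (Fekete subadditivity for finiteness, semi-homogeneity, Lipschitz continuity, monotonicity, Hamiltonian invariance, superadditivity, and the use of weak normalization in place of the missing normalization) matches what the paper does, since the paper simply invokes the Entov--Polterovich argument for all of these soft axioms. The genuine gap is at the single non-formal point, which you isolate correctly but do not close: the behavior of $c(e,\cdot)$ on Hamiltonians supported in a displaceable subset. You reduce controlled quasi-additivity and the Calabi property (Definition \ref{defn42}(3),(5)) and partial additivity and vanishing (Definition \ref{defn41}(5),(8)) to a displacement-energy estimate $|c_e(\wt\phi_{\ov{H}})-c_e(0)|\le e(V)$, and then concede that proving it would require either a Poincar\'e-type pairing on $VHF(M)$ --- which is not constructed in the paper and is close in spirit to the PSS-type results the authors explicitly flag as open --- or an unspecified adiabatic-limit argument. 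As written this step is a conjecture, not a proof: no combination of the properties in Proposition \ref{prop36} formally yields an energy--capacity inequality, so your argument does not establish the theorem.

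The paper closes this gap by a softer route that needs no energy--capacity inequality at all: Ostrover's spectrum-shift trick. For a displaceable open $U\subset\ov{M}$, $f\in\wt{\rm Ham}(\ov{M})$ and $\phi\in\wt{\rm Ham}(U)$ one has the exact identity \eqref{equation42}, $c(a,f\phi)=c(a,f)+{\rm Cal}_U(\phi)/{\rm vol}(\ov{M},\ov\omega)$. This is obtained from Lemma \ref{lemma44}: along Ostrover's path $\phi_s$ the spectrum ${\rm Spec}^{\ov{M}}(f\phi_s)$ is ${\rm Spec}^{\ov{M}}(f)$ translated by $s\,{\rm Cal}_U(\phi)/{\rm vol}$, and since ${\rm Crit}{\mc A}_{\ov{H}}$ includes into ${\rm Crit}{\mc A}_H$ with $\Gamma\,{\rm Crit}{\mc A}_{\ov{H}}={\rm Crit}{\mc A}_H$, the same translation statement holds for the vortex spectrum \eqref{equation31X}; combining this with spectrality, Lipschitz continuity and the measure-zero property of the spectrum (the same continuity-plus-rigidity argument used for isotopy invariance) gives \eqref{equation42}. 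With that identity in hand, quasi-additivity, the Calabi property, partial additivity and vanishing follow formally as in Entov--Polterovich, with homogenization absorbing the absence of normalization exactly as you note. So to complete your proof within the paper's toolkit, replace your displacement-energy estimate by \eqref{equation42} proved via Lemma \ref{lemma44}; no pairing and no new adiabatic-limit analysis is required.
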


The proof of the above theorem follows almost word by word as the proof of \cite[Theorem 4.1]{Entov_Polterovich_2}, which we will not reproduce.  The reason is that the proof relies almost entirely on the formal properties of spectral invariants.  There are, however, two points worth mentioning in our situation.

Firstly, the only non-formal part of the proof involves a trick of spectrum shift due to Ostrover \cite{Ostrover_2003}. In particular, one needs that for a displaceable open subset $U \subset \ov{M}$,
\begin{align}
\label{equation42}c(a,f\phi)=c(a,f) + \frac{{\rm Cal}_U(\phi)}{vol(\ov M, \ov\omega)},
\end{align}
for $a\in VHF(M)$, $f\in\wt{\rm Ham}(\ol M)$ and $\phi\in\wt{\rm Ham}(U)$. Note that Ostrover's proof in fact shows the following:

\blem\label{lemma44} In the situation above, there exists a path of Hamiltonian diffeomorphism $\phi_s\in \wt{\rm Ham}(U)$ such that
\begin{align}
\label{equation43} {\rm Spec}^{\ov{M}} (f\circ\phi_s)= {\rm Spec}^{\ov{M}} (f) + \frac{s {\rm Cal}_U(\phi)}{vol( \ov{M}, \ov\omega)}.
\end{align}
\elem
Here ${\rm Spec}^{\ov{M}}$ denotes the spectrum of the ordinary Hamiltonian Floer theory of $\ov{M}$. Recall that ${\rm Crit}{\mc A}_H$ of our equivariant action functional on ${\mf L}$ is a finitely generated $\Gamma$-torsor, and there is a canonical inclusion ${\rm Crit} {\mc A}_{\ov{H}} \to {\rm Crit} {\mc A}_H$ such that $\Gamma {\rm Crit} {\mc A}_{\ov{H}} = {\rm Crit} {\mc A}_H$. This fact and Lemma \ref{lemma44} imply that \eqref{equation43} holds if replacing ${\rm Spec}^{\ov{M}}$ by the vortex Floer spectrum \eqref{equation31X}. Then \eqref{equation42} follows.

Secondly, our spectral invariants miss the normalization property.  The only place this was used is when proving $\upzeta(0)=0$, but since we know $c(a,0)$ is finite, the property follows already.  Especially the proof of the vanishing property will encounter a similar cancellation due to homogenization thus does not raise any concerns.

\begin{example} The basic example for partial symplectic quasi-states on any symplectic manifold is given by the one defined by the identity element of quantum cohomology \cite{Entov_Polterovich_3}.

In the Hamiltonian $G$-manifold setting, one could define a partial symplectic quasi-state using the identity ${\mbms 1}_{\mbms H}$ in vortex Floer cohomology, of which the existence was proved in \ref{subsection27}, assuming that ${\mbms 1}_{\mbms H}\neq 0$.  By the time of writing, the authors do not know if the two partial quasi-states coincide.  Moreover, it is not clear if such partial quasi-states are always non-trivial.  We will establish the non-triviality of these partial quasi-states on all toric symplectic manifolds in Section \ref{s:app}.

\end{example}

\subsection{Heaviness}

We use the same notions as in \cite{FOOO_spectral}. Let $\ov{M}$ be our symplectic quotient.

\begin{defn}
Let $\ov{H} = (\ov{H}_t)_{t\in [0,1]}$ be a time-dependent Hamiltonian on $\ov{M}$ and $Y \subset \ov{M}$ be a closed subset. We define
\begin{align*}
E_\infty^\pm( \ov{H}; Y) = \sup \big\{ \pm \ov{H} (t, p)\ |\ (t, p) \in [0,1]\times Y \big\}.
\end{align*}
For $\wt\phi \in \wt{\rm Ham}(\ov{M})$, define
\begin{align*}
e_\infty^\pm( \wt\phi; Y) = \inf \Big\{ E_\infty^\pm(\ov{H}; Y)\ |\ \wt\phi = [\phi_{\ov{H}} ] \in \wt{\rm Ham}(\ov{M}),\ \ov{H}_t \in {\mc H}(\ov{M})_0 \Big\}. 
\end{align*}
\end{defn}

\begin{defn}
Let $\upzeta: C(M) \to {\mb R}$ be a functional. A closed subset $Y \subset M$ is called {\bf $\upzeta$-heavy} if
\begin{align*}
\upzeta(H) \leq \sup_Y H,\ \forall H \in C(M).
\end{align*}
\end{defn}

\begin{defn}
Let $\upmu: \wt{\rm Ham}(M)\to {\mb R}$ be a function. A closed subset $Y \subset M$ is called {\bf $\upmu$-heavy} if
\begin{align*}
\upmu( \wt\phi) \geq - {\rm vol}(M) e_\infty^+(\wt\phi; Y),\ \forall \wt\phi \in \wt{\rm Ham}(M).
\end{align*}
\end{defn}

\begin{rem}
The observation in \cite{Entov_Polterovich_3} is that a $\upzeta$-heavy set of arbitrary symplectic partial quasi-state $\zeta$ is \textbf{non-displaceable}.  This uses only the formal properties of partial quasi-states hence carries over to our case.  Entov and Polterovich then showed a stronger result of \textbf{stably non-displaceability} of heavy sets, which will be a consequence of our result in Section \ref{subsection63}.
\end{rem}





\section{The closed-open map}\label{s:COMap}

In this section  we establish the closed-open string map between the vortex Hamiltonian Floer cohomology and Woodward's quasimap Floer cohomology.  Also, we prove a key property to our applications, that the closed-open map sends the identity to the identity.  In fact, the closed-open map is a ring map, and the proof follows the same line as presented but we will not make use of it here.  On the filtration level, we will establish the necessary energy estimate for the applications in Section \ref{s:app}.

We shall remind the reader that there are many versions of closed-open string maps in the literature \cite{FOOO_Book}, \cite{Biran_Cornea}.  Since we will follow Woodward's pearly definition of quasimap Floer cohomology, our version of closed-open map will take the form of \cite{Biran_Cornea}, while our exposition follows mostly \cite{FOOO_spectral}.


\subsection{$G$-Lagrangian branes}\label{subsection51}

Let $(M, \omega, \mu)$ be a Hamiltonian $G$-manifold, satisfying {\bf (H1)--(H3)}. A {\bf $G$-Lagrangian} of $M$ is a smooth embedded, {\it connected} $G$-invariant Lagrangian submanifold $L$ contained in $\mu^{-1}(0)$. $G$-Lagrangians are in one-to-one correspondence with Lagrangians in the symplectic quotient $\ov{M}$.

Let $D_2(M, L) \subset H_2(M, L)$ be the image of the Hurwicz map $\pi_2(M, L) \to H_2(M, L)$. Then $S_2(M)$ acts on $D_2(M, L)$ in the usual way. Moreover, it extends to an $S_2^G(M)$-action as follows. We view $H_2(M, L)$ as a subset of $H_2^G(M, L) = H_2(M_G, L_G)$ since $(M, L)$ is a $G$-equivariant pair, Then every $\beta\in D_2(M, L)$ can be viewed as a homotopy class of maps $u: ({\mb D}, \partial {\mb D}) \to (M_G, L_G)$. Then the map
\begin{align*}
S_2^G(M) \to H_2^G(M) = H_2(M_G) \to H_2(M_G, L_G),
\end{align*}
induces an $S_2^G(M)$-action on $D_2(M, L)$, which is denoted by $B\# \beta$ for $B \in S_2^G(M)$ and $\beta\in D_2(M, L)$. We denote
\begin{align*}
\wt\Gamma:= D_2(M, L)/ N_2^G(M)
\end{align*}
which has the residual $\Gamma$-action (recall $\Gamma = S_2^G(M)/ N_2^G(M)$). Every element $\lb \beta \rb \in \wt\Gamma$ has a well-defined {\bf Maslov index} ${\sf ml} \lb \beta \rb \in {\mb Z}$.

Let $J_0$ be a $G$-invariant almost complex structure on $M$ which satisfies the convexity condition {\bf (H3)}. For each $\lb \beta \rb \in \wt\Gamma$, the moduli space of $J_0$-holomorphic disks representing the class $\lb \beta \rb$ is denoted by ${\mc O} (\lb \beta \rb)$. A key hypothesis in the following construction is

\vspace{0.3cm}

{\bf (H5)} If ${\mc O} (\lb \beta \rb) \neq \emptyset$, then either $\lb \beta \rb = 0$ or ${\sf ml}\lb \beta \rb \geq 2$. Moreover, every stable $J_0$-holomorphic disk is regular.

\vspace{0.3cm}

Notice that this is not the {\it topological} monotonicity condition, but a condition on the special $J_0$.

Choose a $G$-brane structure on $L$ (see Appendix \ref{appendixa}, notation simplified to a choice of $b\in H_1(\ov{L}, \Lambda_0 )$). Choose a Morse-Smale pair $(F, B)$ on $\ov{L}$ with $F$ having a single maximum $y_{\max}$ and choose a generic universal consistent perturbation datum. Then Woodward \cite{Woodward_toric} constructed the {\bf quasimap $A_\infty$ algebra}
\begin{align*}
QA_\infty(L^b) = \big( CQF(L, \Lambda), \{{\mf m}_k^b \}_{k\geq 0}\big)
\end{align*}
of the Lagrangian brane $L^b$, and $\big( CQF(L, \Lambda), {\mf m}_1^b\big)$ is a chain complex of ${\mb Z}_2$-graded $\Lambda$-modules with homology $HQF(L^b, \Lambda)$.

\subsection{Vortex equation and the closed-open moduli spaces}

Now we define a chain level map from the vortex Floer complex $VCF(M; H)$ of a Hamiltonian $H\in {\mc H}_G(M)$ to the quasimap chain complex of a $G$-Lagrangian brane.  We need to consider the symplectic vortex equation over the half cylinder $\Theta_-$ perturbed by a ${\rm Ham}^G(M)$-connection, which interpolating the flow line equation \eqref{equation23} and the $J_0$-holomorphic curve equation. In order to study heaviness of Lagrangians, we need to choose a specific ${\rm Ham}^G(M)$-connection which interpolates between $H$ and a suitable constant, as did in \cite{FOOO_spectral}.

For any $\ov{H} \in {\mc H}(\ov{M})$, denote
\begin{align*}
R_{\ov{H}} = \sup \big\{ \ov{H}(t, x)\ |\ (t, x) \in [0,1]\times \ov{L} \big\}.
\end{align*}
\begin{lemma}{\rm (}\cite[Lemma 18.10]{FOOO_spectral}{\rm )} For any $\ov{H} \in {\mc H}(\ov{M})$ and $\epsilon>0$, there exist an open neighborhood $\ov{U}$ of $\ov{L}$ and a smooth function $\ov{F}_\epsilon = \ov{F}_\epsilon: \Theta_- \times \ov{M} \to {\mb R}$ such that
\begin{align*}
\begin{array}{cll}
\ov{F}_\epsilon (s, t, x) & = \ov{H}_t(x), & \ \forall s< -10,\ t\in S^1,\\[0.2cm]
\ov{F}_\epsilon (s, t, x) & = R_{\ov{H}} + \epsilon, & \ \forall s> -1,\ t\in S^1,\  x \in \ov{U},\\
\displaystyle \frac{\partial \ov{F}_\epsilon}{\partial s}(s, t, x) & \geq 0, &\ \forall s\in {\mb R},\ t\in S^1,\ x\in \ov{M}.
\end{array}
\end{align*}
\end{lemma}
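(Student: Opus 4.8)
The plan is to produce $\ov{F}_\epsilon$ by an entirely explicit formula: start from $\ov{H}_t(x)$ and add a correction term which is switched on by a cut-off in the cylindrical variable $s$ and localized by a cut-off near $\ov{L}$, arranged so that the correction raises the function monotonically in $s$ up to the constant $R_{\ov{H}}+\epsilon$ on a neighborhood of $\ov{L}$. Concretely, I would first use that $\ov{H}_t(x)\le R_{\ov{H}}$ for $(t,x)\in[0,1]\times\ov{L}$, together with continuity of $\ov{H}$ and compactness of $[0,1]\times\ov{L}$ (recall $\ov{M}$, hence $\ov{L}$, is compact by {\bf (H2)}), to find an open neighborhood $\ov{U}'$ of $\ov{L}$ in $\ov{M}$ with
\begin{align*}
\ov{H}_t(x) < R_{\ov{H}}+\epsilon\qquad\text{for all }(t,x)\in[0,1]\times\ov{U}',
\end{align*}
and then a smaller open neighborhood $\ov{U}$ of $\ov{L}$ with $\overline{\ov{U}}\subset\ov{U}'$.

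Next I would pick a smooth cut-off $\chi: \ov{M}\to[0,1]$ with $\chi\equiv 1$ on $\ov{U}$ and ${\rm supp}\,\chi\subset\ov{U}'$, and a smooth non-decreasing function $\beta: {\mb R}\to[0,1]$ with $\beta\equiv 0$ on $(-\infty,-10]$ and $\beta\equiv 1$ on $[-1,+\infty)$, and set
\begin{align*}
\ov{F}_\epsilon(s,t,x) := \ov{H}_t(x) + \beta(s)\,\chi(x)\,\bigl( R_{\ov{H}}+\epsilon-\ov{H}_t(x)\bigr),\qquad (s,t,x)\in\Theta_-\times\ov{M}.
\end{align*}
This is visibly smooth. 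For $s<-10$ the factor $\beta(s)$ vanishes, so $\ov{F}_\epsilon=\ov{H}_t$; for $s>-1$ and $x\in\ov{U}$ one has $\beta(s)=\chi(x)=1$, so $\ov{F}_\epsilon\equiv R_{\ov{H}}+\epsilon$; and
\begin{align*}
\frac{\partial\ov{F}_\epsilon}{\partial s}(s,t,x) = \beta'(s)\,\chi(x)\,\bigl(R_{\ov{H}}+\epsilon-\ov{H}_t(x)\bigr)\ge 0,
\end{align*}
since $\beta'\ge0$, $\chi\ge0$, the last factor is strictly positive on ${\rm supp}\,\chi\subset\ov{U}'$ by the choice of $\ov{U}'$, and the whole product vanishes off ${\rm supp}\,\chi$. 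Thus all three listed requirements hold.

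I do not expect any genuine obstacle here: the lemma is a soft normalization statement, and the only point deserving a word is the existence of the neighborhood $\ov{U}'$ on which $\ov{H}$ stays strictly below $R_{\ov{H}}+\epsilon$, which is exactly where compactness of $\ov{L}$ enters. The reason the particular shape of $\ov{F}_\epsilon$ matters is downstream: in the perturbed symplectic vortex equation on $\Theta_-$ used to define the closed-open moduli spaces, the sign condition $\partial_s\ov{F}_\epsilon\ge0$ will force the Hamiltonian-curvature contribution in the Yang--Mills--Higgs energy identity (the analogue of the term $-\int \partial_s H_{s,t}$ in Proposition~\ref{prop27}) to have a favorable sign, while the value $R_{\ov{H}}+\epsilon$ near $\ov{L}$ supplies precisely the constant needed to extract heaviness of $\ov{L}$ from the resulting action estimate.
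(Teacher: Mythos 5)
Your construction is correct and complete: the interpolation $\ov{F}_\epsilon=\ov{H}_t+\beta(s)\chi(x)\bigl(R_{\ov{H}}+\epsilon-\ov{H}_t\bigr)$ satisfies all three conditions, and the only point needing care --- that $\partial_s\ov{F}_\epsilon\ge 0$ holds globally, not just near $\ov{L}$ --- is handled correctly by choosing $\ov{U}'$ via compactness so that $R_{\ov{H}}+\epsilon-\ov{H}_t>0$ on $\operatorname{supp}\chi$, with the product vanishing elsewhere. Note that the paper itself gives no proof of this lemma (it is quoted from \cite[Lemma 18.10]{FOOO_spectral}); your explicit cut-off argument is essentially the standard construction used there, so there is no substantive divergence to report.
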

Then we can lift $\ov{F}_\epsilon$ to a $G$-invariant smooth function $F_\epsilon: \Theta_- \times M \to {\mb R}$ (which induces a $G$-invariant lift $H: [0,1]\times M\to {\mb R}$ of $\ov{H}$) such that
\begin{align*}
\begin{array}{cll}
F_\epsilon (s, t, x) & = H_t(x), & \ \forall s< -10,\\[0.2cm]
F_\epsilon (s, t, x) & = R_{\ov{H}} + \epsilon, & \ \forall s> -1, x \in U,\\
\displaystyle \frac{\partial F_\epsilon}{\partial s}(s, t, x) & \geq 0, & \ \forall s\in {\mb R},\ t\in S^1, x\in M.
\end{array}
\end{align*}
Here $U$ is a $G$-invariant neighborhood of $L$.

If $\ov{H}\in {\mc H}^*(\ov{M})$, then we can choose the lift $F_\epsilon$ such that the induced lift $H$ of $\ov{H}$ belongs to a regular pair $(H, J) \in {\mc P}^{reg}_{\ov{H}, \lambda =1}$ (cf. Theorem \ref{thm23} and \cite[Definition 6.5]{Xu_VHF}). Then choose a generic family of almost complex structure $\wt{J} = (J_z) = (J_{s, t})$ parametrized by $z \in \Theta_-$, satisfying:

\begin{itemize}
\item on $(-\infty, -2]\times S^1$, $\wt{J}$ is exponentially close to $J$;

\item for $(s, t) \in \partial \Theta_-$, $J_{s, t} \equiv J_0$.
\end{itemize}

Then consider the $F_\epsilon$-perturbed symplectic vortex equation over $\Theta_-$ with auxiliary data $(\wt{J}, \nu = ds dt)$ and boundary condition $u(\partial \Theta_-) \subset L$. In the standard cylindrical coordinates, it reads
\begin{align}\label{equation51}
\left\{ \begin{array}{ccc} \displaystyle \frac{\partial u}{\partial s} + {\mc X}_\Phi(u) + J_{s,t} \Big( \frac{\partial u}{\partial t} + {\mc X}_\Psi - {\mc Y}_{F_\epsilon} (u) \Big) & = & 0,\\[0.25cm]
\displaystyle \frac{\partial \Psi}{\partial s} - \frac{\partial \Phi}{\partial t} + [\Phi, \Psi] + \mu(u) & = & 0.
\end{array} \right. \ u(\partial \Theta_-) \subset L.
\end{align}

For each finite energy solution ${\mf w}= (u, \Phi, \Psi)$ to \eqref{equation51}, there exists $\wt{x}\in {\rm Zero} \wt{\mc B}_H$ such that up to gauge transformation, $\displaystyle \lim_{z \to -\infty} {\mf w} = \wt{x}$. Therefore, for $\lb {\mf x}\rb\in {\rm Crit} {\mc A}_H$ and $ \lb \beta \rb \in \wt\Gamma$, we can consider the moduli space ${\mc M}( \lb {\mf x} \rb, \lb \beta \rb; F_\epsilon, \wt{J})$, consisting of gauge equivalence classes of solutions $[{\mf w}]$ to \eqref{equation51} which are asymptotic to the orbit of $\lb {\mf x} \rb$ and for which the concatenation of $\lb {\mf x} \rb$ and $[{\mf w}]$ represents $\lb \beta \rb$. A simple calculation similar to the proof of Theorem \ref{thm25} gives the following energy identity.
\begin{prop}\label{prop52}
The energy of a bounded solution ${\mf w} = (u, \Phi, \Psi)$ to \eqref{equation51} which represents an element in ${\mc M}( \lb {\mf x} \rb, \lb \beta\rb; F_\epsilon, \wt{J})$ is defined and given by the following formula.
\begin{align*}
\begin{split}
\mc{YMH}_{F_\epsilon} ({\mf w}) := &\ \Big\| \frac{\partial u}{\partial s} + {\mc X}_{\Phi}(u) \Big\|_{L^2(\Theta_-)}^2 + \Big\| \mu(u) \Big\|_{L^2(\Theta_-)}^2\\
= &\ \int_{\Theta_-} u^* \omega + \int_{\Theta_-} \Big( dF_\epsilon \cdot \frac{\partial u}{\partial s}\Big) ds dt\\
= &\ \omega \lb \beta \rb + {\mc A}_H\lb {\mf x} \rb + (R_{\ov{H}} + \epsilon) - \int_{\Theta_-} \Big( \frac{\partial F_\epsilon}{\partial s} \circ u \Big) ds dt
\end{split}
\end{align*}
\end{prop}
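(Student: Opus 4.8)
The plan is to carry out the pointwise energy--density computation from the proof of Theorem~\ref{thm25}, this time over the half--cylinder $\Theta_-$ and with the ${\rm Ham}^G(M)$--connection $\nabla=-F_\epsilon\,dt$ (so that in the notation of that proof $K^{(1)}_\nabla\equiv 0$, $K^{(2)}_\nabla=-F_\epsilon$, hence $R_\nabla=-\tfrac{\partial F_\epsilon}{\partial s}\,ds\wedge dt$), and then to integrate while keeping track of the new boundary circle $\partial\Theta_-=\{0\}\times S^1$. Before computing I would note that the energy is genuinely defined: since $\mc{YMH}_{F_\epsilon}({\mf w})<+\infty$ and $\sup_{\Theta_-}|\mu(u)|<+\infty$, part (1) of Theorem~\ref{thm25} (equivalently the asymptotic analysis of \cite[Section~3]{Xu_VHF}) produces $\wt{x}=(x,f)\in{\rm Zero}\,\wt{\mc B}_H$ with $\lim_{s\to-\infty}{\mf w}=\wt{x}$ and exponential convergence along the end; together with elliptic regularity up to the totally real boundary $u(\partial\Theta_-)\subset L$, this makes every integral in the statement absolutely convergent, and it pins down the capping data: gluing the capping disk $w$ of $\lb{\mf x}\rb$ to $u$ produces a disk with boundary on $L$ whose class in $\wt\Gamma$ is, by the definition of ${\mc M}(\lb{\mf x}\rb,\lb\beta\rb;F_\epsilon,\wt{J})$, equal to $\lb\beta\rb$, so that $\int_{\Theta_-}u^*\omega+\int_{\mb D}w^*\omega=\omega\lb\beta\rb$ (this pairing being unambiguous because $L\subset\mu^{-1}(0)$ makes the equivariant and ordinary symplectic areas agree on $D_2(M,L)$).

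The heart of the argument is the local identity, obtained verbatim as in the proof of Theorem~\ref{thm25}. Writing $v_s=\tfrac{\partial u}{\partial s}+{\mc X}_\Phi$ and $v_t=\tfrac{\partial u}{\partial t}+{\mc X}_\Psi-{\mc Y}_{F_\epsilon}$, the first equation in \eqref{equation51} gives $v_t=-J_{s,t}v_s$ and the second gives $F_A=-\mu(u)\,ds\wedge dt$, so the Yang--Mills--Higgs density $\tfrac12\big(|d_{A,F_\epsilon}u|^2+|F_A|^2+|\mu(u)|^2\big)$ equals $|v_s|^2+|\mu(u)|^2$; expanding exactly as in Theorem~\ref{thm25} one obtains
\begin{align*}
\Big|\frac{\partial u}{\partial s}+{\mc X}_\Phi\Big|^2+|\mu(u)|^2 &= u^*\omega\Big(\frac{\partial}{\partial s},\frac{\partial}{\partial t}\Big)+dF_\epsilon\cdot\frac{\partial u}{\partial s}\\
&\quad -\frac{\partial}{\partial s}\big(\mu(u)\cdot\Psi\big)+\frac{\partial}{\partial t}\big(\mu(u)\cdot\Phi\big).
\end{align*}
Integrating over $\Theta_-$ kills the $\tfrac{\partial}{\partial t}$--term by $S^1$--periodicity, while $\int_{\Theta_-}\tfrac{\partial}{\partial s}\big(\mu(u)\cdot\Psi\big)=\int_{S^1}\big(\mu(u(0,t))\cdot\Psi(0,t)-\mu(x(t))\cdot f(t)\big)\,dt=0$, because $L\subset\mu^{-1}(0)$ forces $\mu(u(0,\cdot))\equiv0$ and $\wt{x}\in{\rm Zero}\,\wt{\mc B}_H$ forces $\mu(x)\equiv0$. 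This cancellation, coming from the Lagrangian boundary lying in the zero level set, is the only genuinely new ingredient compared with the closed case treated in Theorem~\ref{thm25}; what remains is the middle line, $\mc{YMH}_{F_\epsilon}({\mf w})=\int_{\Theta_-}u^*\omega+\int_{\Theta_-}\big(dF_\epsilon\cdot\tfrac{\partial u}{\partial s}\big)\,ds\,dt$.

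For the last line I would split $dF_\epsilon\cdot\tfrac{\partial u}{\partial s}=\tfrac{d}{ds}\big[F_\epsilon\circ u\big]-\tfrac{\partial F_\epsilon}{\partial s}\circ u$, integrate the total derivative, and use $F_\epsilon(s,t,\cdot)\equiv R_{\ov{H}}+\epsilon$ for $s>-1$ near $L$ together with $F_\epsilon(s,t,\cdot)=H_t$ for $s\le-10$ (and the usual truncation--and--limit at $s\to-\infty$, legitimate by the exponential decay invoked above) to get $\int_{\Theta_-}\big(dF_\epsilon\cdot\tfrac{\partial u}{\partial s}\big)\,ds\,dt=(R_{\ov{H}}+\epsilon)-\int_{S^1}H_t(x)\,dt-\int_{\Theta_-}\big(\tfrac{\partial F_\epsilon}{\partial s}\circ u\big)\,ds\,dt$. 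Combining this with $\int_{\Theta_-}u^*\omega=\omega\lb\beta\rb-\int_{\mb D}w^*\omega$ from the first paragraph and with ${\mc A}_H\lb{\mf x}\rb=-\int_{\mb D}w^*\omega-\int_{S^1}H_t(x)\,dt$ (valid because $\mu(x)\equiv0$ on the periodic orbit) yields the stated formula. The only real difficulty is bookkeeping --- fixing the signs in the minimal coupling form $\omega_{A,\nabla}$ and in $R_\nabla$, and justifying the exchange of limit and integral at the cylindrical end --- and both are handled exactly as in the proof of Theorem~\ref{thm25}.
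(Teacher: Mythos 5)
Your proposal is correct and takes essentially the same route as the paper, which establishes Proposition \ref{prop52} by exactly the computation you carry out: the energy-density expansion from the proof of Theorem \ref{thm25} specialized to $\nabla=-F_\epsilon\,dt$ on $\Theta_-$, with the new boundary term $\int_{\Theta_-}\partial_s\big(\mu(u)\cdot\Psi\big)\,ds\,dt$ vanishing because $u(\partial\Theta_-)\subset L\subset\mu^{-1}(0)$ and $\mu(x)\equiv 0$ at the asymptotic end, followed by the splitting $dF_\epsilon\cdot\partial_s u=\tfrac{d}{ds}\big(F_\epsilon\circ u\big)-\tfrac{\partial F_\epsilon}{\partial s}\circ u$ together with the boundary and asymptotic values of $F_\epsilon$ and the formula for ${\mc A}_H\lb{\mf x}\rb$. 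The only blemish is cosmetic: the first equation of \eqref{equation51} gives $v_t=J_{s,t}v_s$ rather than $-J_{s,t}v_s$, which affects nothing you actually use.
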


Since \eqref{equation51} has no domain symmetry, it is easy to achieve transversality for the above moduli space. More precisely, we have
\begin{thm}
For a generic family $\wt{J} = (J_{s, t})$, for any $\lb {\mf x} \rb \in {\rm Crit} {\mc A}_H$ and $\lb \beta \rb \in \wt\Gamma$, the moduli space ${\mc M}( \lb {\mf x} \rb, \lb \beta \rb;  F_\epsilon, \wt{J})$ is a smooth manifold of dimension
\begin{align*}
{\rm dim} {\mc M}( \lb {\mf x} \rb, \lb \beta \rb;  F_\epsilon, \wt{J})  = \ov{m} + {\sf ml} \lb \beta\rb + {\sf cz}\lb{\mf x}\rb.
\end{align*}
Moreover, a $G$-equivariant spin structure on $L$ induces an orientation on ${\mc M}( \lb {\mf x} \rb, \lb \beta \rb;  F_\epsilon, \wt{J})$.
\end{thm}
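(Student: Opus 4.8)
The statement is the standard transversality-plus-dimension-count-plus-orientation package for the closed-open moduli spaces $\mc{M}(\lb\mf x\rb,\lb\beta\rb;F_\epsilon,\wt J)$, adapted to the symplectic vortex setting over $\Theta_-$ with Lagrangian boundary in $L\subset\mu^{-1}(0)$. First I would set up the analytic framework: realize $\mc{M}(\lb\mf x\rb,\lb\beta\rb;F_\epsilon,\wt J)$ as the zero set of a Fredholm section of a Banach bundle over an appropriate Banach manifold of gauge-equivalence classes of pairs $(u,\Psi)$ in a temporal-gauge slice (using that, as in \cite{Xu_VHF}, one can always reduce to temporal gauge $\Phi\equiv0$), with Sobolev completions $W^{k,p}$ on the half-cylinder with a weighted norm at the $s\to-\infty$ end encoding exponential convergence to the nondegenerate orbit $\wt x$. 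The linearization $D_{\mf w}$ of \eqref{equation51} at a solution is a compact perturbation (by the boundary term and the Hamiltonian/moment-map terms) of a Cauchy–Riemann-type operator on $u^*T^\perp Y$ with totally real boundary condition $TL$, hence Fredholm; its index is computed below.

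**Transversality.** Here I would follow the universal-moduli-space argument exactly as in \cite[Section 6]{Xu_VHF}: vary $\wt J=(J_{s,t})$ in the class of families which are exponentially close to $J$ at the negative end and equal to the fixed $J_0$ on $\partial\Theta_-$, and show the universal linearized operator is surjective. The key point making this easier than in the closed-string case is, as the authors note, that \eqref{equation51} has \emph{no domain symmetry} (the $\mb R$-translation is broken by the $s$-dependent perturbation $F_\epsilon$ and by the boundary), so every nonconstant solution has a point where $\partial_s u+\mc X_\Phi(u)\neq0$ and a neighborhood on which the projection to $\ov M$ is an immersion transverse to $\mu^{-1}(0)$, allowing the standard Aronszajn/unique-continuation argument to produce enough variations of $J_{s,t}$ to hit the cokernel; the case of constant solutions (mapped into $\mu^{-1}(0)$, covered by $J_0$-regularity assumptions like {\bf (H5)}) is handled separately. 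Sphere and disk bubbling is excluded by asphericity {\bf (H1)} together with {\bf (H5)}, so no further stratification is needed. This gives a residual set of $\wt J$ for which $\mc{M}(\lb\mf x\rb,\lb\beta\rb;F_\epsilon,\wt J)$ is a smooth manifold of dimension equal to the Fredholm index.

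**Index computation and orientation.** The index is obtained by the usual gluing/linear-algebra excision: capping off the negative end with the disk $w$ defining $\lb\mf x\rb$ and gluing in the Maslov-index-$\sf{ml}\lb\beta\rb$ boundary data turns $D_{\mf w}$ into a Cauchy–Riemann operator on a bundle over the disk with totally real boundary, whose index is $\ov m+\sf{ml}\lb\beta\rb$; the spectral contribution of the asymptotic orbit at the negative puncture contributes the Conley–Zehnder-type term $\sf{cz}\lb\mf x\rb$ (with the sign convention of \cite{Xu_VHF}, so that it appears additively as written), yielding $\dim=\ov m+\sf{ml}\lb\beta\rb+\sf{cz}\lb\mf x\rb$. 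For orientations I would use the determinant-line formalism of \cite{Floer_Hofer_Orientation} together with a $G$-equivariant spin (or relative spin) structure on $L$: the spin structure trivializes the determinant line of the boundary Cauchy–Riemann problem coherently, and gluing this with the canonical orientations on the spectral/capping pieces (exactly as in the closed-string vortex orientations already used in Section \ref{section2}) orients $\mc{M}(\lb\mf x\rb,\lb\beta\rb;F_\epsilon,\wt J)$; one checks compatibility with the $\Gamma$-action ``$\lb\beta\rb\mapsto A\#\lb\beta\rb$'' and with the gluing of $VCF$ trajectories at the negative end.

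**Main obstacle.** The routine pieces (Fredholm property, index, orientation bookkeeping) are all mild generalizations of \cite{Xu_VHF} and \cite{FOOO_spectral}; the one point needing genuine care is the transversality near \emph{constant} and near-constant solutions, i.e.\ solutions whose image lies in (a small neighborhood of) $\mu^{-1}(0)$ and which are essentially $J_0$-holomorphic disks in $L\subset\mu^{-1}(0)$ — there the $J_{s,t}$-variations are constrained ($J_{s,t}\equiv J_0$ on the boundary), so one must invoke hypothesis {\bf (H5)} (automatic regularity of $J_0$-holomorphic disks) to close the argument, rather than a generic perturbation. Ensuring the two transversality mechanisms patch together consistently over the whole moduli space is the part I would write out in full detail.
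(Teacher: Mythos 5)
Your proposal is correct and follows essentially the same route the paper takes: the paper offers no written proof of this theorem beyond the one-line remark that equation \eqref{equation51} has no domain symmetry, so transversality for generic $\wt J$ is standard, with the index and orientation statements left to the usual Fredholm/gluing and coherent-orientation arguments of \cite{Xu_VHF} and \cite{Floer_Hofer_Orientation} — exactly the package you spell out. Your only deviations are expository (the careful discussion of constant/near-constant solutions and of hypothesis {\bf (H5)}, which is not actually needed here since this moduli space contains no disk components), so no genuine gap arises.
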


\subsubsection*{The closed-open moduli spaces}

It is a bit lengthy to spell out rigorously the definition of closed-open map, as will be elaborated momentarily.  The reader should keep in mind that intuitively it is only a holomorphic treed quasi-disk (see Appendix \ref{subsectiona2}) with one disk component replaced by a solution of \eqref{equation51}.  Throughout we use conventions and notations from Appendix \ref{appendixa}.


\begin{defn}
\begin{enumerate}

\item A {\bf stable co-map} (with auxiliary data $F_\epsilon$, $\wt{J}$, and the perturbation datum on flow lines) consists of a Stasheff tree ${\bf T} = (V, E, {\mf e})$, a treed disk (see Appendix \ref{subsectiona1})
\begin{align*}
{\mc D} = \Big( (D_\alpha)_{\alpha \in V_1},\ (N_\beta)_{\beta\in V_0},\ (I_{e})_{e\in E},\ (z_e)_{e\in E,\ e^+\in V_1} \Big),
\end{align*}
modelled on ${\bf T}$, a distinguished vertex ${\mf c}\in V_1$, and a collection
\begin{align*}
{\bf U}:=\big( {\mf w}, (u_\alpha)_{\alpha \in V_1 \setminus \{{\mf c}\}}; (v_e)_{e\in E} \big)
\end{align*}
where
\begin{itemize}
\item ${\mf w} = (u, \Phi, \Psi)$ is a solution to \eqref{equation51}, called the cylindrical component;

\item for each $\alpha \in V_1 \setminus \{{\mf c}\}$, $u_\alpha: (D_\alpha, \partial D_\alpha) \to ( X, L)$ is a $J_0$-holomorphic disk, called a disk component;

\item For each $e \in E$, $v_e: I_e \to \ov{L}$ is a solution to the perturbed negative flow line equation \eqref{equationa1}, called an edge component.
\end{itemize}
Here we regard the domain ${\mf w}$ as the punctured disk $D_{\mf c}\setminus \{0\}$ equipped with the cylindrical volume form. We require that, if replacing ${\mf w}$ by a nonconstant quasidisk with same evaluations on all special points of $D_{\mf c}$, then ${\bf U}$ becomes a stable holomorphic treed quasidisk (see Appendix \ref{subsectiona2}).

\item there is an isomorphism of their Stasheff trees (as ribbon trees), identifying the distinguished vertex.

\item A disk vertex $\alpha \in V_1 \setminus \{{\mf c}\}$ is called a {\bf ghost disk vertex} for ${\bf U}$ if $u_\alpha$ has zero energy.

\item Two stable co-maps are regarded as equivalent, if
\begin{itemize}

\item their underlying treed disks are the same;

\item the corresponding disk components are isomorphic, i.e., differ by a domain symmetry preserving the special points and a $G$-action;

\item the corresponding edge components are the same perturbed flow line;

\item the cylindrical components are isomorphic, i.e., differ by a gauge transformation on $\Theta_-$.
\end{itemize}
\end{enumerate}
\end{defn}
See Figure \ref{figure1} as an illustration of a stable co-map.

\begin{figure}
\centering
\begin{tikzpicture}[scale=0.6]
\coordinate (C1) at (0,1);

\coordinate (C2) at (0, -1);

\draw[dashed, color=gray] (0,1.5) arc (90:270:0.3 and 0.75); 
\draw (0,0) arc (-90:90:0.3 and 0.75); 
\draw (0, 1.5) -- (-4, 1.5); 
\draw (0,0) -- (-4, 0); 
\draw (0,0) -- (0, -2); 

\fill[gray] (0, -2.6) circle (0.6);

\draw (0.3, 0.75) -- (2.3, 0.75); 

\draw[dotted] (2.9, 0.75) circle (0.6); 

\draw (2.9, 1.35) -- (2.9, 2.35);

\fill[gray] (2.9, 2.95) circle (0.6); 

\draw (0.7, 2.95) -- (2.3, 2.95);
\draw[dotted] (0.7, 2.95) node[fill, circle, inner sep = 2pt]{};

\draw (3.5, 0.75) -- (4.5, 0.75); 
\draw[dotted] (4.5, 0.75) node[fill, circle, inner sep = 2pt]{};

\draw[dotted] (1.3, 0.75) node[fill, circle, inner sep = 2pt]{};
\draw[dotted] (0, -1) node[fill, circle, inner sep = 2pt]{};

\end{tikzpicture}
\caption{A stable co-map with one out-going vertex (the node on the very right), one boundary in-coming vertex (the node on the top). The other two nodes are breakings of perturbed gradient lines. The dashed circle indicates a ghost disk component.}
\label{figure1}
\end{figure}
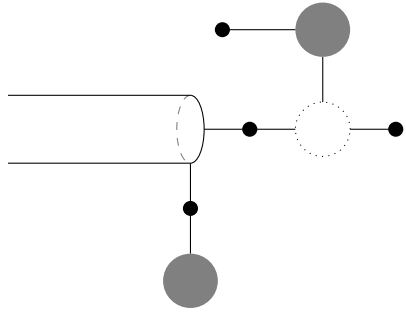

We now restrict our attention to special types of stable co-maps whose underlying Stasheff trees have at most two in-coming semi-infinite edges. In this section we simply call those in-coming semi-infinite edges {\bf boundary inputs} and call the cylindrical components the {\bf cylindrical inputs}; we call the unique out-going semi-infinite edge the {\bf output}.

Here and in the following we will omit the dependence on $F_\epsilon$ and $\wt{J}$ in the notations.

\bdf For each $\lb {\mf x}\rb \in {\rm Crit} {\mc A}_H$, $\lb \beta \rb \in \wt\Gamma$ and $y \in {\rm Crit} F$,  $\mc{CO} \big( \lb {\mf x} \rb, \lb \beta \rb, y\big)$ denotes the moduli space of equivalence classes of stable co-maps ${\bf U}$ of which the underlying isomorphism classes of Stasheff trees have no boundary inputs, such that
\begin{itemize}
\item The cylindrical component of ${\bf U}$ represents an element in ${\mc M}(\lb {\mf x} \rb, \lb \beta' \rb; F_\epsilon, \wt{J})$ for some $\beta' \in \wt\Gamma$ and the disk components have homology class $\beta''\in \wt\Gamma$ such that $\beta = \beta' + \beta''$.

\item The evaluation at the $+\infty$ of the output of ${\bf U}$ is $y$.
\end{itemize}
\edf

An important special case is to consider the \textbf{linear} stable co-maps, whose underlying Stasheff tree has maximal valence equal two. To describe boundaries of 1-dimensional moduli spaces of the form $\mc{CO} ( \lb {\mf x} \rb, \lb \beta \rb , y )$, we also consider

\bdf In the situation as above, denote by $\mc{CO}'( \lb {\mf x} \rb, \lb \beta \rb, y )$ the moduli space of stable co-maps with one boundary input. For $y' \in {\rm Crit} F$, denote by $\mc{CO}'( \lb {\mf x} \rb, \lb \beta \rb, y; y' )$ the subset of elements whose evaluations at the $-\infty$ of the boundary input is $y'$.

\edf

Examples of elements in $\mc{CO} \big( \lb {\mf x} \rb, \lb \beta \rb, y\big)$ are shown in Figure 2.  Removing the shaded disk and the edge connecting to it yields an example of elements in $\mc{CO}'( \lb {\mf x} \rb, \lb \beta \rb, y )$.

\begin{rem}
The topology of $\mc{CO}( \lb {\mf x} \rb, \lb \beta\rb, y)$ or $\mc{CO}'(\lb {\mf x} \rb, \lb \beta\rb, y; y')$ is defined in a similar way as the moduli spaces of perturbed treed holomorphic quasidisks given in \cite{Woodward_toric}. The only difference is the  bubbling of quasidisks on the cylindrical component, which still falls into the realm of standard techniques: on the one hand, no sphere bubbles can appear; on the other hand, by our choice of $\wt{J}$, whose value on the boundary of $\Theta_-$ is equal to $J_0$, the bubbles on the boundary will always be $J_0$-quasidisks.
\end{rem}

By standard argument similar to that of \cite{Woodward_toric_corrected}, with one of the disk components replaced by a cylindrical component, we have the following theorem.
\begin{thm}\label{thm58}
For $\lb {\mf x} \rb \in {\rm Crit} {\mc A}_H$, $y \in {\rm Crit} F$, $\lb \beta \rb \in \wt\Gamma$, for a generic, admissible family of almost complex structures $\wt{J}$ and a generic perturbation datum, the moduli space $\mc{CO}\big( \lb {\mf x} \rb, \lb \beta \rb, y \big)$ is smooth of dimension
\begin{align}\label{equation52}
{\rm dim} \mc{CO} ( \lb {\mf x} \rb, \lb \beta \rb, y \big) = {\sf cz}\lb {\mf x} \rb + {\sf ml}\lb \beta \rb - {\sf ind} y.
\end{align}
The $G$-equivariant spin structure endows consistent orientations on these moduli spaces.

Moreover, when \eqref{equation52} is equal to one, the boundary of the moduli space is the disjoint union of the following three sets:

{\bf (I)} The set of objects broken in the cylindrical part:
\begin{align}\label{equation53}
\partial_+ \mc{CO} \big( \lb {\mf x} \rb, \lb \beta \rb, y\big) = \bigcup_{{\sf cz} \lb {\mf x} \rb - {\sf cz} \lb {\mf x}' \rb = 1}  {\mc M} \big( \lb {\mf x} \rb, \lb {\mf x}'\rb \big) \times \mc{CO} \big( \lb {\mf x}' \rb, \lb \beta \rb, y \big)
\end{align}

{\bf (II)} The set of objects broken in the quasidisk part:
\begin{align}\label{equation54}
\partial_- \mc{CO}\big( \lb {\mf x} \rb, \lb \beta \rb, y\big) = \bigcup_{{\sf ml}\lb \beta_2\rb + {\sf ind} y' - {\sf ind} y = 1 \atop \beta = \beta_1 + \beta_2 } \mc{CO} \big( \lb {\mf x} \rb, \lb \beta_1 \rb, y' \big)  \times {\mc M}\big( y', y; \lb \beta_2  \rb \big)
\end{align}

{\bf (III)} The set of objects where side-bubbling happens (see the middle and right in Figure 2):
\begin{align}\label{equation55}
\partial_0 \mc{CO} \big( \lb {\mf x} \rb, \lb \beta \rb, y\big) = \bigcup_{y' \in {\rm Crit} F} \bigcup_{\beta = \beta_0 + \beta_1} \mc{CO}'(\lb {\mf x} \rb, \lb \beta_0 \rb, y; y') \times \mc{O}( \lb \beta_1\rb, y').
\end{align}
Here ${\mc O}(\lb \beta_1 \rb, y')$ is the subset of $MW_0(L,\beta_1)$ with $y'$ as the output, and ${\mc M}\big( y', y; \lb \beta_2  \rb \big)$ is the subset of $MW_1(L,\beta_2)$ with boundary input equal $y'$ and output $y$ (see Appendix \ref{appendixa} for relevant definitions).
\end{thm}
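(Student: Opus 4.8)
The plan is to follow the now-standard package of techniques for moduli spaces of holomorphic treed quasidisks in Woodward's framework, modified in exactly one place: one disk component is replaced by a solution of the perturbed vortex equation \eqref{equation51} on $\Theta_-$. First I would set up the deformation theory of a stable co-map. The linearized operator splits into three blocks, corresponding to the edge components (finite-length gradient trajectories, whose linearization is standard Morse theory and is surjective for Morse--Smale $F$), the $J_0$-holomorphic disk components (whose regularity is exactly hypothesis {\bf (H5)}, together with genericity of the perturbation datum as in \cite{Woodward_toric}), and the cylindrical vortex component, for which the relevant Fredholm theory and transversality were established in \cite{Xu_VHF} (absence of sphere bubbles in the aspherical case makes a generic family $\wt{J}$ suffice). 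The index of the cylindrical block was already recorded above (its moduli space ${\mc M}(\lb{\mf x}\rb,\lb\beta'\rb;F_\epsilon,\wt{J})$ has dimension $\ov m+{\sf ml}\lb\beta'\rb+{\sf cz}\lb{\mf x}\rb$), and adding the contributions of the disk and edge components while subtracting the matching conditions at the nodes yields \eqref{equation52}; this is a bookkeeping computation of the same form as in \cite{Woodward_toric_corrected}. Orientations are obtained by the usual linear gluing/determinant-line argument: the $G$-equivariant spin structure on $L$ orients the disk and edge pieces exactly as in Woodward's construction, while the cylindrical piece is oriented coherently with the moduli spaces of vortex Floer trajectories (already oriented in \cite{Xu_VHF}); compatibility across the decomposition is checked as in \cite{FOOO_spectral, Floer_Hofer_Orientation}.

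Next comes compactness. The key point is the energy identity of Proposition \ref{prop52}: since $\partial_s F_\epsilon\ge 0$, the Yang--Mills--Higgs energy of the cylindrical component is bounded by $\omega\lb\beta\rb+{\mc A}_H\lb{\mf x}\rb+(R_{\ov H}+\epsilon)$, so total energy is uniformly bounded on each stratum $\mc{CO}(\lb{\mf x}\rb,\lb\beta\rb,y)$. One then runs the standard Gromov--Uhlenbeck-type compactness: gauge-fixing and elliptic bootstrapping for the vortex component (as in \cite{Mundet_thesis, Xu_VHF}), and Woodward's quasidisk compactness for the treed part. The possible degenerations are (i) breaking of a vortex Floer trajectory off the negative cylindrical end, contributing \eqref{equation53}; (ii) breaking of a gradient edge together with possible quasidisk bubbling at the output, i.e. a treed-quasidisk configuration splitting off, contributing \eqref{equation54}; and (iii) ``side bubbling'', where a quasidisk bubbles off at an interior boundary node of the tree — this is the new-versus-standard feature and produces \eqref{equation55}, with the bubbled-off piece an element of $MW_0(L,\beta_1)$ attached along a point $y'\in{\rm Crit}F$ and the remaining configuration acquiring one new boundary input, hence lying in $\mc{CO}'(\lb{\mf x}\rb,\lb\beta_0\rb,y;y')$. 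The crucial exclusions are: no sphere bubbles (asphericity {\bf (H1)}), and no nonconstant $J_0$-disk bubbling on the boundary circle $\partial\Theta_-$ \emph{other than} honest $J_0$-quasidisks, which is guaranteed by the choice $J_{s,t}\equiv J_0$ on $\partial\Theta_-$ and again {\bf (H5)}; disk bubbles of Maslov index $<2$ are ruled out by {\bf (H5)}, and ghost components are handled by the stability condition built into the definition of stable co-map.

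Finally one assembles these into the boundary description. Given that \eqref{equation52} equals one, the compactified moduli space $\ov{\mc{CO}}(\lb{\mf x}\rb,\lb\beta\rb,y)$ is a compact one-manifold with boundary, and by the gluing theorems — vortex trajectory gluing from \cite{Xu_VHF, Mundet_thesis} for type (I), gradient-flow gluing plus Woodward's quasidisk gluing for types (II) and (III) — every configuration in the three listed sets arises as a genuine boundary point and conversely, with the correct multiplicity. Matching the dimension constraints (${\sf cz}\lb{\mf x}\rb-{\sf cz}\lb{\mf x}'\rb=1$ in (I); ${\sf ml}\lb\beta_2\rb+{\sf ind}\,y'-{\sf ind}\,y=1$ in (II); the side-bubble stratum in (III)) with \eqref{equation52} shows these are precisely the codimension-one strata. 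The main obstacle, and the only genuinely non-routine point, is analyzing the side-bubbling degeneration \eqref{equation55} rigorously: one must show that when a quasidisk bubbles off along the tree, the limiting configuration is exactly a stable co-map with one boundary input glued to an element of $MW_0(L,\beta_1)$, that no energy is lost, that the matching happens at a critical point $y'$ of $F$ (a consequence of the edge-length going to zero and the standard analysis of treed trajectories), and that the gluing is a local homeomorphism onto a half-neighborhood — i.e. that this boundary stratum is genuinely codimension one and not, say, an interior coincidence. This is where the interplay between the vortex-theoretic compactness (which must be pushed to the boundary circle of $\Theta_-$) and Woodward's treed-disk formalism is most delicate, but all the required pieces are available in \cite{Xu_VHF} and \cite{Woodward_toric, Woodward_toric_corrected} and combine without essentially new input.
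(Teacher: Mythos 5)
Your proposal is correct and follows essentially the same route as the paper, which in fact gives no detailed argument for Theorem \ref{thm58} at all: it simply invokes the standard treed-quasidisk analysis of \cite{Woodward_toric_corrected} with one disk component replaced by the cylindrical vortex component, together with the preceding remark that asphericity excludes sphere bubbles and that $\wt{J}\equiv J_0$ on $\partial\Theta_-$ forces boundary bubbles to be $J_0$-quasidisks. Your outline (blockwise transversality, index bookkeeping, orientations from the $G$-equivariant spin structure, compactness via Proposition \ref{prop52}, and identification of the three codimension-one strata including the side-bubbling one) is a faithful expansion of exactly that intended argument.
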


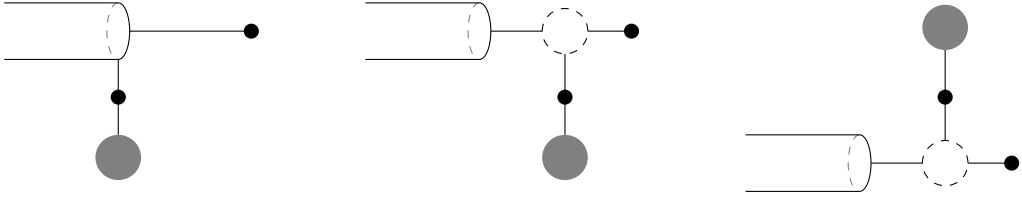
\begin{figure}
\centering
\begin{tikzpicture}[scale=0.5]

\draw[dashed, color=gray] (0.5,1.5) arc (90:270:0.3 and 0.75); 
\draw (0.5,0) arc (-90:90:0.3 and 0.75); 
\draw (0.5, 1.5) -- (-2.5, 1.5); 
\draw (0.5,0) -- (-2.5, 0); 
\draw (0.5,0) -- (0.5, -2); 
\fill[gray] (0.5, -2.6) circle (0.6);
\draw (0.8, 0.75) -- (4, 0.75); 
\draw[dotted] (4, 0.75) node[fill, circle, inner sep = 2pt]{};
\draw[dotted] (0.5, -1) node[fill, circle, inner sep = 2pt]{};

\draw[dashed, color=gray] (10,1.5) arc (90:270:0.3 and 0.75); 
\draw (10,0) arc (-90:90:0.3 and 0.75); 
\draw (10, 1.5) -- (7, 1.5); 
\draw (10,0) -- (7, 0); 
\draw (10.3, 0.75) -- (11.65, 0.75); 
\draw[dashed] (12.25, 0.75) circle (0.6);
\draw (12.25, 0.15) -- (12.25, -2); 
\draw[dotted] (12.25, -1) node[fill, circle, inner sep = 2pt]{};
\fill[gray] (12.25, -2.6) circle (0.6);
\draw (12.85, 0.75)--(14, 0.75);
\draw[dotted] (14, 0.75) node[fill, circle, inner sep = 2pt]{};

\draw[dashed, color=gray] (20,-2) arc (90:270:0.3 and 0.75); 
\draw (20,-3.5) arc (-90:90:0.3 and 0.75); 
\draw (20, -2) -- (17, -2); 
\draw (20,-3.5) -- (17, -3.5); 

\draw (20.3, -2.75) -- (21.65, -2.75); 
\draw[dashed] (22.25, -2.75) circle (0.6);
\draw (22.25, -2.15) -- (22.25, 0.25);

\fill[gray] (22.25, 0.85) circle (0.6);
\draw (22.85, -2.75)--(24, -2.75);
\draw[dotted] (24, -2.75) node[fill, circle, inner sep = 2pt]{};

\draw[dotted] (22.25, -1) node[fill, circle, inner sep = 2pt]{};

\end{tikzpicture}
\caption{Stable co-maps with side bubbles (from the left to right, attached to the cylinder, attached to a ghost disk from the right, and attached to a ghost disk from the left, respectively). When counting defines a chain map, the configuration on the left doesn't exist and the configurations in the middle and on the right cancel in pairs.}
\label{figure2}
\end{figure}

We have more precise descriptions of the moduli spaces when assuming {\bf (H5)}.
\begin{lemma}\label{lemma59}
Assume $(L, J_0)$ satisfies {\bf (H5)}. Then when $\mc{CO}\big( \lb {\mf x} \rb, \lb \beta \rb, y\big)$ or ${\mc M}(y', y; \lb \beta \rb)$ is zero-dimensional, for each of its element, its underlying treed disk has neither a disk vertex with only one special point nor a nodal vertex. In particular, elements in zero-dimensional $\mc{CO}\big( \lb {\mf x} \rb, \lb \beta \rb, y\big)$ or zero-dimensional ${\mc M}(y', y; \lb \beta \rb)$ must be linear stable co-maps.
\end{lemma}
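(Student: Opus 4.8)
The plan is to argue by a dimension count that rules out the ``degenerate'' configurations, exactly as in the standard treatment of treed holomorphic disks but taking {\bf (H5)} as the input that forbids non-constant low-Maslov disks. First I would recall the general structure of a stable co-map in $\mc{CO}(\lb {\mf x}\rb,\lb\beta\rb,y)$ or of a treed pearly trajectory in ${\mc M}(y',y;\lb\beta\rb)$: it decomposes into a cylindrical component (only in the first case), a collection of $J_0$-holomorphic disk components, and edge components which are perturbed negative gradient flow segments. The expected (virtual) dimension of the whole moduli space is the sum of the contributions of each piece plus the reduced dimension coming from the combinatorics of the tree; stability forces every component to be either non-constant or to carry at least three special points. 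The key numerical fact, which I would isolate first, is that under {\bf (H5)} every non-constant $J_0$-holomorphic disk appearing has Maslov index $\geq 2$, and that these disks are regular, so no component contributes a negative amount to the index and the ``forgetting a component'' maps are of the expected codimension.

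Next I would carry out the case analysis for the two types of pathological disk vertices. \textbf{Case 1: a disk vertex $\alpha$ with exactly one special point.} Such a vertex must be non-constant (otherwise it is unstable), hence by {\bf (H5)} it carries a class of Maslov index $\geq 2$. Removing it and the edge attaching it drops the expected dimension by ${\sf ml}\lb\beta_\alpha\rb - 1 \geq 1$ (the disk contributes ${\sf ml}\lb\beta_\alpha\rb + \dim\ov L - 3$ after modding out automorphisms, one gets a clean codimension bound by the usual pearly count), so the configuration sits in a moduli space of dimension $\geq 1$ and cannot appear in a zero-dimensional stratum. \textbf{Case 2: a nodal vertex}, i.e.\ a disk vertex carrying two special points and a constant (ghost) map, or more generally a vertex forcing a node between two disk components. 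Here the constant ghost disk has a one-dimensional automorphism group not absorbed by special points, or equivalently one can glue/forget to strictly decrease the expected dimension; as in \cite{Woodward_toric}, such strata have codimension $\geq 1$ once all the genuine disk components attached are non-constant with Maslov $\geq 2$. In both cases the conclusion is that a zero-dimensional $\mc{CO}$ or ${\mc M}(y',y;\lb\beta\rb)$ contains no such configuration; what remains is precisely a chain of disk components each with two special points joined by edge components to the cylindrical input and the output, i.e.\ a linear stable co-map.

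The main obstacle I anticipate is bookkeeping the dimension contribution of the \emph{cylindrical} component correctly, since it is the novel piece not present in Woodward's setup: I would need the index formula ${\rm dim}\,\mc{CO}(\lb{\mf x}\rb,\lb\beta'\rb,y') = {\sf cz}\lb{\mf x}\rb + {\sf ml}\lb\beta'\rb - {\sf ind}\,y'$ from Theorem \ref{thm58}, and then check that attaching further non-constant disks of Maslov $\geq 2$ via gradient edges is an operation of non-negative codimension, with strict positivity whenever a bad vertex occurs. The rest is genuinely the same combinatorial argument as in the pearly/treed-disk literature, so I would cite \cite{Woodward_toric} and \cite{Woodward_toric_corrected} for the disk side and only spell out the interaction with the cylindrical component. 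One subtlety worth flagging explicitly: regularity in {\bf (H5)} is needed so that the moduli space of disks has exactly its expected dimension, otherwise the codimension estimates could fail; I would make sure the statement of the lemma invokes {\bf (H5)} precisely for this reason, as it does.
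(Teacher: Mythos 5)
Your Case 1 is essentially the paper's argument and is fine in spirit: stability forces the one-special-point disk to be nonconstant, {\bf (H5)} gives ${\sf ml}\geq 2$, and forgetting that disk together with its edge produces an object lying in a moduli space of \emph{negative} expected dimension, which is empty by the regularity of Theorem \ref{thm58}. You should, however, state the conclusion that way — the reduced object lands in a moduli space of index $-{\sf ml}(u_\alpha)\leq -2$ — rather than saying the configuration ``sits in a moduli space of dimension $\geq 1$'', and you are missing the subcase the paper treats separately: if the adjacent vertex $\alpha'$ carries a constant map, the reduced object may be \emph{unstable}, and one instead bounds its index by $1-{\sf ml}(u_\alpha)<0$ before invoking transversality.

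The genuine gap is your Case 2. In this paper a ``nodal vertex'' is an element of $V_0$ in the treed-disk combinatorics: a valence-two \emph{point} vertex at which two infinite-length edges meet, i.e.\ a breaking of the perturbed gradient trajectory at a critical point of $F$. It is not a ghost disk, nor a node between two disk components, so your discussion of ghost-disk automorphism groups does not address the claim at all (a ghost disk with only two special points is already excluded by the stability condition, not by a dimension count, and ghost disks with three or more special points are not what the lemma forbids). What the lemma needs is the observation that a broken gradient segment can be glued: if an element of the zero-dimensional $\mc{CO}\big(\lb{\mf x}\rb,\lb\beta\rb,y\big)$ or ${\mc M}(y',y;\lb\beta\rb)$ contained an interior nodal vertex, gluing the broken flow line would produce a one-parameter family inside that same zero-dimensional moduli space — equivalently, configurations with a breaking have formal dimension $-1$ and are excluded by transversality, since breakings only occur as boundaries of one-dimensional moduli spaces. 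Without this step the ``no nodal vertex'' half of the lemma, and hence the deduction that zero-dimensional moduli spaces consist of linear stable co-maps, is unproved in your write-up.
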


\begin{proof}
Let ${\bf U}$ be a stable co-map representing an element in $\mc{CO}( \lb {\mf x} \rb, \lb \beta \rb, y)$ (or ${\mc M}(y', y; \lb \beta \rb)$, which has no difference in this proof) whose underlying treed disk is ${\mc D}$. It is easy to see that ${\mc D}$ doesn't contain any interior nodal vertex, otherwise by gluing broken gradient lines, one can construct a 1-parameter family of holomorphic treed quasidisks.

Now we prove that ${\mc D}$ has no disk vertex which has only one special point. Suppose $\alpha \in V_1$ is a disk vertex with only one edge $\alpha \succ \alpha'$ that connects to $\alpha$. Let ${\mc D}'$ be the treed disk obtained by forgetting the vertex $\alpha$ and the edge $\alpha \succ \alpha'$ and ${\bf U}'$ the object by forgetting $u_\alpha$ and $v_{\alpha \succ \alpha'}$. $u_\alpha$ is nonconstant by stability and $\alpha'$ is not nodal as we have just seen. Then $\alpha'$ is a disk vertex. If $u_{\alpha'}$ is nonconstant, then ${\bf U'}$ is a stable co-map of index $- {\sf ml}(u_{\alpha})$ with underlying treed disk ${\mc D}'$, although the perturbation data may be different. Since there is no nonconstant quasidisk with nonpositive Maslov index, the index must be negative. The transversality supported by Theorem \ref{thm58} implies that there is no such object.

If $u_{\alpha'}$ is constant, then removing the vertex $\alpha$ and the edge $\alpha \succ \alpha'$, the remaining object is a (possibly) unstable co-map, of index at most $1 - {\sf ml}(u_{\alpha})<0$, since minimal Maslov index is two. For the same reason as above, there cannot be such object.
\end{proof}

\subsection{The closed-open map}

The counting of zero-dimensional moduli spaces defines a $\Lambda$-linear map ${\mf i}_{F_\epsilon}: VCF (M; H) \to CQF(L)$ given by
\begin{align}\label{equation56}
{\mf i}_{F_\epsilon} \lb {\mf x} \rb = \sum_{\beta \in \wt\Gamma\atop y \in {\rm Crit} F} \# \mc{CO} ( \lb {\mf x} \rb, \lb \beta \rb, y ) q^{- A(\lb \beta \rb)} e^{\langle b, \partial\lb \beta \rb\rangle} \langle y\rangle.
\end{align}
Here $\# \mc{CO}( \lb {\mf x} \rb, \lb \beta \rb, y )$ is the algebraic counting. By Lemma \ref{lemma59}, under the assumption {\bf (H5)}, one only counts linear stable co-maps.

\begin{prop}\label{prop510}
${\mf i}_{F_\epsilon}$ is a chain map, i.e., ${\mf i}_{F_\epsilon} \circ \delta_{J} = {\mf m}_1^b \circ {\mf i}_{F_\epsilon}$.
\end{prop}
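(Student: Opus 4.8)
The plan is to prove that $\mathfrak{i}_{F_\epsilon}$ is a chain map by analyzing the boundary of the one-dimensional moduli spaces $\mc{CO}(\lb {\mf x} \rb, \lb \beta \rb, y)$ when the dimension formula \eqref{equation52} equals one, and identifying the algebraic count of this boundary with the two sides of the desired identity. Concretely, one computes $\mathfrak{i}_{F_\epsilon}\circ \delta_J \lb {\mf x} \rb$ by first applying $\delta_J$, which counts broken vortex Floer trajectories of the form $\mc{M}(\lb {\mf x} \rb, \lb {\mf x}' \rb)$ with ${\sf cz}\lb {\mf x} \rb - {\sf cz}\lb {\mf x}' \rb = 1$, and then applying $\mathfrak{i}_{F_\epsilon}$. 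This exactly enumerates the boundary component $\partial_+ \mc{CO}(\lb {\mf x} \rb, \lb \beta \rb, y)$ in \eqref{equation53}. Similarly, ${\mf m}_1^b \circ \mathfrak{i}_{F_\epsilon}\lb {\mf x} \rb$ enumerates the boundary component $\partial_- \mc{CO}(\lb {\mf x} \rb, \lb \beta \rb, y)$ in \eqref{equation54}, using that ${\mf m}_1^b$ is defined by the pearly trajectory count $\mc{M}(y', y; \lb \beta_2 \rb)$ of the quasimap $A_\infty$ algebra. The remaining step is to show that the third boundary component $\partial_0 \mc{CO}(\lb {\mf x} \rb, \lb \beta \rb, y)$ in \eqref{equation55}, coming from side-bubbling, contributes zero to the algebraic count.

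The main obstacle is precisely the treatment of the side-bubble configurations in \eqref{equation55}. By Lemma \ref{lemma59}, under hypothesis {\bf (H5)}, zero-dimensional closed-open moduli spaces consist only of linear stable co-maps, so no disk vertex has a single special point and no interior nodal vertex appears. The key is then to observe — as indicated in the caption of Figure \ref{figure2} — that in the boundary of a one-dimensional $\mc{CO}(\lb {\mf x} \rb, \lb \beta \rb, y)$, the side-bubble configurations come in two types: a quasidisk bubble attached to a ghost disk ``from the right'' versus ``from the left''. I would argue that these two types of configurations are in bijection with each other (via moving the attaching edge across the ghost vertex), and that the induced orientations are opposite, so that their signed contributions cancel in pairs. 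Meanwhile, the configuration on the left of Figure \ref{figure2} (a quasidisk bubble attached directly to the cylindrical component, with no ghost disk) is excluded: such an object would force a disk vertex with a single special point attached directly to the cylinder, which by the transversality and index considerations behind Lemma \ref{lemma59} (minimal Maslov index two, so the remaining co-map would have negative index) cannot occur in a moduli space of the relevant dimension.

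Having disposed of $\partial_0$, the argument concludes by the standard cobordism principle: the algebraic count of the boundary of a compact oriented one-manifold is zero, hence
\begin{align*}
0 = \# \partial \mc{CO}(\lb {\mf x} \rb, \lb \beta \rb, y) = \# \partial_+ \mc{CO}(\lb {\mf x} \rb, \lb \beta \rb, y) + \# \partial_- \mc{CO}(\lb {\mf x} \rb, \lb \beta \rb, y),
\end{align*}
and summing over $\lb \beta \rb \in \wt\Gamma$ and $y \in {\rm Crit} F$ with the appropriate Novikov weights $q^{-A(\lb \beta \rb)} e^{\langle b, \partial \lb \beta \rb \rangle}$ — which are compatible with the splitting of energy and boundary class under both types of breaking — yields $\mathfrak{i}_{F_\epsilon}\circ \delta_J - {\mf m}_1^b \circ \mathfrak{i}_{F_\epsilon} = 0$. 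One must of course check that the signs in \eqref{equation53} and \eqref{equation54} match those produced by the definitions of $\delta_J$, ${\mf m}_1^b$, and $\mathfrak{i}_{F_\epsilon}$, which is a routine (if tedious) verification using the $G$-equivariant spin structure and the gluing-sign conventions of \cite{Floer_Hofer_Orientation} and \cite{Woodward_toric}, entirely parallel to the corresponding argument in \cite{FOOO_spectral}; I would simply remark that this follows as in the references rather than reproduce it.
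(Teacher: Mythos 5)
Your overall skeleton is the same as the paper's (identify $\partial_\pm$ of a one-dimensional closed-open moduli space with ${\mf i}_{F_\epsilon}\circ\delta_J$ and ${\mf m}_1^b\circ{\mf i}_{F_\epsilon}$, exclude bubbles attached directly to the cylinder by an index argument, and cancel the remaining side-bubble boundary in left/right pairs with opposite orientations), but there is a genuine gap at the crucial cancellation step. You run the flip argument directly on the boundary $\partial_0\mc{CO}\big(\lb{\mf x}\rb,\lb\beta\rb,y\big)$ of \eqref{equation55}. There the Maslov-two bubble is joined to the main component through a broken gradient edge of positive length (the fiber product $\mc{CO}'(\cdot;y')\times{\mc O}(\lb\beta_1\rb,y')$, middle and right of Figure \ref{figure2}), and the two configurations related by ``moving the attaching edge across the ghost vertex'' have underlying Stasheff trees that are \emph{not} isomorphic: the positive-length edges are not collapsed, and the ribbon structure at the ghost vertex differs. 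Since the universal consistent perturbation datum depends only on the isomorphism class of the tree, the flipped configuration generically carries \emph{different} perturbations on the main string, so your claimed bijection need not carry solutions to solutions, and the pairwise cancellation is not justified. This is exactly the difficulty the paper flags (``it is not easy to evaluate this contribution due to our perturbation scheme'') and is why it does not argue on $\mc{CO}$ directly.

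The paper's fix, which your proposal is missing, is to bypass $\mc{CO}$ and work with the sub-moduli space $\mc{BO}\big(\lb{\mf x}\rb,\lb\beta\rb,y\big)$ of \emph{linear} stable co-maps. By Lemma \ref{lemma59} this changes nothing in dimension zero, while Lemma \ref{lemma511} shows that the side-bubble boundary of the one-dimensional $\mc{BO}$ consists only of a single Maslov-index-two quasidisk attached by a \emph{length-zero} edge to a ghost disk vertex (Figure \ref{figure3}). Flipping such a bubble from one side of the ghost vertex to the other yields, after collapsing zero-length edges, an isomorphic Stasheff tree, so the perturbation data on the positive-length edges are literally unchanged; only then is the flip a genuine orientation-reversing bijection between boundary points, and the cancellation goes through. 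To repair your argument you would either have to restrict to $\mc{BO}$ as in the paper (proving the analogue of Lemma \ref{lemma511}), or construct a perturbation scheme symmetric under the flip on all of $\partial_0\mc{CO}$, which is not provided by the universal consistent data used here.
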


By Theorem \ref{thm58}, the proposition will be proved if we can show that the contribution of \eqref{equation55} is zero. However, it is not easy to evaluate this contribution due to our perturbation scheme. So we bypass it by considering one-dimensional moduli space of linear stable co-maps. Indeed, fix $\lb {\mf x} \rb \in {\rm Crit} {\mc A}_H$, $\lb \beta \rb \in \wt\Gamma$ and $y\in {\rm Crit} F$. Consider the subset
\begin{align*}
\mc{BO}\big( \lb {\mf x} \rb, \lb \beta \rb, y \big) \subset \mc{CO} \big( \lb {\mf x} \rb, \lb \beta \rb, y \big)
\end{align*}
consisting of linear stable co-maps. It has the induced topology from $\mc{CO}\big(\lb {\mf x} \rb, \lb \beta \rb, y \big)$. By Lemma \ref{lemma59}, when the moduli spaces are zero-dimensional, $\mc{BO}$ is the same as $\mc{CO}$. Moreover, we have the following description of boundaries of 1-dimensional $\mc{BO}\big( \lb {\mf x} \rb, \lb \beta \rb, y \big)$.

\begin{figure}
\centering
\begin{tikzpicture}[scale=0.5]

\draw[dashed, color=gray] (10,1.5) arc (90:270:0.3 and 0.75); 
\draw (10,0) arc (-90:90:0.3 and 0.75); 
\draw (10, 1.5) -- (7, 1.5); 
\draw (10,0) -- (7, 0); 
\draw (10.3, 0.75) -- (11.65, 0.75); 
\draw[dashed] (12.25, 0.75) circle (0.6);
\fill[gray] (12.25, -0.45) circle (0.6);
\draw (12.85, 0.75)--(14, 0.75);
\draw[dotted] (14, 0.75) node[fill, circle, inner sep = 2pt]{};

\draw[dashed, color=gray] (20,0) arc (90:270:0.3 and 0.75); 
\draw (20,-1.5) arc (-90:90:0.3 and 0.75); 
\draw (20, 0) -- (17, 0); 
\draw (20,-1.5) -- (17, -1.5); 

\draw (20.3, -0.75) -- (21.65, -0.75); 
\draw[dashed] (22.25, -0.75) circle (0.6);

\fill[gray] (22.25, 0.45) circle (0.6);
\draw (22.85, -0.75)--(24, -0.75);
\draw[dotted] (24, -0.75) node[fill, circle, inner sep = 2pt]{};
\end{tikzpicture}
\caption{}
\label{figure3}
\end{figure}

\begin{lemma}\label{lemma511}
Suppose {\bf (H5)} holds. When \eqref{equation52} is equal to one, $\mc{BO}\big( \lb {\mf x} \rb, \lb \beta \rb, y \big)$ is a one-dimensional manifold with boundary and its boundary is the disjoint union of \eqref{equation53}, \eqref{equation54} and a finite set $\partial_0 \mc{BO}\big( \lb {\mf x} \rb, \lb \beta \rb, y \big)\subset \mc{CO}\big( \lb {\mf x}\rb, \lb \beta \rb, y\big)$ consisting of stable co-maps with exactly one side bubbles. Moreover, the side bubbles are holomorphic quasidisks of Maslov index two and are connected via a length-zero edge to a ghost disk vertex (see Figure \ref{figure3}).
\end{lemma}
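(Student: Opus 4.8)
\textbf{Plan of proof for Lemma \ref{lemma511}.}

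The plan is to start from the description of $\partial \mc{CO}(\lb{\mf x}\rb,\lb\beta\rb,y)$ already established in Theorem \ref{thm58} and then use hypothesis {\bf (H5)} together with Lemma \ref{lemma59} to eliminate all but the advertised strata when we restrict to the linear locus $\mc{BO}$. First I would note that, by the dimension formula \eqref{equation52} and the transversality of Theorem \ref{thm58}, the space $\mc{CO}(\lb{\mf x}\rb,\lb\beta\rb,y)$ is a smooth $1$-manifold whose ends are described by \eqref{equation53}, \eqref{equation54}, \eqref{equation55}. The subset $\mc{BO}$ is open in $\mc{CO}$, and I would check that it is in fact a union of connected components together with their limit points of the first two types: indeed, a family of linear stable co-maps can only degenerate by (i) a vortex-trajectory breaking at the negative end, (ii) a Morse trajectory breaking at the positive end, or (iii) side-bubbling of a quasidisk. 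The first two keep the configuration linear (up to adding a length-$\infty$ edge) and give exactly \eqref{equation53} and \eqref{equation54}; so the only remaining contribution to $\partial\mc{BO}$ is a subset of $\partial_0\mc{CO}$ of stable co-maps with precisely one side bubble.

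Next I would analyze the side-bubble stratum \eqref{equation55} under {\bf (H5)}. An element there is a pair $(\,\mc{CO}'(\lb{\mf x}\rb,\lb\beta_0\rb,y;y'),\ \mc{O}(\lb\beta_1\rb,y')\,)$; the quasidisk bubble $\mc{O}(\lb\beta_1\rb,y')$ carries class $\beta_1$. Since the side-bubbled configuration arises as a boundary point of a $1$-dimensional moduli space, a dimension count (using \eqref{equation52} for $\mc{CO}'$, which has one extra boundary input hence one less dimension, plus $\dim\mc O(\lb\beta_1\rb,y') = {\sf ml}\lb\beta_1\rb - 1$) forces ${\sf ml}\lb\beta_1\rb = 2$ and the rest of the configuration to be rigid; {\bf (H5)} guarantees there is no quasidisk of Maslov index $\le 0$ (other than the constant), and a constant bubble contributes no energy hence cannot be a genuine side bubble on a linear co-map of the relevant index by the stability argument in Lemma \ref{lemma59}. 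So the side bubble must be a nonconstant $J_0$-quasidisk of Maslov index exactly $2$. Then I would invoke the same reasoning as in the proof of Lemma \ref{lemma59} to see that the point on the main component to which the bubble is attached cannot itself be a quasidisk vertex with too few special points; the only way to keep the whole configuration stable and linear is that the Maslov-$2$ bubble is attached through a length-zero edge to a \emph{ghost} disk vertex, which in turn sits on the linear spine. This is exactly the assertion, and finiteness of $\partial_0\mc{BO}$ follows because each such configuration is rigid (zero-dimensional) by the dimension count.

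I expect the main obstacle to be the bookkeeping in the dimension count that pins down the side bubble to Maslov index $2$ and forces the insertion to go through a length-zero edge at a ghost vertex, i.e.\ ruling out more complicated side-bubble trees (a chain of bubbles, or a bubble attached directly to a nonconstant disk component, or attached to the cylindrical component). For the cylindrical-component case one uses the boundary condition $J_{s,t}\equiv J_0$ on $\partial\Theta_-$ together with asphericity to see that any boundary bubble there is a $J_0$-quasidisk and is already accounted for; but since the cylindrical component itself has index that interacts with $\lb\beta\rb$, I would have to argue (as in Theorem \ref{thm58}) that such a bubble forces the remaining cylindrical part to have negative index, which is excluded by transversality. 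The remaining cases — longer bubble trees and bubbles on nonconstant disk vertices — are handled by iterating the argument of Lemma \ref{lemma59}: each extra nonconstant Maslov-$\ge 2$ bubble drops the virtual dimension of the complement by at least one, so in a $1$-dimensional family at most one nonconstant bubble can appear, and it must meet the configuration at a point where the rest is a ghost disk. Once this combinatorial reduction is in place, smoothness of $\mc{BO}$ near these boundary points is the same gluing statement used throughout \cite{Woodward_toric}, \cite{Xu_VHF}, with one disk component replaced by a cylindrical vortex solution.
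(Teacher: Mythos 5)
Your plan is essentially the paper's own argument: the paper likewise deduces from an index calculation together with {\bf (H5)} that a boundary configuration of $\mc{BO}\big(\lb {\mf x}\rb, \lb\beta\rb, y\big)$ can carry only a single side bubble, necessarily of Maslov index two, and it rules out attachment to anything other than a ghost disk vertex by the same removal argument you describe — deleting the bubble would leave a stable co-map of negative index, which cannot exist by transversality. Your identification of the remaining boundary strata \eqref{equation53} and \eqref{equation54} via breaking, and the finiteness-by-rigidity remark, match the paper's (much terser) proof, so the proposal is correct in approach.
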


\begin{proof}
By index calculation and {\bf (H5)}, it is easy to see that in each stable co-map in $\partial_0 \mc{BO}\big( \lb {\mf x}\rb, \lb \beta \rb, y \big)$, there can only be one side bubble which is of Maslov index two. Moreover, if it is not adjacent to a ghost disk vertex, then by removing the bubble component, we obtain another stable co-map whose index is negative, which cannot exist.
\end{proof}

\begin{proof}[Proof of Proposition \ref{prop510}] By the above lemma, the difference ${\mf i}_{F_\epsilon} \circ \delta_{J} - {\mf m}_1^b \circ {\mf i}_{F_\epsilon}$ is given by the counting of $\partial_0 \mc{BO}\big( \lb {\mf x} \rb, \lb \beta \rb, y\big)$. Indeed, there are two types of such configurations, those with bubbles on the ``left'' and those with bubbles on the ``right'' (see Figure \ref{figure3} again). The proposition will be proved if we show that these configurations cancel. Indeed, if we flip the bubble from top to bottom (or backward), and precompose a proper automorphism of the domain of the bubble, it doesn't change the perturbation data on the edges of positive lengths (because then we obtain a Stasheff tree isomorphic to the one before flipping, and the perturbation datum only depends on the isomorphism class of the underlying trees). Therefore, there is a natural one-to-one correspondence between the two types of configurations in the boundary of $\mc{BO}\big( \lb {\mf x} \rb, \lb \beta \rb, y \big)$. Lastly, the contribution from configurations with side bubbles is indeed zero due to the opposite orientations.
\end{proof}

Hence, the map ${\mf i}_{F_\epsilon}$ induces a map in homology
\begin{align*}
\mbms{co}: VHF(M) \to HQF(L^b).
\end{align*}
We call it the {\bf closed-open} map from vortex Floer homology to the quasi-map Lagrangian Floer homology. It is independent of the choice of the auxiliary data $F_\epsilon$ (in particular, independent of $\epsilon$) and $\wt{J}$. This independence can be proved similarly as Proposition \ref{prop510} as well as Theorem \ref{thm512} below, where a homotopy between two sets of auxiliary data will produce a chain homotopy.

We prove the following crucial property of the map $\mbms{co}$ in the next subsection.
\begin{thm}\label{thm512}
Let $L \subset M$ be a $G$-Lagrangian satisfying {\bf (H5)} with a $G$-equivariant brane structure $L^b$. Then $\mbms{co}(\mbms{1}_{\mbms{H}}) = \mbms{1}_{L^b} \in HQF(L^b, \Lambda)$, where ${\mbms 1}_{L^b}$ is the class represented by the chain $\langle y_{\max} \rangle$.
\end{thm}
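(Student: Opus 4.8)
The plan is to compute $\mbms{co}$ on the chain-level cycle $\bm 1_H = \sum_{\ov m = {\sf cz}\lb{\mf x}\rb} \# {\mc C}(\lb{\mf x}\rb)\,\lb{\mf x}\rb$ that represents $\mbms 1_{\mbms H}$, and to identify the result — at least up to an exact term — with $\langle y_{\max}\rangle$. Concretely, one forms the composition: glue each cap solution in ${\mc C}(\lb{\mf x}\rb)$ (a solution of \eqref{equation218} over ${\mb C}$, asymptotic to $\lb{\mf x}\rb$) to a stable co-map in $\mc{CO}(\lb{\mf x}\rb,\lb\beta\rb, y)$ along the cylindrical end of the latter. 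A standard gluing argument identifies the resulting moduli space with the moduli space of solutions to an equation on a new domain: ${\mb C}$ with a Hamiltonian perturbation that is supported near infinity and interpolates to the constant $R_{\ov H}+\epsilon$, and with boundary on $L$ far out — i.e. the "cap with Lagrangian boundary" analogue. Call this moduli space $\mc{K}(\lb\beta\rb, y)$. The chain $\mbms{co}(\bm 1_H) = \sum_{\beta, y} \#\mc{K}(\lb\beta\rb,y)\, q^{-A\lb\beta\rb} e^{\langle b,\d\lb\beta\rb\rangle}\langle y\rangle$, where the sum over $\lb{\mf x}\rb$ has been absorbed by gluing.

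The main computation is then to evaluate $\mc{K}(\lb\beta\rb, y)$ in the index-zero case. The key device is an adiabatic/energy argument of the type used repeatedly in this paper: by Proposition \ref{prop52} (applied here, with ${\mc A}_H$ replaced by the contribution of the cap, which is controlled by the asphericity \textbf{(H1)} and \eqref{equation219}'s construction) together with \textbf{(H5)} (no quasidisks of nonpositive Maslov index, all regular), one shows that for a suitable choice of Hamiltonian perturbation the only index-zero solutions are the constant ones: $u \equiv \text{pt} \in L$ with the trivial connection and no disk bubbles, and that such a configuration must output the maximum $y_{\max}$ of the chosen Morse function $F$. The energy estimate forces $\lb\beta\rb = 0$ and rules out broken/bubbled configurations exactly as in the proof of Lemma \ref{lemma59} and Lemma \ref{lemma511}. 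One then checks the sign/count: $\#\mc{K}(0, y_{\max}) = \pm 1$, using the orientation conventions from the $G$-equivariant spin structure, and the count vanishes for all other $(\beta,y)$. This gives $\mbms{co}(\bm 1_H) = \langle y_{\max}\rangle = \mbms 1_{L^b}$ on the nose, or up to an ${\mf m}_1^b$-exact term which is harmless on homology.

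Alternatively — and this is probably the cleaner route to write — one avoids the explicit gluing by a homotopy argument: both $\mbms{co}(\mbms 1_{\mbms H})$ and $\mbms 1_{L^b}$ are represented by counts of solutions on ${\mb C}$ with Lagrangian boundary condition and a Hamiltonian perturbation, differing only in the choice of perturbation datum (one coming from the definition of $\mbms 1_{\mbms H}$ in Subsection \ref{subsection27}, one from Woodward's definition of $\mbms 1_{L^b}$ as $\langle y_{\max}\rangle$). A path of perturbation data connecting them yields, by the usual parametrized-moduli-space argument (transversality via \textbf{(H5)}, compactness via the energy identity and asphericity, cancellation of side bubbles exactly as in Lemma \ref{lemma511}), a chain homotopy between the two chain maps, hence equality in $HQF(L^b)$. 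The hard part will be the compactness/transversality bookkeeping for this interpolated moduli space on a non-compact domain with both a shrinking-to-zero area form near the boundary (producing quasidisk bubbles) and a Hamiltonian term near infinity: one must verify that the energy of Proposition \ref{prop52} stays bounded along the homotopy and that no new bubbling phenomena appear beyond quasidisks and Floer breaking — but given \textbf{(H1)} (no sphere bubbles), \textbf{(H5)} (regularity and positivity of Maslov index), and the choice $\wt J|_{\d\Theta_-} = J_0$ already in place, this is a routine, if lengthy, adaptation of the arguments already deployed for Theorem \ref{thm58} and Lemma \ref{lemma511}.
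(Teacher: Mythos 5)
Your second, ``alternative'' route is essentially the argument the paper gives in Subsection \ref{subsection54}: one builds exactly such a one-parameter interpolation, concretely a family of volume forms $\nu_\chi$, $\chi\in[-1,+\infty)$, on the disk, together with Hamiltonian perturbations $H^\chi_\epsilon=\delta(\chi)\rho_\chi F_\epsilon$ and almost complex structures $\wt{J}^\chi$, and the compactified parametrized moduli space $\wt{\mc{BO}}(\lb\beta\rb,y)$ of linear stable co-maps becomes a one-dimensional cobordism. Its $\chi=+\infty$ boundary is the fibered product of vortex caps with half-cylinder co-maps \eqref{equation58}, whose count is the chain-level $\mbms{co}({\bm 1}_H)$ (this is where your gluing step lives); its interior boundaries are either ${\mf m}_1^b$-exact (broken flow lines) or cancel in pairs (side bubbles attached to ghost disks, exactly as in Lemma \ref{lemma511} and Proposition \ref{prop510}); and its $\chi=-1$ boundary is the single constant configuration attached to $y_{\max}$, giving $\langle y_{\max}\rangle$, whence the claim up to an exact term. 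So the overall plan of your second route is correct and coincides with the paper's.

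The problem is with what you present as the main computation, the direct evaluation of the glued moduli space $\mc{K}(\lb\beta\rb,y)$: the claim that ``for a suitable choice of Hamiltonian perturbation the only index-zero solutions are the constant ones'' is not justified and, as stated, fails. As long as the perturbation $F_\epsilon$ (hence $H$) is switched on, constant maps are not solutions at all, and the energy identity of Proposition \ref{prop52} contains the Hamiltonian/curvature contribution $(R_{\ov H}+\epsilon)$ and the action term ${\mc A}_H\lb{\mf x}\rb$, so it forces neither $\lb\beta\rb=0$ nor constancy; this count is precisely the quantity one cannot evaluate directly, which is why the paper deforms rather than computes. The missing mechanism, which your alternative route also leaves implicit, is the specific degenerate end of the homotopy: the volume form must be shrunk to zero on the \emph{whole} domain (not just near the boundary) while the Hamiltonian term is simultaneously switched off ($\delta(\chi)=0$ near $\chi=-1$), so that the vortex equation \eqref{equation57} degenerates to a flat connection together with a $J_0$-holomorphic quasidisk. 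Only then does the index relation $\ov m+{\sf ml}\lb\beta\rb-{\sf ind}\,y=0$, combined with ${\sf ind}\,y\leq\ov m$ and {\bf (H5)}, force $\lb\beta\rb=0$, $y=y_{\max}$ and the quasidisk to be constant, which is the content of \eqref{equation59} in Proposition \ref{p:CO}; note also that $\langle y_{\max}\rangle=\mbms{1}_{L^b}$ is not itself defined by a moduli count in Woodward's theory, so this endpoint identification is a step that must be proved, not quoted. With that degeneration made explicit, your second route is the paper's proof; without it, the argument has a genuine gap.
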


\subsection{Proof of Theorem \ref{thm512}}\label{subsection54}

The proof is based on a gluing construction.  The actual gluing follows the standard procedure, hence the technical part described in the following is the appropriate family of perturbation data used for the gluing.

\vspace{0.5cm}
\noindent$\bullet$ \textbf{The choice of parametrization and the volume form}
\vspace{0.5cm}

Consider a smooth 1-parameter family of volume forms $\{\nu_\chi\}_{\chi\in [-1,+\infty)}$ on the unit disk ${\mb D}\subset {\mb C}$ such that
\begin{itemize}
\item $\nu_0$ is the standard volume form, $\nu_{-1}=0$;

\item For $\chi\geq 0$, $\nu_\chi$ stretches the length of the cylindrical part;

\item For $\chi \leq 0$, $\nu_\chi$ is a rescale of $\nu_0$ by a factor $<1$;

\item For $\chi \geq 0$, the metric $g_\chi$ induced from $\nu_\chi$ on ${\mb D} \setminus B( e^{-2 \chi})$ is isometric via the exponential map to the cylinder $[- 2 \chi, 0] \times S^1$ with the standard metric,

\item The area of $B(e^{-2\chi})$ is uniformly bounded.
\end{itemize}
The family of volume forms can be illustrated in Figure \ref{figure4}.

For each finite $\chi$, let ${\mb D}_\chi$ be the corresponding disk with the volume form $\nu_\chi$. Denote $C_\chi = B(e^{-\chi})$ and $\Theta_\chi = {\mb D}_\chi \setminus C_\chi$. Then as $\chi$ approaches to infinity, $(C_\chi, \nu_\chi|_{C_\chi})$ converges to ${\mb C}$ equipped with a cylindrical volume form, meaning there is a family of isometry $\vp_\chi:(C_\chi, \nu_\chi|_{C_\chi})\rightarrow B(r_\chi)\subset {\mb C}$ with $r_\chi\rightarrow\infty$, respecting the cylindrical ends.  $(\Theta_\chi, \nu_\chi|_{\Theta_\chi})$ approaches to $\Theta_-$ similarly. We regard ${\mb D}_\infty$ as the disjoint union of a copy of ${\mb C}$ and $\Theta_-$.

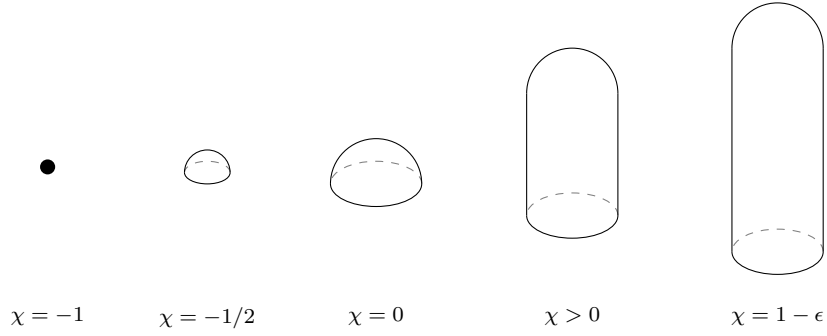
\begin{figure}
\centering

\begin{tikzpicture}[scale=0.6]

\draw[dotted] (-6, -0.125) node[fill, circle, inner sep = 2pt]{}; 

\node[below] at (-6, -3) {\scriptsize $\chi = -1$};

\draw[dashed, color=gray] (-2, -0.25) arc (0:180:0.5 and 0.25); 
\draw (-3, -0.25) arc (180:360:0.5 and 0.25);
\draw (-2, -0.25) arc (0: 180: 0.5 and 0.5);

\node[below] at (-2.5, -3) {\scriptsize $\chi = -1/2$};

\draw[dashed, color=gray] (2.2, -0.5) arc (0:180:1 and 0.5); 
\draw (0.2, -0.5) arc (180:360:1 and 0.5);
\draw (2.2, -0.5) arc (0: 180:1 and 1);

\node[below] at (1.2, -3) {\scriptsize $\chi = 0$};

\draw[dashed, color=gray] (6.5, -1.2) arc (0:180:1 and 0.5); 
\draw (4.5, -1.2) arc (180:360:1 and 0.5);
\draw (6.5, -1.2)--(6.5, 1.5);
\draw (4.5, -1.2)--(4.5, 1.5);
\draw (6.5, 1.5) arc (0:180:1 and 1);

\node[below] at (5.5, -3) {\scriptsize $\chi > 0$};

\draw[dashed, color=gray] (11, -2) arc (0:180:1 and 0.5); 
\draw (9, -2) arc (180:360:1 and 0.5);
\draw (11, -2)--(11, 2.5);
\draw (9, -2)--(9, 2.5);
\draw (11, 2.5) arc (0:180:1 and 1);

\node[below] at (10, -3) {\scriptsize $\chi = 1-\epsilon$};

\end{tikzpicture}
\caption{The family of volume forms $\nu_\chi$ on the unit disk.}
\label{figure4}
\end{figure}

Choose a cut-off function $\delta(\chi)$ such that
\begin{align*}
\delta(\chi) = \left\{ \begin{array}{cc} 0,&\ -1 \leq \chi \leq 1,\\
                                               1,&\ 2 \leq \chi < +\infty.
\end{array} \right.
\end{align*}
Also choose a family of cut-off functions $\{\rho_\chi\}_{\chi \geq \chi_0}$ on the disk such that
\begin{align*}
\rho_\chi(s, t) = \left\{ \begin{array}{cc} 0,&\ - \infty < s \leq - 2 \chi,\\
                                             1,&\ - 2\chi+1 \leq s \leq 0.
																						 \end{array}
																						 \right.
																						 \end{align*}
where $(s, t)$ is the cylindrical coordinate away from $0\in {\mb D}_\chi$. We assume that as $\chi \to +\infty$, $\rho_\chi\circ\vp_{\chi}^{-1}$ converges uniformly with all derivatives on compact subsets of ${\mb C}$, to a cut-off function $\rho_\infty$.

\vspace{0.5cm}
\noindent$\bullet$ {\bf The choice of Hamiltonian perturbation}
\vspace{0.5cm}

The Hamiltonian perturbation is given by a smooth family of domain-dependent Hamiltonians $H^\chi_\epsilon = \delta(\chi) \rho_\chi(s) F_\epsilon$, where $F_\epsilon$ is the function used in \eqref{equation51}, which interpolates between $H\in {\mc H}_G(M)$ and a constant. Simply speaking, we turn on the perturbation after the domain is long enough, on a longer and longer cylindrical region. Denote by $\nabla' = - \rho_\infty(s, t) H_t dt $ which is a ${\rm Ham}^G(M)$-connection on ${\mb C}$.

\vspace{0.5cm}
\noindent$\bullet$ {\bf The choice of almost complex structures}
\vspace{0.5cm}

We need to choose a nice family of almost complex structures. First choose $J = (J_t)_{t\in S^1}$ such that $(H, J)$ is regular pair; so all vortex Floer trajectories are assumed regular. Next, choose $J' = (J_z')_{z\in {\mb C}}$ which is exponentially close to $J$ on the cylindrical end of ${\mb C}$; so all moduli spaces ${\mc C}(\lb {\mf x} \rb)$ of $\nabla'$-perturbed vortex caps (i.e., solutions to \eqref{equation218} with auxiliary data $\nabla$, $J'$ and the cylindrical volume form on ${\mb C}$) are regular. Then we can choose a family $\wt{J}^\chi = (J^\chi_z)$ parametrized by $\chi \in [-1, +\infty)$ and $z \in {\mb D}_\chi$ such that
\begin{itemize}
\item $J^{-1}_z \equiv J_0$;

\item For each $\chi$, $J^\chi_z = J_0$ for $z \in \partial {\mb D}_\chi$;

\item For some large $\chi_0$ (which is allowed to vary), for each $\chi \geq \chi_0$, $J^\chi$ is obtained by a gluing construction from $J$ and $J'$.
\end{itemize}

\vspace{0.5cm}
\noindent$\bullet$ {\bf The equation and the moduli spaces}
\vspace{0.5cm}

Now consider the family of perturbed symplectic vortex equation reads
\begin{align}\label{equation57}
\ov\partial_A u -  (Y_{H^\chi_\epsilon} dt )^{0,1}(u) = 0,\ F_A + \mu(u) \nu_\chi = 0,\ u(\partial {\mb D}_\chi) \subset L.
\end{align}
Here $\ov\partial_A u$ and $(Y_{H^\chi_\epsilon} dt)^{0,1}$ are both defined with respect to $J_z^\chi$.

Each solution to \eqref{equation57} represents a class $\lb \beta \rb \in \wt\Gamma$. Let $y \in {\rm Crit} F\in \ov{L}$. In the definition of stable co-maps, if we replace the equation on the cylindrical component $D_{\mf c}$ by \eqref{equation57}, then we obtain a family of moduli spaces $\mc{CO}_\chi( \lb \beta \rb, y)$ for each disk class $\lb \beta \rb \in \wt\Gamma$. In particular, each element of $\mc{CO}_{-1}( \lb \beta \rb, y)$ is actually an isomorphism class of holomorphic treed quasidisks whose underlying Stasheff tree lies in ${\mc T}_0$, and one only quotients out the $G$-action on $D_{\mf c}$ but not domain automorphisms.

Moreover, one can define a natural topology on the union
\begin{align*}
\bigcup_{\chi\geq -1} \mc{CO}_\chi(\lb \beta \rb, y).
\end{align*}
Since the length of the cylinder tends to infinity as $\chi\to +\infty$, we can compactify the union by adding broken objects. The compactification is denoted by $\wt{\mc{CO}}( \lb \beta \rb, y )$.

Similar to the situation of defining the closed-open map, one need to consider moduli spaces of ``linear'' objects with notations given by replacing $\mc{CO}$ by $\mc{BO}$. By choosing generic perturbation datum, one can make the compactified space a smooth oriented manifold with boundary when the virtual dimension is one or zero. Then one has the following theorem (one can compare with Theorem \ref{thm58}, Lemma \ref{lemma59} and Lemma \ref{lemma511}).

\begin{prop}\label{p:CO}
For a generic perturbation datum and a generic family of almost complex structure $\wt{J}^\chi$, for every disk class $\lb \beta \rb \in \wt\Gamma$ and $y \in {\rm Crit} F$ such that $\ov{m}+ {\sf ml} \lb \beta \rb - {\sf ind} y = 0$, $\wt{\mc{BO}}( \lb \beta \rb, y)$ is a smooth, compact, oriented, 1-dimensional manifold with boundary. Moreover, its boundary is the disjoint union of the following components.

{\bf (I)} The $\chi = +\infty$ side boundary, which is
\begin{align}\label{equation58}
\partial_1 \wt{\mc{BO}}(\lb \beta \rb, y) = \bigcup_{\lb {\mf x} \rb} \mc{CO}(\lb {\mf x} \rb, \lb \beta \rb, y; F_\epsilon, J) \times \mc{C}(\lb {\mf x} \rb; \nabla', J').
\end{align}

{\bf (II)} The $\chi = -1$ side boundary, which is
\begin{align}\label{equation59}
\partial_{-1} \wt{\mc{BO}} ( \lb \beta \rb, y) = \left\{ \begin{array}{cc} \emptyset,\ & y \neq y_{\max}\ {\rm or}\ \lb \beta \rb \neq 0,\\
                                                                            \{[{\bf V}_0]\},\ & y = y_{\max}\ {\rm and}\ \lb \beta \rb = 0.
\end{array} \right.
\end{align}
Here $[{\bf V}_0]$ is the isomorphism class of the constant quasidisk with a perturbed negative gradient flow line connecting the constant disk to $y_{\max}$.

{\bf (III)} The interior boundary component $\partial_{\mf{int}} \wt{\mc{BO}}( \lb \beta \rb, y)$ consisting of two types of configurations. The first type of configurations are those containing broken graident lines, i.e.,
\begin{align*}
\bigcup_{\chi > -1} \bigcup_{y', \lb \beta'\rb} {\mc{BO}}_\chi( \lb \beta - \beta' \rb, y')\times {\mc M}( y', y ; \lb \beta' \rb);
\end{align*}
the other type of configurations are those with exactly one side bubbles which are attached to a ghost disk vertex via a length-zero edge.
\end{prop}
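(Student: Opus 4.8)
The plan is to run the standard compactness--transversality--gluing package for the $\chi$-family of equations \eqref{equation57}, treating $\chi\in[-1,+\infty)$ as an additional degeneration/gluing coordinate, and then to identify the three boundary strata one at a time. First I would treat transversality. For $\chi\le\chi_0$ the family $\wt{J}^\chi$ may be perturbed freely, subject only to $J^\chi_z=J_0$ on $\partial\mb{D}_\chi$ and $J^{-1}_z\equiv J_0$, as may the perturbation datum on the edge components; a Sard--Smale argument of the type used in \cite{Woodward_toric} --- with {\bf (H1)} excluding sphere bubbles and with the relevant deformation operator Fredholm of index $\ov{m}+{\sf ml}\lb\beta\rb-{\sf ind}\,y$ for each fixed $\chi$ (the parametrized analogue of the count behind Theorem \ref{thm58}) --- makes $\bigcup_{-1\le\chi\le\chi_0}\mc{BO}_\chi(\lb\beta\rb,y)$ cut out transversally for generic data, hence a smooth manifold of dimension one greater than that index over the whole family. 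For $\chi\ge\chi_0$ I would not perturb further but build $\wt{J}^\chi$ by the gluing construction from the already-generic $J$ (so all vortex Floer trajectories are regular) and $J'$ (so all vortex caps $\mc{C}(\lb\mf{x}\rb)$ are regular); the gluing theorem then supplies a transverse description of this part of the moduli space. Orientations come from the $G$-equivariant spin structure on $L$ together with coherent orientations of the linearized operators, as in \cite{Floer_Hofer_Orientation} and \cite{Woodward_toric}, compatibly with the boundary gluing maps exactly as in Theorem \ref{thm58}; this makes $\wt{\mc{BO}}(\lb\beta\rb,y)$ an oriented $1$-manifold with boundary once $\ov{m}+{\sf ml}\lb\beta\rb-{\sf ind}\,y=0$.

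Next I would establish compactness. By {\bf (H1)} there are no sphere bubbles, and since $\wt{J}^\chi\equiv J_0$ on $\partial\mb{D}_\chi$ every boundary bubble is a $J_0$-holomorphic quasidisk; hence {\bf (H5)} and the energy/index bookkeeping of Lemmas \ref{lemma59} and \ref{lemma511} apply verbatim to show that no disk vertex carries a single special point and that a side bubble must be a Maslov-$2$ quasidisk joined by a length-zero edge to a ghost disk vertex. A sequence with $\chi$ confined to a compact subset of $(-1,+\infty)$ then limits to a stable co-map with possibly broken edge components --- giving the interior stratum $\bigcup_{\chi>-1}\bigcup_{y',\lb\beta'\rb}\mc{BO}_\chi(\lb\beta-\beta'\rb,y')\times\mc{M}(y',y;\lb\beta'\rb)$ --- or with exactly one side bubble as just described. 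The remaining behaviour is as $\chi\to+\infty$ and as $\chi\to-1$.

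As $\chi\to+\infty$ the volume form $\nu_\chi$ stretches an ever-longer cylindrical neck separating $C_\chi$ --- which after the isometries $\vp_\chi$ exhausts $\mb{C}$ with a cylindrical volume form and the connection $\nabla'=-\rho_\infty H_t\,dt$ --- from $\Theta_\chi$, which converges to $\Theta_-$ carrying the $F_\epsilon$-perturbed equation \eqref{equation51}. I would invoke the neck-stretching analysis for symplectic vortex equations from \cite[\S 3]{Xu_VHF} and \cite[Proposition 2.2]{Cieliebak_Gaio_Mundet_Salamon_2002}, together with the energy identity Proposition \ref{prop52} and the uniform area bound on $B(e^{-2\chi})$, to conclude that the limit is a broken configuration: a $\nabla'$-perturbed vortex cap over $\mb{C}$ asymptotic to some $\lb\mf{x}\rb\in{\rm Crit}\,\mc{A}_H$ (an element of $\mc{C}(\lb\mf{x}\rb;\nabla',J')$), possibly some intermediate broken vortex Floer trajectories, and a stable co-map over $\Theta_-$ asymptotic to $\lb\mf{x}\rb$ (an element of $\mc{CO}(\lb\mf{x}\rb,\lb\beta\rb,y;F_\epsilon,J)$). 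The identity $\dim\mc{CO}(\lb\mf{x}\rb,\lb\beta\rb,y)+\dim\mc{C}(\lb\mf{x}\rb)=\ov{m}+{\sf ml}\lb\beta\rb-{\sf ind}\,y=0$ forces both factors to be $0$-dimensional and forbids an intermediate trajectory (a nonconstant one has dimension $\ge1$ after quotienting by translation, forcing a negative-dimensional factor), so the $\chi=+\infty$ stratum is exactly $\bigcup_{\lb\mf{x}\rb}\mc{CO}(\lb\mf{x}\rb,\lb\beta\rb,y;F_\epsilon,J)\times\mc{C}(\lb\mf{x}\rb;\nabla',J')$; conversely the gluing theorem produces a unique half-open arc of $\wt{\mc{BO}}$ parametrized by large $\chi$ near each such pair. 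As $\chi\to-1$ we have $\nu_{-1}=0$ and $H^\chi_\epsilon=\delta(\chi)\rho_\chi F_\epsilon$ vanishes near $\chi=-1$, so \eqref{equation57} degenerates to $\ov\partial_A u=0$, $F_A=0$; a flat connection on the disk is gauge-trivial, so the cylindrical component becomes an honest $J_0$-holomorphic quasidisk of class $\lb\beta\rb$ and the configuration is a holomorphic treed quasidisk with output $y$. By {\bf (H5)} a nonconstant quasidisk has Maslov index $\ge2$, which would force ${\sf ind}\,y=\ov{m}+{\sf ml}\lb\beta\rb\ge\ov{m}+2$, impossible; hence $\lb\beta\rb=0$ and the quasidisk is constant, and the only such configuration is the constant disk joined by a perturbed negative gradient line to the unique maximum $y_{\max}$ of $F$, which occurs exactly when $y=y_{\max}$ and is the object $[{\bf V}_0]$ of \eqref{equation59}.

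Assembling the strata gives the asserted boundary decomposition. I expect the main obstacle to be the $\chi\to+\infty$ degeneration: pinning down the precise limiting configuration of \eqref{equation57} as the cylindrical neck between a genuine vortex cap over $\mb{C}$ and the $F_\epsilon$-perturbed equation over $\Theta_-$ is stretched, and proving the matching gluing theorem producing the half-open arcs --- although each ingredient is a mild adaptation of neck-stretching and gluing analysis already carried out in \cite{Xu_VHF}, \cite{Cieliebak_Gaio_Mundet_Salamon_2002} and \cite{Woodward_toric}.
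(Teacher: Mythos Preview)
Your proposal is correct and follows essentially the same route as the paper's proof: both argue transversality by perturbing the domain-dependent almost complex structures for $\chi>-1$ (with the $\chi=-1$ endpoint covered by Woodward's regularity and {\bf (H5)}), both rule out sphere bubbles via {\bf (H1)} and control disk bubbles via $J^\chi|_{\partial\mb D_\chi}=J_0$ and Lemmas \ref{lemma59}/\ref{lemma511}, and both identify the $\chi\to+\infty$ limit by neck-stretching/gluing and the $\chi\to-1$ limit by observing $\nu_{-1}=0$, $H^\chi_\epsilon=0$ so the equation degenerates to a $J_0$-quasidisk, whence the index constraint plus {\bf (H5)} force $\lb\beta\rb=0$, $y=y_{\max}$. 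The paper's proof is considerably terser but the content is the same; one small point you leave implicit is \emph{why} the constant configuration is unique --- namely, the stable manifold of $y_{\max}$ under negative gradient flow is the single point $\{y_{\max}\}$, so the constant disk must sit at $y_{\max}$.
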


\begin{figure}\label{figure5}
\centering
\begin{tikzpicture}[scale= 0.5]


\draw[dashed, color=gray] (0.3, 0.75) arc (0:180:0.8 and 0.4);
\draw (-1.3, 0.75) arc (180:360:0.8 and 0.4);
\draw (0.3, 0.75)--(0.3, 3.45);
\draw (-1.3, 0.75)--(-1.3, 3.45);
\draw (0.3, 3.45) arc (0:180:0.8 and 0.8);

\draw (0.3, 0.75) -- (1.65, 0.75) node[fill, circle, inner sep = 2pt]{}; 
\draw (1.65, 0.75) -- (3, 0.75);
\fill[gray] (3.6, 0.75) circle (0.6);
\draw (4.2, 0.75)--(5.55, 0.75) node[fill, circle, inner sep = 2pt]{};


\draw[dashed, color=gray] (10.5, 0.75) arc (0:180:0.8 and 0.4);
\draw (8.9, 0.75) arc (180:360:0.8 and 0.4);
\draw (10.5, 0.75)--(10.5, 3.45);
\draw (8.9, 0.75)--(8.9, 3.45);
\draw (10.5, 3.45) arc (0:180:0.8 and 0.8);

\draw (10.5, 0.75) -- (11.85, 0.75); 
\draw[dashed] (12.45, 0.75) circle (0.6);
\fill[gray] (12.45, -0.45) circle (0.6);
\draw (13.05, 0.75)--(14.2, 0.75);
\draw[dotted] (14.2, 0.75) node[fill, circle, inner sep = 2pt]{};

\draw[dashed, color=gray] (20, 0.75) arc (0:180:0.8 and 0.4);
\draw (18.4, 0.75) arc (180:360:0.8 and 0.4);
\draw (20, 0.75)--(20, 3.45);
\draw (18.4, 0.75)--(18.4, 3.45);
\draw (20, 3.45) arc (0:180:0.8 and 0.8);

\draw (20, 0.75) -- (21.35, 0.75); 
\draw[dashed] (21.95, 0.75) circle (0.6);
\fill[gray] (21.95, 1.95) circle (0.6);

\draw (22.55, 0.75)--(23.65, 0.75) node[fill, circle, inner sep = 2pt]{};

\end{tikzpicture}
\caption{The boundary component $\partial_{\mf int} \wt{\mc{BO}}( \lb \beta \rb; y)$.}
\end{figure}

\begin{proof}
The proof is similar to that of Theorem \ref{thm58}, which essentially only differs in the description of the $\chi=-1$ side boundary.

Since there cannot be sphere bubbles, compactification is given by adding broken objects as well as side bubbles. On the $\chi = +\infty$ side, we only have breakings on the cylindrical component. A gluing construction based on the careful choices of the auxiliary data we made gives \eqref{equation58}. For $\chi \in (-1, +\infty)$, there is no breaking on the cylindrical component. Instead, the perturbed flow line in $\ov{L}$ may break, and there may be $J_0$-holomorphic disk bubbles. These objects give the boundary components in {\bf (III)}.

Finally, suppose we have a sequence $\chi_i \to -1^+$ and a sequence of elements ${\bf U}^{(i)}$ with bounded energy, then a subsequence (still indexed by $i$) will converge in the following sense: the cylindrical component will converge (up to gauge transformation) to a flat connection (because the volume form shrinks to zero) and a quasidisk without reparametrizing the domain; all the other components or trajectories converge as perturbed holomorphic treed quasidisks. However, by dimensional counting and the hypothesis {\bf (H5)}, there can only be the trivial quasi-disk when $y = y_{\max}$. Therefore, the $\chi = -1$ side boundary is of the form as described in {\bf (II)}.

To achieve transversality, notice that for $\chi = -1$, transversality was proved in \cite{Woodward_toric}; once $\chi$ is greater than $-1$, we are allowed to change $J$ to be dependent on $z$, which is enough to guarantee the transversality near and away from $\chi = -1$.
\end{proof}

\begin{proof}[Proof of Theorem \ref{thm512}]
It is similar to the proof of Proposition \ref{prop510}. First note that, by the index formula and transversality, each component in \eqref{equation58} on the right-hand-side is nonempty only when
\begin{align*}
{\sf cz} \lb {\mf x} \rb + {\sf ml}\lb \beta \rb - {\sf ind} y \geq 0,\ \ov{m} - {\sf cz} \lb {\mf x} \rb \geq 0
\end{align*}
which implies that ${\sf cz} \lb {\mf x} \rb = \ov{m}$. Remember that the representative of $\mbms{1}_{\mbms H}$ in $VCF(M; H)$ constructed in Subsection \ref{subsection27} is a linear combination of $\lb {\mf x} \rb$'s with ${\sf cz} \lb {\mf x} \rb = \ov{m}$. So the algebraic count of $\partial_1 \wt{\mc{BO}}(\lb \beta \rb, y)$ for all $\lb \beta \rb$ and all $y$ gives the chain level representative of $\mbms{co}(\mbms{1}_{\mbms H})$. More precisely, the image of $\mbms{1}_{\mbms H}$ is represented by
\begin{align*}
\sum_{\lb \beta \rb \in \wt\Gamma \atop y\in {\rm Crit} F} \Big( \# \partial_1 \wt{\mc{BO}}(\lb \beta \rb, y)\Big) q^{-\omega\lb \beta \rb} \langle y \rangle\in CQF(L).
\end{align*}
Hence by Proposition \ref{p:CO}, this chain is equal to the contribution of the other two parts of each of these moduli spaces.

The contribution from $\partial_{\mf{int}}\wt{\mc{BO}}(\lb \beta\rb, y)$ consists of two parts, as shown in Figure \ref{figure5}. The first part gives rise to a boundary in $CQF(L)$, hence we can ignore it; the second part contributes zero by the same cancellation argument as in the proof of Proposition \ref{prop510}. Therefore, the contribution of $\partial_1\wt{\mc{BO}}(\lb\beta\rb, y)$ is cobordant to $\partial_{-1} \wt{\mc{BO}}(\lb \beta\rb, y)$, which implies
\begin{align*}
{\mf i}_{F_\epsilon}({\bm 1}_H) = \sum_{\lb \beta \rb \in \wt\Gamma \atop y\in {\rm Crit} F} \Big( \# \partial_1 \wt{\mc{BO}}(\lb \beta \rb, y)\Big) q^{-\omega\lb \beta \rb} \langle y \rangle = \langle y_{\max}\rangle.
\end{align*}
By passing to homology this completes the proof of Theorem \ref{thm512}.
\end{proof}

\section{Application of the spectral invariants and the closed-open map}\label{s:app}

\subsection{A weak Arnold conjecture}

As the first application, we give the proof of Corollary \ref{c:weakArnold} by the basic properties of closed-open maps proved in Section \ref{s:COMap}.

\begin{cor}{\rm (}{\bf Weak Arnold Conjecture}{\rm )} For any Hamiltonian $G$-manifold $(X, \omega, \mu)$ satisfying {\bf (H1)}--{\bf (H3)} with at least one $G$-Lagrangian whose quasimap Floer homology is defined and nonzero, any generic 1-periodic Hamiltonian $\ov{H}$ on $\ov{X}$ has at least one 1-periodic orbit of $\ov{H}$.
\end{cor}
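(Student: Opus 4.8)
The plan is to reduce the statement to the non-vanishing of the vortex Floer homology $VHF(X)$, and then to the elementary fact that the vortex Floer chain complex is generated by objects that project to $1$-periodic orbits of $\ov H$ on $\ov X$. The only non-formal ingredient is that the closed--open map carries the unit to the unit, which is Theorem~\ref{thm512} (equivalently the last clause of Theorem~\ref{t:clopen}); this is assumed.

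First I would observe that $VHF(X)\ne 0$. Let $L\subset X$ be the given $G$-Lagrangian, equipped with the brane structure $b$, such that its quasimap Floer homology is defined---so in particular hypothesis {\bf (H5)} holds for $(L,J_0)$ and $L$ carries the $G$-equivariant brane data needed for Theorem~\ref{thm512}---and such that $HQF(L^b,\Lambda)\ne 0$. Since $HQF(L^b,\Lambda)$ is the homology of the unital $A_\infty$-algebra $QA_\infty(L^b)$, it is a unital $\Lambda$-algebra, and being non-zero its unit $\mbms{1}_{L^b}$ (the class of $\langle y_{\max}\rangle$) is non-zero. By Theorem~\ref{thm512}, $\mbms{co}(\mbms{1}_{\mbms H})=\mbms{1}_{L^b}\ne 0$, so the class $\mbms{1}_{\mbms H}\in VHF(X)$ constructed in Subsection~\ref{subsection27} is non-zero; in particular $VHF(X)\ne 0$.

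Next I would translate this into a chain-level statement for the given $\ov H$. As $\ov H$ is generic it is nondegenerate, i.e. satisfies {\bf (H4)}, so $\ov H\in{\mc H}^*(\ov X)$. Fixing any $\lambda>0$, Theorem~\ref{thm23} provides an admissible lift $H\in{\mc H}_G^*(X)$ of $\ov H$ and a generic $S^1$-family $J$ with $(H,J)\in{\mc P}^{reg}_{\ov H,\lambda}$, so that $VHF(X;H,J,\lambda)$ is defined and, by the canonical-independence theorem, canonically isomorphic to $VHF(X)$. Hence $VHF(X;H,J,\lambda)\ne 0$, which forces the chain group $VCF(X;H)\ne 0$, hence ${\rm Crit}\,{\mc A}_H\subset{\mf L}$ is non-empty, hence ${\rm Zero}\,\wt{\mc B}_H\ne\emptyset$. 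Choosing $(x,f)\in{\rm Zero}\,\wt{\mc B}_H$ we have $\mu(x(t))\equiv 0$ and $\dot x(t)+{\mc X}_{f(t)}(x(t))=Y_{H_t}(x(t))$; since ${\mc X}_{f(t)}$ is an infinitesimal $G$-action and the projection $\mu^{-1}(0)\to\ov X$ carries $Y_{H_t}$ to $Y_{\ov H_t}$, the image $\ov x$ of the loop $x\subset\mu^{-1}(0)$ satisfies $\dot{\ov x}(t)=Y_{\ov H_t}(\ov x(t))$, i.e. $\ov x$ is a $1$-periodic orbit of $\ov H$.

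I do not expect a genuine obstacle: the content sits entirely in Theorem~\ref{thm512}, and the remaining steps are the standard Floer-theoretic implication ``non-vanishing of homology produces a generator''. The only subtleties are bookkeeping ones, namely that ``quasimap Floer homology is defined'' is precisely what supplies {\bf (H5)} and the brane structure required by Theorem~\ref{thm512}, and that genericity of $\ov H$ is used only to ensure nondegeneracy, so that the abstract invariant $VHF(X)$ is computed by the honest complex $VCF(X;H)$. (Corollary~\ref{c:weakArnold} for toric manifolds is then the special case $X=\mathbb{C}^N$, $G$ a subtorus of $(S^1)^N$, where a suitable toric-fibre Lagrangian has non-zero quasimap Floer homology by \cite{Woodward_toric}; fixed points of a generic Hamiltonian diffeomorphism of the toric manifold correspond to $1$-periodic orbits, all automatically contractible since toric manifolds are simply connected.)
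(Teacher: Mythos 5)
Your proposal is correct and follows essentially the same route as the paper: Theorem \ref{thm512} gives $\mbms{co}(\mbms{1}_{\mbms H})=\mbms{1}_{L^b}\neq 0$ (the paper likewise argues $\mbms{1}_{L^b}\neq0$ from $HQF(L^b)\neq0$ because it is the identity), hence $VHF(X)\neq 0$, and the paper treats the passage from non-vanishing of $VHF$ to existence of a generator, i.e.\ a $1$-periodic orbit of $\ov H$, as immediate, which you simply spell out at chain level. The toric specialization you add in parentheses is exactly how the paper deduces Corollary \ref{c:weakArnold} from Woodward's computation and the existence of a critical point of the Hori--Vafa potential.
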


\bpf An immediate implication of Theorem \ref{thm512} is that the nontriviality of quasimap Floer homology of any $G$-Lagrangian implies the nontriviality of $VHF(X)$.
\epf

Combining with Woodward's result \cite[Theorem 6.6]{Woodward_toric_corrected} and the fact that the Hori-Vafa superpotential of any toric moment polytope has at least one critical point (see Appendix \ref{subsectiona6}), Corollary \ref{c:weakArnold} follows.

\subsection{Heavy Lagrangians in symplectic quotients}

In this subsection, we obtain the following analogue to \cite[Theorem 18.8]{FOOO_spectral}.

\begin{thm}\label{thm61}
Let $(M, \omega, \mu)$ be a Hamiltonian $G$-manifold satisfying {\bf (H1)}--{\bf (H3)} and $L \subset M$ be a $G$-Lagrangian satisfying {\bf (H5)} such that ${\mf J}$ of {\bf (H3)} and $J_0$ of {\bf (H5)} coincide outside a compact subset of $M$. Let $b\in H^1(\ov{L}, \Lambda)$ be an equivariant brane structure on $L$. Let $\mbms{e} \in VHF(M)$ be an idempotent element. If $\mbms{co} (\mbms{e}) \neq 0\in HQF(L^b)$, then $\ov{L}$ is $\upzeta_{\mbms e}$-heavy and $\upmu_{\mbms e}$-heavy.
\end{thm}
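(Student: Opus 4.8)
The plan is to follow the strategy of \cite[Theorem 18.8]{FOOO_spectral}, using the closed-open map $\mbms{co}$ as a bridge between the vortex spectral invariants and the obstruction-theoretic/quasimap structure on $\ov{L}$. Fix a time-dependent Hamiltonian $\ov{H}\in {\mc H}(\ov{M})$ and $\epsilon>0$. Recall the perturbation data of Section \ref{s:COMap}: the $G$-invariant lift $F_\epsilon: \Theta_-\times M\to {\mb R}$ interpolating between a lift $H$ of $\ov{H}$ and the constant $R_{\ov{H}}+\epsilon = \sup\{\ov{H}(t,x)\,|\,(t,x)\in[0,1]\times\ov{L}\}+\epsilon$, together with the boundary-compatible family $\wt{J}$. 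The key point is the energy identity of Proposition \ref{prop52}: for a solution ${\mf w}$ representing an element of ${\mc M}(\lb{\mf x}\rb,\lb\beta\rb;F_\epsilon,\wt{J})$, one has
\begin{align*}
\mc{YMH}_{F_\epsilon}({\mf w}) = \omega\lb\beta\rb + {\mc A}_H\lb{\mf x}\rb + (R_{\ov{H}}+\epsilon) - \int_{\Theta_-}\Big(\frac{\partial F_\epsilon}{\partial s}\circ u\Big)ds\,dt,
\end{align*}
and since $\partial_s F_\epsilon\geq 0$ and $\mc{YMH}_{F_\epsilon}({\mf w})\geq 0$, we obtain $-\omega\lb\beta\rb - {\mc A}_H\lb{\mf x}\rb \leq R_{\ov{H}}+\epsilon$. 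This is precisely the filtration estimate: the Novikov-weighted generators $q^{-A(\lb\beta\rb)}\langle y\rangle$ appearing in ${\mf i}_{F_\epsilon}\lb{\mf x}\rb$ have $q$-valuation at most ${\mc A}_H\lb{\mf x}\rb + R_{\ov{H}}+\epsilon$.

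The next step is to run this through the definition of the spectral number and the heaviness inequality. Given the idempotent $\mbms{e}$ with $\mbms{co}(\mbms{e})\neq 0$, and $\tau > c^v(\mbms{e},\ov{H}) = c(\mbms{e},H)$ arbitrary, choose a Floer cycle ${\mf X} = \sum a_{\lb{\mf x}\rb}\lb{\mf x}\rb\in VCF(M;H)$ representing $\mbms{e}$ with ${\mf v}_q({\mf X})\leq \tau$, i.e.\ ${\mf v}_q(a_{\lb{\mf x}\rb}) + {\mc A}_H\lb{\mf x}\rb\leq\tau$ for all contributing $\lb{\mf x}\rb$. Then ${\mf i}_{F_\epsilon}({\mf X})$ is a cycle in $CQF(L)$ representing $\mbms{co}(\mbms{e})$, which is nonzero in $HQF(L^b)$, so ${\mf i}_{F_\epsilon}({\mf X})\neq 0$ in $CQF(L)$; hence at least one $\langle y\rangle$ appears with a coefficient whose $q$-valuation is, by the energy estimate combined with ${\mf v}_q(a_{\lb{\mf x}\rb})\leq\tau - {\mc A}_H\lb{\mf x}\rb$, bounded above by $\tau + R_{\ov{H}}+\epsilon$. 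Since $HQF(L^b)$ carries a nontrivial valuation (the filtration on the quasimap complex), a nonzero class has finite spectral value, and letting $\tau\to c(\mbms{e},\ov{H})$, $\epsilon\to 0$, and homogenizing over $n\ov{H}$ in the definitions \eqref{equation41} of $\upzeta_{\mbms e}$ and $\upmu_{\mbms e}$, one extracts $\upzeta_{\mbms e}(\ov{H})\leq \sup_{\ov{L}}\ov{H}$ after the sign conventions are matched, i.e.\ $\ov{L}$ is $\upzeta_{\mbms e}$-heavy; the $\upmu_{\mbms e}$-heaviness follows by the same manipulation applied to $e_\infty^+(\wt\phi;\ov{L})$ using that $\mbms{co}$ sends the relevant idempotent to a nonzero class and is filtration-compatible. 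The hypothesis that ${\mf J}$ of {\bf (H3)} and $J_0$ of {\bf (H5)} agree outside a compact set is what makes the family $\wt{J}$ (exponentially close to a regular $J$ at the cylindrical end, equal to $J_0$ on $\partial\Theta_-$) admissible while keeping the energy estimate valid.

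The main obstacle, as in \cite{FOOO_spectral}, is establishing that $\mbms{co}(\mbms{e})\neq 0$ in $HQF(L^b)$ genuinely forces a \emph{finite} spectral bound on the quasimap side with the \emph{correct constant} $\sup_{\ov{L}}\ov{H}$, and that this survives homogenization without losing a multiple of the Novikov weight; concretely one must check that the filtration on $CQF(L)$ induced by the quasimap $A_\infty$-structure is the one compatible with Proposition \ref{prop52}, and that ${\mf i}_{F_\epsilon}$ is a genuine \emph{filtered} chain map (not merely a chain map), so that the spectral number of $\mbms{co}(\mbms{e})$ in $HQF(L^b)$ is $\leq c(\mbms{e},\ov{H}) + \sup_{\ov{L}}\ov{H}$. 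Once that filtered-chain-map property is in hand, the passage to heaviness is formal, following verbatim the argument in \cite[Section 18]{FOOO_spectral} and \cite{Entov_Polterovich_3}, using only that $\upzeta_{\mbms e}$ is a partial quasi-state (Theorem \ref{thm43}) and the tautological inequality $\upzeta_{\mbms e}(\ov{H})\leq -\liminf_n c(\mbms{e},n\ov{H})/n$ against the spectral bound just derived.
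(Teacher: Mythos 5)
Your proposal follows essentially the same route as the paper: the energy identity of Proposition \ref{prop52} together with $\partial_s F_\epsilon \geq 0$ is exactly what the paper turns into Proposition \ref{prop64} (the chain map ${\mf i}_{F_\epsilon}$ shifts the filtration by at most $R_{\ov{H}}+\epsilon$), which combined with the finiteness (via Usher's formalism) of the quasimap spectral number of the nonzero class $\mbms{co}(\mbms{e})$ gives Corollary \ref{cor65}, and the homogenization over iterates $\ov{H}_{(n)}$ then yields $\upzeta_{\mbms e}$- and $\upmu_{\mbms e}$-heaviness just as you outline. The only detail you elide is that the Novikov weight of a generator of ${\mf i}_{F_\epsilon}\lb{\mf x}\rb$ involves the total class of the treed configuration rather than the class of the cylindrical component alone, which the paper dispatches in one line using the nonnegativity of the areas of the holomorphic disk components.
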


A particularly interesting instance of Theorem \ref{thm61} is given by toric Lagrangian fibers. We again refer the readers to Appendix \ref{subsectiona6} for relevant notations.

\begin{thm}{\rm (}{\bf Heaviness of Hori-Vafa critical fibres}{\rm )}
If $W_\lambda^G$ has a critical point $b\in H^1(\ov{L}_\lambda, \Lambda_0)$, then $\mbms{1}_{\mbms{H}} \neq 0 \in VHF(M)$ and $\ov{L}_\lambda$ is $\upzeta$-heavy and $\upmu$-heavy.
\end{thm}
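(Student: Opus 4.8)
The plan is to deduce this from Theorem \ref{thm61} by identifying the hypotheses in the toric setting and appealing to Woodward's computation of quasimap Floer homology for toric fibers. First I would set up the toric dictionary: $M = \mathbb{C}^N$ with $G = (S^1)^k$ acting linearly, $\mu$ the moment map whose zero level projects to the symplectic quotient $\overline{M}$ (a compact toric manifold), and $\overline{L}_\lambda$ the Lagrangian torus fiber over an interior point $\lambda$ of the moment polytope. In this model the standard complex structure $J_0 = \mathfrak{J}$ is integrable, the convexity condition \textbf{(H3)} holds with the obvious proper function, and hypothesis \textbf{(H5)} — that every nonconstant $J_0$-holomorphic disk has Maslov index $\geq 2$ and is regular — is exactly the standard fact (Cho--Oh) that on a toric manifold the Maslov-$2$ disks through a fiber are the "Blaschke" disks dual to the facets and are Fredholm-regular, together with the minimal Maslov index being $2$. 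So the hypotheses of Theorem \ref{thm61} are met with $L = L_\lambda$, $J_0 = \mathfrak{J}$, and $b \in H^1(\overline{L}_\lambda, \Lambda_0)$ a critical point of the potential.

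Second I would recall the structure of Woodward's quasimap $A_\infty$ algebra on $\overline{L}_\lambda$: the curvature term $\mathfrak{m}_0^b$ (equivalently $\mathfrak{m}_1^b$ on the fundamental class $\langle y_{\max}\rangle$) is governed by the Hori--Vafa superpotential $W^G_\lambda$, in the sense that the critical point equation $dW^G_\lambda(b) = 0$ is precisely the condition for $(CQF(L_\lambda), \mathfrak{m}_1^b)$ to be unobstructed with $HQF(L_\lambda^b) \neq 0$; in fact $\mathbf{1}_{L^b} = [\langle y_{\max}\rangle]$ is then a nonzero (and in the monotone-like situation, unital) class. This is where I invoke Woodward's results (referenced as \cite{Woodward_toric}, \cite{Woodward_toric_corrected}) together with the existence of a critical point of $W^G_\lambda$ guaranteed in Appendix \ref{subsectiona6}. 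Thus the hypothesis "$W_\lambda^G$ has a critical point $b$" directly yields $HQF(L_\lambda^b, \Lambda) \neq 0$ with nonzero unit $\mathbf{1}_{L^b}$.

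Third, I would run the closed-open map of Theorem \ref{thm512}: $\mathbf{co}(\mathbf{1}_{\mathbf{H}}) = \mathbf{1}_{L^b} \neq 0$. Since $\mathbf{co}$ is $\Lambda$-linear, if $\mathbf{1}_{\mathbf{H}}$ were zero in $VHF(M)$ then its image would be zero, a contradiction; hence $\mathbf{1}_{\mathbf{H}} \neq 0 \in VHF(M)$. This establishes the first assertion of the theorem. Now take $\mathbf{e} = \mathbf{1}_{\mathbf{H}}$, which is idempotent and nonzero, and note $\mathbf{co}(\mathbf{e}) = \mathbf{1}_{L^b} \neq 0$. All hypotheses of Theorem \ref{thm61} are now verified, so $\overline{L}_\lambda$ is $\upzeta_{\mathbf{e}}$-heavy and $\upmu_{\mathbf{e}}$-heavy — writing $\upzeta = \upzeta_{\mathbf{1}_{\mathbf H}}$ and $\upmu = \upmu_{\mathbf{1}_{\mathbf H}}$ as in Section \ref{s:QMQS}, these are precisely $\upzeta$-heavy and $\upmu$-heavy.

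The main obstacle is the second step: making precise the identification between the vanishing of $dW^G_\lambda$ at $b$ and the nonvanishing of $HQF(L_\lambda^b)$ with a nonzero unit, in the \emph{gauged} (quasimap) setting rather than the ordinary toric Lagrangian Floer setting, and checking that Woodward's unobstructedness criterion and his computation of the superpotential are stated in a form compatible with the brane structures $b \in H^1(\overline{L}_\lambda, \Lambda_0)$ used here. One must be careful that the $G$-equivariant superpotential $W^G_\lambda$ appearing in Appendix \ref{subsectiona6} is the one controlling $\mathfrak{m}_1^b$, not merely a formal analogue; once this correspondence is in hand the rest is a formal application of Theorems \ref{thm512} and \ref{thm61}. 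A secondary point to verify is the technical compatibility clause in Theorem \ref{thm61} requiring $\mathfrak{J}$ of \textbf{(H3)} and $J_0$ of \textbf{(H5)} to agree outside a compact set, which in the toric model holds trivially since both are the standard integrable structure everywhere.
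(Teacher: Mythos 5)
Your proposal is correct and follows essentially the same route as the paper: Woodward's computation (Theorem \ref{thma8}) gives $HQF(L_\lambda^b)\neq 0$ with nonzero unit, Theorem \ref{thm512} then forces $\mbms{1}_{\mbms H}\neq 0$ since $\mbms{co}(\mbms{1}_{\mbms H})=\mbms{1}_{L^b}$, and Theorem \ref{thm61} applied to $\mbms{e}=\mbms{1}_{\mbms H}$ yields the $\upzeta$- and $\upmu$-heaviness. The extra verifications you flag (that {\bf (H5)} holds for the standard $J_0$ via Blaschke disks, and the compatibility of ${\mf J}$ with $J_0$) are already supplied by the toric discussion in Appendix \ref{subsectiona6}, so they pose no gap.
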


\begin{proof}
By Theorem \ref{thma8}, the condition that $W_\lambda^G$ has a critical point implies that $HQF(L_\lambda^b) \neq 0$, hence ${\bm 1}_{L_\lambda^b} \neq 0$ since it is the identity. By Theorem \ref{thm512}, $\mbms{1}_{\mbms{H}} \neq 0$ and hence induces a prequasimorphism $\upmu$ and a partial quasi-state $\upzeta$. By Theorem \ref{thm61}, $\ov{L}_\lambda$ is $\upzeta$-heavy in $\ov{M}$.
\end{proof}

Now we prove Theorem \ref{thm61} following similar procedure as \cite{FOOO_spectral}. The valuation ${\mf v}_q: \Lambda \to {\mb R}$ (see \eqref{equation30}) naturally induces a filtration on $CQF(L)$, making it a filtered Floer-Novikov chain complex. More precisely, for $\tau\in {\mb R}$, denote
\begin{align*}
CQF(L)_\tau = \big\{ {\mf Y} \in CQF(L)\ |\ {\mf v}_q({\mf Y}) < \tau \big\}.
\end{align*}
Then by the definition of ${\mf m}_1^b$ \eqref{equationa2}, ${\mf m}_1^b ( CQF(L)_\tau ) \subset CQF(L)_\tau$. Therefore, we have the filtered quasimap Floer homology $HQF(L^b)_\tau$ with maps
\begin{align*}
\iota: HQF(L^b)_\tau \to HQF(L^b)_{\tau'},\ \forall \tau< \tau'.
\end{align*}
It allows us to define a similar quantity as the spectral numbers $c(a, H)$: for every $x\in HQF(L^b)$, define
\begin{align*}
c_{L^b} (x) = - \sup \Big\{ \tau\ |\ x \in {\rm Im} \big(\iota: HQF(L^b)_\tau \to HQF(L^b) \big)\Big\}
\end{align*}
By Usher's abstract formalism (see Subsection \ref{subsection31} and \cite[Theorem 1.3, Theorem 1.4]{Ush08}), $c_{L^b}(x)$ is finite for all nonzero $x$.

The closed and open spectral numbers are related by the closed-open map (recall \eqref{equation56}).
\begin{prop}\label{prop64}{\rm (}cf. \cite[Proposition 18.19]{FOOO_spectral}{\rm )}
For every $H \in {\mc H}_G^*(M)$, every $\tau\in {\mb R}$ and every $\epsilon>0$, we have
\begin{align}\label{equation63}
{\mf i}_{F_\epsilon} \big( VCF(M; H)_\tau \big) \subset CQF(L)_{\tau + R_{\ov{H}} + \epsilon}
\end{align}
\end{prop}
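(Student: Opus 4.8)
The plan is to bound the action of the image cycle by the energy identity of Proposition \ref{prop52}. Let $\mf X = \sum a_{\lb \mf x \rb} \lb \mf x \rb \in VCF(M;H)_\tau$ be a chain, so that $\mf v_q(\mf X) = \max\{ \mf v_q(a_{\lb \mf x \rb}) + \mc A_H \lb \mf x \rb\} < \tau$, and we may assume each $\lb \mf x \rb$ with $a_{\lb \mf x\rb}\neq 0$ satisfies $\mc A_H\lb \mf x\rb + \mf v_q(a_{\lb \mf x\rb}) < \tau$. Applying $\mf i_{F_\epsilon}$ and expanding via \eqref{equation56}, the coefficient of a generator $\langle y\rangle$ in $\mf i_{F_\epsilon}(\mf X)$ is a sum of terms of the shape $a_{\lb \mf x \rb}\, \#\mc{CO}(\lb \mf x\rb, \lb \beta\rb, y)\, q^{-A(\lb \beta\rb)} e^{\langle b, \partial \lb\beta\rb\rangle}$. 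Each such term is nonzero only when $\mc{CO}(\lb \mf x\rb, \lb \beta\rb, y)\neq \emptyset$, i.e. there is a genuine bounded solution $\mf w$ of \eqref{equation51} connecting $\lb \mf x\rb$ to some disk class $\lb\beta'\rb$ with $\beta = \beta' + \beta''$, $\beta''$ carried by honest $J_0$-holomorphic disks. I would run the estimate first for the case $\beta'' = 0$ and then note the general case differs only by adding nonnegative disk areas (by {\bf (H5)}, any nonconstant $J_0$-disk has nonnegative, indeed positive, symplectic area, so $\omega\lb\beta''\rb \ge 0$).

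The core computation: Proposition \ref{prop52} gives $\mc{YMH}_{F_\epsilon}(\mf w) = \omega\lb\beta'\rb + \mc A_H\lb \mf x\rb + (R_{\ov H} + \epsilon) - \int_{\Theta_-}(\partial_s F_\epsilon \circ u)\,ds\,dt$. Since $\mc{YMH}_{F_\epsilon}(\mf w) \ge 0$ and $\partial_s F_\epsilon \ge 0$ everywhere (by the defining property of $F_\epsilon$), the last integral is $\ge 0$, hence
\begin{align*}
\omega\lb\beta'\rb \le \mc{YMH}_{F_\epsilon}(\mf w) - \mc A_H\lb \mf x\rb - (R_{\ov H} + \epsilon) + \int_{\Theta_-}(\partial_s F_\epsilon\circ u)\,ds\,dt.
\end{align*}
That is not immediately what I want; instead I would rearrange to isolate $-\omega\lb\beta'\rb$: from nonnegativity of the energy,
\begin{align*}
-\omega\lb\beta'\rb \le \mc A_H\lb\mf x\rb + (R_{\ov H}+\epsilon) - \int_{\Theta_-}(\partial_s F_\epsilon\circ u)\,ds\,dt \le \mc A_H\lb\mf x\rb + R_{\ov H} + \epsilon,
\end{align*}
using again $\partial_s F_\epsilon \ge 0$. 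The valuation of the monomial $a_{\lb\mf x\rb} q^{-A(\lb\beta\rb)}$ contributing to the $\langle y\rangle$-coefficient is $\mf v_q(a_{\lb\mf x\rb}) - \omega\lb\beta'\rb - \omega\lb\beta''\rb \le \mf v_q(a_{\lb\mf x\rb}) - \omega\lb\beta'\rb \le \mf v_q(a_{\lb\mf x\rb}) + \mc A_H\lb\mf x\rb + R_{\ov H} + \epsilon < \tau + R_{\ov H} + \epsilon$, where I used $\omega\lb\beta''\rb \ge 0$. Taking the max over all contributing generators $\langle y\rangle$ and all monomials gives $\mf v_q(\mf i_{F_\epsilon}(\mf X)) < \tau + R_{\ov H} + \epsilon$, which is precisely \eqref{equation63}. (One must be slightly careful that $A(\lb\beta\rb)$ as used in \eqref{equation56} agrees with $\omega\lb\beta\rb$ up to the sign conventions fixed earlier; I would insert one line reconciling the normalization.)

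The main obstacle is bookkeeping rather than analysis: making sure the convention for $A(\lb\beta\rb)$ versus $\omega\lb\beta\rb$ and the direction of the Novikov valuation $\mf v_q$ are aligned so that the inequality points the right way, and handling the decomposition $\beta = \beta'+\beta''$ cleanly — in particular confirming via {\bf (H5)} that the disk-bubble contribution $\omega\lb\beta''\rb$ is nonnegative so it only helps. No transversality or compactness issue arises here because Proposition \ref{prop52} already supplies the exact energy identity for every bounded solution, and the moduli space $\mc{CO}(\lb\mf x\rb,\lb\beta\rb,y)$ being nonempty is all that is invoked. Thus the proof is a short rearrangement of the energy identity together with the sign $\partial_s F_\epsilon \ge 0$ and the positivity of disk areas.
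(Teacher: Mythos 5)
Your proposal is correct and takes essentially the same route as the paper's own proof: reduce by $\Lambda$-linearity to generators, apply the energy identity of Proposition \ref{prop52} together with $\mc{YMH}_{F_\epsilon}({\mf w}) \geq 0$ and $\partial_s F_\epsilon \geq 0$, and use nonnegativity of the $J_0$-holomorphic disk areas (so $\omega\lb \beta' \rb \leq \omega\lb \beta \rb$) to obtain $-\omega\lb\beta\rb \leq {\mc A}_H\lb{\mf x}\rb + R_{\ov{H}} + \epsilon$ and hence the filtration bound. The only differences are cosmetic bookkeeping (the $\leq$ versus $<$ conventions and the $A$ versus $\omega$ normalization), which the paper treats in exactly the same spirit.
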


\begin{proof}
Since ${\mf i}_{F_\epsilon}$ is $\Lambda$-linear, we only have to prove that for each $\lb {\mf x} \rb \in {\rm Crit} {\mc A}_H$ with ${\mc A}_H\lb {\mb x} \rb \leq \tau$, ${\mf i}_{F_\epsilon}(\lb {\mf x} \rb) \in CQF(L)_{\tau + \ov{R}_H + \epsilon}$. If ${\mf i}_{F_\epsilon}(\lb {\mf x} \rb) = 0$, then \eqref{equation63} is automatically true. Suppose it is not the case. Then by definition
\begin{align*}
{\mf i}_{F_\epsilon} (\lb {\mf x} \rb) = \sum_{y\in {\rm Crit} F\atop \lb \beta \rb \in \wt\Gamma} \# \Big(\mc{CO}( \lb {\mf x} \rb, \lb \beta \rb, y_i; F_\epsilon, \wt{J})\Big) q^{- A\lb \beta \rb} \langle y \rangle\neq 0.
\end{align*}
Let ${\bf U}$ be a stable co-map representing an element in $\mc{CO}( \lb {\mf x} \rb, \lb \beta \rb, y_i; F_\epsilon, \wt{J})$. Suppose its cylindrical component is ${\mf w} = (u, \Phi, \Psi)$, which represents an element in ${\mc M}(\lb {\mf x} \rb, \lb \beta'\rb; F_\epsilon, \wt{J})$. Then $\omega\lb \beta'\rb \leq \omega \lb \beta\rb$. By Proposition \ref{prop52} and the special property of $F_\epsilon$, we have
\begin{align*}
{\mc A}_H \lb {\mf x} \rb \geq - R_{\ov{H}} - \epsilon - A\lb \beta' \rb \geq -R_{\ov{H}} - \epsilon - A \lb \beta \rb.
\end{align*}
Then by the definition of the filtration on $CQF(L)$ we see \eqref{equation63} is true.
\end{proof}

The following is an immediate consequence of Proposition \ref{prop64}.
\begin{cor}\label{cor65}{\rm (}cf. \cite[Proposition 18.19]{FOOO_spectral}{\rm )} For each $a \in VHF(M)$, any Hamiltonian $\ov{H} \in {\mc H}(\ov{M})$ and any $\wt\phi \in \wt{\rm Ham}(\ov{M})$, we have
\begin{align*}
\begin{split}
c(a, \ov{H}) \geq &\ - E_\infty^+(\ov{H}; \ov{L}) + c_{L^b}( \mbms{co}(a)),\\
c(a, \wt\phi) \geq &\ - e_\infty^+(\wt\phi; \ov{L}) + c_{L^b}( \mbms{co}(a)).
\end{split}
\end{align*}
\end{cor}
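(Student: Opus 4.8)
The plan is to deduce Corollary \ref{cor65} directly from Proposition \ref{prop64} by unwinding the definitions of the spectral numbers $c(a,\ov{H})$ and $c_{L^b}$, and then to pass from the Hamiltonian statement to the $\wt\phi$-statement by minimizing over representatives. Recall that $c(a,\ov H) = \inf\{\tau : a \in \iota_\tau(VHF_\tau(H,J,\lambda))\}$, so for any $\varepsilon'>0$ there is a Floer cycle ${\mf X}\in VCF(M;H)$ representing $a$ with ${\mf v}_q({\mf X}) \le c(a,\ov H)+\varepsilon'$; in particular ${\mf X}\in VCF(M;H)_{\tau}$ for $\tau = c(a,\ov H)+\varepsilon'$. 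Since ${\mf i}_{F_\epsilon}$ is a chain map (Proposition \ref{prop510}), the cycle ${\mf i}_{F_\epsilon}({\mf X})$ represents $\mbms{co}(a) \in HQF(L^b)$, and by Proposition \ref{prop64} it lies in $CQF(L)_{\tau + R_{\ov H} + \epsilon}$. By the very definition of $c_{L^b}$ we therefore get $-c_{L^b}(\mbms{co}(a)) \ge -(\tau + R_{\ov H} + \epsilon)$, i.e.
\begin{align*}
c_{L^b}(\mbms{co}(a)) \le c(a,\ov H) + \varepsilon' + R_{\ov H} + \epsilon.
\end{align*}
Letting $\varepsilon' \to 0$ and $\epsilon \to 0$ and recalling $R_{\ov H} = E_\infty^+(\ov H;\ov L)$ (the sup of $\ov H$ over $[0,1]\times\ov L$), this gives the first inequality $c(a,\ov H) \ge -E_\infty^+(\ov H;\ov L) + c_{L^b}(\mbms{co}(a))$.

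For the second inequality, I would fix $\wt\phi \in \wt{\rm Ham}(\ov M)$ and use the isotopy invariance of the spectral numbers (item (6) of Proposition \ref{prop36}), which gives $c(a,\wt\phi) = c(a,\ov H)$ for any $\ov H \in {\mc H}(\ov M)_0$ with $\wt\phi_{\ov H} = \wt\phi$. Applying the first inequality to each such $\ov H$ yields $c(a,\wt\phi) \ge -E_\infty^+(\ov H;\ov L) + c_{L^b}(\mbms{co}(a))$. Taking the infimum over all normalized generating Hamiltonians of $\wt\phi$ and recalling the definition $e_\infty^+(\wt\phi;\ov L) = \inf\{E_\infty^+(\ov H;\ov L) : \wt\phi = [\phi_{\ov H}],\ \ov H_t \in {\mc H}(\ov M)_0\}$, we obtain $c(a,\wt\phi) \ge -e_\infty^+(\wt\phi;\ov L) + c_{L^b}(\mbms{co}(a))$, which is the second claimed inequality. (One small point to check: Proposition \ref{prop64} is stated for $H \in {\mc H}_G^*(M)$; I would first establish the inequality for $\ov H \in {\mc H}^*(\ov M)$ and then extend to all $\ov H$ by the Lipschitz continuity of $c$ in item (4) of Proposition \ref{prop36}, together with the obvious continuity of $E_\infty^+$ and $c_{L^b}$ in the relevant arguments.)

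I do not expect any genuine obstacle here: the corollary is a formal consequence once Proposition \ref{prop64} is in hand, and the only care needed is in the order of limits and in matching the normalization conventions ($E_\infty^+$ versus $R_{\ov H}$, and the passage to ${\mc H}(\ov M)_0$ when defining $c(a,\wt\phi)$). The mildly delicate step is ensuring that the chain-level inclusion of Proposition \ref{prop64} survives passing to homology, i.e. that a filtered cycle representing $a$ maps under ${\mf i}_{F_\epsilon}$ to a filtered cycle still representing $\mbms{co}(a)$ in the appropriate filtration level; this is exactly the content of ${\mf i}_{F_\epsilon}$ being a filtered chain map, which follows by combining Proposition \ref{prop510} and Proposition \ref{prop64}.
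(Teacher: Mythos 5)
Your argument is correct and is essentially the paper's own (the paper treats the corollary as an immediate consequence of Proposition \ref{prop64}): push a near-optimal filtered cycle for $a$ through the chain map ${\mf i}_{F_\epsilon}$, use the filtration inclusion with $R_{\ov H}=E^+_\infty(\ov H;\ov L)$, let $\epsilon,\varepsilon'\to 0$, extend to all $\ov H$ by Lipschitz continuity, and then take the infimum of $E^+_\infty$ over normalized generators of $\wt\phi$ to get the second inequality. Note only that you are (rightly) reading $c_{L^b}(x)$ as the infimum of filtration levels $\tau$ with $x\in{\rm Im}\,\iota_\tau$, which is the intended meaning of the paper's definition and the one under which the stated inequalities hold.
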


\begin{proof}[Proof of Theorem \ref{thm61}] Similar to the proof of \cite[Theorem 18.8]{FOOO_spectral}, we only need to prove the $\upmu_{\mbms{e}}$-heaviness. Choose a normalized Hamiltonian $\ov{H}\in {\mc H}(\ov{M})_0$ and take
\begin{align*}
{\ov{H}}_{(n)}(t, x) = n \ov{H}\Big( 2\pi \big( \frac{nt}{2\pi} - \big[ \frac{nt}{2\pi} \big]\big), x\Big).
\end{align*}
Here $[c]$ is the greatest integer which is no greater than $c$. Then $\wt\phi_{\ov{H}_{(n)}} = ( \wt\phi_{\ov{H}})^n$. Apply Corollary \ref{cor65} to $\ov{H}_{(n)}$, we obtain
\begin{align*}
c(\mbms{e}, (\wt\phi_{\ov{H}})^n) \geq - n \sup\big\{ \ov{H}(t, x)\ |\ t\in S^1,\ x\in \ov{L} \big\} + c_L^b( \mbms{co}(\mbm{e})).
\end{align*}
Then by the definition of $\upmu_{\mbms{e}}$ and $E_\infty^+$, we obtain $\upmu_{\mbms{e}} ( \wt\phi_{\ov{H}}) \geq - {\rm vol}(\ov{M}) E_\infty^+( \ov{H}, \ov{L})$. Take supremum among all normalized $\ov{H}$ generating the same $\wt\phi_{\ov{H}}$, we have
\begin{align*}
\upmu_{\mbms{e}} ( \wt\phi_{\ov{H}}) \geq - {\rm vol}(\ov{M}) e_\infty^+( \wt\phi_{\ov{H}}, \ov{L}),
\end{align*}
i.e., the $\upmu_{\mbms{e}}$-heaviness of $L$.
\end{proof}



\subsection{Products of heavy sets}\label{subsection63}

Our last application in concern is a counterpart of \cite[Theorem 5.2]{Entov_Polterovich_3} in the vortex case.  Let $(M^i, \omega^i, \mu^i)$ be a Hamiltonian $G^i$-manifold satisfying {\bf (H1)}--{\bf (H3)} for $i=0,1$. Denote $M = M^0 \times M^1$ and let $G^0\times G^1$ act factor-wisely. Let $H^i \in {\mc H}_G(M^i)$ which descends to $\ov{H}^i \in {\mc H}(\ov{M}^i)$ and $H = (H^0, H^1) \in {\mc H}_G(M)$. Denote $\ov{M} = \ov{M}^0\times \ov M^1$ and $\ov{H} = (\ov{H}^0, \ov H^1)$.

\subsubsection*{The chain complex of the product}

Apply the construction of Section \ref{section2} to $(M, \omega, \mu)$, we have the action functional ${\mc A}_H: {\mc L} \to {\mb R}$. It is easy to see that there is a canonical identification ${\mf L} \simeq {\mf L}^0\times {\mf L}^1$. Denote an element of ${\mf L}$ by $\lb {\mf x}^0, {\mf x}^1 \rb$ for $\lb {\mf x}^0 \rb \in {\mf L}^0$, $\lb {\mf x}^1 \rb \in {\mf L}^1$. Then it is easy to see
\begin{align}
{\mc A}_H \lb {\mf x}^0 , {\mf x}^1 \rb = {\mc A}_{H^0} \lb {\mf x}^0 \rb + {\mc A}_{H^1} \lb {\mf x}^1 \rb.
\end{align}

\begin{thm}\label{thm66} Let $(M^i, \omega^i, \mu^i)$ be a Hamiltonian $G^i$-manifold satisfying {\bf (H1)}--{\bf (H3)} for $i=0,1$. Then $(M, \omega, \mu)$ satisfies {\bf (H1)}--{\bf (H3)} with smooth symplectic quotient $\ov{M}$. Moreover,
\begin{enumerate}
\item If $H^i \in {\mc H}_{G^i}^*(M^i)$, then $\ov{H}$ satisfies {\bf (H4)}, ${\rm Crit} {\mc A}_{H}\simeq {\rm Crit} {\mc A}_{H^0} \times {\rm Crit} {\mc A}_{H^1}$ and ${\sf cz}\lb {\mf x}^0, {\mf x}^1 \rb = {\sf cz}\lb {\mf x}^0 \rb + {\sf cz} \lb {\mf x}^1 \rb$.

\item Let $\lambda > 0$. Suppose $(H^i, J^i)$ is a regular pair relative to $\ov{H}^i$ and $\lambda$,
    Denote $J = (J^0, J^1)$. Then $(H, J)$ is a regular pair relative to $\ov{H}$ and $\lambda$, and for $\lb {\mf x}_\pm \rb = \lb {\mf x}_\pm^0, {\mf x}_\pm^1 \rb \in {\rm Crit} {\mc A}_H$, ${\mc M}_\lambda ( \lb {\mf x}_- \rb, \lb {\mf x}_+ \rb; H, J)$ is smooth, oriented and is diffeomorphic to
\begin{align}\label{equationb7}
{\mc M}_\lambda( \lb {\mf x}_-^0 \rb, \lb {\mf x}_+^0 \rb; H^0, J^0) \times {\mc M} ( \lb {\mf x}_-^1 \rb, \lb{\mf x}_+^1\rb; H^1, J^1).
\end{align}

\item The counting of isolated gauge equivalence classes of connecting orbits modulo translation defines a chain complex $\big( VCF (M; H), \delta_{J, \lambda} \big) $, and there is an isomorphism of ${\mb Z}_2$-graded chain complexes
\begin{align*}
K_{PG}: \big( VCF(M^0; H^0), \delta_{J^0, \lambda}\big) \otimes \big( VCF(M^1; H^1), \delta_{J^1, \lambda} \big) \simeq \big( VCF (M; H), \delta_{J, \lambda} \big).
\end{align*}
$K_{PG}$ induces an isomorphism on homology
\begin{align}\label{equation66}
\mbms{K}_{\mbms{PG}}: VHF(M^0) \otimes VHF(M^1) \simeq VHF(M).
\end{align}

\item $\mbms{1}_{\mbms H}^0 \otimes \mbms{1}_{\mbms H}^1$ is mapped via $\mbms{K}_{\mbms{PG}}$ to the multiplicative identity $\mbms{1}_{{\mbms H}} \in VHF(M)$.
\end{enumerate}
\end{thm}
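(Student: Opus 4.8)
The plan is to prove Theorem \ref{thm66} by establishing its four items in order, since each feeds into the next. For item (1), the conditions {\bf (H1)}--{\bf (H3)} for a product are verified factorwise: asphericity of $M^0 \times M^1$ follows because $\pi_2$ and $H_2$ split, the moment map condition is immediate since $\mu = (\mu^0, \mu^1)$ and $\mu^{-1}(0) = (\mu^0)^{-1}(0) \times (\mu^1)^{-1}(0)$ with free product action, and the convex structure can be taken to be $\mf{f} = \mf{f}^0 + \mf{f}^1$ with $\mf J = (\mf J^0, \mf J^1)$. The identification $\ov M \simeq \ov M^0 \times \ov M^1$ and the splitting of the action functional are formal. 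The nondegeneracy condition {\bf (H4)} for $\ov H$ follows because the linearization of the time-$1$ flow of $\ov H = (\ov H^0, \ov H^1)$ is block diagonal, and $\mbm{1} - d\phi^1_{\ov H}$ is invertible iff both blocks are. The zero set ${\rm Zero}\,\wt{\mc B}_H$ splits because equations \eqref{equation21} decouple on the two factors, and the Conley-Zehnder index is additive under direct sums of symplectic paths.

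For item (2), I would observe that the linearized vortex operator along a product solution ${\mf w} = ({\mf w}^0, {\mf w}^1)$ is the direct sum of the two linearized operators (the almost complex structure, Hamiltonian, and moment map all being of product type, so equation \eqref{equation23} decouples); hence surjectivity of each summand gives surjectivity of the sum, so $(H, J)$ is a regular pair. The diffeomorphism \eqref{equationb7} is then the statement that every bounded solution to the product equation is a pair of bounded solutions — this uses the decoupling together with the asymptotic analysis from Theorem \ref{thm25} and the capping-shift bookkeeping so that the identification respects the $\Gamma$-torsor structures (note $\Gamma$ for $M$ maps to $\Gamma^0 \times \Gamma^1$ compatibly). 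The orientations match by the standard sign conventions of \cite{Floer_Hofer_Orientation} for products. Item (3) then follows: since the zero-dimensional components of the product moduli space (after quotienting by the diagonal $\mb R$-translation) are exactly products of a zero-dimensional and a one-dimensional piece, the count gives the tensor-product differential, so $K_{PG}$ is a chain isomorphism, and passing to homology gives $\mbms{K}_{\mbms{PG}}$ — this is the algebraic K\"unneth formula at the chain level, where the tensor product is over $\Lambda$ and no Tor terms appear because we work with field or torsion-free coefficients and the complexes are free.

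The heart of the theorem, and the step I expect to be the main obstacle, is item (4): that $\mbms{K}_{\mbms{PG}}$ carries $\mbms{1}_{\mbms H}^0 \otimes \mbms{1}_{\mbms H}^1$ to $\mbms{1}_{\mbms H}$. One can either argue abstractly — $\mbms{K}_{\mbms{PG}}$ is a ring isomorphism for the pair-of-pants products (which themselves split over the product, by the same decoupling argument applied to the equation over the pair-of-pants $\Sigma$), and a ring isomorphism sends the unique multiplicative identity to the identity — or geometrically, by analyzing the ${\mb C}$-vortex cap equation \eqref{equation218} on the product. The geometric route: the cap equation over ${\mb C}$ with a product Hamiltonian connection $\nabla = (\nabla^0, \nabla^1)$ and product almost complex structure decouples, so ${\mc C}(\lb {\mf x}^0, {\mf x}^1 \rb) \simeq {\mc C}(\lb {\mf x}^0 \rb) \times {\mc C}(\lb {\mf x}^1 \rb)$ compatibly with dimensions (the virtual dimension $\ov m - {\sf cz}$ being additive), so the chain ${\bm 1}_H = \sum \#{\mc C}(\lb{\mf x}\rb)\lb{\mf x}\rb$ factors as the product of the corresponding chains ${\bm 1}_{H^0} \otimes {\bm 1}_{H^1}$ under $K_{PG}$. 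I would favor presenting the abstract ring-map argument as the main line (citing that the pair-of-pants product is also of product type, with the relevant gluing and transversality being the product analogue of what was already set up in Subsection \ref{subsection27}), and mentioning the cap-equation decoupling as the concrete verification. The subtlety to watch is that the identification of moduli spaces must be shown to intertwine the orientations and the $N_2^G$-capping quotients on the nose, so that the algebraic counts — not just the underlying sets — multiply; this is routine but must be stated, since it is exactly what makes the chain-level map $K_{PG}$ well-defined and compatible with the products.
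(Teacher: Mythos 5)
Your proposal is correct, and for items (1)--(3) it follows the paper's argument essentially verbatim: the vortex equation \eqref{equation23}, its linearization, and the asymptotics all decouple over the product, regularity is inherited from the factors, the one-dimensional product moduli spaces force one factor to be constant so the count reproduces the tensor-product differential (with the Koszul sign), and $K_{PG}$ is a chain isomorphism. The only divergence is in item (4). The paper proves it purely geometrically: the cap equation \eqref{equation218} over ${\mb C}$ with product data splits, so ${\mc C}(\lb {\mf x}^0,{\mf x}^1\rb)\simeq {\mc C}(\lb{\mf x}^0\rb)\times{\mc C}(\lb{\mf x}^1\rb)$ with transversality inherited factorwise, the rigid configurations occur exactly when ${\sf cz}\lb{\mf x}^i\rb=\ov m^i$, and hence the chain ${\bm 1}_H$ is literally $K_{PG}({\bm 1}_{H^0}\otimes {\bm 1}_{H^1})$ --- i.e.\ exactly your secondary ``geometric route.'' Your preferred main line, deducing (4) from $\mbms{K}_{\mbms{PG}}$ being a ring isomorphism, is sound in principle but costs more than it saves: the paper never shows that $K_{PG}$ intertwines the pair-of-pants products, and doing so would require choosing product-type Hamiltonian connections and almost complex structures over the pair-of-pants and re-running the transversality and gluing discussion there (the same product-type-data issue flagged in the remark after Theorem \ref{thm23}), only to reach a conclusion that the cap-decoupling computation gives directly. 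So if you write this up, make the cap argument the main proof; your insistence on checking that the identifications respect orientations and the $N_2^G$-capping quotients, and your remark that the homology-level K\"unneth step uses freeness of the complexes over $\Lambda$ (or $\Lambda$ a field, e.g.\ $R={\mb Q}$), are points the paper leaves implicit and are worth keeping.
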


\begin{proof}
The fact that $(M, \omega, \mu)$ satisfies {\bf (H1)}--{\bf (H3)} and the first item are straightforward. For (2), let ${\mf w} = ({\mf w}^0, {\mf w}^1)$, where ${\mf w}^i = (u^i, \Phi^i, \Psi^i): \Theta \to M^i \times {\mf g}^i \times {\mf g}^i$.
Then because the action of $G$ splits, (\ref{equation23}) for ${\mf w}$ splits into the equation (\ref{equation23}) for ${\mf w}^i$ in $M^i$
Therefore, it is easy to see that the moduli space ${\mc M}_\lambda( \lb {\mf x}_- \rb, \lb {\mf x}_+ \rb; H, J)$ is naturally identified with the product (\ref{equationb7}). The linearization $D_{\mf w}$ of (\ref{equation23}) along ${\mf w}$ is the direct sum of the linearization $D_{{\mf w}^i}$ of (\ref{equation23}) along ${\mf w}^i$ for $(H^i, J^i)$. 
Since both $(H^i, J^i)$
 are regular, so is $(H, J)$. So the identification of moduli spaces is actually a diffeomorphism, which respects the orientations in a natural way.

For (3), define the map
\begin{align}\label{equation67}
K_{PG}: VCF(M^0; H^0) \otimes VCF(M^1; H^1)\to VCF(M; H)
\end{align}
induced from $\lb {\mf x}^0 \rb \otimes \lb {\mf x}^1 \rb \mapsto \lb {\mf x}^0, {\mf x}^1 \rb$. This is an isomorphism of $\Lambda$-modules. Note that by transversality, the moduli space ${\mc M}_\lambda( \lb {\mf x}_- \rb, \lb {\mf x}_+ \rb; H, J)$ is one-dimensional only when
\begin{align*}
\lb {\mf x}_-^0\rb = \lb {\mf x}_+^0\rb,\ {\sf cz}\lb {\mf x}_-^1 \rb - {\sf cz} \lb {\mf x}_+^0 \rb=1\ {\rm or}\ {\sf cz}\lb {\mf x}_-^0\rb - {\sf cz}\lb {\mf x}_+^0\rb = 1,\ \lb {\mf x}_-^1 \rb = \lb {\mf x}_+^0 \rb.
\end{align*}
Therefore $K_{PG}$ is a chain map where the differential on the tensor product is
\begin{align*}
\delta_P \lb {\mf x}^0, {\mf x}^1 \rb = K_{PG} \Big(\big( \delta_{J^0, \lambda} \lb {\mf x}^0\rb\big) \otimes \lb {\mf x}^1 \rb	 + (-1)^{{\sf cz} \lb {\mf x}^0 \rb} \lb {\mf x}^0 \rb \otimes \big( \delta_{J^1,\lambda} \lb {\mf x}^1 \rb \big)\Big).
\end{align*}
Therefore (\ref{equation67}) is an isomorphism of chain complexes inducing the isomorphism \eqref{equation66}.

Lastly, we consider the chain level representative of $\mbms{1}_{\mbms H}$. The equation on cappings (\ref{equation218}) also splits into two parts and the transversality follows from the transversality of each component. The moduli space ${\mc C}( \lb {\mf x}^0, {\mf x^1} \rb; \wt{H}, \wt{J})$ is diffeomorphic to the product ${\mc C}(\lb {\mf x}^0 \rb; \wt{H}^0, \wt{J}^0)\times {\mc C}( \lb {\mf x}^1 \rb; \wt{H}^1,\wt{J}^1)$, which is zero-dimensional only when ${\sf cz} \lb {\mf x}^0 \rb = \ov{m}^0$ and ${\sf cz} \lb {\mf x}^1 \rb = \ov{m}^1$, where $2\ov{m}^i$ is the dimension of $\ov{M}^i$. Hence
\begin{align*}
{\bm 1}_H = \sum_{{\sf cz} \lb {\mf x}^0 \rb = \ov{m}^0 \atop {\sf cz} \lb {\mf x}^1 \rb = \ov{m}^1} \# \Big( {\mc C}(\lb {\mf x}^0 \rb; \wt{H}^0, \wt{J}^0)\times {\mc C}( \lb {\mf x}^1 \rb; \wt{H}^1,\wt{J}^1)\Big) \lb {\mf x}^0, {\mf x}^1 \rb.
\end{align*}
Via the inverse of the chain level map (\ref{equation67}), it is identified with ${\bm 1}_{H^0}^0 \otimes {\bm 1}_{H^1}^1$. Passing to homology, we see that (4) holds.\end{proof}

\subsubsection*{Spectral invariants and quasi-states of products}

We prove
\begin{prop}\label{prop67}{\rm (}cf. \cite[Theorem 5.2]{Entov_Polterovich_3}{\rm )}
For $a^i \in VHF(M^i)$ and $H^i \in {\mc H}_{G^i}(M^i)$, we have 
\begin{align}\label{equation68}
c(\mbms{K}_{\mbms{PG}}(a^0\otimes a^1), (H^0, H^1)) = c(a^0,H^0) + c(a^1, H^1).
\end{align}
\end{prop}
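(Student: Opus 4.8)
The statement is a product formula for spectral invariants, and the natural strategy is to prove the two inequalities
\begin{align*}
c(\mbms{K}_{\mbms{PG}}(a^0\otimes a^1), (H^0, H^1)) \leq c(a^0,H^0) + c(a^1, H^1)
\end{align*}
and
\begin{align*}
c(\mbms{K}_{\mbms{PG}}(a^0\otimes a^1), (H^0, H^1)) \geq c(a^0,H^0) + c(a^1, H^1)
\end{align*}
separately, using the splitting of the chain complex from Theorem \ref{thm66}. I would work throughout with a fixed $\lambda > 0$ and regular pairs $(H^i, J^i)$ relative to $(\ov H^i, \lambda)$, so that $(H, J) = ((H^0,H^1),(J^0,J^1))$ is again a regular pair by Theorem \ref{thm66}(2), and the chain-level isomorphism $K_{PG}$ identifies $VCF(M^0;H^0)\otimes VCF(M^1;H^1)$ with $VCF(M;H)$ as filtered complexes. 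The key structural point to record first is that $K_{PG}$ respects filtrations on the nose: since $\lb{\mf x}^0,{\mf x}^1\rb \mapsto$ generator with action ${\mc A}_{H^0}\lb{\mf x}^0\rb + {\mc A}_{H^1}\lb{\mf x}^1\rb$, the valuation of $K_{PG}(\sum a_{ij}\,\lb{\mf x}_i^0\rb\otimes\lb{\mf x}_j^1\rb)$ is $\max_{ij}\big({\mf v}_q(a_{ij}) + {\mc A}_{H^0}\lb{\mf x}_i^0\rb + {\mc A}_{H^1}\lb{\mf x}_j^1\rb\big)$.

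\textbf{The easy inequality ($\leq$).} Given $\epsilon>0$, pick Floer cycles ${\mf X}^i \in VCF(M^i;H^i)$ representing $a^i$ with ${\mf v}_q({\mf X}^i) \leq c(a^i,H^i) + \epsilon$. Because $K_{PG}$ is a chain map inducing $\mbms{K}_{\mbms{PG}}$ on homology, $K_{PG}({\mf X}^0\otimes{\mf X}^1)$ is a cycle representing $\mbms{K}_{\mbms{PG}}(a^0\otimes a^1)$, and by the valuation computation above its valuation is $\leq {\mf v}_q({\mf X}^0) + {\mf v}_q({\mf X}^1) \leq c(a^0,H^0) + c(a^1,H^1) + 2\epsilon$. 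Letting $\epsilon\to 0$ gives the inequality. Note this half does not even use the product structure on the target beyond the chain map property and the additivity of the action functional; it mirrors the submultiplicativity half of a Künneth-type estimate.

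\textbf{The hard inequality ($\geq$), which is the main obstacle.} This is where one must work. The cleanest route is the one used by Entov--Polterovich: prove that a \emph{minimizing} cycle for the product can be taken of product form, or more precisely argue via the pair-of-pants product and the triangle inequality (item (9) of Proposition \ref{prop36}) combined with the unit-cycle description of $\mbms{1}_{\mbms H}$. Concretely, one exploits that $\mbms{K}_{\mbms{PG}}$ is compatible with the ring structures — $\mbms{K}_{\mbms{PG}}(a^0\otimes a^1) = \mbms{K}_{\mbms{PG}}(a^0\otimes \mbms 1_{\mbms H}^1)* \mbms{K}_{\mbms{PG}}(\mbms 1_{\mbms H}^0\otimes a^1)$ in $VHF(M)$ — so the triangle inequality reduces the problem to the two "slice" cases $c(\mbms{K}_{\mbms{PG}}(a^0\otimes\mbms 1_{\mbms H}^1),(H^0,0))$ and $c(\mbms{K}_{\mbms{PG}}(\mbms 1_{\mbms H}^0\otimes a^1),(0,H^1))$, together with the additivity-with-constants and the normalization/weak-normalization items (1) and (10). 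For each slice case, one shows $c(\mbms{K}_{\mbms{PG}}(a^0\otimes\mbms 1_{\mbms H}^1),(H^0,0)) = c(a^0, H^0) + c(\mbms 1_{\mbms H}^1, 0)$ by a direct analysis of the product complex: a cycle representing $\mbms{K}_{\mbms{PG}}(a^0\otimes\mbms 1_{\mbms H}^1)$ in $VCF(M;(H^0,0))$, when pushed through $K_{PG}^{-1}$ and restricted via the obvious projections, must represent $a^0$ in the $M^0$-factor, hence has valuation $\geq c(a^0,H^0) + (\text{action contribution from the }M^1\text{-factor})$. The subtle part is controlling the $M^1$-contribution from below by $c(\mbms 1_{\mbms H}^1,0)$, which is exactly what the weak normalization item (10) and finiteness of $c(a,0)$ are for. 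The main technical obstacle I anticipate is handling the fact that $c(a,0)$ need not vanish in this theory (no PSS isomorphism is available), so one cannot simply assert $c(\mbms 1_{\mbms H}^1,0)=0$; instead one must carry the constant $c(\mbms 1_{\mbms H}^1,0)$ through and check it cancels against the $\leq$ estimate — which it does precisely because the $\leq$ estimate, applied with $a^1$ replaced by $\mbms 1_{\mbms H}^1$ and $H^1$ replaced by $0$, forces $c(\mbms{K}_{\mbms{PG}}(a^0\otimes\mbms 1_{\mbms H}^1),(H^0,0)) \leq c(a^0,H^0) + c(\mbms 1_{\mbms H}^1,0)$, matching the lower bound. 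Assembling the slice estimates through the triangle inequality and then specializing $a^i$ general, $H^i$ general completes the proof; I would be careful that the pair-of-pants/continuation steps do not secretly require a PSS-type normalization, using instead only the formal properties (1)–(3), (9), (10) already established in Proposition \ref{prop36}.
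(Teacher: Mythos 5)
Your $\leq$ direction is fine, but the $\geq$ direction --- which is the entire content of the proposition --- is not established by your outline, and the route you sketch would not work as stated. First, the triangle inequality (item (9) of Proposition \ref{prop36}) only produces \emph{upper} bounds: writing $\mbms{K}_{\mbms{PG}}(a^0\otimes a^1)$ as a pair-of-pants product of the two ``slices'' gives $c(\mbms{K}_{\mbms{PG}}(a^0\otimes a^1),(H^0,H^1))\leq c(\mbms{K}_{\mbms{PG}}(a^0\otimes\mbms{1}_{\mbms H}^1),(H^0,0))+c(\mbms{K}_{\mbms{PG}}(\mbms{1}_{\mbms H}^0\otimes a^1),(0,H^1))$, which points the wrong way for the lower bound you need; there is no way to ``assemble'' slice estimates through the triangle inequality into an inequality of the form $\geq$. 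Second, even that reduction presupposes that $\mbms{K}_{\mbms{PG}}$ intertwines the pair-of-pants products; Theorem \ref{thm66} only establishes that $K_{PG}$ is a filtered chain isomorphism sending $\mbms{1}_{\mbms H}^0\otimes\mbms{1}_{\mbms H}^1$ to $\mbms{1}_{\mbms H}$, so the identity $\mbms{K}_{\mbms{PG}}(a^0\otimes a^1)=\mbms{K}_{\mbms{PG}}(a^0\otimes\mbms{1}_{\mbms H}^1)*\mbms{K}_{\mbms{PG}}(\mbms{1}_{\mbms H}^0\otimes a^1)$ would itself require a separate proof. Third, and most seriously, your treatment of the slice case is exactly the hard K\"unneth-type lower bound restated rather than proved: a cycle in the tensor-product complex representing $a^0\otimes\mbms{1}_{\mbms H}^1$ (or $a^0\otimes a^1$) need not be a tensor product of cycles, and ``pushing through $K_{PG}^{-1}$ and restricting via the obvious projections'' does not control valuations --- the cross terms in a general minimizing cycle are precisely the difficulty. (Also, in the slice case $(H^0,0)$ the reduced Hamiltonian is degenerate, so there is no chain complex to analyze without an additional perturbation/continuation step.)

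For comparison, the paper does not attempt a geometric or formal-properties argument at all: it observes that the statement is a purely algebraic fact about spectral numbers of tensor products of filtered (decorated, ${\mb Z}_2$-graded) Floer--Novikov complexes, proved in \cite[Theorem 5.2]{Entov_Polterovich_3}. One restricts coefficients to the Novikov completion of the group ring over the (countable) spectrum so that the hypotheses of that algebraic theorem apply, uses Theorem \ref{thm66} to identify $(VCF(M;H),\delta_{J,\lambda})$ with the tensor product of the factors' complexes, obtains \eqref{equation68} for nondegenerate $H^i$, and then extends to all $H^i\in{\mc H}_{G^i}(M^i)$ by Lipschitz continuity. If you want a self-contained proof you would have to reprove that algebraic K\"unneth statement; the formal properties (1)--(3), (9), (10) of Proposition \ref{prop36} alone do not imply the $\geq$ inequality.
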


\begin{proof}
It is essentially the algebraic result \cite[Theorem 5.2]{Entov_Polterovich_3} and we only remark on some minor difference in notations and conventions. The Novikov ring used in \cite{Entov_Polterovich_3} is the downward completion of the group ring ${\mc K}_\Gamma$ where ${\mc K}$ is either ${\mb Z}_2$ or ${\mb C}$, and $\Gamma\subset {\mb R}$ is the action spectrum, a countable subgroup of ${\mb R}$, similar as the role of $G(M, \mu)$ in (\ref{equation31}). It is slightly smaller than the universal Novikov ring $\Lambda$. However our theory can be easily restricted to one with coefficient ring ${\mc K}_\Gamma$, since the spectrum is countable for both the vortex Floer theories.
 Therefore, for regular pairs $(H^i, J^i)$ on $M^i$,
  the filtered chain complexes $(VCF(M^i; H^i), \delta_{J^i, \lambda})$
  are decorated ${\mb Z}_2$-graded complexes in the sense of \cite[Subsection 5.2]{Entov_Polterovich_3}. Therefore by \cite[Theorem 5.2]{Entov_Polterovich_3} and the properties of the spectral numbers, (\ref{equation68}) holds for nondegenerate $H^i$. Since both the left-hand-side and the right-hand-side are Lipschitz continuous, (\ref{equation68}) holds for all $H^i \in {\mc H}_{G^i}(M^i)$.
\end{proof}

The identity elements ${\mbms 1}_{\mbms H}^0$, ${\mbms 1}_{\mbms H}^1$, ${\mbms 1}_{\mbms H}$ induce the quasi-states $\zeta^0$ on $\ov{M}^0$, $\zeta^1$ on $\ov{M}^1$ and $\zeta$ on $\ov{M}$ respectively. By (4) of Theorem \ref{thm66} and Proposition \ref{prop67}, we have
\begin{cor}
For $(\ov{H}^0, \ov H^1) \in C^0(\ov{M}^0) \times C^0(\ov M^1)$, $\zeta(\ov{H}^0,\ov H^1) = \zeta^0(\ov H^0) + \zeta^1(\ov H^1)$.
\end{cor}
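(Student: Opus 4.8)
The plan is to deduce the corollary directly from the two facts already assembled: item (4) of Theorem \ref{thm66}, which identifies $\mbms{K}_{\mbms{PG}}(\mbms{1}_{\mbms H}^0 \otimes \mbms{1}_{\mbms H}^1)$ with $\mbms{1}_{\mbms H}$, and Proposition \ref{prop67}, which gives the additivity of the spectral numbers under the K\"unneth isomorphism. First I would unwind the definition of the partial quasi-state attached to an idempotent: for $\ov{F}^0 \in C^0(\ov{M}^0)$ and $\ov{F}^1 \in C^0(\ov{M}^1)$, by \eqref{equation41} we have $\zeta(\ov{F}^0, \ov{F}^1) = -\lim_{n\to\infty} c(\mbms{1}_{\mbms H}, n(\ov{F}^0, \ov{F}^1))/n$, and similarly for $\zeta^0$ and $\zeta^1$. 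So the task reduces to showing $c(\mbms{1}_{\mbms H}, n\ov{F}^0, n\ov{F}^1) = c(\mbms{1}_{\mbms H}^0, n\ov{F}^0) + c(\mbms{1}_{\mbms H}^1, n\ov{F}^1)$ for each $n$, after which dividing by $n$ and passing to the limit finishes the argument.

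The key step is this single equality of spectral numbers at each level $n$. By item (4) of Theorem \ref{thm66}, $\mbms{1}_{\mbms H} = \mbms{K}_{\mbms{PG}}(\mbms{1}_{\mbms H}^0 \otimes \mbms{1}_{\mbms H}^1)$ in $VHF(M)$. Applying Proposition \ref{prop67} with $a^0 = \mbms{1}_{\mbms H}^0$, $a^1 = \mbms{1}_{\mbms H}^1$, $H^i = n\ov{F}^i$ (after first reducing to smooth Hamiltonians and invoking the Lipschitz continuity, item (4) of Proposition \ref{prop36}, to extend $c$ from $\mathcal{H}^*$ to all continuous functions, exactly as in the proof of Proposition \ref{prop67} itself), we obtain
\begin{align*}
c\big(\mbms{1}_{\mbms H}, (n\ov{F}^0, n\ov{F}^1)\big) = c(\mbms{1}_{\mbms H}^0, n\ov{F}^0) + c(\mbms{1}_{\mbms H}^1, n\ov{F}^1).
\end{align*}
One should also note that the product Hamiltonian on $\ov{M} = \ov{M}^0 \times \ov{M}^1$ induced by $n(\ov{F}^0, \ov{F}^1)$ is precisely the separable sum $n\ov{F}^0 \oplus n\ov{F}^1$, so that $c(\mbms{1}_{\mbms H}, n(\ov{F}^0, \ov{F}^1))$ on the left is computed by the vortex spectral invariant on the product manifold $M = M^0 \times M^1$, which is the setting of Proposition \ref{prop67}.

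Finally I would divide by $n$, negate, and take $n \to \infty$. The left-hand side converges to $-\zeta(\ov{F}^0, \ov{F}^1)$ by definition, and the two terms on the right converge to $-\zeta^0(\ov{F}^0)$ and $-\zeta^1(\ov{F}^1)$ respectively; the limits exist because $\zeta^0$, $\zeta^1$, and $\zeta$ are genuine partial quasi-states by Theorem \ref{thm43} (finiteness of the homogenization limit is part of that theorem). Rearranging gives $\zeta(\ov{F}^0, \ov{F}^1) = \zeta^0(\ov{F}^0) + \zeta^1(\ov{F}^1)$, which is the assertion. I do not expect a genuine obstacle here: the entire content has been front-loaded into Theorem \ref{thm66}(4) and Proposition \ref{prop67}. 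The only point requiring a word of care is the bookkeeping that the homogenization of the product Hamiltonian matches the product of the homogenizations of the factors and that the coefficient-ring subtleties flagged in the proof of Proposition \ref{prop67} (working over $\mathcal{K}_\Gamma$ rather than the full universal Novikov ring) do not interfere — but both are routine, since homogenization commutes with the splitting $\mathcal{A}_H = \mathcal{A}_{H^0} + \mathcal{A}_{H^1}$ of the action functional recorded just before Theorem \ref{thm66}.
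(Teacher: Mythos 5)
Your proposal is correct and follows exactly the route the paper takes: the paper derives this corollary directly from Theorem \ref{thm66}(4) together with Proposition \ref{prop67}, with the homogenization limit and the Lipschitz-continuity extension to $C^0$ functions left implicit just as you fill them in. No gaps.
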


Then we have the main theorem of this subsection.
\begin{thm}\label{thm69}
If $\ov{X}^0 \subset \ov{M}^0$ is $\zeta^0$-heavy and $\ov X^1\subset \ov M^1$ is $\zeta^1$-heavy, then $\ov{X} = \ov{X}^0 \times\ov X^1$ is $\zeta$-heavy.
\end{thm}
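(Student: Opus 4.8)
The plan is to verify directly the defining inequality of $\zeta$-heaviness, namely $\zeta(F) \leq \sup_{\ov X} F$ for every $F \in C(\ov M)$, using the additivity corollary $\zeta(\ov H^0, \ov H^1) = \zeta^0(\ov H^0) + \zeta^1(\ov H^1)$ together with the heaviness of the two factors. First I would recall that $\zeta$-heaviness of $\ov X^0$ in $\ov M^0$ means $\zeta^0(F^0) \leq \sup_{\ov X^0} F^0$ for all $F^0 \in C(\ov M^0)$, and similarly for $\ov X^1$; we want the analogous bound for $\ov X = \ov X^0 \times \ov X^1 \subset \ov M = \ov M^0 \times \ov M^1$ and $\zeta$.

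The main point is that heaviness is equivalent to a seemingly weaker ``one-sided" control that behaves well under the splitting, combined with the Lipschitz property. Concretely, following Entov--Polterovich (\cite{Entov_Polterovich_3}), I would use the standard reformulation: $\ov X$ is $\zeta$-heavy if and only if for every $F \in C(\ov M)$ one has $\zeta(F) \leq \sup_{\ov X} F$; and by the Lipschitz and semi-homogeneity properties of $\zeta$ it suffices to prove this for smooth $F$ and, by a further approximation, for $F$ of the special ``split" form, or more precisely to bound $\zeta(F)$ by comparing $F$ from above and below with functions that depend on only one factor near $\ov X$. The cleanest route is: given $\epsilon > 0$, since $\ov X = \ov X^0 \times \ov X^1$ is compact, choose $c^1 := \sup_{\ov X^1}(F(x^0,\cdot))$ pointwise in $x^0$ — but to decouple, instead bound $F \leq G^0 \oplus \text{const}$ is too crude; the correct Entov--Polterovich device is to use the function $(x^0,x^1)\mapsto F(x^0,x^1)$ and the heaviness of $\ov X^1$ applied to the slice functions to get, for each fixed $x^0$, a one-variable estimate, then integrate this against the heaviness of $\ov X^0$. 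In practice one shows $\zeta(F) \le \zeta^0\big(x^0 \mapsto \sup_{x^1\in\ov M^1} \min(F(x^0,x^1), \ldots)\big)$ — here I would instead follow the shorter argument that heaviness of a product reduces, via the additivity corollary and monotonicity, to the heaviness of each factor: pick $F^0 \in C(\ov M^0)$ with $F^0(x^0) = \sup_{x^1} F(x^0, x^1)$ wait, that is not smooth; so I regularize $F^0$ upward within $\epsilon$, keeping $F \leq F^0 \oplus 0$ near nothing — the key identity to invoke is really $\zeta(F^0 \oplus F^1) = \zeta^0(F^0) + \zeta^1(F^1) \leq \sup_{\ov X^0}F^0 + \sup_{\ov X^1}F^1$, and then a sandwiching argument reduces general $F$ to split functions.

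So the concrete steps are: (i) reduce, by Lipschitz continuity of $\zeta$ (property (1) of Definition \ref{defn41}) and compactness, to proving $\zeta(F) \le \sup_{\ov X}F + \epsilon$ for arbitrary $\epsilon>0$; (ii) using compactness of $\ov M^1$ and $\ov M^0$, approximate $F$ from above, within $\epsilon$ in $C^0$ on a neighborhood of $\ov X$, by a finite minimum $\min_j (F_j^0 \oplus F_j^1)$ or more simply use the slicing: set $\phi(x^0) := \sup_{x^1 \in \ov M^1} F(x^0, x^1)$, a continuous function on $\ov M^0$ with $\phi \le \sup_{\ov X^0}\phi + \epsilon$ wait this loses the $\ov X^1$ information — so instead set $\phi(x^0) := \sup_{x^1 \in \ov X^1} F(x^0,x^1)$, which satisfies $\sup_{\ov X^0}\phi = \sup_{\ov X}F$ by definition, and $F(x^0,x^1) \le \phi(x^0) + [F(x^0,x^1) - \sup_{\ov X^1}F(x^0,\cdot)]$; (iii) apply $\zeta^1$-heaviness of $\ov X^1$ slice-by-slice and $\zeta^0$-heaviness of $\ov X^0$, glued by the additivity corollary, to conclude $\zeta(F) \le \sup_{\ov X^0}\phi = \sup_{\ov X}F$; (iv) let $\epsilon \to 0$. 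The hard part, and the step I expect to be the main obstacle, is making the slice-by-slice application of the heaviness of $\ov X^1$ rigorous: one cannot literally feed a parametrized family of functions into $\zeta^1$, so one must replace the slicing argument by the Entov--Polterovich inequality in its correct form, which uses the additivity corollary applied to well-chosen genuinely split Hamiltonians $\ov H^0 \oplus \ov H^1$ dominating $F$ on a neighborhood of $\ov X$, together with monotonicity (property (3)) to pass from $F$ to the dominating split function. Once the dominating split function is in hand, the estimate $\zeta(F) \le \zeta(\ov H^0 \oplus \ov H^1) = \zeta^0(\ov H^0) + \zeta^1(\ov H^1) \le \sup_{\ov X^0}\ov H^0 + \sup_{\ov X^1}\ov H^1$ is immediate, and one arranges the right-hand side to be within $\epsilon$ of $\sup_{\ov X}F$ by a standard partition-of-unity / Lebesgue-number construction on the compact set $\ov X^0 \times \ov X^1$. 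This is exactly the content of \cite[Theorem 5.2 and its corollaries]{Entov_Polterovich_3}, and since every ingredient used (Lipschitz continuity, monotonicity, additivity with constants, and the product additivity corollary above) is a formal property shared by our $\zeta$, the argument transfers verbatim; I would simply cite that proof and indicate the substitution of the vortex partial quasi-state for the ordinary one.
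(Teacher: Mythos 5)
Your proposal is correct and follows essentially the same route as the paper: the paper simply invokes the argument of Entov--Polterovich's Theorem 1.7, which is exactly the split-dominator construction you settle on in your final paragraph (dominate $F$ from above everywhere by $(\sup_{\ov X}F+\epsilon)+\ov H^0\oplus \ov H^1$ with $\ov H^i\ge 0$ vanishing on $\ov X^i$ and large off a neighborhood $U_i$, then apply monotonicity, additivity with constants, the product-additivity corollary, and heaviness of each factor). The only caveat is presentational: the slice-by-slice heuristic and the other false starts in your middle section should be cut, since, as you yourself note, they are not rigorous and the Entov--Polterovich bump-function argument is what actually carries the proof.
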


\begin{proof}
The proof is the same as that of \cite[Theorem 1.7]{Entov_Polterovich_3}. Note that one only needs the Lipschitz continuity and monotonicity of $\zeta$, which were established in our paper.
\end{proof}

\brmk Note that when $G_1$ is trivial and $M^1 = \ov{M}^1$ is compact aspherical, one obtains a ``hybrid case".  An instance is when $\ov M=\ov M^0\times \mathbb{T}^2$ and $\ov X^1$ being a meridian.  Theorem \ref{thm69} implies the product of a heavy set $\ov X^0\subset\ov M^0$ with the meridian is a heavy set hence non-displaceable.  This in turn implies the stably non-displaceability property of a $\zeta$-heavy set as defined in \cite{Entov_Polterovich_3}.

\ermk

\appendix

\section{Woodward's quasimap $A_\infty$ algebra}\label{appendixa}

In this appendix we review the quasimap $A_\infty$ algebra and quasimap Floer homology constructed by Woodward in \cite{Woodward_toric}, and his computation in the toric case.

\subsection{Treed disks}\label{subsectiona1}

A {\it Stasheff tree} is a ribbon tree ${\mb T} = (V, E, {\mf e})$ with $V$ the set of vertices, $E$ the set of edges and ${\mf e}\in E$ a distinguished out-going semi-infinite edge, and a metric $l: E \to [0, +\infty]$. The semi-infinite edges are required to have infinite length. Two Stasheff trees are isomorphic if they become isomorphic after collapsing all edges of length zero. For a Stasheff tree ${\mb T} = (V, E, {\mf e})$, we use ${\mf m}$ to denote the unique vertex adjacent to ${\mf e}$. All other semi-infinite edges ${\mf e}_1, \ldots, {\mf e}_n$ are regarded as in-coming and has a canonical order. Every edge then has an induced orientation; as a convention, $E$ consists of all {\it oriented} edges. For each $\alpha\in V \setminus \{\mf m\}$, we denote by $\alpha'$ the unique vertex in $V$ such that there is an oriented edge from $\alpha$ to $\alpha'$, denoted as $\alpha \succ\alpha'$. For each $e\in E$, denote by $e^- \in V$ ($e^+$ resp.) the source (target resp.) of $e$ so that $e= e^- \succ e^+$. $e^- (e^+, \text{ resp.})$ does not exist if $e$ is an incoming (out-going, resp.) edge.

For $k \geq 0$, let ${\mc T}_d$ be the moduli space of Stasheff trees with $k$ incoming semi-infinite edges. There are various inclusion maps such as
\begin{align}\label{equationa0}
{\bm i}_{k_1, k_2}^j: {\mc T}_{k_1} \times {\mc T}_{k_2} \to {\mc T}_{k_1 + k_2 - 1}
\end{align}
(i.e., composing the $j$-th
in-coming semi-infinite edges on $\cT_{k_2}$ and the out-going semi-infinite edge of ${\cT_{k_1}}$, for any $1\le j \le k_2$).

A {\it treed disk} is modelled on a Stasheff tree  ${\mb T} = (V, E, {\mf e})$ along with a partition of the vertices $V=V_0\cup V_1$ and consists of the following collection of data
\begin{align*}
{\mc D} = \Big( (D_\alpha)_{\alpha \in V_1},\ (N_\beta)_{\beta\in V_0},\ (I_{e})_{e\in E},\ (z_e)_{e\in E,\ e^+\in V_1} \Big),
\end{align*}
where
\begin{enumerate}
\item $V = V_1 \cup V_0$ is a disjoint partition of $V$ into ``disk vertices'' and ``nodal vertices'', respectively (the disk and black dots in Figure 1, respectively).  Nodal vertices are required to have valence 2. For each $\alpha \in V_1$, $D_\alpha$ is a copy of the unit disk in ${\mb C}$; for each $\beta \in V_0$, $N_\beta$ is a point.

\item For each edge $e\in E$, $I_e = [\epsilon_e^-, \epsilon_e^+]$ is a closed interval (where $\epsilon_e^\pm$ are allowed to be $\pm \infty$), whose length $\epsilon_e^+ - \epsilon_e^-$ is induced from the Stasheff tree structure. They are subject to the following condition: $e^\pm \in V_1$ (resp. $e^\pm \in V_0$) if and only if $\epsilon_e^\pm$ is finite (resp. infinite).

\item For each $\alpha\in V_1$, a cyclic order for all edges adjacent to $\alpha$ is assigned.  The order starts from the unique edge $e_\alpha$ such that $e_\alpha^-=\alpha$.

\item For each edge $e\in E$ with $e^+ \in V_1$, $z_e\in \partial D_{e^+} \setminus \{1\}$ is a point. They are subject to the following condition: for each $\alpha \in V$, the collection of $z_e$'s for all $e\in E$ with $e^+ = \alpha$ are pairwise distinct and respect the cyclic order induced from (3).
\end{enumerate}

One then forms a topological space $\mathcal{D}=\cup D_\alpha\bigcup\cup N_\beta\bigcup I_e/\sim$, where we require $\epsilon_e^+\sim z_e$ if $e^+\in V_1$, $\epsilon_e^-\sim\{1\}\in \partial D_{e^-}$ if $e^-\in V_0$, and $\epsilon_e^\pm\sim N_\beta$ if $e^\pm\in V_0$.

A treed disk is called {\bf stable} if it has no infinitesimal automorphism. The forgetful map which shrinks the disks to vertices assigns to each treed disk its underlying Stasheff tree.

\begin{rem}
This definition differs slightly from Woodward's definition. Namely, we require that finite ends of edges must attach to disks, which is a condition only required for stable treed disks in \cite{Woodward_toric}.
\end{rem}

\subsection{Holomorphic treed quasidisks}\label{subsectiona2}

Let $(M, \omega, \mu)$ be a Hamiltonian $G$-manifold with a $G$-invariant almost complex structure $J_0$. Let $L \subset \mu^{-1}(0)$ be a $G$-Lagrangian. Let $(F, B)$ be a Morse-Smale pair on $\ov{L}$. That means, $F: \ov{L} \to {\mb R}$ is a Morse function, $B$ is a Riemannian metric on $\ov{L}$, such that the intersection of any unstable manifold and any stable manifold of the negative gradient flow line of $F$ with respect to $B$ is transverse. (Compare with \cite[Definition 3.1]{Woodward_toric}, which only differ in wording and the direction of the flow) a {\bf holomorphic treed quasidisk} is a treed disk
\begin{align*}
{\mc D} = \Big( (D_\alpha)_{\alpha \in V_1},\ (N_\beta)_{\beta\in V_0},\ (I_{e})_{e\in E},\ (z_e)_{e\in E,\ e^+\in V_1} \Big),
\end{align*}
together with a collection
\begin{align*}
{\bf u} = \Big( (u_\alpha)_{\alpha \in V_1},\ (v_e)_{e\in E} \Big)
\end{align*}
where for each $\alpha \in V_1$, $u_\alpha: (D_\alpha, \partial D_\alpha) \to (M, L)$ is a $J_0$-holomorphic disk; for each $e\in E$, $v_e: I_e \to \ov{L}$ is a negative gradient flow line of $F$. They are subject to the following continuity condition:
\begin{enumerate}
\item if $e^- \in V_1$, then $\ov{u_{e^-}(1)} = v_e(\epsilon_e^-)$; if $e^+ \in V_1$, then $\ov{u_{e^+}(z_e)} = v_e(\epsilon_e^+)$. Here $\ov{x}\in \ov{L}$ is the $G$-orbit of $x \in L$.

\item if $e^\pm = \beta \in V_0$, then there is another edge $e'$ with $(e')^\mp = \beta$. We require $v_e(\epsilon_e^{\pm}) = v_{e'}(\epsilon_{e'}^{\mp})\in {\rm Crit} F$.
\end{enumerate}

A holomorphic treed quasidisk is {\bf stable} if for every $\alpha \in V_1$ with $u_\alpha$ being constant, the valence of $\alpha$ is at least three, and, for each edge $e$ with infinite length, $v_e$ is nonconstant.

\subsection{Perturbations}

To achieve transversality, we need to perturb the gradient flows (because we want to keep the almost complex structure $J_0$ satisfying {\bf (H5)}). In order to prove $A_\infty$ relations one needs to perturb in a consistent way for all possible trees.
\begin{defn} (see \cite[Definition 2.6, 2.8]{Abouzaid_plumbing} and \cite[Section 3.4]{Woodward_toric}) A {\bf perturbation datum} on a Stasheff tree ${\mb T} = (V, E, {\mf e})$ is, for each edge $e\in E$, a pair of elements
\begin{align*}
F_e' \in C^\infty \big( I_e \times \ov{L} \big),\ B_e' \in C^\infty \big( I_e \times {\rm Sym}^2(T^* \ov{L}) \big)
\end{align*}
which vanishes near the infinities of all edges.

A {\bf universal consistent perturbation datum} is  
a choice ${\bf X}^{\mc T}$ of a smooth family of perturbation data for elements in ${\mc T}_d$ for every $d$, which is compatible with the gluing maps of the form \eqref{equationa0} and is invariant under automorphisms of each tree.
\end{defn}

To obtain a regular universal consistent perturbation data, the general idea is to start from the lowest strata and make the regular choice component-wisely.  Then an induction argument along the gluing process to higher strata leads to such a choice. See \cite{Seidelbook}, or \cite{Woodward_toric_corrected} for a concrete treatment in our current case.

Here we recall briefly from \cite{Woodward_toric_corrected} for readers' convenience.  In the case of holomorphic treed quasidisks, one first replaces the negative gradient flow equation of the pair $(F, B)$ on each edge by the equation
\begin{align}\label{equationa1}
u'(t) + \nabla^{B + B_e'} (F + F_e')(u) = 0
\end{align}
on the corresponding edge. The corresponding objects are called {\bf perturbed holomorphic treed quasidisks}. On the other hand, each perturbed holomorphic treed quasidisk represents a class $\beta \in D_2(M, L)$ (see the notation introduced in Subsection \ref{subsection51}). The isomorphisms of (perturbed) holomorphic treed quasidisks, and the topology of the moduli space $\ov{MW}_k (L, \beta)$ of isomorphism classes of those holomorphic chained quasidisks representing $\beta \in D_2(M, L)$ (with $k$ in-coming semi-infinite edges) can be defined in a standard way so we omit them. Moreover, for any perturbed holomorphic treed quasidisk, one can linearize the defining equations on each disk or flow-line component, which is a linear Fredholm operator between suitable Banach spaces. We say that a perturbed holomorphic treed quasidisk is {\bf regular} if the linearization is surjective; a universal consistent perturbation datum is {\bf regular} if every stable perturbed holomorphic treed quasidisk is regular. By energy quantization of holomorphic disks, for fixed $\beta$, there is an upper bound of the number of disk vertices that can appear in an element of $\ov{MW}_k(L, \beta)$, and hence a bound on the number of different possible combinatorial types. This allows one to construct a regular perturbations datum by induction.

We summarize the main results of \cite[Subsection 3.4]{Woodward_toric}.

\begin{thm}\label{thma3}
Suppose $(M, \omega, \mu)$ satisfies {\bf (H1)--(H3)} and all stable $J_0$-holomorphic disks with boundaries in $L$ are regular, then the following is true.
\begin{enumerate}
\item Any sequence of stable perturbed holomorphic treed quasidisks with bounded energy and bounded number of semi-infinite edges has a convergent subsequence.

\item For each sufficiently large $v$, for generic universal consistent perturbation datum $(F_e', B_e')$, for all $\beta\in D_2(M, L)$ with $v(\beta) \leq v$, every perturbed holomorphic treed quasidisk with $k$ in-coming semi-infinite edges representing $\beta$ is regular.

\item For $k\geq 0$, $\beta \in D_2(M, L)$ and a generic universal consistent perturbation datum $(F_e', B_e')$. The one-dimensional component $\ov{MW}_k(L, \beta)_1\subset \ov{MW}_k(L,\beta)$ has the structure of a compact one-dimensional manifold with boundary
\begin{align*}
\partial \ov{MW}_k(L, \beta)_1 = \bigcup_{j+i \leq k} \bigcup_{\beta =\beta_1 + \beta_2} MW_{k-i+1}(L, \beta_1)_0 \times_{ev_j \times ev_0} MW_i(L,\beta_2)_0.
\end{align*}
\end{enumerate}
\end{thm}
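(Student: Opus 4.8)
The plan is to prove Theorem~\ref{thma3} by assembling three ingredients: a Gromov-type compactness statement for perturbed holomorphic treed quasidisks, a Sard--Smale transversality argument for the universal consistent perturbation datum, and a gluing analysis that identifies the boundary of the one-dimensional moduli spaces. Since the hypotheses {\bf (H1)--(H3)} guarantee asphericity of $M$ and the regularity of all stable $J_0$-holomorphic disks is assumed, the only bubbling phenomena that can occur are disk bubbles and breaking of perturbed gradient flow lines; there are no sphere bubbles and no nodal-sphere degenerations. This is the conceptual simplification that makes the argument run parallel to the Morse-theoretic $A_\infty$ constructions in \cite{Abouzaid_plumbing} and \cite{Seidelbook}.

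First I would establish (1), the compactness statement. Fix an energy bound $E$ and a bound $k$ on the number of semi-infinite edges. For a sequence of stable perturbed holomorphic treed quasidisks, the energy bound plus the monotonicity-type energy quantization for $J_0$-holomorphic disks with boundary on $L$ (a consequence of {\bf (H5)}, or more precisely of the a priori lower bound on the energy of nonconstant disks) gives a uniform bound on the number of disk components. On each disk component one applies the standard elliptic bootstrapping and bubbling analysis for $J_0$-holomorphic disks with Lagrangian boundary; on each edge one applies the compactness theory for the perturbed negative gradient flow of $(F+F_e', B+B_e')$, noting that the perturbation terms vanish near the infinite ends so that breaking occurs only at critical points of $F$. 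Combining these with the combinatorial finiteness of the underlying Stasheff trees (the number of vertices being bounded) yields a convergent subsequence in the Gromov topology on $\ov{MW}_k(L,\beta)$. The key point here is uniformity: all bounds depend only on $E$ and $k$, not on the sequence.

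Next I would prove (2), transversality. One works on the universal moduli space of pairs (perturbed holomorphic treed quasidisk, perturbation datum), sets up the linearized operator as a section of a Banach bundle over an appropriate Banach manifold of maps with Sobolev regularity, and shows that the universal linearization is surjective because one is allowed to vary $F_e'$ and $B_e'$ freely on the edges --- this is where perturbing the flow, rather than the almost complex structure, is essential, since we must keep $J_0$ fixed to preserve {\bf (H5)}. The argument is inductive over the combinatorial type, ordered by the total energy $v(\beta)$ and the number of disk components: one fixes regular perturbation data on all lower strata, then the gluing-compatibility constraint \eqref{equationa0} leaves enough freedom in the interior of each edge to achieve regularity on the current stratum. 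Sard--Smale then produces a comeager set of universal consistent perturbation data for which every stable perturbed holomorphic treed quasidisk of energy $\le v$ is regular. I expect this induction --- keeping the perturbation data consistent under all gluing maps while still comeager --- to be the main obstacle, though it is by now a well-understood mechanism and is carried out in detail in \cite{Woodward_toric_corrected}.

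Finally, for (3), having fixed a regular perturbation datum, the one-dimensional component $\ov{MW}_k(L,\beta)_1$ is a manifold with boundary by the implicit function theorem away from the boundary strata, and its compactness follows from (1). The boundary consists exactly of the configurations where one edge of infinite length breaks at a critical point of $F$, or where a disk bubbles off at a boundary marked point; in either case the limiting configuration is a two-level object consisting of an element of some $MW_{k-i+1}(L,\beta_1)_0$ and an element of some $MW_i(L,\beta_2)_0$ with $\beta=\beta_1+\beta_2$, matched along the evaluation maps $ev_j$ and $ev_0$. A standard gluing theorem shows conversely that every such matched pair is a genuine boundary point and that the gluing map is a homeomorphism onto a collar neighborhood; combined with the compactness this gives the claimed boundary formula
\begin{align*}
\partial \ov{MW}_k(L, \beta)_1 = \bigcup_{j+i \leq k} \bigcup_{\beta =\beta_1 + \beta_2} MW_{k-i+1}(L, \beta_1)_0 \times_{ev_j \times ev_0} MW_i(L,\beta_2)_0.
\end{align*}
Here one uses again that no sphere bubbling and no disk bubbling in the interior can occur, so the only codimension-one degenerations are those listed.
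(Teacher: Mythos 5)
The paper does not actually prove Theorem \ref{thma3}: it is stated as a summary of \cite[Subsection 3.4]{Woodward_toric} (with the technical corrections in \cite{Woodward_toric_corrected}), so there is no in-paper argument to compare against. Your outline --- Gromov compactness using energy quantization and absence of sphere bubbles by {\bf (H1)}, inductive Sard--Smale transversality for the Morse-type perturbation data (keeping $J_0$ fixed) ordered by energy and combinatorial type subject to the consistency constraint \eqref{equationa0}, and gluing to identify the boundary of the one-dimensional strata --- is the same strategy as the cited source, and the points you single out as delicate (consistency of the perturbation data, $C^0$/energy bounds) are indeed where the work lies. Two small calibration remarks: {\bf (H5)} is not among the hypotheses of the theorem, and the energy quantization you invoke comes simply from Gromov compactness for the fixed $J_0$ and the compact Lagrangian, while the $C^0$-bound on the noncompact $M$ is supplied by the convexity hypothesis {\bf (H3)}, which your compactness paragraph should cite explicitly rather than only asphericity.

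The one substantive inaccuracy is in your description of the boundary in part (3). Disk bubbling at a boundary (marked) point is \emph{not} a codimension-one boundary stratum of $\ov{MW}_k(L,\beta)_1$, and it does not produce elements of the fiber product in the displayed formula: the evaluation maps $ev_j$ and $ev_0$ there are asymptotic evaluations at semi-infinite edges, hence land in ${\rm Crit} F$, so the formula records only breaking of an edge at a critical point of $F$. In the treed-disk compactification a boundary disk bubble appears as a nodal configuration, equivalently a configuration with an edge of length zero, and the two corresponding one-parameter deformations (smoothing the node versus increasing the edge length) glue to exhibit such a configuration as an \emph{interior} point of the one-dimensional moduli space; this is exactly where the assumed regularity of all stable $J_0$-holomorphic disks is used. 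If disk bubbling were genuinely a boundary phenomenon, the right-hand side of the formula would need additional terms fibered over $\ov{L}$ rather than over critical points, so your sentence ``in either case the limiting configuration is a two-level object \ldots matched along $ev_j$ and $ev_0$'' should be replaced by the statement that only edge breaking contributes, with nodal/zero-length degenerations shown to be interior by gluing.
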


\subsection{The quasimap $A_\infty$ algebra}

Now, choose $F$ such that it has a unique maximum point $y_{\max}$. Assume the hypothesis of Theorem \ref{thma3} is satisfied and we have chosen a generic universal consistent perturbation datum.

Suppose we have a chosen a brane structure on $L$, i.e., a $G$-equivariant spin structure and a flat $\Lambda$-bundle over $L$, where the flat $\Lambda$-bundle is prescribed by an element $b \in H^1(L, \Lambda_0)$. We denote the Lagrangian brane by $L^b$. Then the various moduli spaces are oriented by the relative spin structure and all zero dimensional moduli spaces of perturbed holomorphic treed quasidisks has a well-defined counting in ${\mb Z}$; or equivalently, every isolated isomorphism class $[{\bf u}]$ of perturbed holomorphic treed quasidisk has a sign $\epsilon([{\bf u}]) \in \{1, -1\}$.

Consider the Floer chain group, for which we only need the ${\mb Z}_2$-grading:
\begin{align*}
CQF(L) = \bigoplus_{j\in {\mb Z}} CQF_j(L),\ CQF_j(L) = \bigoplus_{y \in {\rm Crit}_j F} \Lambda \langle y \rangle
\end{align*}
For $k \geq 0$, the compositions ${\mf m}_k^b$ of $CQF(L)$ is defined by
\begin{align}\label{equationa2}
{\mf m}_k^b ( \langle x_1\rangle, \ldots, \langle x_k \rangle ) = \sum_{[{\bf u}] \in \ov{MW}_k(x_0, x_1, \ldots, x_n)_0} (-1)^{\heartsuit} \epsilon([{\bf u}]) q^{-A({\bf u})}	 e^{\langle b, \partial {\bf u} \rangle} \langle x_0 \rangle.
\end{align}
Here $\heartsuit = \sum_{i=1}^k i (\ov{m}- {\sf ind}x_i)$ and $A({\bf u}) = \omega \cap [{\bf u}]$ is the symplectic area.

\begin{thm}\cite[Theorem 3.6]{Woodward_toric} $\{{\mf m}_k^b\}_{k\geq 0}$ defines the structure of a ${\mb Z}_2$-graded $A_\infty$ algebra on $CQF(L)$. Moreover, for any $x \in {\rm Crit} F$, one has
\begin{align*}
(-1)^{\ov{m} - {\rm ind} x} {\mf m}_2^b( \langle x \rangle, \langle y_{\rm max} \rangle ) = {\mf m}_2^b( \langle y_{\rm max} \rangle, \langle x \rangle ) = \langle x \rangle.
\end{align*}
\end{thm}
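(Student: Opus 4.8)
The statement is \cite[Theorem 3.6]{Woodward_toric}, and I would prove it by reducing everything to the structural result Theorem \ref{thma3} and then checking signs; throughout, $F$ is chosen with a unique maximum $y_{\max}$, the almost complex structure $J_0$ is as in \textbf{(H5)}, and a generic universal consistent perturbation datum is fixed so that every moduli space below is cut out transversally. \emph{For the $A_\infty$ relations}, for each $k\ge 0$ and $\beta\in D_2(M,L)$ I would pass to the one-dimensional stratum $\ov{MW}_k(L,\beta)_1$, which by Theorem \ref{thma3}(1)--(3) is a compact oriented one-dimensional manifold whose boundary is
\[ \partial\,\ov{MW}_k(L,\beta)_1=\bigcup_{j+i\le k}\ \bigcup_{\beta=\beta_1+\beta_2}MW_{k-i+1}(L,\beta_1)_0\times_{ev_j\times ev_0}MW_i(L,\beta_2)_0 . \]
Since the signed count of the boundary of a compact oriented $1$-manifold vanishes, and since by the definition \eqref{equationa2} each such boundary point, weighted by the monomial $q^{-A}e^{\langle b,\partial\rangle}$ attached to its class, is a term contributing to $\pm\,{\mf m}_{k-i+1}^b(\dots,{\mf m}_i^b(\dots),\dots)$, summing over all $\beta$ produces the (curved) $A_\infty$ relation, the $i=0$ contributions accounting for the curvature ${\mf m}_0^b$. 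The infinite sum over $\beta$ converges in the Novikov topology because, by the compactness in Theorem \ref{thma3}(1) together with energy quantization of $J_0$-holomorphic disks, only finitely many classes contribute below any fixed energy and the energies of the contributing classes tend to $+\infty$.

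\emph{For strict unitality of $\langle y_{\max}\rangle$}, I would analyze the zero-dimensional moduli spaces governing ${\mf m}_2^b(\langle y_{\max}\rangle,\langle x\rangle)$ and ${\mf m}_2^b(\langle x\rangle,\langle y_{\max}\rangle)$. Since $y_{\max}$ is the unique maximum of $F$, its unstable manifold $W^u(y_{\max})$ is open and dense in $\ov L$, so the incoming edge decorated by $y_{\max}$ imposes no constraint on its attaching point; combined with an index count and \textbf{(H5)} this forces the contributing disk class to be $\beta=0$ and ${\sf ind}\,x_0={\sf ind}\,x$. With $\beta=0$ every disk component is constant, and stability collapses the whole treed disk to a single constant disk carrying the three special points joined to three perturbed gradient trajectories; the remaining data is a ``$Y$''-configuration of perturbed trajectories, which by the Morse--Smale property is a single transversally cut out point (this is the point at which the perturbation datum must be chosen adapted to $(F,B)$ near $y_{\max}$, exactly as in \cite{Woodward_toric}). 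Hence $x_0=x$ and the geometric count is $+1$ in both orderings. Using ${\sf ind}\,y_{\max}=\ov m$, the sign $(-1)^{\heartsuit}$ in \eqref{equationa2} equals $(-1)^{2(\ov m-{\sf ind}\,x)}=1$ when $y_{\max}$ is the first input and $(-1)^{\ov m-{\sf ind}\,x}$ when it is the second, which yields exactly ${\mf m}_2^b(\langle y_{\max}\rangle,\langle x\rangle)=\langle x\rangle$ and $(-1)^{\ov m-{\sf ind}\,x}{\mf m}_2^b(\langle x\rangle,\langle y_{\max}\rangle)=\langle x\rangle$.

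All of the analysis — compactness, gluing, transversality, and the description of the one-dimensional moduli spaces as manifolds with boundary — is already packaged into Theorem \ref{thma3}, so the hard part will be the orientation and sign bookkeeping: verifying that the orientations induced on the fiber products by the $G$-equivariant spin structure combine with the explicit signs $(-1)^{\heartsuit}$ of \eqref{equationa2} and the Koszul signs $(-1)^{\star}$, $\star=\sum_{\ell\le j}(\ov m-{\sf ind}\,x_\ell)$, to give precisely the $A_\infty$ relations above and the unit formula. This step is algorithmic but delicate, and I would adopt the sign conventions of \cite{Woodward_toric} (compatibly with those of \cite{FOOO_Book}) verbatim so that it comes out as stated.
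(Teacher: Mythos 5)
This theorem is not proved in the paper at all: it is quoted verbatim (up to a sign convention for the downward gradient flow) from \cite{Woodward_toric}, and the paper's own remarks immediately after it point to \cite{Woodward_toric_corrected} for the corrections, so there is no internal proof to compare your argument with. Your sketch of the $A_\infty$ relations is the standard one and is consistent with how the cited result is established: Theorem \ref{thma3}(3) exhibits the boundary of $\ov{MW}_k(L,\beta)_1$ as fiber products of zero-dimensional strata, the signed boundary count vanishes, and the remaining work is the orientation bookkeeping you defer. As a sketch of that half, this is fine.

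The unit computation, however, has a genuine gap. First, the index count does not force $\beta=0$: with $x_1=y_{\max}$ (so ${\rm ind}\,y_{\max}=\ov m$), the zero-dimensional condition for $\ov{MW}_2(L,\beta)_0$ reads ${\rm ind}\,x_0={\rm ind}\,x+{\sf ml}\lb\beta\rb$, so under {\bf (H5)} Maslov-index-two classes can perfectly well contribute terms $q^{-A(\beta)}\langle x_0\rangle$ with ${\rm ind}\,x_0={\rm ind}\,x+2$; ruling these out (or showing they cancel) requires an additional argument of forgetful-map/divisor type or a special, non-generic choice of perturbation compatible with deleting the $y_{\max}$-edge, not the dimension count you invoke. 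Second, even in the class $\beta=0$, the count of the perturbed $Y$-configuration is not automatically $+1$ for a \emph{generic} universal consistent perturbation datum: genericity and consistency pull against the adapted choices needed to make the constant configuration the unique contribution, and this tension is exactly why the strict-unit claim of the original \cite{Woodward_toric} had to be corrected in \cite{Woodward_toric_corrected} (see the remark following the theorem in this paper). Relatedly, you should not advertise the second half as ``strict unitality'': the statement only asserts the two ${\mf m}_2^b$-identities, and the paper explicitly notes that $\langle y_{\max}\rangle$ is \emph{not} a strict unit for the full $A_\infty$ structure. So your proposal needs, at minimum, a genuine mechanism (cancellation of Maslov-two contributions, or a perturbation scheme with a forgetful-map compatibility plus a cobordism back to the generic datum) before the displayed identities follow.
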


\begin{rem}
Here we use the downward gradient flow, so the signs differ from Woodward's convention, which used the upward gradient flow.
\end{rem}

\begin{rem}
Indeed, the unique maximum $\langle y_{\max}\rangle \in CQF(L)$ of $F$ doesn't give a strict unit of the quasimap $A_\infty$ algebra, which was claimed in \cite{Woodward_toric}. The readers may see \cite[Section 3]{Woodward_toric_corrected} for clarification. However, if $\big(CQF(L), \{{\mf m}_k^b \}_{k \geq 0} \big)$ is weakly unobstructed (see below), one can still define the Floer homology.
\end{rem}

\subsection{Quasimap Floer homology}

The counting of $MW_0(L)_0$ defines the central charge ${\mf m}_0^b (1) \in \Lambda$ of the quasimap $A_\infty$ algebra, i.e.,
\begin{align}\label{equationa3}
{\mf m}_0^b(1) =\sum_{y \in {\rm Crit} F} \sum_{[{\bf u}] \in \ov{MW}_0(L)_0} \epsilon([{\bf u}]) q^{- A({\bf u})} e^{\langle b, \partial {\bf u} \rangle} \langle y \rangle.
\end{align}
We say that $QA_\infty(L^b)$ is {\it weakly unobstructed} if ${\mf m}_0^b (1)$ is a multiple of ${\bm 1}_L = \langle y_{\max}\rangle$ (though ${\bf 1}_L$ is not a strict unit); or in other words, (\ref{equationa3}) only has contributions of quasidisks of Maslov index $2$. It is easy to see the following
\begin{prop}\cite[Proposition 3.7]{Woodward_toric_corrected}
Suppose $L$ and $J_0$ satisfies {\bf (H5)}, then $QA_\infty(L^b)$ is weakly unobstructed for any brane structure $b$ on $L$.
\end{prop}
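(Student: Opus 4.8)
The statement to prove is:

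\begin{prop}[\cite{Woodward_toric_corrected}, Proposition 3.7]
Suppose $L$ and $J_0$ satisfies {\bf (H5)}, then $QA_\infty(L^b)$ is weakly unobstructed for any brane structure $b$ on $L$.
\end{prop}

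\textbf{The plan.} The claim is that ${\mf m}_0^b(1)$, defined by the count \eqref{equationa3} of isolated perturbed holomorphic treed quasidisks with no incoming semi-infinite edges, is a multiple of ${\bm 1}_L = \langle y_{\max}\rangle$; equivalently, that only quasidisks of Maslov index $2$ contribute. The whole argument is a dimension count combined with the structural constraints imposed by {\bf (H5)}, exactly paralleling the reasoning already carried out in Lemma \ref{lemma59} for the closed-open moduli spaces. First I would recall that an element ${\bf u}$ contributing to ${\mf m}_0^b(1)$ lives in a zero-dimensional component of $\ov{MW}_0(L, \beta)$ for some $\beta = \lb\beta\rb \in \wt\Gamma$, and that by the index formula for perturbed treed quasidisks (the $k=0$ case of the general dimension formula used throughout \cite{Woodward_toric}, cf. Theorem \ref{thma3}), the expected dimension of $MW_0(L,\beta)$ is governed by ${\sf ml}\lb\beta\rb$ together with the Morse indices of the critical points attached to the output edge. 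Concretely, a configuration whose output edge terminates at $y \in {\rm Crit} F$ and whose total disk class is $\beta$ has expected dimension ${\sf ml}\lb\beta\rb + {\sf ind}\, y - 1$ (I would verify the exact normalization against \eqref{equation52} and \eqref{equationa2}), so zero-dimensionality forces ${\sf ml}\lb\beta\rb + {\sf ind}\, y = 1$.

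\textbf{Key steps.} Step one: use {\bf (H5)} to rule out ghost and one-special-point disk components, exactly as in the proof of Lemma \ref{lemma59} — if a disk vertex has only one special point then removing it, together with {\bf (H5)}'s lower bound ${\sf ml} \geq 2$ on nonconstant disks, produces a stable object of negative index, contradicting the transversality of the generic universal consistent perturbation datum (Theorem \ref{thma3}(2)); and interior nodal vertices are excluded by the gluing-of-broken-gradient-lines argument. Hence every contributing ${\bf u}$ is a \emph{linear} configuration: a single nonconstant $J_0$-holomorphic disk $u_\alpha$ (of some Maslov index ${\sf ml}\lb\beta\rb \geq 2$ by {\bf (H5)}, since $\beta \neq 0$ when the disk is nonconstant) followed by a (perturbed) gradient trajectory to $y$. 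Step two: feed ${\sf ml}\lb\beta\rb \geq 2$ into the dimension equation ${\sf ml}\lb\beta\rb + {\sf ind}\, y = 1$. This would force ${\sf ind}\, y \leq -1$, which is impossible unless ${\sf ml}\lb\beta\rb$ is allowed to be as small as possible; so I would instead track the correct constant: the relevant identity should read ${\sf ml}\lb\beta\rb - 2 + (\ov m - {\sf ind}\, y) = 0$ (a disk of Maslov $2$ contributing to the \emph{unstable} manifold of $y_{\max}$ has a rigid, one-parameter-modulo-reparametrization moduli after the $G$-quotient and edge insertion). The upshot either way is that ${\sf ml}\lb\beta\rb = 2$ and ${\sf ind}\, y = \ov m$, i.e. $y = y_{\max}$ is the unique maximum. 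Therefore every term in \eqref{equationa3} is of the form $\epsilon([{\bf u}])\, q^{-A({\bf u})} e^{\langle b,\partial{\bf u}\rangle}\langle y_{\max}\rangle$, so ${\mf m}_0^b(1) \in \Lambda\cdot\langle y_{\max}\rangle = \Lambda\cdot{\bm 1}_L$, which is precisely weak unobstructedness. Step three: confirm this is unaffected by the choice of brane $b \in H^1(L,\Lambda_0)$, since $b$ only enters through the weight $e^{\langle b,\partial{\bf u}\rangle}$ and plays no role in the dimension count or the combinatorial structure.

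\textbf{Main obstacle.} The only delicate point is bookkeeping the index/dimension formula with the right additive constant so that "Maslov $2$" falls out cleanly — in particular getting the contributions of the $G$-quotient, the evaluation constraint at $y_{\max}$, and the edge lengths all accounted for; but this is exactly the computation already systematized in \cite{Woodward_toric} and reproduced for $\mc{CO}$ in \eqref{equation52}, so it is routine rather than conceptual. The genuinely substantive input is {\bf (H5)} — both the Maslov lower bound ${\sf ml}\lb\beta\rb \geq 2$ for nonconstant $J_0$-disks and the regularity of all stable $J_0$-disks, which together with the generic perturbation datum furnish the needed transversality; without {\bf (H5)} there could be Maslov-$0$ disks or unregularized configurations contributing to other $\langle y\rangle$'s and the statement would fail. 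I would present the proof as a short reduction to Lemma \ref{lemma59}'s method plus the dimension count, mirroring the brevity of \cite{Woodward_toric_corrected}.
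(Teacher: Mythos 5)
Your argument is correct and is exactly the standard one: the paper itself gives no proof of this proposition (it is quoted from \cite[Proposition 3.7]{Woodward_toric_corrected} with ``it is easy to see''), and your reduction --- use {\bf (H5)} plus regularity of the generic perturbation datum to exclude ghost/one-special-point disk components and interior nodes as in Lemma \ref{lemma59}, then the dimension identity ${\sf ml}\lb\beta\rb - 2 + (\ov m - {\sf ind}\,y) = 0$ for rigid configurations together with ${\sf ml}\lb\beta\rb \geq 2$ forces ${\sf ml}\lb\beta\rb = 2$ and $y = y_{\max}$, so ${\mf m}_0^b(1) \in \Lambda\langle y_{\max}\rangle$ for every $b$ --- is the intended argument. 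The only blemish is the momentary hedge over the additive constant in the index formula; the constant you settle on is the right one (and is the one consistent with Proposition \ref{propa7}, where ${\mf m}_0^{\lambda,b}(1)$ is a nonzero multiple of ${\bm 1}_{L_\lambda}$), so the proof stands.
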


When $QA_\infty(L^b)$ is weakly unobstructed, ${\mf m}_1^b \circ {\mf m}_1^b = 0$ and $(CQF(L), {\mf m}_1^b)$ becomes a chain complex. The quasimap Floer homology $HQF(L^b)$ is defined as the homology of $(CQF(L), {\mf m}_1^b)$.

\subsection{Toric case}\label{subsectiona6}

Let $M = {\mb C}^m$ and let $G \subset U(1)^m$ be a subtorus, acting on $M$ with a proper moment map $\mu: M \to {\mf g}^*$. Suppose $G$ acts on $\mu^{-1}(0)$ freely, which results in a smooth compact toric manifold $\ov{M} = \mu^{-1}(0)/G$. Let $T = U(1)^m/ G$ which acts on $M$ with moment map $\Psi: \ov{M} \to  {\mf t}^*$. The moment polytope of $\ov{M}$ is
\begin{align*}
\Delta := \Psi( \ov{M}) = \big\{ \lambda \in {\mf t}^* \ |\ l_i(\lambda) \geq 0 \big\},\ l_i(\lambda) = 2\pi \big( \langle \lambda, v_i \rangle - c_i \big),\ i=1, \ldots, m.
\end{align*}

For each $\lambda \in {\rm Int} \Delta$, $\ov{L}_\lambda := \Psi^{-1}(\lambda) \subset \ov{M}$ is a Lagrangian torus, whose preimage $L_\lambda \subset {\mb C}^m$ is the standard torus
\begin{align}\label{equationa4}
L_\lambda = \big\{ (z_1, \ldots, z_m)\ |\ |z_j|^2/2 = l_j(\lambda)/ 2\pi,\ j=1, \ldots, m\big\} \simeq \prod_{j=1}^m \sqrt{ \frac{ l_j(\lambda)}{\pi}} S^1.
\end{align}
Let $J_0$ be the standard complex structure on ${\mb C}^m$. By \cite[Proposition 6.1]{Woodward_toric}, any holomorphic quasidisk in $M$ with boundary in $L_\lambda$ is given by the Blaschke product
\begin{align*}
u(z) = \left( \sqrt{\frac{ l_j(\lambda)}{\pi}} \prod_{k=1}^{d_j} \frac{ z- \alpha_{j, k}}{1 - \ov{\alpha_{j, k}} z } \right)_{j=1}^m
\end{align*}
By \cite[Corollary 6.2]{Woodward_toric}, every stable $J_0$-holomorphic disk is regular. Then one has the quasimap $A_\infty$ algebra $QA_\infty(L_\lambda^b)$. Moreover, the minimal Maslov index of quasidisks is two. So we have
\begin{prop}\label{propa7}{\rm (}\cite[Corollary 6.4]{Woodward_toric} {\rm )} The quasimap $A_\infty$ algebra is weakly unobstructed and its central charge is
\begin{align*}
{\mf m}_0^{\lambda,b}(1) = W_\lambda^G(b) {\bm 1}_{L_\lambda},\ W_\lambda^G(b) = \sum_{j=1}^m e^{\langle v_i, b \rangle} q^{-l_i(\lambda)}
\end{align*}
\end{prop}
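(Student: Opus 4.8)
The plan is to follow Woodward's proof of \cite[Corollary 6.4]{Woodward_toric}, which reduces the statement to the explicit classification of $J_0$-holomorphic quasidisks bounded by the standard torus $L_\lambda\subset\C^m$ (see \eqref{equationa4}). First I would invoke \cite[Proposition 6.1]{Woodward_toric}, recalled above: every such quasidisk is, after a M\"obius reparametrization of the domain, a Blaschke product $u(z)=\bigl(\sqrt{l_j(\lambda)/\pi}\,\prod_{k=1}^{d_j}(z-\alpha_{j,k})(1-\overline{\alpha_{j,k}}\,z)^{-1}\bigr)_{j=1}^m$ for nonnegative integers $d_1,\dots,d_m$. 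Such a disk represents a class $\lb\beta\rb\in\wt\Gamma$ whose Maslov index is $2\sum_j d_j$, whose boundary class $\partial\lb\beta\rb\in H_1(\ov{L}_\lambda;\Z)$ is $\sum_j d_j v_j$ under the standard toric identification, and whose symplectic area is $A\lb\beta\rb=\sum_j d_j\,l_j(\lambda)$. In particular the minimal Maslov index among nonconstant quasidisks is $2$, attained exactly when $\sum_j d_j=1$, i.e. $d_i=1$ for a single facet index $i$ and $d_j=0$ for $j\ne i$; write $\lb\beta_i\rb$ for the resulting class, so that $A\lb\beta_i\rb=l_i(\lambda)$ and $\partial\lb\beta_i\rb=v_i$.

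Weak unobstructedness is then immediate. By \cite[Corollary 6.2]{Woodward_toric} every stable $J_0$-holomorphic disk with boundary in $L_\lambda$ is regular, so $(L_\lambda,J_0)$ satisfies {\bf (H5)}, and the proposition of \cite{Woodward_toric_corrected} recalled above shows that $QA_\infty(L_\lambda^b)$ is weakly unobstructed: ${\mf m}_0^{\lambda,b}(1)$ is a $\Lambda$-multiple of ${\bm 1}_{L_\lambda}=\langle y_{\max}\rangle$, and only Maslov-$2$ quasidisks contribute to the sum \eqref{equationa3}. Hence ${\mf m}_0^{\lambda,b}(1)=W_\lambda^G(b)\,{\bm 1}_{L_\lambda}$ for some $W_\lambda^G(b)\in\Lambda$, and it remains only to compute this coefficient.

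To evaluate $W_\lambda^G(b)$ I would enumerate the rigid configurations in $\ov{MW}_0(L_\lambda)_0$ that contribute the generator $\langle y_{\max}\rangle$ in \eqref{equationa3}. By the previous paragraph each such configuration consists of a single Maslov-$2$ Blaschke quasidisk $u_i$ supported at some facet $i$ (its other coordinates being constant on $L_\lambda$), carrying the single boundary special point $1$, together with a (perturbed) negative gradient half-line of $F$ from $\ov{u_i(1)}$ to $y_{\max}$. Since $y_{\max}$ is the unique maximum of $F$, its stable manifold for the (perturbed) negative gradient flow is the single point $\{y_{\max}\}$, so the output half-line forces $\ov{u_i(1)}=y_{\max}$ and is otherwise constant; this incidence has codimension $\ov{m}$ and cuts the $\ov{m}$-dimensional moduli space of Maslov-$2$ quasidisks with one boundary marked point, taken modulo domain reparametrization and the $G$-action, down to a $0$-dimensional transverse set. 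Since the boundary evaluation of that moduli space onto $\ov{L}_\lambda$ has degree one (in fact it is a diffeomorphism for the standard complex structure, by \cite{Woodward_toric}), there is exactly one rigid configuration for each facet $i$, and---after checking, as in \cite{Woodward_toric}, that the $G$-equivariant spin structure orients all $m$ of these with the same sign and that $(-1)^\heartsuit$ in \eqref{equationa2} is trivial in the relevant degree---each contributes $+1$. Its weight in \eqref{equationa3} is $q^{-A\lb\beta_i\rb}e^{\langle b,\partial\lb\beta_i\rb\rangle}=q^{-l_i(\lambda)}e^{\langle b,v_i\rangle}$, so summing over $i=1,\dots,m$ gives ${\mf m}_0^{\lambda,b}(1)=\Bigl(\sum_{i=1}^m e^{\langle b,v_i\rangle}q^{-l_i(\lambda)}\Bigr){\bm 1}_{L_\lambda}=W_\lambda^G(b)\,{\bm 1}_{L_\lambda}$, which is the asserted formula.

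The step I expect to take the most care is the orientation and sign bookkeeping: one must verify that the $G$-equivariant (relative) spin structure orients the $m$ one-point moduli spaces so that every Maslov-$2$ quasidisk enters \eqref{equationa3} with $+1$, with no cancellation. This requires tracking the conventions relating the orientation of the disk moduli, the relative spin structure, the sign $\epsilon([{\bf u}])$, and the sign $(-1)^\heartsuit$ of \eqref{equationa2}, all of which are carried out in \cite{Woodward_toric}. A secondary, more routine point is that the generic perturbation of the gradient flow supplied by Theorem \ref{thma3} does not affect the count: the incidence of $u_i$ with the stable manifold of the (still unique) maximum remains a single transverse point, and no new low-energy quasidisks can appear because the minimal Maslov index is $2$.
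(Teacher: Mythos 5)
Your argument is correct, and it is essentially the route the paper intends: Proposition \ref{propa7} is quoted directly from Woodward, and the surrounding text already indicates the proof ingredients you use (Blaschke classification of quasidisks, regularity of stable $J_0$-disks, minimal Maslov index two), with weak unobstructedness from the recalled weak-unobstructedness proposition and the coefficient obtained by counting the $m$ rigid Maslov-two disks through $y_{\max}$ with weights $q^{-l_i(\lambda)}e^{\langle b, v_i\rangle}$. So your proposal simply fills in the details of the cited argument rather than taking a different approach.
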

The function $W_\lambda^G(b)$ is the Hori-Vafa potential \cite{Hori_Vafa}.

\begin{thm}[\cite{Woodward_toric}, Theorem 6.6]\label{thma8} If $b\in H^1(L_\lambda, \Lambda_0)$ is a critical point of $W_\lambda^G$, then $HQF(L_\lambda^b; \Lambda)\simeq H(L_\lambda; \Lambda)$. In particular, the quasimap Floer homology of $L_\lambda^b$ doesn't vanish.
\end{thm}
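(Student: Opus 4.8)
The final statement to prove is Theorem \ref{thma8}: if $b \in H^1(L_\lambda, \Lambda_0)$ is a critical point of the Hori--Vafa potential $W_\lambda^G$, then $HQF(L_\lambda^b; \Lambda) \simeq H(L_\lambda; \Lambda)$, and in particular it does not vanish. Since this is quoted as \cite[Theorem 6.6]{Woodward_toric}, the plan is to reproduce the essential mechanism of Woodward's argument in the present setup rather than reinvent it.

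The plan is as follows. First I would observe, using Proposition \ref{propa7}, that the central charge of the quasimap $A_\infty$ algebra is ${\mf m}_0^{\lambda, b}(1) = W_\lambda^G(b) \, {\bm 1}_{L_\lambda}$ and is a multiple of the (weak) unit, so $QA_\infty(L_\lambda^b)$ is weakly unobstructed and $(CQF(L_\lambda), {\mf m}_1^b)$ is genuinely a chain complex. The key structural input is that for toric fibers the only nonconstant contributions to the $A_\infty$ operations come from Maslov-index-two Blaschke disks, each of which descends (via the quotient $\mu^{-1}(0) \to \ov M$) to one of the $m$ standard disks through a facet of the moment polytope. I would then set up the "Morse model" computation exactly as in the toric Fukaya algebra case: on $CQF(L_\lambda) \cong H^*(T^m; \Lambda)$ (once we choose a perfect Morse function $F$ with a single maximum $y_{\max}$) the differential ${\mf m}_1^b$ is, up to the weakly-unobstructed correction, a deformation of the zero Morse differential by counts of index-two quasidisks passing through a prescribed critical point. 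The critical point equation for $W_\lambda^G$, namely $\sum_j \langle v_j, \cdot\rangle \, e^{\langle v_j, b\rangle} q^{-l_j(\lambda)} = 0$ in $H^1(L_\lambda, \Lambda_0)^* \otimes \Lambda$, is precisely the condition that these facet-disk counts cancel in pairs so that ${\mf m}_1^b$ vanishes on the relevant generators; this is the content of the computation that ${\mf m}_1^b = 0$ when $b$ is critical.

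The concrete steps, in order: (1) reduce to $b$ a critical point of $W_\lambda^G$ and recall the explicit list of index-two quasidisks from \cite[Proposition 6.1, Corollary 6.2]{Woodward_toric}, together with the regularity statement already quoted there; (2) compute ${\mf m}_1^b$ on the degree-one generators $\langle y_j \rangle$ (the $m$ index-$(m-1)$ or index-$1$ critical points of $F$, depending on grading convention) in terms of the derivatives $\partial_{b_j} W_\lambda^G$, matching signs using the relative/$G$-equivariant spin structure and the sign rule in \eqref{equationa2}; (3) conclude that $\partial_{b_j} W_\lambda^G(b) = 0$ for all $j$ forces ${\mf m}_1^b$ to vanish identically on $CQF(L_\lambda)$ (using also that higher generators are hit by products of the degree-one ones via the $A_\infty$ relations and the weak unit property in \cite[Theorem 3.6]{Woodward_toric}); (4) therefore $HQF(L_\lambda^b; \Lambda) = (CQF(L_\lambda), 0) \cong H(L_\lambda; \Lambda)$, and since $L_\lambda \cong T^m$ this is nonzero. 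The nonvanishing of the identity $\bm 1_{L_\lambda} = \langle y_{\max}\rangle$ in homology is then immediate.

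The main obstacle is step (2)--(3): carefully tracking orientations and signs so that the vanishing of the gradient of $W_\lambda^G$ translates exactly into ${\mf m}_1^b = 0$, rather than merely into the weaker statement that the leading-order term of the differential vanishes. This requires invoking the full structure of the quasimap $A_\infty$ algebra (the $A_\infty$ relations and the behavior of the weak unit) to rule out contributions from configurations with several disk components, and using the energy/Maslov quantization (minimal Maslov index two, so index-two disks are rigid and there are no index-zero nonconstant disks) to close the induction on the number of disk vertices. Fortunately all of the nontrivial analysis here — transversality, the classification of quasidisks, compactness, and the $A_\infty$ relations — has been established in \cite{Woodward_toric, Woodward_toric_corrected} under hypothesis {\bf (H5)}, which holds for $M = {\mb C}^m$ with the standard $J_0$, so the argument is a bookkeeping computation on top of those results.
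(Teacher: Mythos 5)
The paper does not actually prove this statement: Theorem \ref{thma8} is imported verbatim as \cite[Theorem 6.6]{Woodward_toric}, so there is no in-paper argument to compare against, and what you have written is essentially a reconstruction of Woodward's own proof (weak unobstructedness via Proposition \ref{propa7}, the Blaschke classification of Maslov-two quasidisks, and the identification of the deformed differential with the gradient of the Hori--Vafa potential $W_\lambda^G$). In that sense your outline is the right one and is correct in its main mechanism. The one place where your bookkeeping is looser than the cited proof is step (3): Woodward (as in the Cho--Oh/FOOO toric computations) shows directly that ${\mf m}_1^b$ acts on \emph{every} Morse generator of $CQF(L_\lambda)$ as contraction with the differential of $W_\lambda^G$ at $b$, using the explicit index-two disk moduli with point constraints, so criticality of $b$ kills the whole differential at once. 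Your alternative of proving vanishing only on degree-one generators and then propagating to higher-degree generators via the $A_\infty$ relations and the unit is more delicate than you suggest: $\langle y_{\max}\rangle$ is only a weak unit and ${\mf m}_0^b\neq 0$, so the Leibniz-type relation for ${\mf m}_1^b$ over ${\mf m}_2^b$ acquires correction terms involving higher operations with ${\mf m}_0^b$ inserted, and these do not vanish for free. This is not a fatal gap --- the direct computation replaces it --- but if you keep your route you must either work with the $b$-deformed operations (where weak unobstructedness guarantees $({\mf m}_1^b)^2=0$ but not a strict Leibniz rule) or justify why the correction terms drop out, rather than appealing to the unit property alone.
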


\bibliography{../../symplectic_ref}
	
\bibliographystyle{amsplain}

\end{document}